\tikzset{->-/.style={decoration={markings, mark=at position .5 with {\arrow{>}}},postaction={decorate}}}
\tikzset{-<-/.style={decoration={markings, mark=at position .5 with {\arrow{<}}},postaction={decorate}}}
\tikzset{->-/.style={decoration={markings, mark=at position .5 with {\arrow{>}}},postaction={decorate}}}
\tikzset{-<-/.style={decoration={markings, mark=at position .5 with {\arrow{<}}},postaction={decorate}}}
\newcommand{\soergel}{\ensuremath{\mathcal{S}}}
\newcommand{\koszul}{\ensuremath{\mathrm{K}}}
\newcommand{\Jones}{\ensuremath{\mathbb{J}}}
\newcommand{\myZ}{\ensuremath{\mathbb{Z}}}
\newcommand{\listk}[1]{\ensuremath{\underline{#1}}}
\newcommand{\polA}{\ensuremath{A}}
\newcommand{\imagesfolder}{Images}
\newcommand{\NB}[1]{\ensuremath{\vcenter{\hbox{#1}}}}
\newcommand{\rickardp}[2]{\ensuremath{T_{#1, #2}^+}}
\newcommand{\rickardm}[2]{\ensuremath{T_{#1, #2}^-}}
\newcommand{\mapX}{\ensuremath{\chi}}
\newcommand{\mapH}{\ensuremath{\eta}}
\newcommand{\mapB}{\ensuremath{\upsilon}}
\newcommand{\mapU}{\ensuremath{\zeta}}
\newcommand{\mapA}{\ensuremath{\alpha}}
\newcommand{\mapE}{\ensuremath{\epsilon}}
\newcommand{\mapD}{\ensuremath{\Delta}}
\newcommand{\mapN}{\ensuremath{\nabla}}
\newcommand{\cl}{\ensuremath{\colon\thinspace}}
\newcommand{\nopdg}{\ensuremath{\mathcal{F}}}
\newcommand{\somecategorynopdg}{{\mathcal{C}}}
\newcommand{\somecategorypdg}{{\somecategorynopdg^{\partial}}}
\theoremstyle{definition}
\newtheorem{thm}{Theorem}[section]
\newtheorem{cor}[thm]{Corollary}
\newtheorem{lem}[thm]{Lemma}
\newtheorem{rem}[thm]{Remark}
\newtheorem{prop}[thm]{Proposition}
\newtheorem{claim}[thm]{Claim}
\newtheorem{defn}[thm]{Definition}
\newtheorem{example}[thm]{Example}
\newtheorem*{thm*}{Theorem}
\numberwithin{equation}{section}
\def\N{{\mathbb N}}
\def\R{{\mathbb R}}
\def\Z{{\mathbb Z}}
\def\Q{{\mathbb Q}}
\def\F{{\mathbb F}}
\newcommand{\e}{\ensuremath{\mathbf{e}}}
\newcommand{\Fp}{\ensuremath{\mathbb{F}_p}}
\newcommand{\Hom}{{\rm Hom}}
\renewcommand{\to}{\rightarrow}
\newcommand{\End}{{\rm End}}
\newcommand{\rkq}{{\rm rk}_q}
\def\dif{\partial}
\def\lra{{\longrightarrow}}
\def\dmod{{\mathrm{\mbox{\textrm{-}}mod}}}   % 
\def\Id{\mathrm{Id}}
\def\mc{\mathcal}
\def\shuffle{\,\raise 1pt\hbox{$\scriptscriptstyle\cup{\mskip
               -4mu}\cup$}\,}
\def\udmod{{\mathrm{\textrm{-}\underline{mod}}}}   % 
\newcommand{\refequal}[1]{\xy {\ar@{=}^{#1}
(-1,0)*{};(1,0)*{}};
\endxy}
\def\K{\mathrm{K}} % 
\newcommand{\mH}{\mathrm{H}} 
\newcommand{\mHH}{\mathrm{HH}}
\newcommand{\mHT}{\mathrm{HT}}
\newcommand{\mHHH}{\mathrm{HHH}}
\newcommand{\mC}{\mathrm{C}}
\newcommand{\stgamma}{\ensuremath{\vcenter{\hbox{\tikz[scale=0.3]{
\coordinate (B) at (0,0);
\coordinate (V1) at (0,1);
\coordinate (V2) at (1,2);
\coordinate (T1) at (-2,3);
\coordinate (T2) at (0,3);
\coordinate (T3) at (2,3);
\draw[>-] (B) -- (V1) node [at start, below] {\tiny{$a+b+c$}};
\draw[->] (V1) -- (T1) node [at end, above] {\tiny{$a$}};
\draw[->-] (V1)  -- (V2) node[midway, right] {\tiny{$b+c$}};
\draw[->] (V2) -- (T2) node[at end, above] {\tiny{$b$}};
\draw[->] (V2) -- (T3) node[at end, above] {\tiny{$c$}};
}}}}}
\newcommand{\stgammaprime}{\ensuremath{\vcenter{\hbox{\tikz[scale=0.3]{
\coordinate (B) at (0,0);
\coordinate (V1) at (0,1);
\coordinate (V2) at (-1,2);
\coordinate (T1) at (-2,3);
\coordinate (T2) at (0,3);
\coordinate (T3) at (2,3);
\draw[>-] (B) -- (V1) node [at start, below] {\tiny{$a+b+c$}};
\draw[->] (V1) -- (T3) node [at end, above] {\tiny{$c$}};
\draw[->-] (V1)  -- (V2) node[midway, left] {\tiny{$a+b$}};
\draw[->] (V2) -- (T1) node[at end, above] {\tiny{$a$}};
\draw[->] (V2) -- (T2) node[at end, above] {\tiny{$b$}};
}}}}}
\newcommand{\vertab}{\ensuremath{\vcenter{\hbox{\tikz[scale=0.4]{
\coordinate (B) at (0,0);
\coordinate (T) at (0,3);
\draw[white] (0, -0.5) -- (0, 3.5);
\draw[->] (B) -- (T) node[midway, right] {\tiny{$a+b$}};
}}}}}
\newcommand{\digonab}{\ensuremath{\vcenter{\hbox{\tikz[scale=0.4]{
\coordinate (B) at (0,0);
\coordinate (V1) at (0,0.5);
\coordinate (V2) at (0,2.5);
\coordinate (T) at (0,3);
\draw[white] (0, -0.5) -- (0, 3.5);
\draw[>-] (B) -- (V1) node[midway, left] {\tiny{$a+b$}};
\draw[->] (V2) -- (T) node[midway, left] {\tiny{$a+b$}};
\draw[->-] (V1) .. controls +(+0.5, +0.5) and +(+0.5, -0.5).. (V2) node[midway, right] {\tiny{$b$}};
\draw[->-] (V1) .. controls +(-0.5, +0.5) and +(-0.5, -0.5).. (V2) node[midway, left] {\tiny{$a$}};
}}}}}
\newcommand{\baddigonab}{\ensuremath{\vcenter{\hbox{\tikz[scale=0.4]{
\coordinate (B) at (0,0);
\coordinate (V1) at (0,0.5);
\coordinate (V2) at (0,2.5);
\coordinate (T) at (0,3);
\draw[white] (0, -0.5) -- (0, 3.5);
\draw[->] (B) -- (V1) node[midway, left] {\tiny{$a$}};
\draw[->] (V2) -- (T) node[midway, left] {\tiny{$a$}};
\draw[-<-] (V1) .. controls +(+0.5, +0.5) and +(+0.5, -0.5).. (V2) node[midway, right] {\tiny{$b$}};
\draw[->-] (V1) .. controls +(-0.5, +0.5) and +(-0.5, -0.5).. (V2) node[midway, left] {\tiny{$a+b$}};
}}}}}
\newcommand{\verta}{\ensuremath{\vcenter{\hbox{\tikz[scale=0.4]{
\coordinate (B) at (0,0);
\coordinate (T) at (0,3);
\draw[white] (0, -0.5) -- (0, 3.5);
\draw[->] (B) -- (T) node[midway, right] {\tiny{$a$}};
}}}}}
\newcommand{\squarea}{\ensuremath{\vcenter{\hbox{\tikz[scale=0.4]{
\coordinate (B1) at (-1,0);
\coordinate (B2) at (1,0);
\coordinate (C1) at (-1,1);
\coordinate (D1) at (-1,2);
\coordinate (C2) at (1,1);
\coordinate (D2) at (1,2);
\coordinate (T1) at (-1,3);
\coordinate (T2) at (1,3);
\draw[>-] (B1) -- (C1) node[at start, below] {\tiny{$1$}};
\draw[->-] (D1) -- (C1) node[midway, left   ] {\tiny{$a$}};
\draw[->] (D1) -- (T1) node[at end , above ] {\tiny{$1$}};
\draw[>-] (C2) -- (B2) node[at end, below] {\tiny{$a$}};
\draw[->-] (C2) -- (D2) node[midway, right] {\tiny{$1$}};
\draw[->] (T2) -- (D2) node[at start, above] {\tiny{$a$}};
\draw[->-] (D2) -- (D1) node[midway, above] {\tiny{$a+1$}};
\draw[->-] (C1) -- (C2) node[midway, below] {\tiny{$a+1$}};
}}}}}
\newcommand{\twoverta}{\ensuremath{\vcenter{\hbox{\tikz[scale=0.4]{
\coordinate (B1) at (-1,0);
\coordinate (T1) at (-1,3);
\coordinate (B2) at (1,0);
\coordinate (T2) at (1,3);
\draw[->] (B1) -- (T1) node[midway, left] {\tiny{$1$}};
\draw[->] (T2) -- (B2) node[midway, right] {\tiny{$a$}};
}}}}}
\newcommand{\doubleYa}{\ensuremath{\vcenter{\hbox{\tikz[scale=0.4]{
\coordinate (B1) at (-1,0);
\coordinate (T1) at (-1,3);
\coordinate (C) at (0,1);
\coordinate (D) at (0,2);
\coordinate (B2) at (1,0);
\coordinate (T2) at (1,3);
\draw[>-] (B1) -- (C) node[at start, below] {\tiny{$1$}};
\draw[->] (C) -- (B2) node[at end, below] {\tiny{$a$}};
\draw[->-] (D) -- (C) node[midway, left] {\tiny{$a-1$}};
\draw[>-] (T2) -- (D) node[at start, above] {\tiny{$a$}};
\draw[->] (D) -- (T1) node[at end, above] {\tiny{$1$}};
}}}}}
\newcommand{\vertavertb}{\ensuremath{\vcenter{\hbox{\tikz[scale=0.4]{
\coordinate (B1) at (-1,0);
\coordinate (T1) at (-1,3);
\coordinate (B2) at (1,0);
\coordinate (T2) at (1,3);
\draw[->] (B1) -- (T1) node[midway, left] {\tiny{$a$}};
\draw[->] (B2) -- (T2) node[midway, right] {\tiny{$b$}};
}}}}}
\newcommand{\squarec}{\ensuremath{\vcenter{\hbox{\tikz[scale=0.4]{
\coordinate (B1) at (-1,0);
\coordinate (B2) at (1,0);
\coordinate (C1) at (-1,1);
\coordinate (D1) at (-1,2);
\coordinate (C2) at (1,1);
\coordinate (D2) at (1,2);
\coordinate (T1) at (-1,3);
\coordinate (T2) at (1,3);
\draw[>-] (B1) -- (C1) node[at start, below] {\tiny{$a$}};
\draw[->-] (C1) -- (D1) node[midway, left   ] {\tiny{$a+1 $}};
\draw[->] (D1) -- (T1) node[at end , above ] {\tiny{$a$}};
\draw[>-] (B2) -- (C2) node[at start, below] {\tiny{$b$}};
\draw[->-] (C2) -- (D2) node[midway, right] {\tiny{$b-1$}};
\draw[->] (D2) -- (T2) node[at end, above] {\tiny{$b$}};
\draw[->-] (D1) -- (D2) node[midway, above] {\tiny{$1$}};
\draw[->-] (C2) -- (C1) node[midway, below] {\tiny{$1$}};
}}}}}
\newcommand{\squared}{\ensuremath{\vcenter{\hbox{\tikz[scale=0.4]{
\coordinate (B1) at (-1,0);
\coordinate (B2) at (1,0);
\coordinate (C1) at (-1,1);
\coordinate (D1) at (-1,2);
\coordinate (C2) at (1,1);
\coordinate (D2) at (1,2);
\coordinate (T1) at (-1,3);
\coordinate (T2) at (1,3);
\draw[>-] (B1) -- (C1) node[at start, below] {\tiny{$a$}};
\draw[->-] (C1) -- (D1) node[midway, left   ] {\tiny{$a-1$}};
\draw[->] (D1) -- (T1) node[at end , above ] {\tiny{$a$}};
\draw[>-] (B2) -- (C2) node[at start, below] {\tiny{$b$}};
\draw[->-] (C2) -- (D2) node[midway, right] {\tiny{$b+1$}};
\draw[->] (D2) -- (T2) node[at end, above] {\tiny{$b$}};
\draw[->-] (D2) -- (D1) node[midway, above] {\tiny{$1$}};
\draw[->-] (C1) -- (C2) node[midway, below] {\tiny{$1$}};
}}}}}
\title{A categorification of the colored Jones polynomial at a root of unity}
\author{You Qi, Louis-Hadrien Robert, Joshua Sussan and Emmanuel Wagner}
\date{\today}
\begin{document}

\maketitle

\begin{abstract}
There is a $p$-differential on the triply-graded Khovanov--Rozansky homology of knots and links over a field of positive  characteristic $p$ that gives rise to an invariant in the homotopy category finite-dimensional $p$-complexes.

A differential on triply-graded homology discovered by Cautis is compatible with the $p$-differential structure.  As a consequence we get a categorification of the colored Jones polynomial evaluated at a $2p$th root of unity.
\end{abstract}

\setcounter{tocdepth}{2} \tableofcontents

\section{Introduction}

\subsection{Motivation}

Crane and Frenkel introduced the categorification program \cite{CF},
with the objective of lifting the Witten--Reshetikhin--Turaev (WRT)
$(2+1)$-dimensional TQFTs to $(3+1)$-dimensional theories.  The
first major success towards this goal was Khovanov's categorification
\cite{KhJones} of the Jones polynomial into a bigraded link homology
theory, now commonly known as \emph{Khovanov homology}.  The
graded Euler characteristic of Khovanov homology is equal to the Jones
polynomial where the quantum parameter may be taken to be a generic
complex number.  Khovanov homology is functorial and has had important
applications in low dimensional topology.

WRT $(2+1)$-dimensional TQFTs are obtained from the WRT $3$-manifold
invariants.  The simplest and most studied WRT invariants are the
ones associated with the Hopf algebras $U_\zeta(\mathfrak{sl}_2)$ for
$\zeta$ a root of unity. In this case, the WRT invariant of $M$ can  be defined by
summing up evaluations at $\zeta$ of the colored Jones polynomials of a
link $L\subset \mathbb{S}^3$ presenting $M$ by surgery.

The fact that these WRT invariants are defined using colored Jones
polynomials evaluated at roots of unity presents extra challenges, but
has led to interesting new structures in representation theory and
categorification.  The Jones polynomial at a root of unity may be
defined through the representation theory of quantum $\mathfrak{sl}_2$
at a root of unity. In contrast to the generic case (or even just the
representation theory of classic $\mathfrak{sl}_2$), at a root of
unity, categories of representations are not semisimple.

In \cite{Hopforoots}, Khovanov set up a program to categorify
structures at prime roots of unity.  He introduced the notion of
hopfological algebra, and more relevant to categorification at prime
roots of unity, the notion of a $p$-DG algebra.  Many technical
results generalizing facts from DG theory were proved in
\cite{QYHopf}.  A $p$-DG algebra is a $\Z$-graded algebra equipped
with a derivation $\partial$ of degree two such that $\partial^p=0$.
Equivalently, a $p$-DG algebra $A$ is a $\Z$-graded algebra which is a
module over $H=\Bbbk[\partial]/(\partial^p)$ with $\Bbbk$ a field of
characteristic $p$.  Khovanov proved that the Grothendieck ring of the
stable category $H \udmod$ is isomorphic to a ring closely related to
the cyclotomic ring $\mathcal{O}_p$ for a prime $p$.  Finding
$H$-module structures on categorical structures leads to
categorifications over the ring $\mathcal{O}_p$.

The first successful implementation of hopfological algebra was the
categorification of half of the small quantum group of
$\mathfrak{sl}_2$ at roots of unity.  This was later expanded to
the full small quantum group and big quantum group in \cite{EQ1, EQ2}.
Categorification of the representation theory of these algebras was
initiated in \cite{KQS, QiSussan3}, where tensor products of two
Weyl modules and tensor products of the natural representation were
categorified.  The first example of potential topological applications
occurred in \cite{QiSussan} where the Burau representation of the
braid group group of type $A$ at a root of unity was
categorified.

In \cite{QiSussanLink}, two of the authors categorified the Jones
polynomial at $2p$th roots of unity, where $p$ is odd.  Rather than searching for an
$H$-module structure on Khovanov homology, the authors considered a
different categorification of the Jones polynomial constructed in
\cite{Cautisremarks}, \cite{QRS}, and \cite{RW}.  This homology is
distinct from Khovanov homology.  This construction
builds upon Khovanov and Rozansky's categorification of the
HOMFLYPT polynomial using categories of Soergel bimodules.  Cautis
defined a differential $d_C$ \cite{Cautisremarks} on HOMFLYPT homology
leading to a categorification of the Jones polynomial.

The link homology introduced in \cite{Cautisremarks}, \cite{QRS}, and
\cite{RW} is in fact more general, and categorifies colored Jones
polynomials.  It is actually even more general than this and
categorifies link polynomials for higher rank quantum groups of type
$A$, but we do not consider these generalizations here.  The colored
constructions utilize categories of singular Soergel bimodules, and
this is the setting we work in here.  The $H$-module structure that we
use goes back to ideas of Khovanov and Rozansky \cite{KRWitt} where an
action of half of a Witt algebra on HOMFLYPT homology was constructed.

Our main result  is a categorification of the colored Jones polynomial
evaluated at roots of unity.

\begin{thm*}[Theorem \ref{thm:main-col}]
  Let $L$ be a colored link presented as the closure of a colored
  braid $\beta$.  The image of the $H$-module associated to the braid
  in the stable category, $\mH^{\dif}_/(\beta)$, is a 
  finite-dimensional bigraded framed link invariant whose Euler
  characteristic is the colored Jones polynomial evaluated at a $2p$th root of unity.
\end{thm*}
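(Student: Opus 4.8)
The plan is to assemble $\mH^\dif_/(\beta)$ in three layers and then prove it is well defined and decategorifies correctly. To the colored braid $\beta$ one first associates its Rouquier complex $\mC(\beta)$ of (singular) Soergel bimodules and equips each term with the $\mH$-module structure coming from the half-Witt algebra action recalled above, so that $\mC(\beta)$ becomes a complex of $p$-DG bimodules with $\partial$-equivariant differential. One then closes up the braid by Hochschild homology $\HH$, which is compatible with the Witt action, obtaining a $p$-complex of bigraded $\Fp$-vector spaces that computes the $p$-DG refinement of colored HOMFLYPT homology $\mHHH(\beta)$. Finally one imposes Cautis's differential $d_C$: by its compatibility with $\partial$ (established earlier in the paper), the pair $(d_C,\partial)$ endows the braid-closure complex with the structure of an object of the homotopy category of $p$-complexes, and $\mH^\dif_/(\beta)$ is its image in the stable category.

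Before turning to invariance I would dispose of finiteness. Forgetting $\partial$, the $d_C$-homology of colored HOMFLYPT homology is the colored $\mathfrak{sl}_2$ homology of \cite{Cautisremarks, QRS, RW}, which is finite-dimensional over the ground field for every link. Since $\partial$ is an endomorphism of fixed bidegree that leaves the underlying bigraded vector space unchanged, finiteness of the underlying $d_C$-complex is unaffected, so $\mH^\dif_/(\beta)$ genuinely lands in the stable category of \emph{finite-dimensional} $p$-complexes.

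The heart of the matter is invariance. The characteristic-zero, generic-$q$ construction is already invariant under the colored braid relations and the colored Markov moves; what remains is to promote each witnessing equivalence to a $\partial$-equivariant one and to check that $d_C$ intertwines them up to $\partial$-equivariant homotopy. For the braid relations and for Markov I (conjugation invariance, coming from the trace property of $\HH$) this reduces to checking that the elementary Soergel-bimodule maps commute with the Witt action. Markov II (stabilization) is the delicate step: one must redo, inside the singular Soergel bimodule setting, the computation of the effect of adding a positive or negative crossing on a strand, reproducing the bidegree shift that records the change of framing, and verify that the $p$-differential matches on the two sides. I expect this $\partial$-equivariant stabilization computation, together with the Jones--Wenzl/cabling bookkeeping that enters the colored case, to be the main obstacle; granting it, one obtains a framed bigraded link invariant valued in the stable category.

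It remains to identify the Euler characteristic. By Khovanov's theorem the Grothendieck ring of the stable category of finite-dimensional $p$-complexes is the cyclotomic ring $\mathcal{O}_p$, in which the $q$-grading variable is sent to a $2p$th root of unity $\zeta$, while the homological grading contributes the usual sign. Hence $[\mH^\dif_/(\beta)]$ equals the graded Euler characteristic of the underlying complex evaluated at $q=\zeta$. Forgetting $\partial$, that graded Euler characteristic is exactly the colored Jones polynomial $J_L(q)$ by the decategorification of the construction of \cite{Cautisremarks, QRS, RW}; specializing $q=\zeta$ yields $J_L(\zeta)$, as claimed.
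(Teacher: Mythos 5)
Your proposal has a genuine gap in the finiteness argument, and it is precisely the point the paper is built around. You set up the whole construction over $\F_p$ from the start and then argue that finiteness follows because ``the $d_C$-homology of colored HOMFLYPT homology is \ldots finite-dimensional over the ground field for every link.'' That finiteness theorem (from \cite{RW}) is a statement over $\Q$; it fails in characteristic $p$. Already for the unknot with enough variables one computes $\mHH_0^{\dif}(A)/d_C(\mHH_1^{\dif}(A)) \cong \Fp[e_1,e_p]/(e_1^{p},\, e_1e_p)$, an infinite-dimensional algebra on which $\dif$ acts trivially, so even the slash cohomology is infinite-dimensional. The resolution in the paper is the Bockstein construction: one defines $\mC^{\prime}(\beta)$ integrally with its $H^\prime$-action, takes the free part of the resulting $\Z$-module, and only then tensors with $\F_p$ to obtain $\mC^{\dif}(\beta)$. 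Finiteness is then inherited from the rational theorem of \cite{RW} because the free part of a finitely generated group stays finitely generated. Without this step your object $\mH^{\dif}_/(\beta)$ does not land in finite-dimensional $p$-complexes, and the Euler characteristic in $\mathbb{O}_p$ is not even defined.

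Separately, you defer the entire content of invariance (``granting it, one obtains a framed bigraded link invariant''), and the route you gesture at --- Jones--Wenzl/cabling bookkeeping --- is not the one that works here. The paper never cables: colored crossings are assigned Rickard complexes of singular Soergel bimodules directly, and both the colored braid relations and colored Markov II are reduced to the \emph{uncolored} statements by the blist hypothesis (Proposition~\ref{prop:blist-hyp}), which lets one split a thick strand into thin ones at the cost of a non-$H^\prime$-equivariant direct sum, together with the $H^\prime$-equivariant forkslide isomorphisms (Propositions~\ref{prop:pitch-1}, \ref{prop:pitch-3}), twist slides (Proposition~\ref{prop:twist-slides}), and forktwists (Corollary~\ref{cor:ft-ab}). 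The point of the blist hypothesis is exactly that an isomorphism after blisting, combined with a rank-one degree-zero endomorphism space, forces the unblisted isomorphism to be $\partial$-equivariant --- this is the mechanism that replaces the equivariance checks you leave open. Your decategorification paragraph is essentially correct, but it rests on the two unproven inputs above.
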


\subsection{Colored Jones categorifications}

We now give a brief history of various categorifications of colored
Jones polynomials at generic values of the quantum parameter.  These
categorifications are distinct from the ones considered in this work.

Using his categorification of the Jones polynomial and a certain
cabling technique, Khovanov categorified the Jones polynomial in
\cite{Khcolored}.  Lee-type deformations were considered in
\cite{BW}.

A categorification of tensor products of the natural representation of
$\mathfrak{sl}_2$ was achieved in \cite{BFK} using category
$\mathcal{O}$ for $\mathfrak{gl}_n$.  This was generalized to
arbitrary finite-dimensional representations (and extended to the
quantum case) in \cite{FKS}.  The authors used categories of
Harish-Chandra bimodules.  
Categorical Jones--Wenzl projectors were constructed in \cite{FSS} and
were used in \cite{StroppelSussanColorJ} to construct a Lie theoretic
categorification of the colored Jones polynomial.  These colored
homologies are in general infinite-dimensional.

Explicit constructions of categorical Jones--Wenzl projectors were
given in \cite{CoKr} and \cite{RoJW}.  The latter construction was
generalized in \cite{CautisClasp}. Infinite-dimensional colored Jones
homologies arose from these ideas.

Webster \cite{Webcombined} introduced diagrammatically defined
algebras, which may be viewed as combinatorial constructions of
certain categories of Harish-Chandra bimodules. These algebras are
extensions of KLR algebras introduced by Khovanov and Lauda \cite{KL1}
and Rouquier \cite{Rou2}.  Webster showed that there are categorical
quantum group representations on these categories which could be
extended to categorified tangle invariants. The resulting link
homology is infinite-dimensional as well, when the colors correspond
to non-minuscule representations.

These infinite-dimensional link homologies are difficult to
compute. There are interesting conjectures connecting these homologies
to the representation theory of certain infinite-dimensional algebras
\cite{GOR2}.  Some progress towards computations of these link
homologies and extracting a finite-dimensional functorial theory from
it was made by Hogancamp \cite{Hog2}.

While the infinite-dimensional link homologies categorifying colored
Jones polynomials seem to have rich representation-theoretic
structures, from the perspective of low dimensional topology, this
infinite-dimensionality poses some problems.  It is not possible to
have a functorial link homology if the invariant assigned to the
unknot is not finite-dimensional.  This is another advantage of
working with the categorification of the colored Jones polynomial
introduced by Cautis, Queffelec--Rose--Sartori,
and two of the authors of the present paper.

\subsection{Perspectives and future work}

The next immediate question is how to categorify the WRT $3$-manifold
invariant using this work.  Naively, one should take a direct sum of
the colored link homologies considered here.  More likely, one would
need a more subtle categorification of the so-called Kirby color.

Some categorified WRT invariants have appeared in the physics
literature \cite{GPPV}.  It would be interesting to connect these
structures to the link homologies we define here.

There should be other, and most likely distinct, categorifications of
the colored Jones polynomial at roots of unity based on constructions
mentioned earlier.  The most straightforward such link homology to
introduce would be based upon Webster's work.  It does appear
technically difficult to prove it is a link invariant.  A more
challenging direction would be to build upon the work of \cite{CoKr}
and \cite{RoJW} to define a link invariant categorifying the colored
Jones polynomial at a root of unity.  For generic values of $q$,
\cite{CoKr} and \cite{RoJW} are related to \cite{Webcombined} by
Koszul duality.  This duality becomes less apparent in the presence of
a $p$-differential.  Most likely, $p$-DG analogues of
$A_{\infty}$-algebras would need to be introduced.

\subsection{Outline}

In Section \ref{secbackground} we review some constructions known in
$p$-DG theory and homological algebra, such as the relative homotopy
category.

In Section \ref{sec:moy-graphs-soergel} we review singular Soergel
bimodules and related $p$-DG structures.  Important $p$-DG bimodule
maps are introduced, along with $p$-DG structures on Rickard
complexes.

In Section \ref{uncolored} we give a modification of the link homology
constructed in \cite{QiSussanLink} which categorifies the Jones
polynomial at $2p$th roots of unity.  This doubly-graded link
homology serves as the foundation upon which the colored versions are
built upon.  While the constructions and arguments are very similar to
those in \cite{QiSussanLink}, in Section \ref{uncolored} our
categorification of the Jones polynomial at $2p$th roots of unity are
doubly-graded, rather than their singly-graded relatives defined in
\cite{QiSussanLink}.  The major difference between these two
constructions is that here we take homology with respect to the $p$-differential at the very end.
In \cite{QiSussanLink}, we
form a total $p$-differential which is a sum of the Cautis
differential, the topological differential and the polynomial
$p$-differential.  This forced a collapse of all three original
gradings into one.

In order to bootstrap our arguments from the uncolored case to the
colored one, we use certain facts described in Section
\ref{sec:topred}.  In particular, we use the blist hypothesis from Section \ref{sec:blist}
throughout the manuscript.  Section \ref{sec:pitchfork-reduction}
contains useful reduction moves involving pitchforks and braiding
diagrams.  This section is completely combinatorial and reduces the
amount of checking that needs to be done later.
Categorical statements about Rickard complexes and certain singular
Soergel are proved in Section \ref{sec:pitchforks}.  In conjunction
with braiding in the uncolored case and the blist hypothesis, one
obtains a categorical $p$-DG braid group action on a relative homotopy
category in the colored case.
 We discuss how twists in the $p$-DG structure slide through crossings in Section \ref{sec:sliding-twists}.
In Section \ref{sec:forktwists}, we record some facts about how a
fork resolves a nearby crossing up to grading shift and a twist. 

In Section \ref{sec:coloredmarkII} we prove a Markov II move which
leads to a proof of the main theorem.  The proof of colored Markov II
relies upon the statement in the uncolored case, along with the blist
bypothesis.

We conclude in Section \ref{sec:hopflink} with the calculation of the
homology of the Hopf link where one component is colored by $2$ and
the other by $1$.

\subsection{Acknowledgements.}

We would like to thank Mikhail Khovanov for showing early and constant
interest in this project.

Y.Q.{} was partially supported by the NSF grant DMS-1947532. J.S.{} is
partially supported by the NSF grant DMS-1807161 and PSC CUNY Award
64012-00 52. L.H.R.{} is supported by the Luxembourg National Research
Fund PRIDE17/1224660/GPS.  E.W.{} is partially supported by the ANR
projects AlMaRe (ANR-19-CE40-0001-01), AHA (JCJC ANR-18-CE40-0001) and
CHARMES (ANR-19-CE40-0017).

\section{Background} \label{secbackground}

Throughout this work, $p$ is a prime number and $\F_p$ denotes $\Z/p\Z$,
the field with $p$ elements. In this section, we collect some necessary background material from
\cite{QiSussanLink}.

\subsection{On \texorpdfstring{$p$}{p}-complexes}

Let $H^\prime = \Z[\dif]$ be the graded polynomial algebra generated
by a degree $2$ generator $\dif$.  Define on $H^\prime$ a {\sl
comultiplication} $\Delta: H^\prime \lra H^\prime \otimes H^\prime$ by
setting
\begin{subequations}\label{eqn-Hopf-algebra-H-prime}
  \begin{equation}
    \Delta (\dif) = \dif \otimes 1 + 1\otimes \dif.
  \end{equation}
  Also set the {\sl counit} $\epsilon: H^\prime \lra \Z $ to be
  \begin{equation}
    \epsilon(\dif)=0,
  \end{equation}
  and {\sl antipode} $S:H^\prime \lra H^\prime$ to be
  \begin{equation}
    S(\dif)=-\dif.
  \end{equation}
\end{subequations}
Then $H^\prime$ is a graded Hopf algebra.

The ideal $(\dif^p, p)\subset H^\prime$ is a Hopf ideal, in the sense
that it is closed under $\Delta$, $\epsilon$ and $S$. The graded
quotient $H^\prime/(\dif^p,p)$ inherits a graded Hopf algebra
structure over $\Fp$ and is denoted $H$. % 
The structure maps of $H$ are
still denoted $\Delta$, $\epsilon$ and $S$. If one prefer to work with
another field $\Bbbk$ of characteristic $p$ it is possible to tensor
to define $H$ to be $\left(H^\prime/(\dif^p,p)\right)\otimes_{\Fp}
\Bbbk$. In order to simplify the exposition, we do not do this.

\paragraph{$p$-Complexes.} Since $H$ (respectively $H^\prime$) is graded
commutative and cocommutative, its category of graded modules, denoted
$H\dmod$ (respectively $H^\prime\dmod$), is a symmetric monoidal abelian
category, where the monoidal structure is given by the usual vector
space tensor product $\otimes_{\Fp}$ (respectively $\otimes_\Z$). 

A graded module over $H$ will be referred to as a \emph{$p$-complex}
since, when $p=2$, such an object reduces to the usual notion of a
(cohomological) complex in characteristic two.

Note that $H$ being a $\F_p$-vector space, any $p$-complex is in
particular a $\Fp$-vector space. Hence when dealing with $p$-complexes, we
are working in characteristic $p$.

Graded $H$-modules constitute a Krull--Schmidt category, with
indecomposable $p$-complexes over $\Fp$ classified as follows. Set
\begin{equation}
  V_i:=H/(\dif^{i+1}) ,\quad \quad 0\leq i \leq p-1,
\end{equation}
so that $V_i$ has dimension $i+1$. Given any $a\in \Z$, denote by
$q^aV_i$ the module $V_i$ with grading shifted up by $a$. Then
$q^aV_i$ is concentrated in degrees $a, a+2,\dots, a+2i$:
\begin{equation}
  q^aV_i=\left(
    \NB{\tikz[scale = 1.5 ]{
        \node (Fa) at (0,0) {$\overset{a}{\Fp}$};
        \node (Fa2) at (1,0) {$\overset{a+2}{\Fp}$};
        \node (dots) at (2,0) {$\cdots$};
        \node (Fa2i) at (3,0) {$\overset{a+2i}{\Fp}$};
        \draw[double equal sign distance] (Fa) -- (Fa2);
        \draw[double equal sign distance] (Fa2) -- (dots);
        \draw[double equal sign distance] (dots) -- (Fa2i);
      }}\right) \ .
\end{equation}
The objects $q^a V_i$, $a\in \Z$ and $i\in \{0,\dots, p-1\}$
constitute a full list of indecomposable graded modules.

As a finite-dimensional Hopf algebra, $H$ is graded Frobenius
\cite{LaSw}, meaning that the full subcategory of graded injective
modules in $H\dmod$ coincides with that of graded projectives. The
\emph{(graded) stable module category} $H\udmod$ is the quotient of
$H\udmod$ by the class of graded projective-injective modules. It is a
triangulated category \cite[Theorem 2.6]{Hap88}. For any graded
$H$-module $M$, the \emph{homological shift functor} $[1]$ is defined
by taking a graded injective embedding $\jmath_M$:
\begin{equation}
  0 \lra M \stackrel{\jmath_M}{\lra} I_M \lra \mathrm{Coker}(\jmath_M) \lra 0, 
\end{equation}
and declaring $M[1]:=\mathrm{Coker}(\jmath_M)$. The inverse functor
$[-1]$ is constructed by taking a graded projective cover $\rho_M$:
\begin{equation}
  0 \lra \mathrm{Ker}(\rho_M) \lra P_M \stackrel{\rho_M}{\lra} M \lra 0,
\end{equation}
and setting $M[-1]:=\mathrm{Ker}(\rho_M)$.

The symmetric monoidal structure on $H\dmod$ descends to an exact
symmetric monoidal structure on $H\udmod$, which we still denote by
$\otimes_{\F_p}$. This follows from the fact that, given an $H$-module $M$ of
dimension $n$, $H\otimes_{\F_p} M$ is free of rank $n$ (see for instance,
\cite[Proposition 2.1]{QYHopf} for details).

In particular, the category $H\udmod$ inherits exact degree shift
functors from $H\dmod$, which we will write as $q^i(\mbox{-})$ for any
$i \in \Z$. For any module $M\in H\dmod$, $q^iM$ has its homogeneous
degree $k$ subspace equal to the original degree $k-i$ part of
$M$. One has functorial-in-$M$ isomorphisms
\begin{equation}
  q^iM \cong M\otimes_{\F_p} (q^i\Fp) \cong (q^i\Fp) \otimes_{\F_p} M.
\end{equation}

Let $V_{p-2}$ be the $(p-1)$-dimensional $p$-complex which is graded
isomorphic to $H/(\dif^{p-1})$. Then there is an isomorphism
\begin{equation}\label{eqn-V-p-2}
  M[1] \cong q^{2-2p}V_{p-2}\otimes_{\F_p} M,
\end{equation}
functorial in $M$. This is because, as alluded to above, $H\otimes M$
is free of rank $n$, and thus the embedding $M \lra H\otimes M$,
$m\mapsto \dif^{p-1}\otimes m$ can be used as $\jmath_M$. Furthermore,
this choice of $\jmath_M$ is clearly functorial.

Applying $[1]$ twice to the module $\Fp$ shows that
\begin{equation}
  \Fp[2]\cong q^{-2p} \Fp,
\end{equation}
and thus the functor isomorphism
$(\mbox{-})[2]\cong q^{-2p}(\mbox{-})$. This follows from the easily
verified fact that the tensor product $q^{4-2p}V_{p-2}\otimes_{\F_p} V_{p-2}$ decomposes into a
direct sum of $\Fp$ and graded free $H$-modules.

\paragraph{Slash cohomology.} Associated to a $p$-complex $M$, one can
define its {\sl slash cohomology} as in \cite[Section 2.1]{KQ}. This
is an analogue of the usual homology functor for chain complexes.

For each $0\le k \le p-2$, form the graded vector space
\[ \mH_{/k}(M) = \dfrac{\mathrm{Ker}(\dif^{k+1}_M)}{\mathrm{Im}
  (\dif^{p-k-1}_M)+\mathrm{Ker} (\dif^{k}_M)}.\] The original
$\Z$-grading on $M$ gives a decomposition
\[ \mH^{\bullet}_{/k}(M) = \bigoplus_{i\in \Z} \mH^{ i }_{/k}(M) .\]
The differential $\dif_M$ induces a map, also denoted $\dif_M$, which
takes $\mH^{i}_{/ k}(M)$ to $\mH^{ i+2}_{/k-1}(M)$.  Define the {\sl
slash cohomology} of $M$ as
\begin{equation}\label{eqnslashcohomology}
  \mH^{\bullet }_{/}(M) =  \bigoplus_{k=0}^{p-2} \mH^{\bullet }_{/k}(M).
\end{equation}
Also let
\[\mH^{i}_/(M) :=
  \bigoplus_{k=0}^{p-2}\mH_{/k}^{i}(M).\] We have the decompositions
\begin{equation}
  \mH^{\bullet}_/(M) = \bigoplus_{i\in \Z} \mH^{i}_/(M)  = \bigoplus_{k=0}^{p-2}
  \mH^{\bullet}_{/k}(M)=\bigoplus_{i\in \Z}\bigoplus_{k=0}^{p-2} \mH_{/k}^{ i}(M).
\end{equation}
$\mH_/^{\bullet}(M)$ is a bigraded $\Fp$-vector space, equipped with
an operator $\dif_M$ of bidegree $(-1,2)$,
$\dif_M:\mH_{/k}^{i}\lra \mH_{/k-1}^{i+2}$.

Forgetting the $k$-grading gives us a graded vector space
$\mH^{\bullet}_/(M)$ with differential $\dif_M$, which we can also
view as a graded $H$-module.  There is a decomposition
\begin{equation}\label{eqn-decomp-of-M-into-cohomology-and-free}
  M\cong \mH_/^{\bullet}(M)\oplus P(M)
\end{equation}
in the abelian category of $H$-modules, where $P(M)$ is a maximal
projective direct summand of $M$. It follows that $\mH_/^{\bullet}(M)$
is isomorphic to $M$ in the stable category $H\udmod$.

In particular, we have
\begin{equation}
  \mH_/^\bullet(q^aV_i) = 
  \begin{cases}
    q^aV_i , & a\in \Z, \quad i=0,\dots, p-2, \\
    0 & i=p-1 .
  \end{cases}
\end{equation}
The slash cohomology group $\mH_/^{\bullet}(M)$, viewed as an
$H$-module, does not contain any direct summand isomorphic to a graded
free $H$-module.

The assignment $M\mapsto \mH_{/}^\bullet(M)$ is functorial in $M$ and
can be viewed as a functor $H\dmod \lra H\udmod$ or as a functor
$H\udmod \lra H\udmod$. The latter functor is then isomorphic to the
identity functor.

\paragraph{Grothendieck ring.}
As a matter of notation for later, we will regard $\Z[q,q^{-1}]$ as the Grothendieck ring of graded (chain complexes of) abelian groups. We will often use the usual \emph{quantum integers and binomial coefficients}
\begin{equation}
    [n]:=q^{n-1}+q^{n-3}+\cdots+q^{1-n}, \quad \quad \genfrac[]{0pt}{}{n}{k}:= \dfrac{[n][n-1]\cdots [n-k+1]}{[k][k-1]\cdots [1]} ,
\end{equation}
for any $n\in \N$ and $k\in \{0,\dots, n\}$

The stable category $H\udmod$ is of particular interest to us because
it categorifies a ring closely related to the ring of cyclotomic integers at a $2p$th root of
unity.

\begin{lem}[{\cite[Proposition 5]{Hopforoots}}]
  The Grothendieck ring of the symmetric tensor triangulated category
  $H\udmod$ is isomorphic to
  \begin{equation}
    K_0(H\udmod) \cong \dfrac{\Z[q,q^{-1}]}{(1+q^2+\dots+q^{2(p-1)})}.
  \end{equation}
\end{lem}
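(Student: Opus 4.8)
The plan is to compute the Grothendieck group of $H\udmod$ directly from the classification of indecomposables and then identify the multiplicative structure coming from the tensor product. First I would recall from the discussion above that $H\udmod$ is a Krull--Schmidt triangulated category whose indecomposable objects are exactly the shifted modules $q^aV_i$ with $a\in\Z$ and $0\le i\le p-2$, the module $V_{p-1}$ being zero in the stable category (it is graded free, hence projective-injective). Thus as an abelian group $K_0(H\udmod)$ is free on the symbols $[q^aV_i]$, $a\in\Z$, $0\le i\le p-2$, subject only to the relations forced by distinguished triangles. The degree shift gives the $\Z[q,q^{-1}]$-module structure via $q\cdot[M]=[q M]$, and I would note that $[q^aV_i]=q^a[V_i]$, so $K_0$ is generated over $\Z[q,q^{-1}]$ by the classes $[V_0],\dots,[V_{p-2}]$.

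Next I would extract the relations. The key short exact sequences are $0\to q^{2}V_{i-1}\to V_i\to V_0\to 0$ (quotienting $H/(\dif^{i+1})$ by the image of $\dif$, which is $\dif H/(\dif^{i+1})\cong q^2 H/(\dif^{i})$) and similar sequences, each of which is also a distinguished triangle in $H\udmod$ since short exact sequences of $H$-modules yield triangles. These give $[V_i]=[V_0]+q^2[V_{i-1}]$ in $K_0$, hence by induction $[V_i]=(1+q^2+\cdots+q^{2i})[V_0]$. In particular every class is a $\Z[q,q^{-1}]$-multiple of $[\Fp]=[V_0]$, so $K_0(H\udmod)$ is a cyclic $\Z[q,q^{-1}]$-module generated by the unit $[\Fp]$. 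To get the defining relation I would use the distinguished triangle expressing the homological shift: from \eqref{eqn-V-p-2}, $[\Fp[1]]=q^{2-2p}[V_{p-2}]=q^{2-2p}(1+q^2+\cdots+q^{2(p-1)})[\Fp]$, while on the other hand the defining injective embedding $0\to\Fp\to H\to \Fp[1]\to 0$ with $H$ projective-injective (hence zero in $K_0$) forces $[\Fp[1]]=-[\Fp]$. Comparing the two expressions yields $(q^{2-2p}(1+q^2+\cdots+q^{2(p-1)})+1)[\Fp]=0$; multiplying by the unit $q^{2p-2}$ and rearranging shows $(1+q^2+\cdots+q^{2(p-1)})[\Fp]=0$ (the extra $q^{2p}-1$ type term is absorbed since $q$ is invertible and one checks the cyclotomic relation is exactly what survives — alternatively one directly uses $\Fp[2]\cong q^{-2p}\Fp$, i.e. $[\Fp]=q^{-2p}[\Fp]$ is automatic and the genuine relation comes from $[\Fp]+[\Fp[1]]=0$ above).

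This produces a surjective ring homomorphism $\Z[q,q^{-1}]/(1+q^2+\cdots+q^{2(p-1)})\twoheadrightarrow K_0(H\udmod)$ sending $1\mapsto[\Fp]$; it is a ring map because the tensor unit is $\Fp$ and $q^a V_i\otimes q^b V_j$ decomposes into shifted $V_k$'s whose $K_0$-classes multiply correctly (this compatibility was essentially recorded above in the remark that $q^{4-2p}V_{p-2}\otimes V_{p-2}\cong \Fp\oplus(\text{free})$, i.e. $[\Fp[1]]^2=[\Fp]$ in $K_0$, which is consistent). For injectivity I would exhibit an inverse: the rank function on $p$-complexes, or more precisely the assignment sending a finite-dimensional $p$-complex $M$ to its graded dimension read modulo the ideal $(1+q^2+\cdots+q^{2(p-1)})$, kills graded free modules (a free module $H$ has graded dimension $1+q^2+\cdots+q^{2(p-1)}$ up to a shift) and hence descends to $H\udmod$, giving a well-defined group homomorphism $K_0(H\udmod)\to \Z[q,q^{-1}]/(1+q^2+\cdots+q^{2(p-1)})$ inverse to the above. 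The main obstacle is this last point: one must check carefully that graded dimension modulo the cyclotomic polynomial is additive on distinguished triangles in the \emph{stable} category, i.e. that it is insensitive to adding projective-injective summands and is compatible with the homological shift $[1]$ — this is where one genuinely uses that $H$ is graded Frobenius and that $M\cong \mH_/^\bullet(M)\oplus P(M)$ with $P(M)$ free, as stated above. Everything else is bookkeeping with the explicit list of indecomposables.
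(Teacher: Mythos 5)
The paper does not prove this lemma at all --- it simply cites Khovanov's Proposition~5 of \cite{Hopforoots} --- so your proposal is supplying an argument where the text defers to a reference. What you write is essentially Khovanov's original proof: generation of $K_0$ by $[\Fp]$ via the filtration triangles $[V_i]=[V_0]+q^2[V_{i-1}]$, the cyclotomic relation from $[\Fp[1]]=-[\Fp]$, and injectivity via graded dimension modulo the cyclotomic polynomial (which is indeed the point where the Frobenius property and the decomposition $M\cong \mH_/^\bullet(M)\oplus P(M)$ are used, as you say). The structure of the argument is sound and complete.

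One arithmetic slip needs fixing. Since $V_{p-2}=H/(\dif^{p-1})$ has dimension $p-1$ and sits in degrees $0,2,\dots,2(p-2)$, your own recursion gives $[V_{p-2}]=(1+q^2+\cdots+q^{2(p-2)})[V_0]$, \emph{not} $(1+q^2+\cdots+q^{2(p-1)})[V_0]$ as written. With the correct expression the computation is clean and there is nothing to ``absorb'': from $[\Fp[1]]=q^{2-2p}(1+q^2+\cdots+q^{2(p-2)})[\Fp]=-[\Fp]$, multiplying by $q^{2p-2}$ gives $1+q^2+\cdots+q^{2(p-2)}+q^{2(p-1)}=0$, which is exactly the cyclotomic relation. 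As written, your version would produce $1+\cdots+q^{2(p-4)}+2q^{2(p-2)}\cdots$-type garbage, and the parenthetical about an ``extra $q^{2p}-1$ type term'' being absorbed is a symptom of that error rather than a resolution of it. Fix the exponent and the rest of the argument goes through.
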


For the rest of this work, we will denote by
\begin{equation}
  \mathbb{O}_p:= \dfrac{\Z[q,q^{-1}]}{(1+q^2+\dots+q^{2(p-1)})}.
\end{equation}
We will refer to the map
\begin{equation}\label{eqn:cyclotomicEuler}
  \chi_{\mathbb{O}}: H\dmod \lra \mathbb{O}_p,
\end{equation}
sending a graded $H$-modules $M$ to the symbol
of its slash cohomology $\mH^\bullet_/(M)$ in $K_0(H\udmod)$ as the \emph{cyclotomic
Euler characteristic} of $M$.

We will be considering a variant version of the cyclotomic Euler characteristic in this paper. To do this, let $M=\oplus_{i,j\in \Z} M_{i,j}$ be an $H$-module that is bigraded, with the second $t$ degree that is independent of the $q$ degree: $\dif(M_{i,j})\subset M_{i+2,j}$, for any $i,j\in \Z$. In this case
\begin{equation}\label{eqn-bigraded-Euler}
    \chi_{\mathbb{O},t}(M):=\sum_{j\in \Z}\chi_{\mathbb{O}}(\mH_{/}(M_{\bullet,j}))t^j \in \mathbb{O}_p[t,t^{-1}]
\end{equation}
is a Laurent polynomial in $t$ with coefficients in the cyclotomic ring.

\subsection{Relative homotopy category}

Throughout this work, we will study $H^\prime$-modules and $H$-modules. In order
to unify the results and the constructions, we adopt the following
terminology, partially borrowed from \cite[Definition 1.2]{EQ3}.
The symbol  $r$ denotes either $p$ or $\e$ (for even) and $H^r$ denotes
 $H$ if $r=p$ or $H'$ if $r=\e$.

Suppose $(A,\dif_A)$ is an \emph{$r$-differential graded ($r$-DG)
algebra}, i.e., a graded algebra equipped with a differential $\dif_A$
of degree two, satisfying
\begin{equation}
  \dif_A^p(a)\equiv 0 \quad\text{if $r = p$,} \qquad \text{and} \qquad \dif_A(ab)=\dif_A(a)b+a\dif_A(b),
\end{equation}
for all $a,b\in A$. In other words, $A$ is an algebra object in the
graded module category of the Hopf algebra $H^r$, where the primitive
degree-two generator $\dif\in H^r$ acts on $A$ by the differential
$\dif_A$. Clearly, a $p$-DG algebra is also an algebra object in the
graded module category of $H^\prime$.

Then, we may form the \emph{smash product algebra} $A\# H^\prime$ (respectively $A\# H$) in this
case. As an abelian group (respectively $\F_p$-vector space), $A\# H^\prime$ is isomorphic to
$A\otimes H^\prime$ (respectively $A\otimes_{\F_p} H$), subject to the multiplication rule determined by
\begin{equation}
  (a\otimes \dif)(b\otimes \dif)=ab\otimes \dif^2+ a\dif_A(b)\otimes \dif.
\end{equation}
Thus, for an $\e$-DG algebra $A$,  $A\otimes 1$ and $1\otimes H^\prime$ sit in
$A\# H^\prime$ as subalgebras by construction. Similarly, for a $p$-DG algebra, there are the natural $\F_p$-subalgebras $A\otimes_{\F_p} 1$ and $1\otimes_{\F_p} H$. We will often refer to modules and morphisms
in $A\#H^r\dmod$ as \emph{$H^r$-equivariant} $A$-modules (or $r$-DG
$A$-modules) and morphisms.

There is an exact forgetful functor between the usual homotopy
categories of chain complexes of graded $A\# H^r$-modules
\begin{equation}\label{eqn-forgetful-functor}
  \mc{F}: \mc{C}(A\# H^r)\lra \mc{C}(A).
\end{equation}
An object $K_\bullet$ in $\mc{C}(A\# H^r)$ lies inside the kernel of
the functor if and only if, when forgetting the $H^r$-module structure
on each term of $K_\bullet$, the complex of graded $A$-modules
$\mc{F}(K_\bullet)$ is null-homotopic. The null-homotopy map on
$\mc{F}(K_\bullet)$, though, is not required to intertwine
$H^r$-actions.

Since $H^r$ is a Hopf algebra, if $A$ is an $r$-DG algebra, then so is its opposite algebra $A^{\mathrm{op}}$. Endow
$A\otimes A^{\mathrm{op}}$ with a natural $H^r$-module structure. We
may consider the algebra $(A \otimes A^{\mathrm{op}})\# H^r$. Following the
definition given above, the multiplication is given by:
\[
  (a_1\otimes a_2 \otimes \dif)(b_1\otimes b_2 \otimes \dif)=a_1b_1
  \otimes b_2a_2\otimes \dif^2+ a_1\dif_A(b_1) \otimes b_2a_2 \otimes
  \dif +  a_1b_1 \otimes \dif_A(b_2)a_2 \otimes \dif.
\]
We will often refer to modules and morphisms in
$(A\otimes A^{\mathrm{op}})\#H^r\dmod$ as \emph{$H^r$-equivariant} bimodules
(or $r$-DG $A$-bimodules) and morphisms.

\begin{defn}\label{def-relative-homotopy-category}
  Given an $r$-DG algebra $(A,\dif_A)$, the \emph{relative homotopy
  category} is the Verdier quotient
  \[\mc{C}^\dif(A):=\dfrac{\mc{C}(A\#
    H^r)}{\mathrm{Ker}(\mc{F})}.\]
\end{defn}
The superscript $\dif$ in the definition is there to remind the reader of the
$H^r$-module structures on the objects.

The category $\mc{C}^\dif(A)$ is triangulated. By construction,
there is a factorization of the forgetful functor
\begin{gather}
  \NB{
    \tikz[xscale=3, yscale =0.8]{
      \node (AH) at (-1, 1) {$\mc{C}(A\# H^r)$};
      \node (A) at ( 1, 1) {$\mc{C}(A)$};
      \node (Adif) at ( 0, -1) {$\mc{C}^\dif(A)$};
      \draw[-to] (AH) -- (A) node[pos =0.5, above] {$\mc{F}$};
      \draw[-to] (AH) -- (Adif);
      \draw[-to] (Adif) -- (A);      
    }
  }  \ .
\end{gather}

Let us briefly discuss on the triangulated structure of the relative homotopy category $\mc{C}^\dif(A)$. By construction the shift functors $[\pm 1]$ are inherited from that of $\mc{C}(A\# H^r)$, which shifts complexes one step to the left or right. 

For the usual homotopy category $\mc{C}(A)$ of an algebra, standard distinguished triangles arise from short exact sequences
  \[
 0 \lra M_\bullet \stackrel{f}{\lra} N_\bullet \stackrel{g}{\lra} L_\bullet \lra 0
  \]
of $A$-modules that are termwise split exact. The class of distinguished triangles in $\mc{C}(A,d)$ are declared to be those that are isomorphic to standard ones.
For distinguished triangles in the relative homotopy category, similarly, we have the following construction.

\begin{lem}\label{lem-construction-of-triangle}
  A short exact sequence of chain complexes of $A\#H^r$-modules
  \[
 0 \lra M_\bullet \stackrel{f}{\lra} N_\bullet \stackrel{g}{\lra} L_\bullet \lra 0
  \]
 that is termwise $A$-split exact gives rise to a distinguished triangle in $\mc{C}^\dif(A)$. Conversely, any distinguished triangle in $\mc{C}^\dif(A)$ is isomorphic to one that arises in this form.
\end{lem}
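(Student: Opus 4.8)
The plan is to deduce this from the analogous statement for the ordinary homotopy category $\mc{C}(A\# H^r)$ together with the universal property of the Verdier quotient $\mc{C}^\dif(A) = \mc{C}(A\# H^r)/\mathrm{Ker}(\mc{F})$. First I would recall that in $\mc{C}(A\# H^r)$, a termwise-split short exact sequence of chain complexes of $A\# H^r$-modules $0 \to M_\bullet \to N_\bullet \to L_\bullet \to 0$ gives rise to a standard distinguished triangle $M_\bullet \to N_\bullet \to L_\bullet \to M_\bullet[1]$: the termwise splittings assemble (not necessarily as a chain map) into a degree-$(-1)$ map whose differential measures the failure, yielding a connecting morphism $L_\bullet \to M_\bullet[1]$ in $\mc{C}(A\# H^r)$ and an isomorphism of $N_\bullet$ with the mapping cone of $f$ up to homotopy. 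Since the localization functor $Q\cl \mc{C}(A\# H^r) \to \mc{C}^\dif(A)$ is triangulated and sends distinguished triangles to distinguished triangles, the image of this triangle is distinguished in $\mc{C}^\dif(A)$. This handles the forward direction, modulo the caveat that the hypothesis only gives a \emph{termwise $A$-split} (not termwise $A\#H^r$-split) sequence; I address this below.

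For the converse, I would use the general fact about Verdier quotients: every distinguished triangle in $\mc{C}^\dif(A)$ is isomorphic (in $\mc{C}^\dif(A)$) to the image under $Q$ of a distinguished triangle in $\mc{C}(A\# H^r)$. Indeed, given a distinguished triangle $X \to Y \to Z \to X[1]$ in the quotient, one lifts the morphism $X \to Y$ to a roof $X \xleftarrow{s} X' \xrightarrow{u} Y$ with $\mathrm{cone}(s) \in \mathrm{Ker}(\mc{F})$; then $X$ is isomorphic to $X'$ in $\mc{C}^\dif(A)$, and completing $u$ to a distinguished triangle in $\mc{C}(A\# H^r)$ produces, after applying $Q$, a triangle isomorphic to the original one. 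Finally, every distinguished triangle in $\mc{C}(A\# H^r)$ is by definition isomorphic to a standard one coming from a termwise-split short exact sequence of $A\# H^r$-modules, and a termwise $A\# H^r$-split sequence is in particular termwise $A$-split, so the resulting sequence has the required form.

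The main obstacle is reconciling the two notions of termwise splitness appearing in the statement. The hypothesis in the forward direction is only that the sequence is termwise $A$-split exact, whereas the standard triangle machinery in $\mc{C}(A\# H^r)$ wants termwise $A\# H^r$-splittings. The key point, which I would isolate as the crux of the argument, is that an $A$-linear termwise splitting can be \emph{averaged} into an $A\# H^r$-linear (equivalently, $H^r$-equivariant) one, because $H^r$ is a Hopf algebra of the appropriate type and the relevant extension groups vanish: concretely, one uses that $H^r$ is (graded) Frobenius, so that an $A$-split monomorphism of $A\# H^r$-modules, when induced up along $A \hookrightarrow A\# H^r$ or analyzed via the adjunction between $\mc{F}$ and $- \otimes_A (A\# H^r)$, admits an $H^r$-equivariant retraction after modifying by an element of $\mathrm{Ker}(\mc{F})$. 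Thus even if the original splitting is only $A$-linear, the cone construction still yields an object of $\mc{C}(A\# H^r)$ whose image in $\mc{C}^\dif(A)$ realizes the desired distinguished triangle; the error term lies in $\mathrm{Ker}(\mc{F})$ and hence is killed in the quotient. Once this averaging lemma is in place, both directions follow formally from the universal property of Verdier localization and the definition of distinguished triangles in $\mc{C}(A\# H^r)$.
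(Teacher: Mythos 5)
Your converse direction is fine and coincides with the paper's: distinguished triangles in a Verdier quotient are, by construction, isomorphic to images of distinguished triangles upstairs, and a termwise $A\#H^r$-split sequence is in particular termwise $A$-split. The forward direction, however, rests on a step that fails. The ``averaging lemma'' you isolate as the crux is false: for $r=p$ the Hopf algebra $H=\Bbbk[\dif]/(\dif^p)$ is local (as far from semisimple as possible), and for $r=\e$ one has $H'=\Z[\dif]$; in neither case can an $A$-linear splitting be averaged to an $A\#H^r$-linear one. The Frobenius property of $H$ only identifies projectives with injectives --- it does not split base-split injections. Indeed, if every termwise $A$-split sequence were automatically termwise $A\#H^r$-split, the relative homotopy category would carry no more triangles than $\mc{C}(A\#H^r)$ and the whole hopfological setup would be vacuous; a concrete counterexample is $0\to q^2\F_p\to H/(\dif^2)\to \F_p\to 0$, which splits over $\F_p$ but not over $H$ since $H/(\dif^2)$ is indecomposable. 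Your fallback phrase ``admits an $H^r$-equivariant retraction after modifying by an element of $\mathrm{Ker}(\mc{F})$'' is not a well-defined operation, and without a genuinely equivariant splitting your connecting morphism $L_\bullet\to M_\bullet[1]$ is only $A$-linear, hence is not a morphism of $\mc{C}(A\#H^r)$ at all.

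The paper's argument avoids splitting the given sequence entirely. One forms the mapping cone $C(f)=(M[1]\oplus N, d_C)$ of the equivariant map $f$ inside $\mc{C}(A\#H^r)$; the sequence $0\to N\to C(f)\to M[1]\to 0$ is automatically termwise $A\#H^r$-split because the underlying graded module of the cone is literally the direct sum, so it yields a standard distinguished triangle that descends to $\mc{C}^\dif(A)$. The termwise $A$-splitness of the original sequence enters only afterwards: the comparison map $h\cl C(f)\to L$, $(m,n)\mapsto g(n)$, has kernel isomorphic to the contractible complex $C(\Id_M)$, and the resulting sequence $0\to C(\Id_M)\to C(f)\to L\to 0$ is termwise $A$-split, so $\mc{F}(h)$ is an isomorphism in $\mc{C}(A)$ and hence $h$ is an isomorphism in the Verdier quotient. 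You would need to replace your averaging step with this cone argument (or something equivalent) for the forward direction to go through.
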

\begin{proof}
For this proof, we will abbreviate complexes by $K_\bullet=K$ for the ease of notation. Let us prove the converse part first. By construction, distinguished triangles are those in $\mc{C}^{\dif}(A)$ that are isomorphic to standard distinguished triangles arising from  short exact sequences of $A\# H^r$-modules that are termwise $A\#H^r$-split exact. Forgetting about the $H^r$-actions, such sequences are also termwise $A$-split exact.

Now let $f: M \lra N$ be the injection as in the statement.  The cone of $f$ in $\mc{C}(A\# H^r)$ is given by
\[
C(f) \cong 
\left(
M[1]\oplus N, d_C:=
\begin{pmatrix}
d_{M[1]} & f \\
0 & d_N
\end{pmatrix}
\right).
\]  
The cone fits into a short exact sequence of $A\#H^r$-modules that are termwise $A\# H^r$-split:
\[
0\lra N \lra C(f)\lra M[1]\lra 0.
\]
Associated with this sequence is the standard distinguished triangle 
\[
N \lra C(f) \lra M[1] \lra N[1]
\]
in $\mc{C}(A\# H^r)$, which descends to a standard distinguished triangle in $\mc{C}^\dif(A)$.

To prove the statement, it then suffices to show that, in the relative homotopy category, we have an isomorphism
$C(f) \cong L$. Consider the map
\[
h: C(f) = (M[1]\oplus N, d_C) \lra L, \quad (m,n)\mapsto g(n).
\]
It is easily checked that $h$ is a surjective map of chain complexes, and the kernel is isomorphic to $C(\Id_M)$. Thus we have a short exact sequence of chain complexes of $A\# H^r$-modules
\[
0\lra C(\Id_M) \lra C(f) \stackrel{h}{\lra} L \lra 0
\]
Now, under $\mc{F}$, the sequence termwise splits over $A$: 
\[
\mc{F} (C(f)) \cong 
\left(
\mc{F}(M[1]\xrightarrow{\Id_M} M) \oplus \mc{F}(L)
\right)
\cong \left(\mc{F}(C(\Id_M))\oplus \mc{F}(L)\right).
\]
It follows that we have a distinguished triangle in $\mc{C}(A)$
\[
0\cong \mc{F}(C(\Id_M)) \lra \mc{F}(C(f)) \stackrel{\mc{F}(h)}{\lra} \mc{F}(L) \lra \mc{F}(C(\Id_M))[1]\cong 0,
\]
implying that $h$ is an isomorphism under $\mc{F}$. The result follows.
\end{proof}

\subsection{Relative Hochschild homology}

Because of the additional structures we consider on Soergel bimodules,
we will need a relative version of Hochschild homology, that is an
$H^\prime$-equivariant version of it. Since everything that follows about relative Hochschild homology works for $H$ as well,
we use the notation of the previous section and
will define $H^r$-equivariant Hochschild homology.

Recall that the usual \emph{simplicial bar complex} of a unital,
associative algebra $A$ is the complex:
\begin{subequations}\label{eqn-bar-complex}
  \begin{equation}
    \mathbf{p}_\bullet (A):= \left(\cdots \xrightarrow{d_{n+1}} A^{\otimes(n+2)} \stackrel{d_n}{\lra} A^{\otimes (n+1)}\xrightarrow{d_{n-1}} \cdots \stackrel{d_2}{\lra} A^{\otimes 3}\stackrel{d_1}{\lra} A^{\otimes 2} \lra 0 \right),
  \end{equation}
  where
  \begin{equation}
    d_i(a_0\otimes a_1\otimes\cdots \otimes a_{i+1})=\sum_{k=0}^{i+1} (-1)^k a_0\otimes \cdots a_{k-1}\otimes a_ka_{k+1}\otimes a_{k+2} \otimes \cdots \otimes a_{i+1}
  \end{equation}
\end{subequations}
The bar complex is a free bimodule resolution of $A$, as the augmented
complex
\begin{equation}\label{eqn-bar-complex-aug}
  \mathbf{p}_\bullet^\prime (A):= \left( \cdots \xrightarrow{d_{n+1}} A^{\otimes(n+2)} \stackrel{d_n}{\lra} A^{\otimes (n+1)}\xrightarrow{d_{n-1}} \cdots \stackrel{d_2}{\lra} A^{\otimes 3}\stackrel{d_1}{\lra} A^{\otimes 2} \lra \underline{A} \lra 0\right), 
\end{equation}
is acyclic (the underlined term sits in homological degree zero). This
can be seen by constructing a left $A$-module map
\begin{equation}\label{eqn-null-homotopy-map}
  \sigma: A^{\otimes n}\lra A^{\otimes (n+1)},\quad x\mapsto x \otimes 1 
\end{equation}
as the null-homotopy. If $M$ is an $(A,A)$-bimodule, then the
\emph{Hochschild homology} of $M$ is, by definition,
\begin{equation}\label{eqn-HH-definition}
  \mHH_\bullet (A,M):= \mH_\bullet (\mathbf{p}_\bullet(A)\otimes_{(A,A)}M).
\end{equation}

Another useful application of the bar resolution is to construct
projective resolutions: if $N$ is a left $A$-module, then the complex
$\mathbf{p}_\bullet (N):=\mathbf{p}_\bullet(A)\otimes_A N$ provides a
left projective resolution of $N$ over $A$. Likewise, if $M$ is a
right $A$-module, then we set
$\mathbf{p}_\bullet (M):=M\otimes_A\mathbf{p}_\bullet(A)$. One has the
chain complex
\begin{equation}\label{eqn-derived-tensor}
  M\otimes_A^{\mathbf{L}} N= M\otimes_A \mathbf{p}_\bullet (N)=\mathbf{p}_\bullet(M)\otimes_A N= M\otimes_A \mathbf{p}_\bullet(A) \otimes_A N,
\end{equation}
whose homology computes the derived tor of $M$ and $N$.

Assume now that $A$ carries also an $H^r$-module structure, and let $M$ be
an $r$-DG $A$-bimodule. Notice that the complex
\eqref{eqn-bar-complex} is $H^r$-equivariant. Indeed, the
differential maps $d_i$, $i\in \N$, are all built out of the
multiplication map on $A$, which respects the $\dif$-action.

Now, we temporarily forget the $H^r$-actions on $A$ and $M$, and
denote the resulting algebra and module by $A_0$ and $M_0$
respectively.  The usual Hochschild homology of $M_0$ over $A_0$ in
this case carries a natural $H^r$-action, since it is computed by the
homology of the complex
$\mathbf{p}_\bullet(A_0)\otimes_{(A_0,A_0)} M_0$, and this complex
carries an $H^r$-action coming from $A$ and $M$.

\begin{defn}\label{def-relative-HH}
  The \emph{relative Hochschild homology} of an $r$-DG bimodule
  $(M,\dif_M)$ over $(A,\dif_A)$ is the usual Hochschild homology of
  $M_0$ over $A_0$ equipped with the induced $H^r$-action from
  $\dif_M$ and $\dif_A$, and is denoted by
  \[
    \mHH^{\dif}_\bullet(M):=\mHH_\bullet(A_0,M_0).
  \]
\end{defn}

By construction, $\mHH_\bullet^\dif$ is a covariant functor in $M$: if
$f: M \lra N$ is an $H^r$-intertwining map of $r$-DG bimodules, then
there is an induced map
\begin{subequations}
  \begin{equation}\label{eqn-functoriality-HH}
    \mHH^\dif_\bullet(f)  : \mHH^\dif_\bullet (M)  \lra \mHH_\bullet^\dif (N),
  \end{equation}
  which is obtained by taking homology for the map of chain complexes
  \begin{equation}
    \Id\otimes f: \mathbf{p}_\bullet (A_0) \otimes_{(A_0,A_0)} M\lra \mathbf{p}_\bullet (A_0) \otimes_{(A_0,A_0)} N.
  \end{equation}
\end{subequations}
When no confusion can be caused, we will write for short
$\mHH(f):=\mHH_\bullet^\dif(f)$.

We record the following two basic properties of the relative
Hochschild homology that will be needed for later constructions.

The first property follows from the usual proof of trace-like
properties for Hochschild homology with respect to derived tensor
product of bimodules.

\begin{prop} \label{HHcyclprop} Given two $r$-DG bimodules $M$ and $N$
  over $A$, there is an isomorphism of $H^r$-modules
  \[
    \mHH^\dif_{\bullet}(M\otimes^{\mathbf{L}}_A N)\cong
    \mHH^\dif_{\bullet}(N\otimes^{\mathbf{L}}_A M).
  \]
\end{prop}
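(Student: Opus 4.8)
The plan is to mimic the classical proof that Hochschild homology is symmetric in a derived tensor product, while keeping track of the $H^r$-equivariance throughout. First I would unwind the definitions: by Definition \ref{def-relative-HH}, $\mHH^\dif_\bullet(M\otimes^{\mathbf{L}}_A N)$ is computed by the complex $\mathbf{p}_\bullet(A_0)\otimes_{(A_0,A_0)}(M\otimes^{\mathbf{L}}_A N)$, where the underlying $A$-bimodule structure on $M\otimes^{\mathbf{L}}_A N$ is that of the complex $M\otimes_A\mathbf{p}_\bullet(A)\otimes_A N$ from \eqref{eqn-derived-tensor}, and the $H^r$-action is the diagonal one induced by $\dif_M$, $\dif_N$ and $\dif_A$ (all of which are compatible with the bar differentials, since those are built from the multiplication on $A$, as noted just before Definition \ref{def-relative-HH}). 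So the whole statement lives at the level of an explicit, functorial identification of two bicomplexes of $\F_p$- (or abelian group) modules, each carrying an $H^r$-action, and I want a chain homotopy equivalence between them that intertwines the $H^r$-actions.

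The key step is the standard "cyclic rotation" identification. Writing everything out with the bar resolution, $\mathbf{p}_\bullet(A_0)\otimes_{(A_0,A_0)}(M\otimes_A\mathbf{p}_\bullet(A)\otimes_A N)$ is, after collapsing the outer $\otimes_{(A_0,A_0)}$ and using associativity of tensor products over $A$, quasi-isomorphic to the two-sided bar-type complex on generators of the form $a_1\otimes\cdots\otimes a_i\otimes m\otimes a_1'\otimes\cdots\otimes a_j'\otimes n$ (cyclically arranged); the map that sends such a generator to the corresponding generator with the roles of $M$ and $N$ swapped — equivalently, rotating the cyclic word past $m$ — is an isomorphism of complexes and is manifestly built only out of the associativity/unit constraints and the multiplication on $A$. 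Since $\dif_A$ is a derivation for the multiplication and the $H^r$-action on all terms is the diagonal action (hence compatible with $\otimes$), this rotation isomorphism commutes with the $\dif$-action; thus it is an isomorphism of complexes of $H^r$-modules, and on homology it induces the desired isomorphism of $H^r$-modules. I would present this either by invoking the trace-like property of $\mathrm{Tor}^A_\bullet(-,-)$ directly on the level of complexes (using \eqref{eqn-derived-tensor} to realize $M\otimes^{\mathbf{L}}_AN$ and $N\otimes^{\mathbf{L}}_AM$ as explicit bar complexes) and then observing that the witnessing chain maps are $H^r$-linear, or by giving the cyclic-rotation chain map explicitly and checking $H^r$-equivariance term by term.

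The main obstacle — and really the only content beyond bookkeeping — is verifying that the classical equivalence is genuinely $H^r$-linear, i.e. that every map used (the rotation isomorphism and any null-homotopies needed to pass between $\mathbf{p}_\bullet(A_0)\otimes_{(A_0,A_0)}(M\otimes^{\mathbf{L}}_AN)$ and the reduced cyclic complex) is built solely from the multiplication, unit, and associativity constraints of $A$ and therefore commutes with $\dif$. The unit map $\sigma$ of \eqref{eqn-null-homotopy-map}, $x\mapsto x\otimes 1$, is $H^r$-equivariant because $\dif(1)=0$ in any $r$-DG algebra; the multiplication-based differentials are equivariant because $\dif_A$ is a derivation; and the diagonal $H^r$-action on tensor products is exactly what makes $\otimes$ a monoidal structure on $H^r\dmod$ (as recalled in the Background section). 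Once this is in place, the rest is the usual Hochschild trace argument carried out verbatim, and the proposition follows by taking homology.
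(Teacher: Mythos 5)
Your proposal is correct and follows essentially the same route as the paper, which simply remarks that the statement ``follows from the usual proof of trace-like properties for Hochschild homology with respect to derived tensor product of bimodules.'' You flesh out precisely the point the paper leaves implicit — that the cyclic-rotation identification and all auxiliary maps are built from the multiplication, unit, and associativity of $A$, hence commute with $\dif$ — so nothing more is needed.
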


The second property allows one to compute relative Hochschild homology
groups with other bimodule resolutions whenever possible. After all,
the usual simplicial bar resolution is quite unwieldy for actual
computations due to its exponentially growing size.

\begin{prop} \label{prop-resolution-independence} Let $M$ be an $r$-DG
  bimodule over $A$. Suppose $f:K_\bullet \lra A_0$ is an
  $H^r$-equivariant resolution of $A_0$ such that each term of
  $K_\bullet$ is projective as an $(A_0, A_0)$-bimodule. Then $f$
  induces an isomorphism of $H^r$-modules
  \[
    \mH_\bullet(K_\bullet\otimes_{(A_0, A_0)}M)\cong
    \mHH^{\dif}_\bullet(M).
  \]
\end{prop}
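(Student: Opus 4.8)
The plan is to mimic the standard comparison-of-resolutions argument from homological algebra, but carried out $H^r$-equivariantly so that the resulting isomorphism intertwines the $\dif$-actions. First I would invoke the comparison theorem for projective resolutions: since $\mathbf{p}_\bullet(A_0) \to A_0$ is a resolution of $A_0$ by projective $(A_0,A_0)$-bimodules and $K_\bullet \to A_0$ is another resolution with projective terms, there exist chain maps $\phi \colon \mathbf{p}_\bullet(A_0) \to K_\bullet$ and $\psi \colon K_\bullet \to \mathbf{p}_\bullet(A_0)$ over $\id_{A_0}$, unique up to chain homotopy, and $\psi\phi \simeq \id$, $\phi\psi \simeq \id$. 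Since $\mathbf{p}_\bullet(A_0)$ has \emph{free} bimodule terms, one can even build $\phi$ explicitly using the contracting homotopy $\sigma$ of \eqref{eqn-null-homotopy-map} together with the assumed $H^r$-equivariant contracting homotopy of $K_\bullet$; this gives a concrete handle on $\phi$, which will matter for the equivariance bookkeeping.

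The key point is to upgrade $\phi$ to an $H^r$-equivariant chain map. Here I would use that $\mathbf{p}_\bullet(A_0)\otimes H^r$ (equivalently, the bar complex regarded in $(A\otimes A^{\op})\#H^r\dmod$) is a complex of \emph{projective} $(A_0\otimes A_0^{\op})\#H^r$-modules resolving $A_0 \otimes H^r$, because induction $(-)\otimes H^r$ from $(A_0,A_0)$-bimodules to $H^r$-equivariant bimodules sends projectives to projectives and is exact. Applying the comparison theorem in the equivariant category to the two $H^r$-equivariant resolutions $\mathbf{p}_\bullet(A_0) \to A_0$ (with its natural $\dif$-action coming from $A$) and $K_\bullet \to A_0$ produces an $H^r$-equivariant chain map $\widetilde\phi$ over $\id_{A_0}$, unique up to $H^r$-equivariant homotopy, with an equivariant homotopy inverse. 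Tensoring $\widetilde\phi$ with $M$ over $(A_0,A_0)$ yields a quasi-isomorphism $\mathbf{p}_\bullet(A_0)\otimes_{(A_0,A_0)}M \to K_\bullet \otimes_{(A_0,A_0)}M$ that is a map of $H^r$-modules; passing to homology gives the desired isomorphism $\mHH^\dif_\bullet(M)\cong \mH_\bullet(K_\bullet\otimes_{(A_0,A_0)}M)$ of $H^r$-modules. (That the underlying map on homology is an isomorphism is the classical statement; the only new content is that every map in sight respects $\dif$.)

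The main obstacle is the equivariance of the comparison map: a priori the classical $\phi$ need not intertwine the $\dif$-actions, and one must argue that it can be chosen to do so. The cleanest way around this is the projectivity/induction observation above — one never lifts $\phi$ by hand but instead applies the comparison theorem directly inside $(A_0\otimes A_0^{\op})\#H^r\dmod$, where it holds formally once one knows the bar complex consists of projective objects there. Verifying that last fact is routine: each term $A_0^{\otimes(n+2)}$ is a free $(A_0,A_0)$-bimodule, hence $A_0^{\otimes(n+2)}\otimes H^r$ is a free $(A_0\otimes A_0^{\op})\#H^r$-module (this is exactly the kind of freeness statement recorded for $H\otimes M$ earlier in the paper, applied to the bimodule setting), and the differentials $d_i$ are $H^r$-equivariant as already noted in the discussion preceding Definition \ref{def-relative-HH}. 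A secondary, purely bookkeeping, point is to confirm that tensoring an equivariant chain homotopy with $M$ over $(A_0,A_0)$ stays equivariant, which is immediate since the $\dif$-action on the tensor product is defined via the coproduct on $H^r$ and all maps involved are $H^r$-linear.
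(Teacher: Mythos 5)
There is a genuine gap in the step where you upgrade the comparison map to an $H^r$-equivariant one. The equivariant comparison theorem you invoke requires the terms of the source resolution to be projective \emph{as objects of} $(A_0\otimes A_0^{\op})\#H^r\dmod$, and this fails: the bar complex term $A_0^{\otimes(n+2)}$, equipped with its natural $\dif$-action, is free as an $(A_0,A_0)$-bimodule but is not of the induced form $P\otimes H^r$, hence not projective over the smash product. (For instance, restricted to $H=\Fp[\dif]/(\dif^p)$ it is far from being a free $H$-module, whereas projective $\#H$-modules restrict to free $H$-modules.) Your justification conflates $\mathbf{p}_\bullet(A_0)$ with its induction $\mathbf{p}_\bullet(A_0)\otimes H^r$: the latter is indeed a complex of projectives in the equivariant category, but it resolves $A_0\otimes H^r$, not $A_0$, and carries no canonical comparison to $K_\bullet$. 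The same problem afflicts $K_\bullet$, whose terms are assumed projective only as $(A_0,A_0)$-bimodules. Without equivariant projectivity on either side, no $H^r$-equivariant chain map $\widetilde\phi$ over $\id_{A_0}$ is produced, and in general such a strict equivariant comparison map need not exist; likewise the "explicit" construction in your first paragraph uses a contracting homotopy of $K_\bullet$ that the hypotheses do not supply in equivariant form (the contracting homotopy $\sigma$ of the bar complex itself is only one-sided linear and not $\dif$-equivariant).

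The paper's proof is designed precisely to avoid constructing any comparison map between the two resolutions. It replaces $M$ by the $H^r$-equivariant bicomplex $\mathbf{p}_{\bullet,\bullet}(M)=\mathbf{p}_\bullet(A_0)\otimes_{A_0}M\otimes_{A_0}\mathbf{p}_\bullet(A_0)$, whose terms are projective bimodules in the \emph{non-equivariant} sense (which is all that is needed to preserve exactness under tensoring), forms the cone $C_f$ of $f$, and tensors the resulting genuinely $H^r$-equivariant short exact sequences $0\lra A_0\lra C_f\lra K_\bullet[1]\lra 0$ and $0\lra N\lra \mathbf{p}_{\bullet,\bullet}(M)\lra M\lra 0$ with $\mathbf{p}_{\bullet,\bullet}(M)$ and $K_\bullet$ respectively. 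The isomorphisms then fall out of the associated long exact sequences in homology, and equivariance is automatic because every map in sight ($f$, the cone maps, and the evaluation $\mathbf{p}_{\bullet,\bullet}(M)\to M$) is equivariant on the nose. If you want to salvage your approach, you would have to replace the strict comparison map by exactly this kind of zig-zag through a common refinement; as written, the key step does not go through.
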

\begin{proof}
  Combining \eqref{eqn-HH-definition} and
  \eqref{eqn-derived-tensor}, one may describe $\mHH_\bullet^\dif(M)$
  as the homology of $A_0$ tensored with the complex of
  $(A_0,A_0)$-bimodules
  \begin{equation}
    \mathbf{p}_{\bullet,\bullet}(M):= \mathbf{p}_\bullet(A_0) \otimes_{A_0} M \otimes_{A_0} \mathbf{p}_\bullet(A_0) ,
  \end{equation}
  where the short hand notation $\mathbf{p}_{\bullet,\bullet}(M)$ is
  just to emphasize that the complex is the total complex of a
  bicomplex. By construction, $\mathbf{p}_{\bullet, \bullet}(M)$ is an
  $H^r$-equivariant chain complex, such that each term is projective
  as a bimodule over $(A_0,A_0)$.

  Let $C_f$ be the cone of $f$, i.e., the complex
  \begin{equation}
    C_f= \left(
      \cdots \lra K_{1} \lra K_0 \lra \underline{A_0} \lra 0
    \right),
  \end{equation}
  where the underlined term $A_0$ sits in homological degree zero. Thus
  we have a short exact sequence of complexes of $(A_0,A_0)$-bimodules
  \begin{equation}
    0 \lra A_0 \lra C_f \lra K_\bullet[1] \lra 0
  \end{equation}
  that is $H^r$-equivariant. By assumption, $C_f$ is an acyclic
  complex.

  Tensoring the sequence with $\mathbf{p}_{\bullet , \bullet} ( M )$,
  the short exact sequence above remains exact:
  \begin{equation}
    0 \lra A_0 \otimes_{(A_0,A_0)} \mathbf{p}_{\bullet , \bullet} (M) \lra C_f \otimes_{(A_0,A_0)} \mathbf{p}_{\bullet , \bullet} ( M ) \lra  K_\bullet \otimes_{(A_0,A_0)}\mathbf{p}_{\bullet , \bullet} ( M )[1] \lra 0.
  \end{equation}
  The middle term
  $ C_f \otimes_{(A_0,A_0)} \mathbf{p}_{\bullet , \bullet} (M) $ is
  easily seen to be an acyclic complex. It follows from the long exact
  sequence in homology that we have an isomorphism
  \begin{equation}\label{eqn-HH-reduction1}
    \mHH_\bullet^\dif(M)= \mH_\bullet( A_0\otimes_{(A_0,A_0)}\mathbf{p}_{\bullet,\bullet} (M)) \cong \mH_\bullet (K_\bullet\otimes_{(A_0,A_0)}\mathbf{p}_{\bullet,\bullet} (M)) .
  \end{equation}

  The natural evaluation map $\mathbf{p}_{\bullet,\bullet}(M) \lra M$
  is a quasi-isomorphism that intertwines the $H^r$-actions, and thus the
  kernel of this map, denoted $N$, is an acyclic complex of
  $H^r$-equivariant bimodules:
  \begin{equation}
    0 \lra N \lra \mathbf{p}_{\bullet,\bullet}(M) \lra M \lra 0.
  \end{equation}
  Tensor this short exact sequence of bimodules with $K_\bullet$ and
  take homology. As above, $K_\bullet \otimes_{(A_0,A_0)} N$ remains
  acyclic. We obtain that
  \begin{equation}\label{eqn-HH-reduction2}
    \mH_\bullet (K_\bullet\otimes_{(A_0,A_0)}\mathbf{p}_{\bullet,\bullet} (M)) \cong
    \mH_\bullet (K_\bullet\otimes_{(A_0,A_0)}M) .
  \end{equation}
  Combining \eqref{eqn-HH-reduction1} and
  \eqref{eqn-HH-reduction2} gives us the desired result.
\end{proof}

\section{MOY graphs, Soergel bimodules, and
  \texorpdfstring{$H^\prime$}{H'}-structure}
\label{sec:moy-graphs-soergel}
Singular Soergel bimodules play a central role in categorification of
the colored HOMFLYPT polynomial \cite{WebWil,MSV2,Cautisremarks}.
In this section we introduce these bimodules, as well as
$H^\prime$-module 
structures on these objects.  Important maps between these bimodules
are reviewed, and we conclude this section with $H^\prime$-equivariant
versions of
Rickard complexes.  These complexes of $H^\prime$-equivariant 
bimodules will be shown
later in the paper to satisfy braid group relations and are used to
build a link homology.

\subsection{Polynomial algebras and Soergel bimodules}
Let $A = A_n = \myZ[x_1,\ldots,x_n]$
be the graded polynomial algebra, where
each generator $x_i$ has degree two.  The subspace of symmetric
polynomials % 
$\myZ[x_1, \dots x_n]^{S_n} = \myZ[e_1, \dots, e_n] $
is also graded where the degree of the $i$th elementary symmetric
function $e_i$ is $2i$. The $i$th complete symmetric polynomial is
denoted $h_i$ and if $\lambda$ is a Young diagram, $s_\lambda$ is the
Schur polynomial associated with $\lambda$. 

For positive integers $\listk{k}= (k_1, \ldots, k_r) $, such that
$k_1+\cdots+k_r=n $, we consider more general subalgebras of
polynomial algebras:
\begin{equation} \label{defofsymmalg}
  A_{\listk{k}}:= \myZ[x_{1},
  \ldots, x_n]^{S_{k_1} \times \cdots \times S_{k_r}} \ .
\end{equation}
Throughout this work, we will often present such algebras, and
elements in these algebras in a graphical fashion.  The algebra
$A_{\listk{k}} $ will be denoted by $r$ vertical lines labeled $k_1$
to $k_r$ from left to right.  A generating elementary symmetric
function in the algebra will be denoted by a labeled dot. For example,
in \eqref{graphdefofalg}, the $i$th elementary symmetric function in the first
$k_1$ variables is given by a dot labeled $e_i$ on the first strand.
In general, one takes formal linear combinations of these diagrams to
represent a generic element of $A_{\listk{k}}$.

\begin{equation} \label{graphdefofalg} A_{\listk{k}} =
  \NB{\tikz[font=\tiny]{\begin{scope}
  \coordinate (bl) at (-0.5, -1);
  \coordinate (br) at ( 0.5, -1);
  \coordinate (tl) at (-0.5,  1);
  \coordinate (tr) at ( 0.5,  1);
   \node at (0,0) {$\dots$};
  \draw[>->] (bl) -- (tl) node[pos = 0, below] {$k_1$} node[pos = 1, above] {};
  \draw[>->] (br) -- (tr) node[pos = 0, below] {$k_r$} node[pos = 1, above] {};
\end{scope}}} \ , \quad \quad \quad A_{\listk{k}} \ni
  e_i(x_1, \ldots, x_{k_1})= \NB{\tikz[font=\tiny]{\begin{scope}
  \coordinate (bl) at (-0.5, -1);
  \coordinate (br) at ( 0.5, -1);
  \coordinate (tl) at (-0.5,  1);
  \coordinate (tr) at ( 0.5,  1);
   \node at (0,0) {$\dots$};
  \draw[>->] (bl) -- (tl) node[pos = 0, below] {$k_1$} node[pos = 1, above] {} coordinate[pos = 0.3] (ga);
  \draw[>->] (br) -- (tr) node[pos = 0, below] {$k_r$} node[pos = 1, above] {};
     \filldraw[] (ga) circle (1mm)
  node[left, ] {$e_i$};
\end{scope}

}} \ .
\end{equation}

Let $G_1$ and $G_2$ be Young subgroups of $S_n$ with
$G_2 \subseteq G_1$.  Then
$\Z[x_1,\ldots,x_n]^{G_1}$ % 
is a subalgebra of $\Z[x_1,\ldots, x_n]^{G_2}.$ % 
Then we may consider
$\Z[x_1,\ldots, x_n]^{G_2}$
as a
$(\myZ[x_1,\ldots, x_n]^{G_1},\myZ[x_1,\ldots,x_n]^{G_2})$-bimodule, where the right action is given by
multiplication and the left action is given by multiplication coming
from the inclusion of algebras.  In a similar way, we could consider
it as a
$(\myZ[x_1,\ldots, x_n]^{G_2},\myZ[x_1,\ldots,
x_n]^{G_1})$-bimodule.

In particular, suppose
\[\listk{k}=(k_1,\ldots, k_i, k_{i+1}, \ldots, k_{i+j}, k_{i+j+1},
  \ldots, k_r) \ , \quad \listk{\ell}=(k_1,\ldots, k_i,
  k_{i+1}+\cdots+ k_{i+j}, k_{i+j+1}, \ldots, k_r) \ .
\]
Then $A_{\listk{\ell}} \subseteq A_{\listk{k}}$ and we define the
$(A_{\listk{\ell}},A_{\listk{k}})$-bimodule
$A^{\listk{\ell}}_{\listk{k}} $ and the
$(A_{\listk{k}},A_{\listk{\ell}})$-bimodule
$A_{\listk{\ell}}^{\listk{k}} $ to both be $A_{\listk{k}}$ as vector
spaces.  A graphical representation of the bimodule
$A^{\listk{\ell}}_{\listk{k}} $ is given by vertical strands labeled
from left to right by $k_1$ to $k_i$, followed by vertical strands
labeled $k_{i+1}$ to $k_{i+j}$ merged into a vertex with a single
strand coming out of it labeled $k_{i+1}+\cdots+k_{i+j}$, followed by
vertical strands labeled $k_{i+j+1}$ to $k_r$ (see
\eqref{bimodmerge}).  Graphically, $A_{\listk{\ell}}^{\listk{k}} $ is
realized as the reflection of $A^{\listk{\ell}}_{\listk{k}} $ in a
horizontal axis.
\begin{equation} \label{bimodmerge} A^{\listk{\ell}}_{\listk{k}} ~=~
  \NB{\tikz[font=\tiny, yscale = 0.5, xscale =0.8]{\begin{scope}[yscale=1]
  \coordinate (a6) at (-3.5, -2.5);
  \coordinate (t6) at (-3.5, 1);
  \coordinate (a7) at (3.5, -2.5);
  \coordinate (t7) at (3.5, 1);
  \coordinate (t) at (0,1);
  \coordinate (o) at (0,-0);
  \coordinate (a0) at (-2, -2.5);
  \coordinate (a1) at (-1, -2.5);
  \coordinate (a2) at (-0.5, -2.5);
  \node[below] (a3) at (0, -2.5) {$\dots$};
  \coordinate (a5) at (2, -2.5);
  \coordinate (a4) at (1, -2.5);
  \node (a8) at (-2, -.75) {$\dots$};
  \node (a9) at (2, -.75) {$\dots$};
  \draw[->] (o) -- (t) node [above, pos = 1] {$k_{i+1}+\cdots +k_{i+j}$} coordinate[pos
  =0.5] (g);
  \draw[>-] (a0) .. controls +(0, 0.4) and +(0,0) .. (o) node [pos =0,
  below] {$k_{i+1}$};
  \draw[>-] (a5) .. controls +(0, 0.4)  and +(0,0) .. (o) node [pos =0, below] {$k_{i+j}$};
  \draw[>-] (a1) .. controls +(0, 0.4) and +(0,0) .. (o) node [pos =0,
  below] {$k_{i+2}$};
  \draw[>-] (a4) .. controls +(0, 0.4)  and +(0,0) .. (o) node [pos =0, below] {$k_{i+j-1}$};
  \draw[->] (a6) -- (t6) node [above, pos = 1] {$k_1$} coordinate[pos
  =0.5] (g);
    \draw[->] (a7) -- (t7) node [above, pos = 1] {$k_r$} coordinate[pos
  =0.5] (g);
\end{scope}

}}
\end{equation}
Elements of these bimodules are formal linear combinations of these
diagrams with dots labeled by elementary symmetric functions.  The
dots slide through vertices as indicated in \eqref{dotslide}. The
analogous relations hold in $A_{\listk{\ell}}^{\listk{k}}$.
\begin{equation} \label{dotslide} \NB{\tikz[yscale=0.5, xscale =
  0.8]{\begin{scope}[yscale=1]
  \coordinate (a6) at (-3.5, -2.5);
  \coordinate (t6) at (-3.5, 1);
  \coordinate (a7) at (3.5, -2.5);
  \coordinate (t7) at (3.5, 1);
  \coordinate (t) at (0,1);
  \coordinate (o) at (0,-0);
  \coordinate (a0) at (-1.5, -2.5);
  \coordinate (a1) at (-1, -2.5);
  \coordinate (a2) at (-0.5, -2.5);
  \node[below] (a3) at (0, -2.5) {$\dots$};
  \coordinate (a5) at (1.5, -2.5);
  \coordinate (a4) at (1, -2.5);
  \node (a8) at (-2, -.75) {$\dots$};
  \node (a9) at (2, -.75) {$\dots$};
  \draw[->] (o) -- (t) node [above, pos = 1] {$k_{i+1}+\cdots +k_{i+j}$} coordinate[pos =0.5] (X);
  \draw[>-] (a0) .. controls +(0, 0.4) and +(0,0) .. (o) node [pos =0,
  below] {$k_{i+1}$};
  \draw[>-] (a5) .. controls +(0, 0.4)  and +(0,0) .. (o) node [pos =0, below] {$k_{i+j}$};
  \draw[->] (a6) -- (t6) node [above, pos = 1] {$k_1$}; 
    \draw[->] (a7) -- (t7) node [above, pos = 1] {$k_r$};
     \filldraw[] (X) circle (1mm) node[left, ] {$e_d$};
\end{scope}

}} ~=~ \sum_{d_{1}+\cdots+d_j=d}~ \NB{\tikz[yscale=0.5,
  xscale = 0.8]{\begin{scope}[yscale=1]
  \coordinate (a6) at (-3.5, -2.5);
  \coordinate (t6) at (-3.5, 1);
  \coordinate (a7) at (3.5, -2.5);
  \coordinate (t7) at (3.5, 1);
  \coordinate (t) at (0,1);
  \coordinate (o) at (0,-0);
  \coordinate (a0) at (-1.5, -2.5);
  \coordinate (a1) at (-1, -2.5);
  \coordinate (a2) at (-0.5, -2.5);
  \node[below] (a3) at (0, -2.5) {$\dots$};
  \coordinate (a5) at (1.5, -2.5);
  \coordinate (a4) at (1, -2.5);
  \node (a8) at (-2, -.75) {$\dots$};
  \node (a9) at (2, -.75) {$\dots$};
  \draw[->] (o) -- (t) node [above, pos = 1] {$k_{i+1}+\cdots +k_{i+j}$} coordinate[pos =0.5] (X);
  \draw[>-] (a0) .. controls +(0, 0.4) and +(0,0) .. (o) node [pos =0,
  below] {$k_{i+1}$} coordinate[pos=0.3] (g1);
  \draw[>-] (a5) .. controls +(0, 0.4)  and +(0,0) .. (o) node [pos
  =0, below] {$k_{i+j}$} coordinate[pos=0.3] (g2);
  \draw[->] (a6) -- (t6) node [above, pos = 1] {$k_1$}; 
    \draw[->] (a7) -- (t7) node [above, pos = 1] {$k_r$};
     \filldraw[] (g1) circle (1mm) node[left, ] {$e_{d_1}$};
     \filldraw[] (g2) circle (1mm) node[right, ] {$e_{d_j}$};

\end{scope}

}}
\end{equation}

One may tensor these bimodules over intermediate algebras and obtain
new bimodules.  The idempotent completion of this monoidal category is
called the category of \emph{singular Soergel bimodules}.  In this
work, we will rarely consider summands of these tensor products.  The
category of singular Soergel bimodules plays an important role in
representation theory, and for a classification of the indecomposable
objects, see \cite{Wilthesis}.  Graphically, the tensor product
$M \otimes_A N$ corresponds to concatenating the diagram for $M$ on
top of the diagram for $N$.  We refer to the graphs obtained in this
way as \emph{MOY graphs}. We will often restrict to trivalent MOY
graphs, since up to canonical isomorphisms they encode Soergel
bimodules associated with general MOY graphs as illustrated by
\eqref{bimodmerge-general}. A MOY graph with only one (non-univalent)
vertex which is trivalent is said to be \emph{basic}.

\begin{equation} \label{bimodmerge-general} \NB{\tikz[font=\tiny, yscale = 0.5,
  xscale =0.8]{}} \ \ \simeq \ \ \NB{\tikz[font=\tiny,
  yscale = 0.5, xscale =0.8]{\begin{scope}[yscale=1]
  \coordinate (a6) at (-3.5, -2.5);
  \coordinate (t6) at (-3.5, 1);
  \coordinate (a7) at (3.5, -2.5);
  \coordinate (t7) at (3.5, 1);
  \coordinate (t) at (0,1);
  \coordinate (o) at (0,-0);
  \coordinate (a0) at (-2, -2.5);
  \coordinate (a1) at (-1, -2.5);
  \coordinate (a2) at (-0.5, -2.5);
  \node[below] (a3) at (0, -2.5) {$\dots$};
  \coordinate (a5) at (2, -2.5);
  \coordinate (a4) at (1, -2.5);
  \node (a8) at (-2, -.75) {$\dots$};
  \node (a9) at (2, -.75) {$\dots$};
  \draw[->] (o) -- (t) node [above, pos = 1] {$k_{i+1}+\cdots +k_{i+j}$} coordinate[pos
  =0.5] (g);
  \draw[>-] (a0) .. controls +(0, 0.4) and +(0,0) .. (o) node [pos =0,
  below] {$k_{i+1}$} coordinate [pos =0.7] (o2) coordinate[pos= 0.3] (o3);
  \draw[>-] (a5) .. controls +(0, 0.4)  and +(0,0) .. (o) node [pos =0, below] {$k_{i+j}$};
  \draw[>-] (a1) .. controls +(0, 0.4) and +(0,0) .. (o3) node [pos =0,
  below] {$k_{i+2}$};
  \draw[>-] (a4) .. controls +(0, 0.4)  and +(0,0) .. (o2) node [pos =0, below] {$k_{i+j-1}$};
  \draw[->] (a6) -- (t6) node [above, pos = 1] {$k_1$} coordinate[pos
  =0.5] (g);
    \draw[->] (a7) -- (t7) node [above, pos = 1] {$k_r$} coordinate[pos
  =0.5] (g);
\end{scope}

}}
\end{equation}

There are useful shifts of Soergel bimodules which appear in the
literature.  We now define these shifts.
\begin{defn}
  \label{defn:soergel}
  Let $\Gamma$ be a trivalent MOY graph.  For each (non-univalent)
  vertex $v \in \Gamma$, let $\ell_1(v)$ and $ \ell_2(v) $ be the
  labels of the thinner edges attached to $v$.  Let
  $\gamma(\Gamma)= \frac{1}{2} \sum_{v} \ell_1(v) \ell_2(v) \in
  \frac{1}{2}\Z$.  Then define the shifted bimodule
  $ \soergel(\Gamma)= q^{-\gamma(\Gamma)} \Gamma $.
\end{defn}

\begin{prop} \label{prop-grank} For a trivalent graph $\Gamma$, the
  graded rank of the corresponding singular Soergel bimodule as a left
  module is given by
  \begin{equation}
    \rkq(\soergel(\Gamma))=  \prod_{v_m,v_s}
    q^{\frac{1}{2} (\ell_1(v_m) \ell_2(v_m)-\ell_1(v_s) \ell_2(v_s))}
    \genfrac[]{0pt}{}{\ell_1(v_m)+\ell_2(v_m)}{\ell_1(v_m)} ,  
  \end{equation}
  where $v_m$ are merge vertices, $v_s$ are split vertices and $\rkq$
  denotes the graded rank.
\end{prop}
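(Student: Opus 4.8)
The plan is to cut $\Gamma$ into elementary layers, use multiplicativity of the left graded rank under tensor product, and read off the rank of each layer from the classical freeness of polynomial rings over invariants of Young subgroups.

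\emph{Reduction to basic graphs.} First I would isotope $\Gamma$ so that its trivalent vertices lie at pairwise distinct heights and cut it along horizontal lines, writing $\Gamma$ as a vertical composite $\Gamma = \Gamma_1 \ast \cdots \ast \Gamma_N$ in which each $\Gamma_i$ is either a basic MOY graph (one trivalent vertex with identity strands on its sides) or a collection of identity strands. Since the monoidal structure on singular Soergel bimodules is given by stacking diagrams, and since the quantity $\gamma$ of Definition~\ref{defn:soergel} is a sum over the vertices, we get an isomorphism $\soergel(\Gamma) \cong \soergel(\Gamma_1) \otimes \cdots \otimes \soergel(\Gamma_N)$, where the $i$-th tensor product is taken over the polynomial algebra $A_{\listk{k}}$ associated with the MOY labels along the $i$-th cutting line.

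\emph{Multiplicativity of the left graded rank.} Next I would record that if $P$ is an $(A,B)$-bimodule which is free of graded rank $\rkq(P)$ as a left $A$-module and $Q$ is a $(B,C)$-bimodule which is free of graded rank $\rkq(Q)$ as a left $B$-module, then a choice of homogeneous left $B$-basis of $Q$ identifies $P \otimes_B Q$ with a direct sum of graded shifts of $P$, so that $P \otimes_B Q$ is free of graded rank $\rkq(P)\rkq(Q)$ as a left $A$-module. Applying this inductively to the composite of the previous step reduces the proposition to the computation of $\rkq(\soergel(\Gamma_i))$ for each individual layer.

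\emph{The basic layers.} An identity layer is a polynomial algebra regarded as a module over itself and contributes $1$. For a split vertex $v_s$ with thin edge labels $\ell_1(v_s), \ell_2(v_s)$, the associated bimodule $A_{\listk{\ell}}^{\listk{k}}$ is $A_{\listk{k}}$ regarded as a module over itself, hence free of rank $1$ as a left module; together with the shift $q^{-\gamma(v_s)}$, this layer contributes $q^{-\frac12 \ell_1(v_s)\ell_2(v_s)}$. For a merge vertex $v_m$ with thin edge labels $a = \ell_1(v_m)$ and $b = \ell_2(v_m)$, the associated bimodule $A^{\listk{\ell}}_{\listk{k}}$ is the finer invariant ring $A_{\listk{k}}$ regarded as a left module over the coarser ring $A_{\listk{\ell}}$. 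By the Chevalley--Shephard--Todd theorem, $\myZ[x_1, \ldots, x_n]$ is a free graded module over the invariant ring of any Young subgroup, with graded rank the product over the blocks of the corresponding $q^2$-factorials $\prod_{j=1}^{m}(1 + q^2 + \cdots + q^{2(j-1)})$; hence $A_{\listk{k}}$ is free over $A_{\listk{\ell}}$ with graded rank the ratio of these two products, in which all blocks cancel except the block of size $a + b$ in the numerator against the blocks of sizes $a$ and $b$ in the denominator, giving the Gaussian binomial $\binom{a+b}{a}_{q^2}$. Rewriting this in balanced form via $\binom{a+b}{a}_{q^2} = q^{ab} \genfrac[]{0pt}{}{a+b}{a}$ and applying the shift $q^{-\gamma(v_m)} = q^{-\frac12 ab}$, this layer contributes $q^{\frac12 ab} \genfrac[]{0pt}{}{a+b}{a}$.

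\emph{Conclusion.} Multiplying the contributions of all layers produces precisely the product over merge and split vertices in the statement. The delicate point is the bookkeeping of grading normalizations: one must reconcile the naive graded rank, a genuine polynomial in $q^2$ coming from a monomial basis of the polynomial ring, with the balanced quantum binomial coefficient $\genfrac[]{0pt}{}{\cdot}{\cdot}$ used in the paper, and check that the possibly half-integral shift $q^{-\gamma}$ built into $\soergel(-)$ exactly accounts for the discrepancy; the digon, whose left graded rank must come out to $[2]$, is a convenient test for all the exponents.
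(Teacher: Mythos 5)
Your argument is correct and is essentially the same as the paper's: the paper's proof simply records that $\Z[x_1,\ldots,x_{a+b}]^{S_a\times S_b}$ is free over $\Z[x_1,\ldots,x_{a+b}]^{S_{a+b}}$ of left graded rank $q^{ab}\genfrac[]{0pt}{}{a+b}{a}$ and then says the proposition follows by ``repeated use of this fact along with our grading conventions,'' which is exactly the layer-by-layer decomposition, multiplicativity of left graded rank over tensor products, and shift bookkeeping that you have spelled out. The only cosmetic caveat is your appeal to Chevalley--Shephard--Todd: over $\Z$ one wants the classical integral freeness of $\Z[x_1,\ldots,x_n]$ over partial symmetric invariants (e.g.\ via Schubert or Schur polynomial bases), not the characteristic-zero reflection-group theorem, but the fact you use is true and standard, so this does not affect the argument.
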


\begin{proof}
  First note that $\Z[x_1,\ldots,x_{a+b}]^{S_a \times S_b}$
  is a
  graded free left module over % 
  $\Z[x_1,\ldots,x_{a+b}]^{S_{a+b}}$
  of graded rank $q^{ab} \genfrac[]{0pt}{}{a+b}{a} $. The proposition
  now follows from repeated use of this fact along with our grading
  conventions.
\end{proof}

Maps between Soergel bimodules are discussed in
Section~\ref{sec:pdgmaps}.

\subsection{\texorpdfstring{$H^\prime$}{H'}-equivariant Soergel
   bimodules}

\label{sec:dif-and-soergel}
The polynomial algebra % 
$A=\Z[x_1,\ldots,x_n]$
carries an
$H^\prime$-module structure, where $\dif(x_i)=x_i^2$, and the action is
extended to the entire algebra by linearity and the Leibniz rule.

\begin{lem}\label{lem:dif-delta-nabla}
  \begin{enumerate}
  \item \label{it:dif-delta} Let
    $\Delta := \prod_{i<j} (x_j - x_i) \in A$. Then
    $\dif(\Delta) = (n-1)e_1(x_1, \dots, x_n) \Delta.$
  \item \label{it:dif-nabla} Let $k \in \{1, \dots, n\}$ and set
    \[
      \nabla:= \prod_{i=1}^k\prod_{j= k+1}^{n} (x_j - x_i) .
    \]
    Then
    \[\dif (\nabla) % 
      = \left( (n-k)e_1(x_1, \dots, x_k) + ke_1(x_{k+1}, \dots, x_{n})
      \right) \nabla.\]
  \end{enumerate}
\end{lem}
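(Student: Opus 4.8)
The plan is to treat both parts by the same elementary principle: the derivation $\dif$ is determined by $\dif(x_i) = x_i^2$ together with the Leibniz rule, so $\dif$ applied to a product of linear factors $(x_j - x_i)$ is computed by summing, over each factor, the result of replacing that one factor $(x_j - x_i)$ by $\dif(x_j - x_i) = x_j^2 - x_i^2 = (x_j - x_i)(x_j + x_i)$. Hence for any set $S$ of pairs $(i,j)$, writing $P_S := \prod_{(i,j)\in S}(x_j - x_i)$, we get the clean formula
\[
  \dif(P_S) = \left(\sum_{(i,j)\in S} (x_i + x_j)\right) P_S .
\]
So in each case the whole content is to evaluate the linear polynomial $\sum_{(i,j)\in S}(x_i+x_j)$ and recognize it as the claimed multiple of $e_1$.

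For part \ref{it:dif-delta}, here $S = \{(i,j) : 1 \le i < j \le n\}$, and $\sum_{i<j}(x_i+x_j)$ counts, for each fixed index $m$, how many pairs in $S$ contain $m$: there are exactly $n-1$ such pairs (namely $\{m,m'\}$ for each $m' \ne m$). Therefore $\sum_{i<j}(x_i + x_j) = (n-1)\sum_{m=1}^n x_m = (n-1)e_1(x_1,\dots,x_n)$, which gives $\dif(\Delta) = (n-1)e_1(x_1,\dots,x_n)\Delta$ as stated.

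For part \ref{it:dif-nabla}, here $S = \{(i,j) : 1 \le i \le k < j \le n\}$, so $\sum_{(i,j)\in S}(x_i + x_j)$ splits as $\sum_{i=1}^k\sum_{j=k+1}^n x_i + \sum_{i=1}^k\sum_{j=k+1}^n x_j$. In the first double sum each $x_i$ with $i \le k$ appears $n-k$ times, contributing $(n-k)\sum_{i=1}^k x_i = (n-k)e_1(x_1,\dots,x_k)$; in the second, each $x_j$ with $j > k$ appears $k$ times, contributing $k\sum_{j=k+1}^n x_j = k\,e_1(x_{k+1},\dots,x_n)$. Summing gives exactly $(n-k)e_1(x_1,\dots,x_k) + k\,e_1(x_{k+1},\dots,x_n)$, hence the claimed formula for $\dif(\nabla)$.

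There is no real obstacle here; the only thing to be careful about is the bookkeeping of multiplicities (how many pairs of $S$ contain a given index), which is precisely what produces the coefficients $n-1$, $n-k$, and $k$. One might also note as a sanity check that part \ref{it:dif-delta} is the special case $k=n$... is degenerate, but more honestly part \ref{it:dif-delta} with the $S_a \times S_b$ picture is consistent with part \ref{it:dif-nabla}: if one takes $\nabla$ with a partition into singletons and multiplies up, the coefficients telescope to $(n-1)e_1$, matching \ref{it:dif-delta}.
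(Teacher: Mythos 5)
Your proof is correct and follows exactly the paper's argument: apply the Leibniz rule using $\dif(x_j - x_i) = (x_j + x_i)(x_j - x_i)$, then count how many factors contain each index to identify the resulting linear polynomial as the stated multiple of $e_1$. The only difference is that you spell out the multiplicity bookkeeping more explicitly than the paper does.
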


\begin{proof}
  Let us start with item \ref{it:dif-delta}.  This follows from the
  fact that $\dif(x_i - x_j) = x_i^2 - x_j^2 = (x_i +x_j)(x_i -
  x_j)$. Indeed using this identity and the Leibniz rule, one obtains:
  \[
    \dif(\Delta)= \sum_{1\leq i<j \leq n}(x_i+x_j)\Delta =
    (n-1)e_1(x_1, \dots, x_n) \Delta.
  \]

  Item \ref{it:dif-nabla} is similar:
  \[
    \dif(\nabla) = \sum_{i=1}^k \sum_{j=k+1}^n(x_i + x_j)\nabla =
    \left( (n-k)e_1(x_1, \dots, x_k) + ke_1(x_{k+1}, \dots, x_{n})
    \right) \nabla.
  \]
\end{proof}
The $H^\prime$-module structure on $\Z[x_1, \ldots, x_n]$ descends to an
$H^\prime$-module structure on $A_{}=\Z[x_1,\ldots,x_n]^G$
where $G$ is
a Young subgroup of $S_n$.  In particular, algebras of symmetric
polynomials are $H^\prime$-equivariant. 

\begin{lem}\label{lem:dif-easy-sym}
  The derivation
  $\dif$ acts on elementary, complete, and Schur
  symmetric functions of 
  $\Z[x_1,\ldots,x_n]^{S_n}$
  as follows:
  \begin{enumerate}
  \item $\dif(e_i)=e_1 e_i - (i+1) e_i$,
  \item $\dif(h_i)=(i+1) h_{i+1} - h_1 h_i $,
  \item $\dif(s_{\lambda})= \sum_{\mu_i} (\lambda_i+1-i) s_{\mu_i} $
    where the sum is over all partitions $\mu_i$ obtained from
    $\lambda$ by adding a box to the $i$th row.
  \end{enumerate}
\end{lem}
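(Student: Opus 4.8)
The plan is to prove all three identities by a single computational method, reducing to the behavior of $\dif$ on power sums and then specializing. First I would recall that $\dif$ is the derivation on $\Z[x_1,\dots,x_n]$ with $\dif(x_i)=x_i^2$, so on the power sum $p_m=\sum_i x_i^m$ one gets $\dif(p_m)=\sum_i m x_i^{m-1}\cdot x_i^2=m p_{m+1}$. This single formula is the engine: it says $\dif$ shifts a power sum up by one degree (in the $m$-index) with a scalar $m$. From here each of the three claims follows by a standard symmetric-function manipulation, but to keep the exposition self-contained I would instead give direct inductive proofs using the generating-function (Newton-type) recursions, since those avoid invoking plethystic machinery.

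For item (1), I would use the Newton identity $i\, e_i = \sum_{k=1}^{i}(-1)^{k-1} p_k e_{i-k}$, or more efficiently the recursion $e_i = \frac{1}{i}\sum_{k=1}^{i}(-1)^{k-1}p_k e_{i-k}$, together with $\dif(p_k)=kp_{k+1}$, and induct on $i$; the base case $\dif(e_1)=\dif(p_1)=p_2 = e_1^2-2e_2 = e_1 e_1 - 2 e_1$ matches the claimed formula $e_1 e_i-(i+1)e_i$ at $i=1$. Alternatively — and this is cleaner — I would work with the generating function $E(t)=\sum_i e_i t^i = \prod_i (1+x_i t)$, compute
\[
\dif\bigl(E(t)\bigr) = \sum_i \dif(x_i)\, t \prod_{j\neq i}(1+x_j t) = \sum_i \frac{x_i^2 t}{1+x_i t} E(t),
\]
and then expand $\sum_i \frac{x_i^2 t}{1+x_i t} = \sum_i x_i^2 t \sum_{m\ge 0}(-x_i t)^m = \sum_{m\ge 1}(-1)^{m-1}p_{m+1}t^m$, which one rewrites in terms of $e$'s; matching coefficients of $t^i$ then yields (1). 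Item (2) is entirely parallel using $H(t)=\sum_i h_i t^i = \prod_i (1-x_i t)^{-1}$, with $\dif(H(t)) = \sum_i \frac{x_i^2 t}{1-x_i t}H(t)$ and $\sum_i \frac{x_i^2 t}{1-x_i t}=\sum_{m\ge 1}p_{m+1}t^m$; extracting coefficients and using the relation between $h$'s and $p$'s gives $\dif(h_i)=(i+1)h_{i+1}-h_1 h_i$. (One can also deduce (2) from (1) via the involution $\omega$, after checking $\omega$ intertwines $\dif$ appropriately, but the direct generating-function argument is shorter and avoids that check.)

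For item (3), the cleanest route is the Jacobi–Trudi formula $s_\lambda = \det(h_{\lambda_i - i + j})_{1\le i,j\le \ell}$ where $\ell$ is the number of parts. Applying $\dif$ to the determinant via the Leibniz rule distributes over rows; in row $i$, each entry $h_{\lambda_i-i+j}$ gets sent to $(\lambda_i-i+j+1)h_{\lambda_i-i+j+1} - h_1 h_{\lambda_i-i+j}$ by item (2). The term $(\lambda_i-i+j+1)h_{\ldots+1}$ splits as $(\lambda_i-i+1)h_{(\lambda_i+1)-i+j} + j\, h_{(\lambda_i+1)-i+j}$: the first piece replaces row $i$ by the row of $s_{\mu_i}$ (where $\mu_i$ adds a box to row $i$) scaled by $(\lambda_i+1-i)$, contributing $(\lambda_i+1-i)s_{\mu_i}$; I expect the main obstacle of the proof to be showing that all the remaining pieces — the $j\, h_{\ldots+1}$ terms and the $-h_1 h_{\ldots}$ terms summed over all rows — cancel. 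For the $-h_1 h_{\ldots}$ contributions this is immediate: summing over rows gives $-h_1 \cdot \ell \cdot$ (nothing), more precisely each row contributes $-h_1 s_\lambda$-type determinants that must be collected; one sees the total is $-(\text{something})s_\lambda$ and has to verify the coefficient vanishes. The correct bookkeeping is that $\sum_i (\lambda_i+1-i) = |\lambda| + \binom{\ell+1}{2} - $ wait — rather, the honest way is: the $-h_1$ factor pulls out of row $i$ as a scalar multiple of the original determinant only after re-indexing, and the $j\,h_{\ldots}$ terms, being a determinant with a row proportional to $(1,2,\dots,\ell)$ times shifted $h$'s, collapse; the clean statement is that the matrix with row $i$ equal to $(j \mapsto h_{\lambda_i-i+j+1})$ minus an appropriate multiple is singular by a column operation. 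I would carry this out by the substitution $h_{\lambda_i-i+j+1}$, noticing that $\lambda_i - i + (j+1) = \lambda_i - (i-1) + j$, i.e. incrementing $j$ by one is the same as decrementing $i$ by one, so the "extra" terms assemble into determinants with two equal rows and hence vanish. This telescoping/equal-rows cancellation is the crux; once it is in place, only the boundary term $(\lambda_i+1-i)s_{\mu_i}$ survives, and summing over $i$ (noting $s_{\mu_i}=0$ when $\mu_i$ is not a partition, which happens exactly when $\lambda_{i-1}=\lambda_i$) gives precisely $\dif(s_\lambda)=\sum_{\mu_i}(\lambda_i+1-i)s_{\mu_i}$.
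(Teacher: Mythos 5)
The paper does not actually prove this lemma; it only cites \cite{EQ1} and \cite{EQ2}, so your argument is necessarily independent of the text. For items (1) and (2) your generating-function computation is sound, and the step you leave implicit (``rewrite in terms of $e$'s'') is exactly Newton's identity $\sum_{m=1}^{k}(-1)^{m-1}p_m e_{k-m}=k e_k$ (resp.\ $\sum_{m=1}^{k}p_m h_{k-m}=k h_k$) with $k=i+1$ after peeling off the $m=1$ term; that should be said, but it is routine. Two corrections: the printed statement of (1) is a typo for $\dif(e_i)=e_1e_i-(i+1)e_{i+1}$ (the version used everywhere else in the paper, e.g.\ in the proof of Lemma \ref{lem:dif-dK-commute}), and your base-case chain ``$p_2=e_1^2-2e_2=e_1e_1-2e_1$'' contains a false equality: what you computed verifies the corrected formula, not the printed one.

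Item (3) is where the genuine gap is. Your identification of the surviving terms is right: differentiating row $i$ of the Jacobi--Trudi matrix $N=(h_{\lambda_i-i+j})$ and splitting $(\lambda_i-i+j+1)=(\lambda_i+1-i)+j$ produces $(\lambda_i+1-i)s_{\mu_i}$ plus a determinant $\det(B_i)$ whose $i$th row is $(j\,h_{\lambda_i-i+j+1})_j$, while the $-h_1h_{\lambda_i-i+j}$ pieces contribute exactly $-\ell h_1 s_\lambda$ in total (each row gives $-h_1\det N$; there is nothing left to ``verify vanishes'' there). What must then be proved is the identity $\sum_{i}\det(B_i)=\ell\, h_1 s_\lambda$, and the mechanism you propose for it --- that the extra determinants have two equal rows and hence vanish --- is false. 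The $B_i$ have no repeated rows and their determinants are individually nonzero: for $\lambda=(1)$ written as the $2\times 2$ determinant of $(1,0)$ one finds $\det(B_1)=h_2$ and $\det(B_2)=2h_1^2-h_2$; neither vanishes, but they sum to $2h_1s_{(1)}=\ell h_1 s_\lambda$ as required. (The equal-rows phenomenon you describe is the reason $s_{\mu_i}=0$ when $\lambda_{i-1}=\lambda_i$, which you already invoke separately; it plays no role in cancelling the $B_i$ against the $-h_1$ terms.) The identity is true but needs an argument: expanding $\det(B_i)$ along row $i$ gives $\sum_j j\,N_{i,j+1}C_{ij}$ with $C_{ij}$ the cofactors of $N$; for $j<\ell$ one has $\sum_i N_{i,j+1}C_{ij}=0$ because this expands a determinant with two equal \emph{columns}, and the surviving $j=\ell$ term must then be identified with $h_1s_\lambda$ via a determinantal Pieri rule. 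None of this bookkeeping is in your write-up, and your own text visibly breaks off mid-computation at this point.

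A cleaner route, which sidesteps the cancellation entirely and uses material already in the paper, is the bialternant formula. Write $s_\lambda=a_{\lambda+\rho}/a_\rho$ with $a_\alpha=\det(x_i^{\alpha_j})$ and $\rho=(n-1,\dots,1,0)$. Column-wise Leibniz gives $\dif(a_\alpha)=\sum_j\alpha_j\,a_{\alpha+\epsilon_j}$, and $\dif(a_\rho)=(n-1)e_1a_\rho$ is Lemma \ref{lem:dif-delta-nabla}(1). Applying $\dif$ to $s_\lambda a_\rho=a_{\lambda+\rho}$ and dividing by $a_\rho$ (a nonzerodivisor) yields $\dif(s_\lambda)=\sum_j(\lambda_j+n-j)s_{\mu_j}-(n-1)e_1s_\lambda$, where $a_{\lambda+\rho+\epsilon_j}/a_\rho=s_{\mu_j}$ or $0$ exactly as in your equal-entries observation. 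Since $e_1s_\lambda=\sum_j s_{\mu_j}$ by the Pieri rule, this gives $\dif(s_\lambda)=\sum_j(\lambda_j+1-j)s_{\mu_j}$ in three lines.
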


\begin{proof}
  See \cite[Lemma 3.1]{EQ1} for the first and second equalities and
  \cite[Lemma 2.4]{EQ2} for the third equality.
\end{proof}

  From straightforward calculations, we obtain the following.% 

\begin{lem}
  \label{lem:pdiff}
  \begin{enumerate}
  \item Let $\listk{k}=(k_1, \dots, k_\ell)$ be a finite sequence of
    non-negative integers with $k_1 + \dots+ k_\ell = k$. The algebra
    $\polA_{\listk{k}}:= \Z[x_1, \dots, x_k]^{S_{\listk{k}}}$
    with the inherited map $\dif$
    is an $H^\prime$-equivariant algebra.
  \item Basic singular Soergel bimodules over $\Z$ are
    $H^\prime$-equivariant bimodules. As a consequence, Soergel
    bimodules over $\Z$ associated with MOY graphs are
    $H^\prime$-equivariant bimodules. % 
  \item An $H^\prime$-equivariant singular Soergel bimodule over $\Z$
    can be endowed with a new $H^\prime$-structure by twisting by a linear
    polynomial with coefficients in $\Z $.
  \end{enumerate}
\end{lem}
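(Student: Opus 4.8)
The plan is to reduce every assertion to the basic Leibniz-type identities, using that $H^\prime=\Z[\dif]$ carries \emph{no} relation on $\dif$: an $H^\prime$-module is simply a graded abelian group with a degree-two endomorphism, an $H^\prime$-equivariant algebra is a graded algebra on which $\dif$ acts as a derivation, and an $H^\prime$-equivariant bimodule is a graded bimodule on which $\dif$ satisfies the two-sided Leibniz rule. In particular, unlike for $H$, there is no nilpotency to check. For item (1), note that $\dif$ on $A_n=\Z[x_1,\dots,x_n]$ commutes with the $S_n$-action: for $\sigma\in S_n$ the conjugate $\sigma\dif\sigma^{-1}$ is again a derivation of $A_n$ sending each $x_i$ to $x_i^2=\dif(x_i)$, hence equals $\dif$. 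Therefore $\dif$ restricts to $\polA_{\listk k}=A_n^{S_{\listk k}}$ for any sequence $\listk k$ with $k_1+\cdots+k_\ell=k$, and the restriction is still a degree-two derivation, so $\polA_{\listk k}$ is an algebra object in $H^\prime\dmod$. (Alternatively, $\polA_{\listk k}$ is generated over $\Z$ by the elementary symmetric functions in each block of variables, and by Lemma~\ref{lem:dif-easy-sym} applied block by block $\dif$ stays within this generating set.)

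For item (2), a basic singular Soergel bimodule is, up to the (possibly half-integer) grading shift of Definition~\ref{defn:soergel}, one of $A^{\listk\ell}_{\listk k}$ or $A_{\listk\ell}^{\listk k}$, whose underlying module is $\polA_{\listk k}$ with the $H^\prime$-structure of item (1). Both module actions are multiplication inside $\polA_{\listk k}$ (the smaller algebra $\polA_{\listk\ell}$ acting through the inclusion $\polA_{\listk\ell}\subseteq\polA_{\listk k}$), and the two $\dif$'s in play are restrictions of the single derivation on $A_n$, so the two-sided Leibniz rule is immediate. Grading shifts send $H^\prime$-modules to $H^\prime$-modules, so each basic $\soergel(\Gamma)$ is $H^\prime$-equivariant. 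A general MOY-graph bimodule is an iterated tensor product of basic ones over intermediate algebras $\polA_{\listk m}$; since $H^\prime$ is a bialgebra with $\Delta(\dif)=\dif\otimes 1+1\otimes\dif$, the rule $\dif(m\otimes m')=\dif(m)\otimes m'+m\otimes\dif(m')$ is well defined on $M\otimes_{\polA_{\listk m}}M'$ once the relevant one-sided actions are $H^\prime$-equivariant (the two ways of moving an element of $\polA_{\listk m}$ across the tensor agree by item (1)) and respects the residual bimodule structure. Induction on the number of vertices of $\Gamma$ completes item (2).

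For item (3), replacing $(M,\dif)$ by $(M,\dif+f)$ with $f$ of degree two yields an $H^\prime$-equivariant bimodule precisely when $f$ is a degree-two bimodule endomorphism: subtracting the Leibniz identities for $\dif$, the condition for $\dif+f$ becomes $f(amb)=a\,f(m)\,b$ for $a,b$ in the two base algebras. For a singular Soergel bimodule whose underlying module is an invariant polynomial algebra, acted on from both sides by commutative algebras through multiplication, multiplication by a linear polynomial $\ell$ of that algebra is such an endomorphism; hence $\dif+\ell\cdot(-)$ is a new $H^\prime$-structure, again of degree two since $\deg\ell=2$, and there is nothing further to impose.

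The content is essentially bookkeeping rather than mathematics. The only points that deserve attention are the \emph{mutual compatibility} of the various $H^\prime$-structures on the many polynomial subalgebras labelling the strands of a MOY graph — so that the iterated tensor product in item (2) is literally an $H^\prime$-equivariant bimodule and not merely so after reassociation — and, in item (3), the exact class of admissible linear twists, namely those lying in the pertinent invariant subalgebra so that multiplication by them commutes with both actions. Both are settled simultaneously by the observation that every $\dif$ appearing is the restriction of the one derivation $x_i\mapsto x_i^2$ on $\Z[x_1,\dots,x_n]$.
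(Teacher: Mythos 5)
Your proposal is correct and is precisely the routine verification the paper alludes to (the paper offers no proof beyond the remark that the lemma follows ``from straightforward calculations''). The three key points you isolate --- that $\dif$ commutes with the $S_n$-action and hence restricts to invariant subalgebras, that the Leibniz rule descends to tensor products over intermediate algebras because the one-sided actions are $\dif$-equivariant, and that a twist by $f$ preserves the two-sided Leibniz rule exactly when multiplication by $f$ is a bimodule endomorphism --- are the right ones, and your observation that no nilpotency needs checking for $H^\prime$ (in contrast to $H$, which is handled separately in Lemma~\ref{lem:pDG-ready}) is consistent with the paper's setup.
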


In order to categorify the colored Jones polynomial at a root of unity, we will later on tensor with $\Fp$ and consider every 
object as a $p$-DG module. For this to make sense, it is important to
check that $\dif^p$ acts trivially on these objects. This is ensured
by the following lemma. % 

\begin{lem}\label{lem:pDG-ready}
  Let $M$ be a singular Soergel bimodule associated with a MOY graph
  and $x$ be an element of $M$, then $\dif^p(x)$ is divisible by $p$.
\end{lem}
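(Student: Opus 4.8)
The plan is to reduce the statement to a computation over $\Z$ and then to invoke the fact that $\dif^p \equiv 0 \pmod p$ already holds on the polynomial generators $x_i$, and that this congruence propagates through all the algebraic operations used to build singular Soergel bimodules. First I would recall that every singular Soergel bimodule $M$ associated with a MOY graph is, by Lemma \ref{lem:pdiff}, built from the polynomial algebras $\polA_{\listk{k}}$ by iterated tensor products over intermediate symmetric subalgebras, and that the $H^\prime$-action is in all cases inherited from the action $\dif(x_i) = x_i^2$ on $\Z[x_1,\dots,x_n]$. So it suffices to prove the statement for $M = \Z[x_1,\dots,x_n]$ with this action, since a tensor product of $H^\prime$-equivariant modules has $\dif$ acting by the Leibniz rule $\dif(u\otimes v) = \dif(u)\otimes v + u \otimes \dif(v)$, and if $\dif^p(u)$ and $\dif^p(v)$ are both divisible by $p$ then so is $\dif^p(u\otimes v)$: indeed $\dif^p(u\otimes v) = \sum_{j=0}^{p}\binom{p}{j}\dif^j(u)\otimes\dif^{p-j}(v)$, and every binomial coefficient $\binom{p}{j}$ with $0 < j < p$ is already divisible by $p$, while the two extreme terms $\dif^p(u)\otimes v$ and $u\otimes\dif^p(v)$ are divisible by $p$ by the inductive hypothesis. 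The twisting operation in part (3) of Lemma \ref{lem:pdiff} only changes $\dif$ by adding multiplication by a fixed linear polynomial; one checks separately (or absorbs into the same induction) that this modified derivation still has $\dif^p$ divisible by $p$, using that $\dif^p$ of the polynomial algebra with a twist can be expanded by the same binomial identity.

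Thus the crux is: on $\Z[x_1,\dots,x_n]$ with $\dif(x_i)=x_i^2$, every element $x$ has $\dif^p(x)$ divisible by $p$. By the Leibniz rule and the same binomial-coefficient argument, $\dif^p$ is a derivation-like operator modulo $p$ — more precisely, working modulo $p$, the operator $\dif^p$ on any $H^\prime$-algebra is again a derivation (this is the standard fact that the $p$th power of a derivation in characteristic $p$ is a derivation). Hence it is enough to check that $\dif^p(x_i) \equiv 0 \pmod p$ for each generator $x_i$. But $\dif(x_i) = x_i^2$, $\dif^2(x_i) = \dif(x_i^2) = 2x_i\cdot x_i^2 = 2x_i^3$, and inductively $\dif^k(x_i) = k!\, x_i^{k+1}$ — a quick induction using $\dif(x_i^{k+1}) = (k+1)x_i^k\dif(x_i) = (k+1)x_i^{k+2}$. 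Therefore $\dif^p(x_i) = p!\, x_i^{p+1}$, which is divisible by $p$. This closes the argument.

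The main obstacle I anticipate is purely bookkeeping: making precise that the $H^\prime$-action on a singular Soergel bimodule $A^{\listk{\ell}}_{\listk{k}}$ (and on iterated tensor products of such) is literally the restriction/corestriction of the action on the ambient polynomial ring, so that the reduction to $\Z[x_1,\dots,x_n]$ is legitimate — including handling the dot-slide relations \eqref{dotslide} and the canonical isomorphisms \eqref{bimodmerge-general} compatibly with $\dif$. This is already essentially contained in Lemma \ref{lem:pdiff}, so the write-up can cite it. A secondary point requiring a line of care is the twist: one must observe that $(\dif + \ell\cdot)^p \equiv \dif^p + \ell^p\cdot \pmod{p}$ as operators (again by the $p$th-power-of-a-derivation principle applied in the smash product, or by direct expansion), so that the twisted derivation inherits the divisibility; since $\ell$ is linear with integer coefficients, $\ell^p \cdot x$ is manifestly integral and the congruence $\dif^p(x)\equiv 0$ is unaffected by the extra term being an honest integral operator. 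With these remarks in place the proof is complete.
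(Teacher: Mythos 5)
Your proposal is correct and follows essentially the same route as the paper: reduce to algebra generators using the fact that $\dif^p$ acts as a derivation modulo $p$ (via the vanishing of the middle binomial coefficients in the Leibniz expansion), pass to the ambient polynomial ring, and conclude from the computation $\dif^k(x_i)=k!\,x_i^{k+1}$, so that $\dif^p(x_i)=p!\,x_i^{p+1}$ is divisible by $p$. The only difference is presentational — you phrase the reduction through tensor products and also discuss twists, which belong to the separate green-dotted version of the lemma rather than to this one.
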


\begin{proof}
  We consider $M$ as an $\e$-DG algebra. Since $\dif$ acts as a
  derivation, it is enough to prove this statement for
  generators of $M$ (as an algebra). Therefore we consider the
  polynomial algebra associated to an edge with label $\ell$. By
  definition, this is $\Z[x_1, \dots x_\ell]^{S_\ell}$ and it is
  generated by the elementary symmetric polynomials
  $e_1, \dots, e_\ell$. It is a subalgebra of $\Z[x_1, \dots x_\ell]$,
  so we may prove the statement for this algebra. Invoking the
  Leibniz rule one more time, it is enough to prove the statement for
  $x_i$ for $i \in \{1,\dots, \ell\}$. For all $k\geq 0$, one has
  $\dif^k(x_i) = k! x_i^{k+1}$, so that $\dif^p(x_i)$ is
  indeed a multiple of $p$.
\end{proof}

Let $M$ be an $H^\prime$-equivariant % 
Soergel $(A_{\listk{k}},A_{\listk{\ell}})$-bimodule. % 
Let
$f \in A_{\listk{\ell}} $ be a degree two element.  Then we define
$M^f$ to be the Soergel $(A_{\listk{k}},A_{\listk{\ell}})$-bimodule,
where for all $m \in M$, one has:
\[
\dif_{M^f}(m) = \dif_M(m) + m\cdot f.
\]
We
define ${}^f\!M $ in a similar way.

\begin{rem}
  On an $H^\prime$-equivariant Soergel bimodule $\soergel(\Gamma)$,
  twists are linear combinations of the first elementary symmetric
  functions on the variables for each edge.  This leads us to
  Definition \ref{defn:greendots} below.  Note however that on direct
  sums of $\soergel(\Gamma) $, more complicated twists could occur
  between the various summands.  Higher symmetric functions appear as
  we will see in the proof of Lemma~\ref{lem:twist-slides-11} and in
  the computation for the Hopf link in Section~\ref{sec:hopflink} .
\end{rem}

In order to facilitate calculations later on in a more compact way,
we introduce green dots which will encode twists pictorially.

\begin{defn}\label{defn:greendots}
  A \emph{green-dotted} MOY graph is a MOY graph $\Gamma$ with some
  green dots floating freely on its edges. Each green dot has a given
  multiplicity (an element of $\Z$). If a given edge carries several
  green dots, they may be replaced by one green dot on this edge with
  the sum of all multiplicities. Green dots with multiplicity equal to
  $0$ can be removed. Alternatively, these dots may be thought of as a
  map $g\colon\thinspace E(\Gamma) \to \Z$. However it is convenient
  to have a more pictorial approach to these green dots.
  
  A green-dotted MOY graph $\Gamma$ gives rise to a twisted $H^\prime$-equivariant
  Soergel bimodule $\soergel(\Gamma)$. The differential on the
  generator is given by a linear combination of terms with dots
  labeled by first elementary symmetric functions with coefficients
  coming from the integral labels of the green dots.
\end{defn}

\begin{example}
  \label{exa:greendots}
  For the Soergel bimodule associated to the green-dotted merge MOY
  graph $\Gamma$,
  \[
    \Gamma= \NB{\tikz[font=\tiny, scale =0.7]{\begin{scope}
  \coordinate (m) at (  0,  0);
  \coordinate (t) at (  0, 1);
  \coordinate (br) at (+.5,  -1);
  \coordinate (bl) at (-.5,  -1);
  \draw[>-] (bl) .. controls +(0,0.5) and + (0, 0) .. (m) node[pos =
  0, below] {$a$} coordinate[pos = 0.3] (ga);
  \draw[>-] (br) .. controls +(0,0.5) and + (0, 0) .. (m) node[pos =
  0, below] {$b$} coordinate[pos = 0.3] (gb);
  \draw[->] (m) -- (t) node[pos = 1, above] {$a+b$};
  \filldraw[draw= green!50!black, fill = white] (ga) circle (1mm)
  node[left, green!50!black] {$r$};
  \filldraw[draw= green!50!black, fill = white] (gb) circle (1mm)
  node[right, green!50!black] {$s$};
\end{scope}}} \
  \]
  the twist on the generator is given by
  \[
    \dif \left( {\NB{\tikz[font=\tiny, scale =0.7]{\begin{scope}
  \coordinate (m) at (  0,  0);
  \coordinate (t) at (  0, 1);
  \coordinate (br) at (+.5,  -1);
  \coordinate (bl) at (-.5,  -1);
  \draw[>-] (bl) .. controls +(0,0.5) and + (0, 0) .. (m) node[pos =
  0, below] {$a$};
  \draw[>-] (br) .. controls +(0,0.5) and + (0, 0) .. (m) node[pos =
  0, below] {$b$};
  \draw[->] (m) -- (t) node[pos = 1, above] {$a+b$};
\end{scope}}}} \right) = \quad
    r {\NB{\tikz[font=\tiny, scale =0.7]{\begin{scope}
  \coordinate (m) at (  0,  0);
  \coordinate (t) at (  0, 1);
  \coordinate (br) at (+.5,  -1);
  \coordinate (bl) at (-.5,  -1);
  \draw[>-] (bl) .. controls +(0,0.5) and + (0, 0) .. (m) node[pos =
  0, below] {$a$} coordinate[pos = 0.3] (ga);
  \draw[>-] (br) .. controls +(0,0.5) and + (0, 0) .. (m) node[pos =
  0, below] {$b$} coordinate[pos = 0.3] (gb);
  \draw[->] (m) -- (t) node[pos = 1, above] {$a+b$};
   \filldraw[] (ga) circle (1mm)
  node[left, ] {$e_1$};
\end{scope}}}} \quad + \quad s
    {\NB{\tikz[font=\tiny, scale =0.7]{\begin{scope}
  \coordinate (m) at (  0,  0);
  \coordinate (t) at (  0, 1);
  \coordinate (br) at (+.5,  -1);
  \coordinate (bl) at (-.5,  -1);
  \draw[>-] (bl) .. controls +(0,0.5) and + (0, 0) .. (m) node[pos =
  0, below] {$a$} coordinate[pos = 0.3] (ga);
  \draw[>-] (br) .. controls +(0,0.5) and + (0, 0) .. (m) node[pos =
  0, below] {$b$} coordinate[pos = 0.3] (gb);
  \draw[->] (m) -- (t) node[pos = 1, above] {$a+b$};
   \filldraw[] (gb) circle (1mm)
  node[right, ] {$e_1$};
\end{scope}}}} \ .
  \]
\end{example}

Our topological invariant uses Soergel bimodules associated with
green-dotted MOY graph, so we need an analogue of
Lemma~\ref{lem:pDG-ready} for that. % 

\begin{lem}\label{pDG-ready-green}
  Let $M$ be a singular Soergel bimodule associated with a
  green-dotted MOY graph $\Gamma$ and $x$ and element of $M$, then $\dif^p(x)$
  is a multiple of $p$. 
\end{lem}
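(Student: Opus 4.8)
The plan is to follow the strategy of Lemma~\ref{lem:pDG-ready} and reduce the claim to an elementary computation inside a polynomial ring, but now keeping track of the twist. By Definition~\ref{defn:greendots}, the differential on $M=\soergel(\Gamma)$ has the form $\dif^{\mathrm{tw}}=\dif_M+R_f$, where $\dif_M$ is the untwisted differential on the underlying Soergel bimodule and $R_f$ is multiplication by the degree-two twist $f$ determined by the green dots. Since the twist is an integer combination of first elementary symmetric functions of edge variables, it is in particular an integer combination $f=\sum_i d_i x_i$ of the underlying polynomial variables, and $f$ lies in a commutative subring, so left and right multiplication by $f$ agree. The underlying bimodule together with $\dif_M$ embeds $\dif$-equivariantly as a subring of a polynomial ring $R=\Z[x_1,\dots,x_N]$ with $\dif(x_i)=x_i^2$ (this is the content underlying Lemma~\ref{lem:pDG-ready}), and under this embedding $\dif^{\mathrm{tw}}$ becomes the restriction of $\dif+R_f$. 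Thus it suffices to show $(\dif+R_f)^p(g)\in pR$ for all $g\in R$.

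The key observation is that $\dif+R_f$ is a sum of pairwise commuting operators. Put $\dif_i:=x_i^2\frac{\partial}{\partial x_i}$ and $D_i:=\dif_i+d_iR_{x_i}$, so $\dif=\sum_i\dif_i$, $R_f=\sum_i d_iR_{x_i}$, and $\dif+R_f=\sum_i D_i$. For $i\neq j$ one has $[D_i,D_j]=0$, since the $\dif_i$ mutually commute, the $R_{x_i}$ mutually commute, and $\dif_i$ annihilates $x_j$. Hence, modulo $p$, the operators $D_i$ span a commutative $\F_p$-subalgebra of $\End(R/pR)$, and the Frobenius identity yields $(\dif+R_f)^p\equiv\sum_i D_i^p\pmod p$.

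It remains to prove $D_i^p\equiv 0\pmod p$ for each $i$, and since $D_i$ involves only $x_i$ this is a one-variable assertion: for $D:=x^2\frac{d}{dx}+cR_x$ acting on $\Z[x]$ with $c\in\Z$ (where $R_x$ is multiplication by $x$), one has $D^p\equiv 0\pmod p$. Indeed an immediate induction gives $D^n(x^k)=(k+c)(k+c+1)\cdots(k+c+n-1)\,x^{k+n}$; taking $n=p$, the coefficient is a product of $p$ consecutive integers and is therefore divisible by $p$ (in fact by $p!$). Thus $D^p(x^k)\in p\Z[x]$ for every $k$, so $D_i^p$ sends $R$ into $pR$, and consequently $(\dif+R_f)^p$ does as well, which proves the lemma.

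The step I expect to require the most care is the first one: making precise, for a general multi-vertex green-dotted MOY graph, the $\dif^{\mathrm{tw}}$-compatible embedding of $M$ into a polynomial ring and the identification of the twist with $\sum_i d_i x_i$; everything after that is elementary. As an alternative to the commuting-operator decomposition, one may compute $(\dif+R_f)^p\bmod p$ via Jacobson's formula $(a+b)^p\equiv a^p+b^p+\sum_{i=1}^{p-1}s_i(a,b)\pmod p$: using $[\dif,R_g]=R_{\dif(g)}$ one checks that the mixed Lie terms $s_i$ with $i<p-1$ vanish while $s_{p-1}\equiv R_{\dif^{p-1}(f)}$, so $(\dif+R_f)^p\equiv\dif^p+R_{f^p+\dif^{p-1}(f)}\pmod p$; then $\dif^p\equiv 0$ by Lemma~\ref{lem:pDG-ready}, and $f^p+\dif^{p-1}(f)\equiv 0\pmod p$ follows from Fermat's and Wilson's theorems since $f$ is linear in the $x_i$.
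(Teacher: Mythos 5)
Your proof is correct in substance but follows a genuinely different route from the paper's. The paper argues by induction on the total (absolute) multiplicity of the green dots: peeling off one unit of twist at a time, it establishes the explicit formula $\dif^k_M(x)=\sum_{i=0}^k\frac{k!}{i!}\dif^{i}_N(x)\,h_{k-i}$ (resp.\ with $e_{k-i}$ and signs for negative multiplicity) using Lemma~\ref{lem:dif-easy-sym}, and reads off $p$-divisibility from the coefficients $p!/i!$ together with the inductive hypothesis on $\dif_N^p$. You instead absorb the entire twist at once into a single linear form $f=\sum_i d_ix_i$, reduce to the ambient polynomial ring, split $\dif+R_f$ into the commuting one-variable operators $D_i=x_i^2\partial_i+d_iR_{x_i}$, and finish with Frobenius plus the observation that $D^p(x^k)$ carries a product of $p$ consecutive integers. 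Your computation $D^n(x^k)=(k+c)\cdots(k+c+n-1)x^{k+n}$ and the commutativity $[D_i,D_j]=0$ are correct, so once the reduction to the polynomial ring is in place the argument closes cleanly; it is arguably more conceptual than the paper's induction, and it isolates exactly why only linear twists are harmless.

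The one step you rightly flag does need repair as literally stated: a singular Soergel bimodule of a multi-level MOY graph does \emph{not} embed as a subring of a polynomial ring. Already $B=A\otimes_{A^{S_2}}A$ has zero divisors (e.g.\ $(x_1\otimes 1-1\otimes x_1)(x_1\otimes 1-1\otimes x_2)=0$), so no such embedding exists. The correct reduction is that $M\cong S/I$, where $S$ is a $\dif$-stable subring of a polynomial ring $R=\Z[x_1,\dots,x_N]$ (a tensor product over $\Z$ of rings of partial invariants, one set of variables per level of $\Gamma$) and $I$ is a $\dif$-stable ideal generated by the identifications along internal edges; the twist lifts to a linear form $\tilde f\in R$, and $\dif+R_{\tilde f}$ preserves both $S$ and $I$. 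Your polynomial-ring computation gives $(\dif+R_{\tilde f})^p(S)\subseteq pR\cap S$, and since $S$ is a ring of invariants in a torsion-free ring one has $pR\cap S=pS$, so the divisibility descends to the quotient $M=S/I$. With that adjustment your proof is complete. (Your Jacobson-formula alternative also checks out: with $f$ linear, $\dif^{p-1}(f)\equiv -\sum d_ix_i^p$ by Wilson and $f^p\equiv\sum d_ix_i^p$ by Fermat, so the correction terms cancel.)
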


\begin{proof}
  We argue by induction on the sum of absolute values of the multiplicities of the green dots on the graph
  $\Gamma$. If this sum is $0$, the case is covered by
  Lemma~\ref{lem:pDG-ready} since green dots with multiplicity $0$ can
  be removed.

  Suppose now that $\Gamma$ has a green dot $g$ of non-trivial
  multiplicity $\alpha$. Denote by $\ell$ the label of the edge
  containing this green dot $g$. To this edge, there is an associated
  polynomial algebra $\Z[x_1, \dots, x_\ell]^{S_\ell}$ generated
  algebraically by the elementary symmetric polynomials
  $(e_i)_{1\leq i \leq \ell}$.  Note that the complete homogeneous
  symmetric polynomials $(h_i)_{1\leq i \leq \ell}$) also form a
  generating set.

  We distinguish two cases: $\alpha>0$ and $\alpha<0$. Suppose first
  that $\alpha>0$ and denote by $N$ the $H^\prime$-equivariant
  Soergel bimodule associated with $\Gamma$ where the multiplicity of
  the green dot $g$ is $\alpha-1$. By induction, we know that
  $\dif_N^p(x)$ is a multiple of $p$ for all $x$ in $N$.

  Let $x$ be an element of $M$. By definition, one has:
  \[
    \dif_M(x) = \dif_N(x) + xh_1.
  \]
  By an easy induction (using Lemma~\ref{lem:dif-easy-sym}) one can show that:
  \[
    \dif^k_M(x) = \sum_{i=0}^k\frac{k!}{i!}\dif^{i}_N(x) h_{k-i}.
  \]
  Hence for $k=p$ we obtain that $\dif^k_M(x)$ is a multiple of $p$.

  Suppose now
  that $\alpha<0$ and denote by $N$, the
  Soergel bimodule associated with $\Gamma$ where the multiplicity of
  the green dot $g$ is $\alpha+1$. By induction, we know that
  $\dif_N^p(x)$ is a multiple of $p$ for all $x$ in $N$.

  Let $x$ be an element of $M$. By definition, one has:
  \[
    \dif_M(x) = \dif_N(x) - xe_1.
  \]
  By an easy induction (using Lemma~\ref{lem:dif-easy-sym}) one can show that:
  \[
    \dif^k_M(x) = \sum_{i=0}^k(-1)^i\frac{k!}{i!}\dif^{i}_N(x) e_{k-i}.
  \]
  Hence for $k=p$ we obtain that $\dif^k_M(x)$ is a multiple of $p$.
\end{proof}

The next result is a convenient tool which allows us to move certain
configurations of greens dots through vertices of a MOY graph.  We
refer to these manipulations of twisted structures on Soergel
bimodules as \emph{green dot migrations}.
\begin{lem}
  \label{lem:iso-geen-dots}
  The following $H^\prime$-equivariant bimodules are isomorphic
  \[
    \soergel\left(\NB{\tikz[font=\tiny]{\begin{scope}
  \coordinate (m) at (  0,  0);
  \coordinate (t) at (  0, 1);
  \coordinate (br) at (+.5,  -1);
  \coordinate (bl) at (-.5,  -1);
  \draw[>-] (bl) .. controls +(0,0.5) and + (0, 0) .. (m) node[pos =
  0, below] {$a$} coordinate[pos = 0.3] (ga);
  \draw[>-] (br) .. controls +(0,0.5) and + (0, 0) .. (m) node[pos =
  0, below] {$b$} coordinate[pos = 0.3] (gb);
  \draw[->] (m) -- (t) node[pos = 1, above] {$a+b$};
  \filldraw[draw= green!50!black, fill = white] (ga) circle (1mm)
  node[left, green!50!black] {$r$};
  \filldraw[draw= green!50!black, fill = white] (gb) circle (1mm)
  node[right, green!50!black] {$r$};
\end{scope}}}\right) \cong
    \soergel\left(\NB{\tikz[font=\tiny]{\begin{scope}
  \coordinate (m) at (  0,  0);
  \coordinate (t) at (  0, 1);
  \coordinate (br) at (+.5,  -1);
  \coordinate (bl) at (-.5,  -1);
  \draw[>-] (bl) .. controls +(0,0.5) and + (0, 0) .. (m) node[pos =
  0, below] {$a$} coordinate[pos = 0.3] (ga);
  \draw[>-] (br) .. controls +(0,0.5) and + (0, 0) .. (m) node[pos =
  0, below] {$b$} coordinate[pos = 0.3] (gb);
  \draw[->] (m) -- (t) node[pos = 1, above] {$a+b$} coordinate[pos = 0.7] (gc);
    \filldraw[draw= green!50!black, fill = white] (gc) circle (1mm)
  node[right, green!50!black] {$r$};
\end{scope}}}\right) , \qquad \qquad
    \soergel\left(\NB{\tikz[font=\tiny]{\begin{scope}
  \coordinate (m) at (  0,  0);
  \coordinate (b) at (  0, -1);
  \coordinate (tr) at (+.5,  1);
  \coordinate (tl) at (-.5,  1);
  \draw[->] (m) .. controls +(0,0) and + (0, -0.5) .. (tl) node[pos =
  1, above] {$a$} coordinate[pos = 0.7] (ga);
  \draw[->] (m) .. controls +(0,0) and + (0, -0.5) .. (tr) node[pos =
  1, above] {$b$} coordinate[pos = 0.7] (gb);
  \draw[>-] (b) -- (m) node[pos = 0, below] {$a+b$} coordinate[pos = 0.3] (gc);
   \filldraw[draw= green!50!black, fill = white] (gc) circle (1mm)
  node[left, green!50!black] {$r$};
\end{scope}}}\right) =
    \soergel\left(\NB{\tikz[font=\tiny]{\begin{scope}
  \coordinate (m) at (  0,  0);
  \coordinate (b) at (  0, -1);
  \coordinate (tr) at (+.5,  1);
  \coordinate (tl) at (-.5,  1);
  \draw[->] (m) .. controls +(0,0) and + (0, -0.5) .. (tl) node[pos =
  1, above] {$a$} coordinate[pos = 0.7] (ga);
  \draw[->] (m) .. controls +(0,0) and + (0, -0.5) .. (tr) node[pos =
  1, above] {$b$} coordinate[pos = 0.7] (gb);
  \draw[>-] (b) -- (m) node[pos = 0, below] {$a+b$};
  \filldraw[draw= green!50!black, fill = white] (ga) circle (1mm)
  node[left, green!50!black] {$r$};
  \filldraw[draw= green!50!black, fill = white] (gb) circle (1mm)
  node[right, green!50!black] {$r$};
\end{scope}}}\right) .
  \]
\end{lem}

\begin{proof}
  This follows directly from the fact that
  \[
    e_1(x_1,\ldots,x_k)+e_1(x_{k+1}, \ldots, x_n) = e_1(x_1,\ldots,
    x_n) .
  \]
\end{proof}
Note that in the previous lemma $\soergel(\Gamma)$ denoted the
$H^\prime$-equivariant Soergel bimodule associated with the
green-dotted MOY graph $\Gamma$. We keep this notation for the rest of
the paper.

\subsection{\texorpdfstring{$H^\prime$}{H'}-equivariant maps between Soergel bimodules}
\label{sec:pdgmaps}
We will make frequent use of the morphisms of $H^\prime$-equivariant Soergel bimodules
that we define next.
\begin{lem}
  \label{lem:pdgmaps}
  \begin{enumerate}
  \item\label{it:pdg-maps-MV} The maps
    \[
      \begin{array}{crcl}
        \mapA \colon\thinspace &\soergel\left(  \NB{\tikz[font=\tiny,
                                 scale =0.7]{\begin{scope}
  \coordinate (b) at ( 0,-0.50);
  \coordinate (m) at (0,0);
  \coordinate (tr) at (+1,  1);
  \coordinate (tm) at (0,  1);
  \coordinate (tl) at (-1,  1);
  \draw[>-, >= to] (b) -- (m) node[pos=0, below] {$a+b+c$};
  \draw[-to] (m) .. controls +(0,0) and + (0, -0.5) .. (tl) node[pos =
  1, above] {$a$} coordinate[pos= 0.5] (ml) coordinate[pos = 0.25] (lml);
  \draw[-to] (m) .. controls +(0,0) and + (0, -0.5) .. (tr) node[pos =
  1, above] {$c$};
  \draw[-to] (ml)  .. controls +(0,0) and + (0, -0.5) .. (tm) node[pos =
  1, above] {$b$};
  \node[left] (lml) {$a+b$};
\end{scope}}} \right) &
                                                                      \to & \soergel\left(  \NB{\tikz[font=\tiny, scale =0.7]{\begin{scope}
  \coordinate (b) at ( 0,-0.50);
  \coordinate (m) at (0,0);
  \coordinate (tr) at (+1,  1);
  \coordinate (tm) at (0,  1);
  \coordinate (tl) at (-1,  1);
  \draw[>-, >= to] (b) -- (m) node[pos=0, below] {$a+b+c$};
  \draw[-to] (m) .. controls +(0,0) and + (0, -0.5) .. (tl) node[pos =
  1, above] {$a$};
  \draw[-to] (m) .. controls +(0,0) and + (0, -0.5) .. (tr) node[pos =
  1, above] {$c$}  coordinate[pos= 0.5] (mr) coordinate[pos = 0.25] (lmr);
  \draw[-to] (mr)  .. controls +(0,0) and + (0, -0.5) .. (tm) node[pos =
  1, above] {$b$};
  \node[right] (lmr) {$b+c$};
\end{scope}}} \right)  \\
                               &1  & \mapsto & 1
      \end{array}
    \]
    and
    \[
      \begin{array}{crcl}
        \mapA \colon\thinspace &\soergel\left( \NB{\tikz[font=\tiny, scale =0.7]{\begin{scope}
  \coordinate (t) at ( 0,0.50);
  \coordinate (m) at (0,0);
  \coordinate (br) at (+1,  -1);
  \coordinate (bm) at ( 0,  -1);
  \coordinate (bl) at (-1,  -1);
  \draw[-to] (m) -- (t) node[pos=1, above] {$a+b+c$};
  \draw[>-, >=to] (bl) .. controls +(0,0.5) and + (0, 0) .. (m) node[pos =
  0, below] {$a$} coordinate[pos= 0.5] (ml) coordinate[pos = 0.75] (lml);
  \draw[>-, >=to] (br) .. controls +(0,0.5) and + (0, -0) .. (m) node[pos =
  0, below] {$c$};
  \draw[>-, >=to] (bm)  .. controls +(0,0.5) and + (0, 0) .. (ml) node[pos =
  0, below] {$b$};
  \node[left] (lml) {$a+b$};
\end{scope}}} \right) & \to &
                                                                                                \soergel\left(  \NB{\tikz[font=\tiny, scale =0.7]{\begin{scope}
  \coordinate (t) at ( 0,0.50);
  \coordinate (m) at (0,0);
  \coordinate (br) at (+1,  -1);
  \coordinate (bm) at ( 0,  -1);
  \coordinate (bl) at (-1,  -1);
  \draw[-to] (m) -- (t) node[pos=1, above] {$a+b+c$};
  \draw[>-, >=to] (bl) .. controls +(0,0.5) and + (0, 0) .. (m) node[pos =
  0, below] {$a$};
  \draw[>-, >=to] (br) .. controls +(0,0.5) and + (0, -0) .. (m) node[pos =
  0, below] {$c$}  coordinate[pos= 0.5] (mr) coordinate[pos = 0.75] (lmr);
  \draw[>-, >=to] (bm)  .. controls +(0,0.5) and + (0, 0) .. (mr) node[pos =
  0, below] {$b$};
  \node[right] (lmr) {$b+c$};
\end{scope}}} \right)  \\
                               & 1 & \mapsto & 1
      \end{array}
    \]
    are isomorphisms of $H^\prime$-equivariant bimodules. Their inverses are also
    denoted by $\mapA$.
  \item\label{it:pdg-maps-splits} The maps
    \[
      \begin{array}{crcl}
        \mapH \colon\thinspace &\soergel\left(  \NB{\tikz[font=\tiny, scale =0.7]{\begin{scope}
  \coordinate (bl) at (-0.5, -1);
  \coordinate (br) at ( 0.5, -1);
  \coordinate (bm) at (  0,-0.3);
  \coordinate (tl) at (-0.5,  1);
  \coordinate (tr) at ( 0.5,  1);
  \coordinate (tm) at (  0, 0.3);
  \draw[>-]  (bl) .. controls +( 0, 0.5) and +(0,0) .. (bm)
  node[below, pos = 0] {$a$};
  \draw[>-]  (br) .. controls +( 0, 0.5) and +(0,0) .. (bm)
  node[below, pos = 0] {$b+c$};
  \draw[<-]  (tl) .. controls +( 0, -0.5) and +(0,0) .. (tm)
  node[above, pos = 0] {$a+c$};
  \draw[<-]  (tr) .. controls +( 0, -0.5) and +(0,0) .. (tm)
  node[above, pos = 0] {$b$};
  \draw [->-] (bm) -- (tm) node[left, pos = 0.5] {$a+b+c$};
\end{scope}}} \right) & \to & \soergel\left(  \NB{\tikz[font=\tiny, scale =0.7]{\begin{scope}
  \coordinate (bl) at (-0.5, -1);
  \coordinate (br) at ( 0.5, -1);
  \coordinate (tl) at (-0.5,  1);
  \coordinate (tr) at ( 0.5,  1);
  \draw[>->] (bl) -- (tl) node[pos = 0, below] {$a$} node[pos = 1,
  above] {$a+c$} coordinate[pos = 0.6] (ml);
  \draw[>->] (br) -- (tr) node[pos = 0, below] {$b+c$} node[pos = 1, above] {$b$} coordinate[pos = 0.4] (mr);
  \draw[->-] (mr) -- (ml) node [pos= 0.5, above] {$c$};
\end{scope}}} \right)  \\
                               &1  & \mapsto & 1
      \end{array}
    \]
    and
    \[
      \begin{array}{crcl}
        \mapH \colon\thinspace &\soergel\left(  \NB{\tikz[font=\tiny, scale =0.7]{\begin{scope}
  \coordinate (bl) at (-0.5, -1);
  \coordinate (br) at ( 0.5, -1);
  \coordinate (bm) at (  0,-0.3);
  \coordinate (tl) at (-0.5,  1);
  \coordinate (tr) at ( 0.5,  1);
  \coordinate (tm) at (  0, 0.3);
  \draw[>-]  (bl) .. controls +( 0, 0.5) and +(0,0) .. (bm)
  node[below, pos = 0] {$a+c$};
  \draw[>-]  (br) .. controls +( 0, 0.5) and +(0,0) .. (bm)
  node[below, pos = 0] {$b$};
  \draw[<-]  (tl) .. controls +( 0, -0.5) and +(0,0) .. (tm)
  node[above, pos = 0] {$a$};
  \draw[<-]  (tr) .. controls +( 0, -0.5) and +(0,0) .. (tm)
  node[above, pos = 0] {$b+c$};
  \draw [->-] (bm) -- (tm) node[left, pos = 0.5] {$a+b+c$};
\end{scope}}} \right) & \to &
                                                                                                   \soergel\left(  \NB{\tikz[font=\tiny, scale =0.7]{\begin{scope}
  \coordinate (bl) at (-0.5, -1);
  \coordinate (br) at ( 0.5, -1);
  \coordinate (tl) at (-0.5,  1);
  \coordinate (tr) at ( 0.5,  1);
  \draw[>->] (bl) -- (tl) node[pos = 0, below] {$a+c$} node[pos = 1,
  above] {$a$} coordinate[pos = 0.4] (ml);
  \draw[>->] (br) -- (tr) node[pos = 0, below] {$b$} node[pos = 1, above] {$b+c$} coordinate[pos = 0.6] (mr);
  \draw[->-] (ml) -- (mr) node [pos= 0.5, above] {$c$};
\end{scope}}} \right)  \\
                               & 1 & \mapsto & 1
      \end{array}
    \]
    are morphisms of $H^\prime$-equivariant bimodules.
  \item\label{it:pdg-maps-merges} The maps
    \[
      \begin{array}{crcl}
        \mapX \colon\thinspace &\soergel\left(  \NB{\tikz[font=\tiny, scale =0.7]{}} \right) & \to & \soergel\left(  \NB{\tikz[font=\tiny, scale =0.7]{\begin{scope}
  \coordinate (bl) at (-0.5, -1);
  \coordinate (br) at ( 0.5, -1);
  \coordinate (bm) at (  0,-0.3);
  \coordinate (tl) at (-0.5,  1);
  \coordinate (tr) at ( 0.5,  1);
  \coordinate (tm) at (  0, 0.3);
  \draw[>-]  (bl) .. controls +( 0, 0.5) and +(0,0) .. (bm)
  node[below, pos = 0] {$a$} coordinate[pos =0.3] (ga);
  \draw[>-]  (br) .. controls +( 0, 0.5) and +(0,0) .. (bm)
  node[below, pos = 0] {$b+c$};
  \draw[<-]  (tl) .. controls +( 0, -0.5) and +(0,0) .. (tm)
  node[above, pos = 0] {$a+c$};
  \draw[<-]  (tr) .. controls +( 0, -0.5) and +(0,0) .. (tm)
  node[above, pos = 0] {$b$} coordinate[pos =0.3] (gb);
  \draw [->-] (bm) -- (tm) node[left, pos = 0.5] {$a+b+c$};
  \filldraw[draw= green!50!black, fill = white] (gb) circle (1mm)
  node[right, green!50!black] {$-a$};
  \filldraw[draw= green!50!black, fill = white] (ga) circle (1mm)
  node[left, green!50!black] {$-b$};
\end{scope}}} \right)  \\
                               &1  & \mapsto & \displaystyle{\prod_{i=1}^a \prod_{j=1}^b(x_{a+c+j}\otimes 1 -
                                               1\otimes x_{i})}
      \end{array}
    \]
    and
    \[
      \begin{array}{crcl}
        \mapX \colon\thinspace &\soergel\left(  \NB{\tikz[font=\tiny, scale =0.7]{}} \right) & \to &
                                                                                              \soergel\left(  \NB{\tikz[font=\tiny, scale =0.7]{\begin{scope}
  \coordinate (bl) at (-0.5, -1);
  \coordinate (br) at ( 0.5, -1);
  \coordinate (bm) at (  0,-0.3);
  \coordinate (tl) at (-0.5,  1);
  \coordinate (tr) at ( 0.5,  1);
  \coordinate (tm) at (  0, 0.3);
  \draw[>-]  (bl) .. controls +( 0, 0.5) and +(0,0) .. (bm)
  node[below, pos = 0] {$a+c$};
  \draw[>-]  (br) .. controls +( 0, 0.5) and +(0,0) .. (bm)
  node[below, pos = 0] {$b$} coordinate[pos =0.3] (gb);
  \draw[<-]  (tl) .. controls +( 0, -0.5) and +(0,0) .. (tm)
  node[above, pos = 0] {$a$} coordinate[pos =0.3] (ga);
  \draw[<-]  (tr) .. controls +( 0, -0.5) and +(0,0) .. (tm)
  node[above, pos = 0] {$b+c$};
  \draw [->-] (bm) -- (tm) node[left, pos = 0.5] {$a+b+c$};
  \filldraw[draw= green!50!black, fill = white] (gb) circle (1mm)
  node[right, green!50!black] {$-a$};
  \filldraw[draw= green!50!black, fill = white] (ga) circle (1mm)
  node[left, green!50!black] {$-b$};
\end{scope}}} \right)  \\
                               & 1 & \mapsto & \displaystyle{\prod_{i=1}^a \prod_{j=1}^b(1\otimes x_{a+c+j} -
                                               x_{i}\otimes 1 )}
      \end{array}
    \]
    are morphisms of $H^\prime$-equivariant bimodules.
  \item\label{it:pdg-maps-merges-special} Let $a_1, a_2, b_1$ and
    $b_2$ be four integers such that $a_1 + a_2 = -a$ and
    $b_1 + b_2 = -b$. Then the map
    \[
      \begin{array}{crcl}
        \mapX \colon\thinspace &\soergel\left(  \NB{\tikz[font=\tiny, scale =0.7]{\begin{scope}
  \coordinate (bl) at (-0.5, -1);
  \coordinate (br) at ( 0.5, -1);
  \coordinate (tl) at (-0.5,  1);
  \coordinate (tr) at ( 0.5,  1);
  \draw[>->] (bl) -- (tl) node[pos = 0, below] {$a$} node[pos = 1, above] {$a$};
  \draw[>->] (br) -- (tr) node[pos = 0, below] {$b$} node[pos = 1, above] {$b$};
\end{scope}}} \right) & \to & \soergel\left(  \NB{\tikz[font=\tiny, scale =0.7]{\begin{scope}
  \coordinate (bl) at (-0.5, -1);
  \coordinate (br) at ( 0.5, -1);
  \coordinate (bm) at (  0,-0.3);
  \coordinate (tl) at (-0.5,  1);
  \coordinate (tr) at ( 0.5,  1);
  \coordinate (tm) at (  0, 0.3);
  \draw[>-]  (bl) .. controls +( 0, 0.5) and +(0,0) .. (bm)
  node[below, pos = 0] {$a$} coordinate[pos =0.3] (ga1);
  \draw[>-]  (br) .. controls +( 0, 0.5) and +(0,0) .. (bm)
  node[below, pos = 0] {$b$} coordinate[pos =0.3] (gb1);
  \draw[<-]  (tl) .. controls +( 0, -0.5) and +(0,0) .. (tm)
  node[above, pos = 0] {$a$} coordinate[pos =0.3] (ga2);
  \draw[<-]  (tr) .. controls +( 0, -0.5) and +(0,0) .. (tm)
  node[above, pos = 0] {$b$} coordinate[pos =0.3] (gb2);
  \draw [->-] (bm) -- (tm) node[left, pos = 0.5] {$a+b$};
  \filldraw[draw= green!50!black, fill = white] (gb1) circle (1mm)
  node[right, green!50!black] {$a_1$};
  \filldraw[draw= green!50!black, fill = white] (gb2) circle (1mm)
  node[right, green!50!black] {$a_2$};
  \filldraw[draw= green!50!black, fill = white] (ga1) circle (1mm)
  node[left, green!50!black] {$b_1$};
  \filldraw[draw= green!50!black, fill = white] (ga2) circle (1mm)
  node[left, green!50!black] {$b_2$};
\end{scope}}} \right)  \\
                               &1  & \mapsto & \displaystyle{\prod_{i=1}^a \prod_{j=1}^b(x_{a+j}\otimes 1 -
                                               1\otimes x_{i})} \\ &&& \displaystyle{= \prod_{i=1}^a \prod_{j=1}^b(1\otimes x_{a+j} -
                                                                       x_{i}\otimes 1 ) }
      \end{array}
    \]
    is a morphism of $H^\prime$-equivariant bimodules.
  \item\label{it:pdg-maps-digons} The maps
    \[
      \begin{array}{crcl}
        \mapB \colon\thinspace &\soergel\left(  \NB{\tikz[font=\tiny, scale =0.7]{\begin{scope}
  \coordinate (bm) at ( 0, -1);
  \coordinate (tm) at ( 0, 1);
  \draw[>->] (bm) -- (tm) node[above, pos =1] {$a+b$} node[below, pos =0] {$a+b$};
\end{scope}}} \right) & \to & \soergel\left(  \NB{\tikz[font=\tiny, scale =0.7]{\begin{scope}
  \coordinate (bm) at ( 0, -1);
  \coordinate (cm) at ( 0, -0.6);
  \coordinate (sm) at ( 0,  0.6);
  \coordinate (tm) at ( 0, 1);
  \draw[->] (sm) -- (tm) node[above, pos =1] {$a+b$};
  \draw[>-] (bm) -- (cm) node[below, pos =0] {$a+b$};
  \draw[->-] (cm) .. controls + ( 0.6, 0.6) and + ( 0.6, -0.6) .. (sm)
  node [pos = 0.5, right] {$b$};
  \draw[->-] (cm) .. controls + (-0.6, 0.6) and + (-0.6, -0.6) .. (sm)
  node [pos = 0.5,  left] {$a$};
\end{scope}}} \right)  \\
                               &1  & \mapsto & 1
      \end{array}
    \]
    and
    \[
      \begin{array}{crcl}
        \mapU \colon\thinspace
        & \soergel\left(\NB{\tikz[font=\tiny, scale =0.7]{\begin{scope}
  \coordinate (bm) at ( 0, -1);
  \coordinate (cm) at ( 0, -0.6);
  \coordinate (sm) at ( 0,  0.6);
  \coordinate (tm) at ( 0, 1);
  \draw[->] (sm) -- (tm) node[above, pos =1] {$a+b$};
  \draw[>-] (bm) -- (cm) node[below, pos =0] {$a+b$};
  \draw[->-] (cm) .. controls + ( 0.6, 0.6) and + ( 0.6, -0.6) .. (sm)
  node [pos = 0.5, right] {$b$} coordinate[pos = 0.8] (gb);
  \draw[->-] (cm) .. controls + (-0.6, 0.6) and + (-0.6, -0.6) .. (sm)
  node [pos = 0.5,  left] {$a$} coordinate[pos = 0.2] (ga);
  \filldraw[draw= green!50!black, fill = white] (ga) circle (1mm)
  node[below, green!50!black] {$-b$};
  \filldraw[draw= green!50!black, fill = white] (gb) circle (1mm)
  node[above,  green!50!black] {$-a$};
\end{scope}}}\right)
        & \to 
        & \soergel\left(  \NB{\tikz[font=\tiny, scale =0.7]{}} \right)  \\
        & P \otimes Q
        & \mapsto
        & \displaystyle{\sum_{\substack{I\sqcup J = \{1,\dots, a+b\}\\ |I| = a \\
        |J| =b}} \frac{P(x_{I})Q(x_J)}{\prod_{j \in J} \prod_{i \in
        I}(x_j - x_i)}}
      \end{array}
    \]
    are morphisms of $H^\prime$-equivariant bimodules.
  \item\label{it:pdg-maps-mult-e} The map
    \[
      \begin{array}{crcl}
        \mapE^i \colon\thinspace &\soergel\left(  \NB{\tikz[font=\tiny, scale =0.7]{\begin{scope}
  \coordinate (bm) at ( 0, -1);
  \coordinate (tm) at ( 0, 1);
  \draw[>->] (bm) -- (tm) node[above, pos =1] {$a$} node[below, pos =0] {$a$};
\end{scope}}} \right) & \to & \soergel\left(  \NB{\tikz[font=\tiny, scale =0.7]{\begin{scope}
  \coordinate (bm) at ( 0, -1);
  \coordinate (tm) at ( 0, 1);
  \draw[>->] (bm) -- (tm) node[above, pos =1] {$a$} node[below, pos
  =0] {$a$} coordinate[pos = 0.5] (g);
  \filldraw[draw= green!50!black, fill = white] (g) circle (1mm)
  node[left, green!50!black] {$-i$};
\end{scope}}} \right)  \\
                                 &1  & \mapsto & e_a^i
      \end{array}
    \]
    is a morphism of $H^\prime$-equivariant bimodules.
  \item\label{it:pdg-maps-mults-delta} The maps
    \[
      \begin{array}{crcl}
        \mapD \colon\thinspace &
                                 \soergel\left(\NB{\tikz[font=\tiny, scale =0.7]{}} \right) & \to &
                                                                                                  \soergel\left(\NB{\tikz[font=\tiny, scale =0.7]{\begin{scope}
  \coordinate (m) at (  0,  0);
  \coordinate (t) at (  0, 1);
  \coordinate (br) at (+.5,  -1);
  \coordinate (bl) at (-.5,  -1);
  \draw[>-] (bl) .. controls +(0,0.5) and + (0, 0) .. (m) node[pos =
  0, below] {$a$} coordinate[pos = 0.3] (ga);
  \draw[>-] (br) .. controls +(0,0.5) and + (0, 0) .. (m) node[pos =
  0, below] {$b$} coordinate[pos = 0.3] (gb);
  \draw[->] (m) -- (t) node[pos = 1, above] {$a+b$};
  \filldraw[draw= green!50!black, fill = white] (ga) circle (1mm)
  node[left, green!50!black] {$-b$};
  \filldraw[draw= green!50!black, fill = white] (gb) circle (1mm)
  node[right, green!50!black] {$-a$};
\end{scope}}} \right)  \\
                               &1  & \mapsto & \displaystyle{
                                               \prod_{i=1}^a
                                               \prod_{j=1}^b (1
                                               \otimes x_{a+j} - 1
                                               \otimes x_i)        }
      \end{array}
    \]
    and
    \[
      \begin{array}{crcl}
        \mapN \colon\thinspace & \soergel\left(\NB{\tikz[font=\tiny, scale =0.7]{\begin{scope}
  \coordinate (m) at (  0,  0);
  \coordinate (b) at (  0, -1);
  \coordinate (tr) at (+.5,  1);
  \coordinate (tl) at (-.5,  1);
  \draw[->] (m) .. controls +(0,0) and + (0, -0.5) .. (tl) node[pos =
  1, above] {$a$};
  \draw[->] (m) .. controls +(0,0) and + (0, -0.5) .. (tr) node[pos =
  1, above] {$b$};
  \draw[>-] (b) -- (m) node[pos = 0, below] {$a+b$};
\end{scope}}} \right) & \to     & \soergel\left(\NB{\tikz[font=\tiny, scale =0.7]{\begin{scope}
  \coordinate (m) at (  0,  0);
  \coordinate (b) at (  0, -1);
  \coordinate (tr) at (+.5,  1);
  \coordinate (tl) at (-.5,  1);
  \draw[->] (m) .. controls +(0,0) and + (0, -0.5) .. (tl) node[pos =
  1, above] {$a$} coordinate[pos = 0.7] (ga);
  \draw[->] (m) .. controls +(0,0) and + (0, -0.5) .. (tr) node[pos =
  1, above] {$b$} coordinate[pos = 0.7] (gb);
  \draw[>-] (b) -- (m) node[pos = 0, below] {$a+b$};
  \filldraw[draw= green!50!black, fill = white] (ga) circle (1mm)
  node[left, green!50!black] {$-b$};
  \filldraw[draw= green!50!black, fill = white] (gb) circle (1mm)
  node[right, green!50!black] {$-a$};
\end{scope}}} \right) \\
                               & 1                                                        & \mapsto & \displaystyle{\prod_{i=1}^a \prod_{j=1}^b (x_{a+j} \otimes 1 - x_i \otimes 1)}
      \end{array}
    \]
    are morphisms of $H^\prime$-equivariant bimodules.
  \end{enumerate}
\end{lem}

\begin{proof}

  The fact that these maps are indeed bimodule maps is not completely
  trivial but classical. We leave this to the reader and we focus on
  proving that these maps respect the $H^\prime$-module structures.  Items
  \ref{it:pdg-maps-MV} and \ref{it:pdg-maps-splits} are trivial.

  The argument for the two maps of item \ref{it:pdg-maps-merges} are
  similar (in fact the maps are intertwined by reordering
  variables). We only deal with the first one.
  Denote by $M$ and $M'$ the Soergel bimodules which are respectively
  the source and the target of the map. Since $\mapX$ is a bimodule
  map, it is enough to show that
  $\dif_{M'} \circ \mapX (1) = \mapX \circ \dif_M(1)$, in other words
  that $\dif_{M'} \circ \mapX (1) =0$.
  \begin{align*}
    \dif_{M'} \circ \mapX (1) &= \dif_{M'} \left( \prod_{i=1}^a \prod_{j=1}^b(x_{a+c+j}\otimes 1 -
                                1\otimes x_{i})\right) \\
                              &= \dif \left( \prod_{i=1}^a \prod_{j=1}^b(x_{a+c+j}\otimes 1 -
                                1\otimes x_{i}) \right) +
                                \dif_{M'}(1)\left(\prod_{i=1}^a \prod_{j=1}^b(x_{a+c+j}\otimes 1 -
                                1\otimes x_{i}) \right) \\
                              &= % 
                                \left(a e_1(x_{a+c+1}, \dots, x_{a+c+b})\otimes 1 + b\otimes e_1(x_1, \dots, x_a) \right)
                                \left(
                                \prod_{i=1}^a \prod_{j=1}^b(x_{a+c+j}\otimes 1 -1\otimes x_{i}) 
                                \right) \\
                              &\qquad
                                + \left(1 \otimes (-be_1(x_1, \dots x_a)) -a e_1(x_{a+c+1}, \dots, x_{a+c+b}) \otimes 1 \right)
                                \left(
                                \prod_{i=1}^a \prod_{j=1}^b(x_{a+c+j}\otimes 1 -1\otimes x_{i})
                                \right) \\
                              &= 0,
  \end{align*}
  where the third identity comes from Lemma
  ~\ref{lem:dif-delta-nabla}.

  Item ~\ref{it:pdg-maps-merges-special} is a straightforward
  consequence of item~\ref{it:pdg-maps-merges} after further twisting
  the first tensor factor by $\gamma e_1$ (a multiple of the first
  elementary symmetric function in all of the $a+b$ variables) and further
  twisting the second factor by $-\gamma e_1$.

  For item~\ref{it:pdg-maps-digons}, compatibility of the map $\mapB$
  with $H^\prime$ is straightforward to show. We will now
  consider the map $\mapU$.  Denote by $M$ and $M'$ the Soergel
  bimodules which are respectively the source and the target of the
  map. We want to prove that
  $\dif_{M'} \circ \mapU (P\otimes Q) = \mapU \circ \dif_M(P\otimes
  Q)$.

  We first remark, that even though the expression $\mapU(P\otimes Q)$
  is written as a rational function, it is indeed a polynomial.  There
  is no difficulty in extending $\dif$ to rational fractions.  We
  compute:
  \begin{align*}
    \dif_{M'}\circ \mapU (P \otimes Q)
    &= \dif\left( \sum_{\substack{I\sqcup J = \{1,\dots, a+b\}\\ |I| = a,~
    |J| =b}} \frac{P(x_{I})Q(x_J)}{\prod_{j \in J} \prod_{i \in
    I}(x_j - x_i)} \right)
    \\
    &= \sum_{\substack{I\sqcup J = \{1,\dots, a+b\}\\ |I| = a,~
    |J| =b}} \dif \left(\frac{P(x_{I})Q(x_J)}{\prod_{j \in J} \prod_{i \in
    I}(x_j - x_i)} \right) \\
    &= \sum_{\substack{I\sqcup J = \{1,\dots, a+b\}\\ |I| = a,~
    |J| =b}}
    \frac{\dif\left(
    P(x_{I})Q(x_J)\right)
    \prod_{j \in J} \prod_{i \in I}(x_j - x_i)
    - P(x_{I})Q(x_J) \dif \left(\prod_{j \in J} \prod_{i \in
    I}(x_j - x_i) \right)
    }
    {
    \left(\prod_{j \in J} \prod_{i \in
    I}(x_j - x_i)\right)^2} \\
    &= \sum_{\substack{I\sqcup J = \{1,\dots, a+b\}\\ |I| = a,~
    |J| =b}}
    \frac{\dif\left(
    P(x_{I})Q(x_J)\right) 
    - P(x_{I})Q(x_J) (be_1(x_I) + ae_1(x_J)) 
    }
    {
    \left(\prod_{j \in J} \prod_{i \in
    I}(x_j - x_i)\right)}  \\
    &= \mapU(\dif_M(P\otimes Q)).
  \end{align*}
  
  Item~\ref{it:pdg-maps-mult-e} is
  straightforward. Item~\ref{it:pdg-maps-mults-delta} follows from
  computations similar to that for \ref{it:pdg-maps-merges} (and are
  in fact easier).
\end{proof}

\begin{rem}
  \label{rmk:nopdgmaps}
  If one forgets the $H^\prime$-module structures, all maps given in
  Lemma~\ref{lem:pdgmaps} are morphisms between Soergel
  bimodules. However the list given is not enough to generate all the
  morphism spaces. To fix this, one should generalize the map in
  item~\ref{it:pdg-maps-mult-e} to multiplication by an arbitrary
  homogeneous symmetric polynomial. However such a map, does not
  respect in general the $H^\prime$-module structure. In particular, the usual
  isomorphisms known as categorified MOY calculus (see
  \cite{MSV2} for instance) do not have analogues in general in the category of $H^\prime$-equivariant Soergel
  bimodules.
\end{rem}

\subsection{The uncolored Koszul complex and Cautis differential}
In this subsection, we let $A=A_n=\myZ[x_1,\dots, x_n]$. First we
consider the $H^\prime$-equivariant Koszul complex in one-variable:
\begin{equation}\label{eqn-ordinary-Koszul-with-Cautis-d}
  \K_1:  0 \lra a q^2 \myZ[x]^x\otimes \myZ[x]^{x} \xrightarrow{d_\K=x\otimes 1 -1\otimes x } \myZ[x]\otimes \myZ[x]\lra 0\ .
\end{equation}
Define on $\K_1$ the graded bimodule map $d_C$ introduced by Cautis
(see \cite[Section 6]{Cautisremarks} and \cite[Section 5.2.2]{RW}),
which we will call the \emph{Cautis differential} and is given by
\begin{equation}
  d_C(f)=(x^{2}\otimes 1) f
\end{equation} 
for any $f\in aq^2 \myZ[x]^x\otimes \myZ[x]^{x}$ and is zero on
$\myZ[x]\otimes \myZ[x]$. By construction, $d_C$ is an endomorphsim
of the Koszul complex $\K_1$ of bidegree $(-1,2)$.

\begin{lem} \label{lemma-acylicity-commutator-relation} The commutator
  of the endomorphisms $d_C$ and % 
$\dif$ is null-homotopic on the Koszul complex $\K_1$.
\end{lem}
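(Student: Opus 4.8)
The plan is to compute $[d_C,\dif]$ on $\K_1$ explicitly and then exhibit an elementary null-homotopy. Write $x_1=x\otimes 1$ and $x_2=1\otimes x$, so the bottom term of $\K_1$ is the polynomial ring $R:=\myZ[x_1,x_2]$ carrying the $H^\prime$-action $\dif(x_i)=x_i^2$, and the top term is the same ring, $R':=\myZ[x]^x\otimes\myZ[x]^x$, with the twisted differential that — by the definition of the twist $M^x$, equivalently because $d_\K=x_1-x_2$ is $H^\prime$-equivariant — is $\dif_{\K_1}(F)=\dif(F)+e_1(x_1,x_2)F=\dif(F)+(x_1+x_2)F$ for $F\in R'$. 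Recall also that $d_C$ sends $F\in R'$ to $x_1^2F\in R$ and is zero on $R$. First I would observe that $[d_C,\dif]$ is a chain endomorphism of $\K_1$: both $d_C$ and $\dif$ commute with $d_\K$ (the former by construction, the latter because $d_\K$ is $H^\prime$-equivariant), and since $d_C$ vanishes on $R$ the commutator is concentrated in the single component $R'\to R$.

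Next, a one-line Leibniz computation, using $\dif(x_1^2)=2x_1^3$ together with the twisted-differential formula on $R'$, gives for $F\in R'$
\[
 [d_C,\dif](F)=d_C\bigl(\dif(F)+(x_1+x_2)F\bigr)-\dif(x_1^2F)=x_1^2\dif(F)+x_1^2(x_1+x_2)F-2x_1^3F-x_1^2\dif(F)=-x_1^2(x_1-x_2)F,
\]
the $x_1^2\dif(F)$ terms cancelling. In other words $[d_C,\dif]$ equals $-(x_1^2\cdot)\circ d_\K$ as a map $R'\to R$, where $(x_1^2\cdot)$ denotes multiplication by $x_1^2$ on $R$.

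Finally I would produce the homotopy: let $h\colon\K_1\to\K_1$ be the bimodule endomorphism given by multiplication by $-x_1^2$ on the bottom term $R$ and by $0$ on the top term $R'$. Since $d_\K$ is zero on $R$ (the complex ends there), one checks directly that $d_\K\circ h+h\circ d_\K$ is $-(x_1^2\cdot)\circ d_\K$ on $R'$ and $0$ on $R$, that is, $d_\K\circ h+h\circ d_\K=[d_C,\dif]$. Hence $[d_C,\dif]$ is null-homotopic as an endomorphism of the complex of bimodules, a fortiori after forgetting the $H^\prime$-structure, which is the sense relevant for the relative homotopy category. The only genuinely delicate point is pinning down the twisted $H^\prime$-action on the top term of $\K_1$ — equivalently the coefficient $e_1(x_1,x_2)$ of the twist — and keeping the homological and quantum grading shifts straight; once that is done everything else is immediate, and in fact the chosen $h$ happens to be $H^\prime$-equivariant, though that is not needed.
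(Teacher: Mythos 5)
Your proof is correct and takes essentially the same route as the paper's: compute $[\dif,d_C]$ on the top term of $\K_1$ using the twisted $H^\prime$-action there, observe that the result is $\mp(x^2\otimes 1)\circ d_\K$, and conclude null-homotopy (your degree-zero homotopy, multiplication by $-x_1^2$ on the bottom term, is if anything bookkept more carefully than the paper's, which records the corresponding map as going from the bottom term up to the top term). The one slip is your final parenthetical: $h$ is \emph{not} $H^\prime$-equivariant, since $\dif(-x_1^2F)=-2x_1^3F-x_1^2\dif(F)\neq -x_1^2\dif(F)$, but as you correctly note this is irrelevant to the lemma.
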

\begin{proof}
  The commutator $[\dif, d_C]$ is only nonzero on
  $aq^2\myZ[x]^x\otimes \myZ[x]^x$. Let $1^x\otimes 1^x$ be the
  generator of $aq^2\myZ[x]^x\otimes \myZ[x]^x$ and $1\otimes 1$ be
  the generator of $\myZ[x]\otimes \myZ[x]$. Then we have
  \begin{eqnarray}
    [\dif,d_C]( 1
    \otimes 1) & = & \dif(d_C( 1^x \otimes 1^x))-d_C \dif(1^x\otimes 1^x)  =  2x^31 \otimes 1- d_C(x1^x\otimes 1^x+1^x\otimes x1^x) \nonumber \\
               & = & x^3 1\otimes 1-x^21\otimes x1 = (x^2\otimes 1)d_{\K}(1^x\otimes 1^x).
  \end{eqnarray}
  Thus the homotopy can be taken to be
  \begin{equation}
    h: \myZ[x]\otimes \myZ[x] \lra aq^2\myZ[x]^x\otimes \myZ[x]^x, \quad \quad 1\otimes 1\mapsto x^21^x\otimes 1^x,
  \end{equation}
  and zero everywhere else. The lemma follows.
\end{proof}

The Koszul complex $\K_n=\K_1^{\otimes n}$ inherits the endomorphism
$d_C$ by forming the $n$-fold tensor product from the one-variable
case. It follows that, for a given $H^\prime$-equivariant bimodule $M$ over $A$, there
is an induced differential, still denoted by $d_C$, given via the
identification
\begin{equation}
  \mHH_\bullet^{\dif}(M)\cong \mH_\bullet (M\otimes_{(A, A)} \K_n),
\end{equation}
where the induced differential acts on the right hand side by $\Id_M \otimes
d_C$. By construction, $d_C$ has Hochschild degree $-1$ and $q$-degree
$2$.

\begin{cor}\label{cor-dC-commutes-with-H}
  The induced differential $d_C$ on $\mHH^{\dif}_\bullet(M)$ commutes
  with the $H^\prime$-action.
\end{cor}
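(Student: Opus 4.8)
The plan is to deduce Corollary~\ref{cor-dC-commutes-with-H} from Lemma~\ref{lemma-acylicity-commutator-relation} by passing to the $n$-variable Koszul complex and then to Hochschild homology. First I would observe that the one-variable statement $[\dif, d_C] \simeq 0$ on $\K_1$ upgrades to the statement that $[\dif, d_C]$ is null-homotopic on $\K_n = \K_1^{\otimes n}$: the differential $\dif$ acts on the tensor product via the coproduct $\Delta(\dif) = \dif\otimes 1 + 1\otimes \dif$ of the Hopf algebra $H^\prime$, so $[\dif, d_C]$ on $\K_n$ is a sum of terms each of which is $[\dif,d_C]$ acting in a single tensor factor, tensored with identities on the remaining factors. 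Since $[\dif,d_C]$ is null-homotopic on $\K_1$ via the explicit homotopy $h$ from the proof of Lemma~\ref{lemma-acylicity-commutator-relation}, each summand is null-homotopic on $\K_n$ via $h$ in that factor tensored with identities, and hence so is the sum. (One should be slightly careful that $d_C$ on $\K_n$ is itself defined as a sum of single-factor $d_C$'s, but this is exactly how the $n$-fold $d_C$ is introduced in the excerpt, so no new input is needed.)

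Next I would tensor with $M$ over $(A,A)$. Using the identification $\mHH_\bullet^{\dif}(M)\cong \mH_\bullet(M\otimes_{(A,A)}\K_n)$ recorded just before the corollary, the operator $d_C$ on $\mHH_\bullet^\dif(M)$ is induced by $\Id_M\otimes d_C$ on the chain level, and the $H^\prime$-action is induced by the $\dif$-action coming from $\dif_M$ on $M$ and $\dif$ on $\K_n$ (again combined via the coproduct). Therefore the chain-level commutator of these two operators on $M\otimes_{(A,A)}\K_n$ equals $\Id_M\otimes [\dif,d_C]$ up to the contribution of $\dif_M$, but the $\dif_M$ part commutes with $\Id_M\otimes d_C$ on the nose since $d_C$ only involves the $\K_n$ factor and multiplication by $x^2\otimes 1$ there. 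Hence the chain-level commutator is $\Id_M \otimes [\dif,d_C]$, which is null-homotopic on $M\otimes_{(A,A)}\K_n$ via $\Id_M\otimes (\text{the homotopy on } \K_n)$. A null-homotopic chain map induces the zero map on homology, so $[\dif, d_C] = 0$ as an operator on $\mHH_\bullet^\dif(M)$; equivalently, $d_C$ commutes with the $H^\prime$-action.

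The only mild subtlety, and the step I would be most careful about, is the bookkeeping of how $\dif$ acts on the tensor product $M\otimes_{(A,A)}\K_n$ and on $\K_1^{\otimes n}$: one must make sure that "$[\dif,d_C]$ on the tensor product decomposes as a sum of single-factor commutators" really holds, which relies on $d_C$ being a sum of single-factor operators and on the coproduct of $\dif$ being primitive (so no cross terms between factors beyond the Leibniz-type terms, which are precisely what the single-factor commutators absorb). Once that is in hand, everything else is formal: the homotopy is transported factor by factor, tensored with $\Id_M$, and the passage from "null-homotopic chain map" to "zero on homology" is standard. I do not expect any genuine obstacle here — the corollary is a routine consequence of the lemma — so the write-up can be kept short.
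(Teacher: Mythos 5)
Your proposal is correct and follows essentially the same route as the paper: the chain-level commutator on $\K_n\otimes_{(A,A)}M$ reduces to $[\dif,d_C]\otimes\Id_M$ because $\dif_M$ commutes with $d_C$ on the nose, and this is null-homotopic by transporting the one-variable homotopy of Lemma~\ref{lemma-acylicity-commutator-relation} factor by factor, hence zero on homology. The paper's proof is just a terser version of the same argument.
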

\begin{proof}
  By construction, the induced maps arise respectively from taking
  homology of the endomorphisms $d_C \otimes \Id_M$ and
  $\dif_\K \otimes \Id_M +\Id_\K\otimes \dif_M$ on
  $(\K_n\otimes_{(A,A)} M, d_\K \otimes \Id_M)$.  Then
  \begin{eqnarray}
    [d_C \otimes \Id_M,\dif_\K \otimes \Id_M +\Id_\K\otimes \dif_M ] = [d_C,\dif_K]\otimes \Id_M
  \end{eqnarray}
  is null-homotopic as a consequence of the previous lemma.
\end{proof}

\begin{rem}
  The differential, first observed by Cautis \cite{Cautisremarks}, has
  the following more algebro-geometric meaning. Identifying
  $\mHH^1(A_n)$ as vector fields on $\mathrm{Spec}(A_n)=\mathbb{A}^n$,
  $\mHH^1(A_n)$ acts as differential operators on $\mHH_\bullet(M)$
  for any $R_n$-bimodule $M$, regarded as a coherent sheaf on
  $\mathbb{A}^n\times \mathbb{A}^n \cong T^* (\mathbb{A}^n)$. Under
  this identification, $d_C$ is given by, up to scaling by a nonzero
  number, contraction with the vector field
  \[
    \xi_C:=\sum_{i=1}^n x_i^2\frac{\dif}{\dif x_i}.
  \]
\end{rem}

\subsection{Koszul resolution in the colored case}
\label{sec:koszul-resolution-p}

In this subsection, let $k$ be a non-negative integer and consider the
$H^\prime$-equivariant algebra
\[A=A_{(k)}=\myZ[x_1, \dots x_k]^{S_k} = \myZ[e_1, \dots, e_k]\]
endowed with
the derivation $\dif$ (see Lemma
\ref{lem:dif-easy-sym} for the explicit action).  We want to have to
have an $H^\prime$-equivariant Koszul resolution of $A$, so that we can
explicitly compute the relative Hochschild homology of
$H^\prime$-equivariant bimodules over $A$. 

Denote by $V$ the graded $\myZ$-module generated by symbols
$e_1 ,\dots , e_k$ with $\deg_q e_i =2i$. The exterior algebra
$\Lambda^\bullet V$ is bigraded: one grading comes from the
$q$-degree of the $(e_i)_{i=1, \dots, k}$, while the other one, the
Hochschild grading, comes from the decomposition:
$\Lambda^\bullet V= \bigoplus_{i=0}^k \Lambda^i V,$ with $\Lambda^i V$
sitting in Hochschild degree $i$.

Recall that the usual \emph{Koszul resolution} $\K(A)$ of $A$ is the
bigraded projective $(A,A)$-bimodule
$A \otimes \Lambda^\bullet V \otimes A$ endowed with the bimodule
endomorphism $d_\K$ of degree $(0, -1)$ defined by:
\begin{eqnarray}\label{eqn-Koszul-differential}
  d_\K(1  \otimes (e_{i_1} \wedge \dots \wedge e_{i_\ell}) \otimes 1) & : =  &
                                                                               \sum_{j=1}^\ell (-1)^{j-1} \big( e_{i_j} \otimes (e_{i_1}
                                                                               \wedge \dots \wedge \widehat{e_{i_j}} \wedge \dots \wedge e_{i_\ell}) \otimes 1 \nonumber \\ 
                                                                      & & - 1 \otimes (e_{i_1}
                                                                          \wedge \dots \wedge \widehat{e_{i_j}} \wedge \dots \wedge e_{i_\ell}) \otimes e_{i_j} \big). 
\end{eqnarray}

One easily checks that $d_\K\circ d_\K = 0$, so that
$(\koszul(A) := A \otimes \Lambda^\bullet V \otimes A,
d_\K)$ is a chain complex. As a complex of bimodules, it is homotopically
equivalent to $A$ seen as a complex concentrated in Hochschild
degree $0$.

Using the (super)algebra structure on $\Lambda^\bullet V$ and
forgetting about $d_\K$, one can view $\koszul(A)$ as a
(super)algebra. With respect to this algebra structure $d_\K$ becomes
a derivation on $\koszul(A)$, so that $\K(A)$ is a usual (homological)
DG algebra.

On the subalgebra $A \otimes \Lambda^0 V \otimes A \cong A\otimes A$
of $\koszul(A)$, there is a derivation $\dif_{\koszul ,0}$ inherited from the
action of $\dif$ on $A$:
\begin{equation} \label{eq-pdiff-onkoszul} \dif_{\koszul,0}(P \otimes
  1 \otimes Q):= \dif(P) \otimes 1 \otimes 1 + 1 \otimes 1 \otimes
  \dif(Q) \ .
\end{equation}
Since $\koszul(A)$ is a projective resolution of $A$ as an
$(A,A)$-bimodule where $A \otimes \Lambda^0 V \otimes A $ is the
degree zero piece, $\dif_{\koszul,0}$ has an extension
$\dif_{\koszul} $ to the entire resolution $\koszul(A)$. % 

More explicitly, by forcing $\partial$ to commute with the Koszul
differential \eqref{eqn-Koszul-differential},
one has the following differential $\dif_{\koszul}$ defined on
generators of $\koszul(A)$ by:
\begin{subequations}
  \begin{equation}
    \dif_{\koszul}(P \otimes 1 \otimes 1) = \dif (P) \otimes 1 \otimes 1, \quad \quad \quad
    \dif_{\koszul}(1 \otimes 1 \otimes Q) =  1\otimes 1 \otimes \dif(Q) ,
  \end{equation}
  \begin{equation}
    \dif_{\koszul}(1 \otimes e_i \otimes 1)=  e_1 \otimes
    e_i \otimes 1 + 1 \otimes e_1 \otimes e_i 
    -(i+1)1 \otimes e_{i+1}
    \otimes 1.
  \end{equation}
\end{subequations}
One can check that the map
$\dif_{\koszul}$ satisfies the Leibniz rule with respect to the
algebra structure on $\koszul(A)$.
Extending via the (super) Leibniz rule to the entire complex, we
have
\begin{align}
  \dif_{\koszul}(P \otimes e_{i_1} \wedge \dots \wedge e_{i_\ell}\otimes
  Q):=& \dif(P) \otimes e_{i_1} \wedge \dots \wedge e_{i_\ell}\otimes
        Q +
        P \otimes e_{i_1} \wedge \dots \wedge e_{i_\ell}\otimes
        \dif(Q)  \nonumber \\
      &+
        \sum_{j =1} ^\ell (i_j+1) (-1)^{j} P \otimes e_{i_{j}+1} \wedge
        e_{i_1} \wedge \dots\wedge \widehat{e_{i_j}} \wedge \dots \wedge e_{i_\ell}
        \otimes Q  \nonumber \\
      &+ \ell \left( 
        P e_1 \otimes e_{i_1} \wedge \dots \wedge e_{i_\ell} \otimes Q \right)  \label{eqn-dif-K} \\
      & +
        \sum_{j =1} ^\ell (-1)^{j+1} P \otimes e_1 \wedge
        e_{i_1} \wedge \dots \wedge \widehat{e_{i_j}}\wedge \dots \wedge e_{i_\ell}
        \otimes Q e_{i_j} \ . \nonumber % 
\end{align}

\begin{lem}\label{lem:dif-dK-commute}
  The endomorphisms $\dif_\K$ and $d_\K$ commute on $\K(A)$.
\end{lem}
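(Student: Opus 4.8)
The plan is to exploit that both $\dif_\K$ and $d_\K$ are (super)derivations of the super-algebra $\K(A) = A \otimes \Lambda^\bullet V \otimes A$, so that their graded commutator is again a derivation; since a derivation that vanishes on a set of algebra generators vanishes identically, everything reduces to a short check on generators.

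In detail, recall that $d_\K$ is a derivation of $\K(A)$ (noted right after \eqref{eqn-Koszul-differential}) and that $\dif_\K$ satisfies the Leibniz rule for the same algebra structure (noted just before \eqref{eqn-dif-K}). Since $\dif_\K$ has Hochschild degree $0$ (only its internal $q$-degree is nonzero, and that does not affect Koszul signs) while $d_\K$ has Hochschild degree $-1$, the graded commutator is simply $\dif_\K d_\K - d_\K \dif_\K$, and a standard computation shows that the commutator of an even derivation with any derivation is again a derivation. As a $\Z$-algebra, $\K(A) = A \otimes \Lambda^\bullet V \otimes A$ is generated by the elements $P \otimes 1 \otimes 1$ and $1 \otimes 1 \otimes Q$ with $P,Q \in A = \Z[e_1,\dots,e_k]$, together with $1 \otimes e_i \otimes 1$ for $1 \le i \le k$. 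Hence it suffices to check $\dif_\K d_\K = d_\K \dif_\K$ on these generators.

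On $P \otimes 1 \otimes 1$ and $1 \otimes 1 \otimes Q$ this is immediate: both lie in $A \otimes \Lambda^0 V \otimes A$, where $d_\K$ vanishes, and $\dif_\K$ maps that subalgebra into itself, so both composites are zero there. The only genuine computation is on $1 \otimes e_i \otimes 1$. On one hand, $d_\K(1 \otimes e_i \otimes 1) = e_i \otimes 1 \otimes 1 - 1 \otimes 1 \otimes e_i$, so $\dif_\K d_\K(1 \otimes e_i \otimes 1) = \dif(e_i) \otimes 1 \otimes 1 - 1 \otimes 1 \otimes \dif(e_i)$. On the other hand, one applies $d_\K$ (using its Leibniz property) to the three terms of $\dif_\K(1 \otimes e_i \otimes 1) = e_1 \otimes e_i \otimes 1 + 1 \otimes e_1 \otimes e_i - (i+1)\, 1 \otimes e_{i+1} \otimes 1$; the cross terms $\pm\, e_1 \otimes 1 \otimes e_i$ cancel, and what remains is $(e_1 e_i - (i+1)e_{i+1}) \otimes 1 \otimes 1 - 1 \otimes 1 \otimes (e_1 e_i - (i+1)e_{i+1})$, which equals $\dif(e_i) \otimes 1 \otimes 1 - 1 \otimes 1 \otimes \dif(e_i)$ by the formula $\dif(e_i) = e_1 e_i - (i+1) e_{i+1}$ of Lemma~\ref{lem:dif-easy-sym}. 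This gives the claimed identity on generators, and the lemma follows.

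The only delicate point is the bookkeeping in this last step: one must track the Koszul signs correctly when applying the Leibniz rule for $d_\K$ to products involving the odd elements $1 \otimes e_j \otimes 1$, and use the correct form of $\dif(e_i)$. An alternative framing is to note that $\dif_\K$ was, by construction, obtained precisely by extending $\dif_{\K,0}$ to the resolution $\K(A)$ so as to commute with $d_\K$; but making that rigorous still comes down to the same generator computation, so the direct verification above is the most efficient route.
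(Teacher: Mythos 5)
Your proposal is correct and follows essentially the same route as the paper: reduce to generators via the observation that the commutator of (super)derivations is a derivation, note that everything vanishes on the Hochschild degree zero part, and verify the identity on $1 \otimes e_i \otimes 1$ using $\dif(e_i) = e_1 e_i - (i+1)e_{i+1}$, with the cross terms $\pm\, e_1 \otimes 1 \otimes e_i$ cancelling exactly as in the paper's computation.
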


\begin{proof}
  It suffices to check the lemma on a bimodule generating set of
  $\koszul(A)$ since the commutator of derivations is a derivation.
  On the generator $P\otimes 1 \otimes Q $, one easily checks
  \[ (\dif_\K \circ d_\K)(P\otimes 1 \otimes Q) = 0 = (d_\K \circ
    \dif_\K)(P\otimes 1 \otimes Q).\]

  For the generator $1 \otimes e_i \otimes 1$, where
  $i \in \{1,\dots, k \}$, one computes
  \begin{eqnarray*}
    (d_\K\circ \dif_\K) (1 \otimes e_i \otimes 1)& = &
                                                       d_\K \left(e_1 \otimes e_i \otimes 1 + 1 \otimes
                                                       e_1 \otimes e_i \right) 
                                                       -(i+1)d_\K\left(1 \otimes e_{i+1}
                                                       \otimes 1\right)  \\
                                                 & = & e_1e_i\otimes 1 \otimes 1 - e_1 \otimes 1\otimes
                                                       e_i + e_1\otimes 1 \otimes e_i - 1 \otimes 1\otimes
                                                       e_1e_i  \\
                                                 & & -(i+1) \left(e_{i+1} \otimes 1 \otimes 1  - 1 \otimes 1 \otimes
                                                     e_{i+1}\right) \\
                                                 & = &\left(e_1e_i - (i+1)e_{i+1} \right) \otimes 1 \otimes 1 -
                                                       1 \otimes 1 \otimes \left(e_1e_i - (i+1)e_{i+1} \right) \\
                                                 & = & \dif (e_{i}) \otimes 1 \otimes 1 -
                                                       1 \otimes 1 \otimes \dif(e_i) \\ 
                                                 & = & \dif_\K( e_i \otimes 1 \otimes 1 - 1 \otimes 1 \otimes e_i) \\
                                                 & = & (\dif_\K \circ d_\K)(1 \otimes e_i \otimes 1). 
  \end{eqnarray*}
  The result follows.
\end{proof}

Lemma \ref{lem:dif-dK-commute} and the discussion before it establish the following result.

\begin{cor}
  The Koszul complex $(\K(A),d_\K)$ with respect to the
  $\dif_\K$-action is an $H^\prime$-equivariant bimodule resolution of
  $A$. \hfill$\square$
\end{cor}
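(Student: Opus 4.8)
The plan is to assemble the statement from three facts, only one of which requires any real work, and that one is already done. First I would recall that, forgetting $\dif$ altogether, $(\koszul(A), d_{\koszul}) = (A \otimes \Lambda^\bullet V \otimes A, d_{\koszul})$ is the ordinary Koszul resolution of the polynomial algebra $A = \myZ[e_1, \dots, e_k]$ by its generators: each term $A \otimes \Lambda^i V \otimes A$ is free of finite rank $\binom{k}{i}$ as an $(A,A)$-bimodule, hence projective, and the augmentation $\mu \colon \koszul(A) \to A$ sending $P \otimes 1 \otimes Q$ to $PQ$ and annihilating $\Lambda^{\geq 1} V$ is a quasi-isomorphism. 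This is classical and was recalled above, so nothing here needs reproving.

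Next I would check that \eqref{eqn-dif-K} genuinely defines an $H^\prime$-structure on the resolution. Concretely: $\dif_{\koszul}$ is homogeneous of Hochschild degree $0$ and $q$-degree $2$; it is compatible with the defining relations of $\Lambda^\bullet V$, the signs in \eqref{eqn-dif-K} being exactly those produced by the super-Leibniz rule; and it is a derivation of the super-algebra $\koszul(A)$. Consequently $\koszul(A)$ is an algebra object in $H^\prime\dmod$, and in particular each $A \otimes \Lambda^i V \otimes A$ acquires the structure of an $H^\prime$-equivariant $(A,A)$-bimodule extending the map $\dif_{\koszul,0}$ on the degree-zero term. (That some such extension exists is automatic from the comparison theorem for projective resolutions; the role of \eqref{eqn-dif-K} is to fix one explicit choice.) Then I would invoke Lemma \ref{lem:dif-dK-commute}: since $[\dif_{\koszul}, d_{\koszul}] = 0$, the differential $d_{\koszul}$ is a morphism of $H^\prime$-equivariant bimodules, so $(\koszul(A), d_{\koszul})$ is a chain complex of $H^\prime$-equivariant $(A,A)$-bimodules whose terms are projective as $(A,A)$-bimodules.

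To conclude I would verify that the augmentation $\mu$ is $H^\prime$-equivariant: on the Hochschild-degree-zero term, $\dif_{\koszul}(P \otimes 1 \otimes Q) = \dif(P) \otimes 1 \otimes Q + P \otimes 1 \otimes \dif(Q)$ is sent by $\mu$ to $\dif(P)Q + P\dif(Q) = \dif(PQ) = \dif(\mu(P \otimes 1 \otimes Q))$ by the Leibniz rule, while both $\mu \dif_{\koszul}$ and $\dif \mu$ vanish on $\Lambda^{\geq 1} V$. This exhibits $\mu$ as an $H^\prime$-equivariant quasi-isomorphism from a complex of bimodules projective over $(A,A)$ --- exactly the data of an $H^\prime$-equivariant projective bimodule resolution of $A$, and in particular one that may be fed into Proposition \ref{prop-resolution-independence}. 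I do not expect a genuine obstacle here: the only computation of substance is the commutation relation $[\dif_{\koszul}, d_{\koszul}] = 0$, which is precisely Lemma \ref{lem:dif-dK-commute}; the one thing to be mildly careful about is the internal consistency of the explicit formula \eqref{eqn-dif-K} --- well-definedness on $\Lambda^\bullet V$, bidegree, and the derivation property --- but this is routine sign-tracking once one knows abstractly that a lift of $\dif_{\koszul,0}$ exists.
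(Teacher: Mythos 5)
Your proposal is correct and matches the paper's (implicit) argument: the corollary is stated with no separate proof precisely because it follows from the classical fact that $\koszul(A)$ is a projective bimodule resolution, the explicit construction of $\dif_{\koszul}$ as a derivation extending $\dif_{\koszul,0}$, and Lemma \ref{lem:dif-dK-commute} giving $[\dif_\K, d_\K]=0$. Your additional check that the augmentation is $H^\prime$-equivariant is the one step the paper leaves entirely tacit, and it is verified correctly.
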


This corollary allows us to use the $H^\prime$-equivariant resolution
$(\K(A),\dif_\K)$ to compute the relative Hochschild homology of an
$H^\prime$-equivariant bimodule $(M,\dif_M)$ over $A$. The map
\[
  \dif_{\K}\otimes \Id_M +\Id_{\K}\otimes \dif_M :
  \K(A)\otimes_{(A,A)} M \lra \K(A)\otimes_{(A,A)} M,
\]
upon taking homology, descends to a map on
$\mHH_\bullet^\dif(M)$ which we still denote by $\dif_M$.

Next we recall the Cautis differential $d_C$ on $\K(A)$ (see
\cite[Section 6]{Cautisremarks} and \cite[Section 5.2.2]{RW}).
\begin{align}
  d_C(P \otimes e_{i_1} \wedge \dots \wedge e_{i_\ell}\otimes
  Q):=&
        \sum_{j =1} ^\ell (-1)^{j-1} (e_1e_{i_j}-(i_j+1)e_{i_j+1})P \otimes  
        e_{i_1} \wedge \dots\wedge \widehat{e_{i_j}} \wedge \dots \wedge e_{i_\ell}
        \otimes Q \ .
\end{align}
This is an endomorphism of $\K(A)$ of $q$-degree $2$ and Hochschild
degree $-1$.

\begin{lem}\label{lem-commutator-null-homotopic}
  \begin{enumerate}
  \item The endomorphism $d_C$ commutes with the Koszul differential
    $d_\K$.
  \item The endomorphisms $d_C$ and $\dif_\K$ commute up to homotopy
    on $\K(A)$.
  \end{enumerate}
  
\end{lem}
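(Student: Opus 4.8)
The plan is to handle the two items separately, reducing each to a short check by exploiting the (super)algebra structure on $\koszul(A) = A\otimes\Lambda^\bullet V\otimes A$.

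For the first item, I would observe that both $d_\K$ and $d_C$ are \emph{odd derivations} of $\koszul(A)$: this is recalled for $d_\K$ just before the lemma, and the displayed formula exhibits $d_C$ as the odd derivation determined by $d_C(P\otimes 1\otimes Q)=0$ and $d_C(1\otimes e_i\otimes 1)=\dif(e_i)\otimes 1\otimes 1$ (with $\dif(e_i)=e_1e_i-(i+1)e_{i+1}$). The graded commutator of two odd derivations is again a derivation, so it suffices to verify $d_Cd_\K+d_\Kd_C=0$ on the algebra generators of $\koszul(A)$. On $A\otimes 1\otimes 1$ and $1\otimes 1\otimes A$ both operators vanish. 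On $1\otimes e_i\otimes 1$ one has $d_\K(1\otimes e_i\otimes 1)=e_i\otimes 1\otimes 1-1\otimes 1\otimes e_i$, which is killed by $d_C$, while $d_C(1\otimes e_i\otimes 1)=\dif(e_i)\otimes 1\otimes 1$ sits in Hochschild degree zero and is killed by $d_\K$. Hence the commutator vanishes identically.

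For the second item I would first record two structural facts. Although $\dif_\K$ is not $A$-linear, the commutator $[\dif_\K,d_C]=\dif_\Kd_C-d_C\dif_\K$ \emph{is} an $(A,A)$-bimodule map, since $d_C$ is $A$-bilinear (it multiplies the left polynomial factor by $\dif(e_{i_j})$ and deletes a wedge letter) and the Leibniz contributions of $\dif_\K$ against scalars cancel in the commutator. Moreover $[\dif_\K,d_C]$ commutes with $d_\K$: by the graded Jacobi identity it equals, up to sign, $[[d_\K,\dif_\K],d_C]+[\dif_\K,[d_\K,d_C]]$, and the two inner brackets vanish by Lemma~\ref{lem:dif-dK-commute} and by the first item, respectively. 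Thus $[\dif_\K,d_C]$ is an $(A,A)$-bimodule chain endomorphism of $(\koszul(A),d_\K)$ of Hochschild degree $-1$, and the task is to produce a contracting bimodule homotopy, exactly as in the one-variable Lemma~\ref{lemma-acylicity-commutator-relation}. Because $[\dif_\K,d_C]$ is $A$-bilinear it is enough to evaluate it on the generators $1\otimes e_{i_1}\wedge\dots\wedge e_{i_\ell}\otimes 1$; using $\dif^2(e_i)=\dif(e_1)e_i+e_1\dif(e_i)-(i+1)\dif(e_{i+1})$ one checks that on $1\otimes e_i\otimes 1$ the commutator equals $(\dif(e_1)\otimes 1\otimes 1)\,d_\K(1\otimes e_i\otimes 1)$, the colored analogue of the identity $[\dif,d_C]=(x^2\otimes 1)d_\K$ from the uncolored case (note $\dif(e_1)=e_1^2$ when $k=1$). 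The homotopy $h$ is then assembled from left multiplication by $\dif(e_1)$ on the exterior part of $\koszul(A)$ with parity-dependent coefficients; the needed cancellations at higher exterior degree are forced by the super-Leibniz rule and the first item, since $\dif_\K$ and $d_C$ are determined by their values on the degree-one generators $1\otimes e_i\otimes 1$.

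An alternative, more conceptual route would avoid the bookkeeping: as a Hochschild-degree $-1$ bimodule chain endomorphism of the projective resolution $\koszul(A)$ of $A$, the map $[\dif_\K,d_C]$ represents a class in $\mHH^1(A)=\Ext^1_{A\otimes A^{\op}}(A,A)$; since $A=\myZ[e_1,\dots,e_k]$ is a polynomial ring, Hochschild--Kostant--Rosenberg identifies $\mHH^\bullet(A)$ with polyvector fields acting faithfully on $\mHH_\bullet(A,A)=\Omega^\bullet_A$ by contraction, and the induced operator of this class is $[\mathcal{L}_\xi,\iota_\xi]=\iota_{[\xi,\xi]}=0$ by Cartan's identity, where $\xi=\sum_i\dif(e_i)\frac{\partial}{\partial e_i}$ is the vector field underlying $d_C$ (as contraction) and $\dif_\K$ (as Lie derivative, since $\dif_\K$ extends the derivation $\dif$). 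Either way, the main obstacle is the direct route: tracking the cancellations produced by the five summands of~\eqref{eqn-dif-K} at arbitrary exterior degree. The structural reductions above — $A$-bilinearity, the Jacobi identity, and reduction to degree-one generators — are precisely what keep this manageable.
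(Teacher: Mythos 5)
Your proposal is correct and follows essentially the same route as the paper: the heart of the argument is the identity $[\dif_\K, d_C] = (e_1^2-2e_2)\,d_\K$ evaluated on the generators $P\otimes e_i\otimes Q$, with the null-homotopy built from multiplication by $\dif(e_1)=e_1^2-2e_2$, exactly as in \eqref{eqn-null-homotopy-for-commutator-1}--\eqref{eqn-null-homotopy-for-commutator-2}. Your derivation argument for the first item is a clean substitute for the paper's citation of \cite{Cautisremarks} and \cite{RW}, and your reductions via $A$-bilinearity and the super-Leibniz rule supply the justification, left implicit in the paper, for why checking on exterior-degree-one generators determines the commutator and the homotopy in all exterior degrees.
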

\begin{proof}
  The first statement is an easy computation and is established in
  \cite[Section 6]{Cautisremarks} and \cite[Appendix B.3]{RW}. Let us verify the second
  one. We compute the effect of the commutator $[\dif_K, d_C ]$ on an
  element of the form $P\otimes e_i\otimes Q$, for $P,Q\in A$. We calculate
  \begin{eqnarray*}
    d_C\circ \dif_\K (P\otimes e_i \otimes Q )& = & d_C\big( \dif(P) \otimes e_i \otimes Q+  P \otimes e_i \otimes \dif(Q) \\
                                              & & + Pe_1\otimes e_i \otimes Q + P \otimes e_1\otimes e_iQ -(i+1) P \otimes e_{i+1} \otimes Q \big) \\
                                              & = & \dif(P)(e_1e_i-(i+1)e_{i+1}) \otimes 1\otimes Q+ P(e_1e_i-(i+1)e_{i+1}) \otimes 1 \otimes \dif(Q) \\
                                              & & + Pe_1(e_1e_i-(i+1)e_{i+1})\otimes 1 \otimes Q  + P(e_1^2-2e_2) \otimes 1 \otimes e_iQ  \\
                                              & & - (i+1) P(e_1e_{i+1}-(i+2)e_{i+2}) \otimes 1 \otimes Q,
  \end{eqnarray*}
  and
  \begin{eqnarray*}
    \dif_\K \circ d_C  (P\otimes e_i \otimes Q )& = & \dif_\K \big( P(e_1e_i - (i+1)e_{i+1})\otimes 1 \otimes Q \big)\\
                                                & = & \dif(P)(e_1e_i-(i+1)e_{i+1}) \otimes 1\otimes Q+ P(e_1^2-2e_{2})e_i \otimes 1 \otimes Q \\
                                                & & + Pe_1(e_1e_i-(i+1)e_{i+1})\otimes 1 \otimes Q  -(i+1) P(e_1e_{i+1}-(i+2)e_{i+2}) \otimes 1 \otimes Q  \\
                                                & & +  P(e_1e_{i}-(i+1)e_{i+1}) \otimes 1 \otimes \dif(Q).
  \end{eqnarray*}
  Thus
  \begin{equation}\label{eqn-null-homotopy-for-commutator-1}
    [\dif_\K ,  d_C]  (P\otimes e_i \otimes Q ) =  P(e_1^2-2e_2)e_i\otimes 1 \otimes Q- P(e_1^2-2e_2)\otimes 1 \otimes e_iQ = (e_1^2-2e_2) d_\K(P\otimes e_i \otimes Q) .
  \end{equation}
  Define a homotopy map $h: \K(A)\lra \K(A)$ by declaring on
  tensor factors, the bimodule map to be the slanted arrow
  \begin{equation}\label{eqn-null-homotopy-for-commutator-2}
      \NB{\tikz[xscale=4.5, yscale =1.5]{
      \node (TL) at (0,1) {$ \myZ[e_i] \otimes e_i \otimes \myZ[e_i]$};
      \node (TR) at (1,1) {$ \myZ[e_i] \otimes 1 \otimes \myZ[e_i]$};
      \node (BL) at (0,0) {$ \myZ[e_i] \otimes e_i \otimes \myZ[e_i]$};
      \node (BR) at (1,0) {$ \myZ[e_i] \otimes 1 \otimes \myZ[e_i]$};
    \draw[-to] (TL) -- (TR) node[pos=0.5, above] {$d_\K$};
    \draw[-to] (BL) -- (BR) node[pos=0.5, below] {$d_\K$};
    \draw[double equal sign distance ] (TL) -- (BL);
    \draw[double equal sign distance]  (TR) -- (BR);
    \draw[-to] (TR) -- (BL) node[left=0.3cm, pos=0.3, scale =0.7] {$(e_1^2-2e_2)\otimes e_i \otimes 1$};
  }}  
  \end{equation}
  and zero everywhere else.  Then
  \eqref{eqn-null-homotopy-for-commutator-1} shows that
  $[\dif_\K, \dif_C]=d_\K h + hd_\K$ is null-homotopic. The lemma
  follows.
\end{proof}

The first part of the lemma implies that $d_C$ descends to the level
of homology.  For ease of notation, we will still denote the induced
endomorphism $\mHH(d_C)$ on $\mHH^\dif_\bullet(M)$ by $d_C$. We record
this observation in the following corollary.

\begin{cor} \label{cor:dCdKcomHH}
  Given an $H^\prime$-equivariant bimodule $M$ over $A$, the induced Cautis
  differential $d_C$ commutes with $\dif_M$ on
  $\mHH_\bullet^\dif(M)$.
\end{cor}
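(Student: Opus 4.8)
The plan is to transport the homotopy-commutation statement proved at the chain level in Lemma~\ref{lem-commutator-null-homotopic} down to homology, following verbatim the template of the proof of Corollary~\ref{cor-dC-commutes-with-H} from the uncolored case. The point is that both $d_C$ and $\dif_M$ on $\mHH^\dif_\bullet(M)$ are, by construction, obtained by taking homology of explicit endomorphisms of the chain complex $(\K(A)\otimes_{(A,A)} M,\, d_\K\otimes\Id_M)$; commutation of these endomorphisms \emph{up to a null-homotopy} at the chain level forces genuine commutation after passing to homology.

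Concretely, I would argue as follows. On the complex $\K(A)\otimes_{(A,A)} M$ with differential $d_\K\otimes\Id_M$, the induced Cautis differential is $d_C\otimes\Id_M$ and the induced $\dif_M$ is $\dif_\K\otimes\Id_M+\Id_\K\otimes\dif_M$. Their commutator is
\[
  [\,d_C\otimes\Id_M,\ \dif_\K\otimes\Id_M+\Id_\K\otimes\dif_M\,]
  = [d_C,\dif_\K]\otimes\Id_M,
\]
since $d_C$ acts only on the $\K(A)$-tensor factor and therefore commutes with $\Id_\K\otimes\dif_M$. By Lemma~\ref{lem-commutator-null-homotopic}(2) we have $[\dif_\K,d_C]=d_\K h+h d_\K$ for the explicit homotopy $h$ of \eqref{eqn-null-homotopy-for-commutator-2}. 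Tensoring with $\Id_M$ over $(A,A)$ yields $[d_C,\dif_\K]\otimes\Id_M=(d_\K\otimes\Id_M)(h\otimes\Id_M)+(h\otimes\Id_M)(d_\K\otimes\Id_M)$, so the commutator of the two induced endomorphisms of $\K(A)\otimes_{(A,A)}M$ is null-homotopic with respect to $d_\K\otimes\Id_M$. A null-homotopic chain map induces the zero map on homology; hence $[\mHH(d_C),\dif_M]=0$ on $\mHH^\dif_\bullet(M)$, which is the assertion.

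**The main point to be careful about** is purely bookkeeping rather than a genuine obstacle: one must make sure that tensoring the chain-level identity with $M$ over $(A,A)$ is legitimate, i.e.\ that $h$ is an honest map of $(A,A)$-bimodules (it is, being the slanted bimodule map of \eqref{eqn-null-homotopy-for-commutator-2}), and that $\mHH^\dif_\bullet(M)$ really is computed from $(\K(A),d_\K)$ with the $\dif_\K$-action — but this is exactly the content of the corollary preceding this one, which identifies $(\K(A),\dif_\K)$ as an $H^\prime$-equivariant projective bimodule resolution of $A$ and invokes Proposition~\ref{prop-resolution-independence}. So the proof is essentially one line once these ingredients are assembled, mirroring Corollary~\ref{cor-dC-commutes-with-H}.
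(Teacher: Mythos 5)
Your proposal is correct and matches the paper's argument: the paper's proof simply states that the claim follows exactly as in Corollary~\ref{cor-dC-commutes-with-H}, replacing Lemma~\ref{lemma-acylicity-commutator-relation} with Lemma~\ref{lem-commutator-null-homotopic}, which is precisely the reduction $[d_C\otimes\Id_M,\ \dif_\K\otimes\Id_M+\Id_\K\otimes\dif_M]=[d_C,\dif_\K]\otimes\Id_M$ followed by null-homotopy that you carry out. Your extra care about $h$ being a bimodule map and about $\mHH^\dif_\bullet(M)$ being computable from $(\K(A),\dif_\K)$ via Proposition~\ref{prop-resolution-independence} is exactly the implicit bookkeeping the paper relies on.
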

\begin{proof} 
  The proof is entirely analogous to the proof of Corollary
  \ref{cor-dC-commutes-with-H}, using Lemma
  \ref{lem-commutator-null-homotopic} instead.
\end{proof}

\begin{example} \label{ex:1varoverZ}
This example is of particular importance because it will later serve as the link homology of the unknot.
Here we take $M$ to be $A$ (the algebra generated by elementary symmetric polynomials $e_1,\ldots,e_n$) itself. In this case,
  $\mHH_\bullet^\dif(A) \simeq A \otimes \Lambda^\bullet V$ and it has
  a natural bigraded algebra structure. One has:
  \begin{align*}
    d_C(P \otimes 1) &= 0, &&&
                               d_C(1 \otimes e_i) &= (e_1e_i-(i+1)e_{i+1})\otimes 1, \\
    \dif_A (P \otimes 1) &= \dif(P) \otimes 1, &&&
                                                   \dif_A (1 \otimes e_i) &= e_1 \otimes e_i + e_i\otimes e_1 -
                                                                            (i+1) 1 \otimes e_{i+1}.
  \end{align*}
  
We now investigate the differential $d_C$ on $\mHH_\bullet^\dif(A)$ and study $\mHH_0^{\dif}(A) / d_C(\mHH_1^{\dif}(A)) $.

For simplicity, assume $n=2$.  Then in this quotient, there are relations $e_1^2=2e_2$ and $e_1 e_2=0$.
As a $\Z$-module, the space is generated by elements $1,e_1,e_2^b$ where $b \geq 1$.
Since $2 e_2^2=e_1^2 e_2=0$, we get that $e_2^b$ is $2$-torsion for $b \geq 2$. Thus
\[
\mHH_0^{\dif}(A) / d_C(\mHH_1^{\dif}(A)) 
\cong
\Z1 \oplus \Z e_1 \oplus \Z e_2 \oplus \bigoplus_{j=2}^{\infty} \Z_2 e_2^j \ . 
\]

For more general $n$, by \cite[Proposition 5.23]{RW}, the space $(\mHH_\bullet^\dif(A)/d_C(\mHH_{\bullet-1}^\dif(A)))\otimes \Q$ is concentrated in Hochschild degree zero only. A direct computation shows that a $\Q$-basis of this space is given by
$\{1,e_1,e_2,\dots, e_n\}$. It follows that the higher integral homology of $(\mHH_\bullet^\dif(A),d_C)$ is always torsion, while the zeroth homology has free part $\Z 1\oplus \Z e_1 \oplus \dots \oplus \Z e_n$ and an infinite torsion subgroup.
\end{example}

For later constructions, we will need the following. 
\begin{cor}\label{cor-p-nilpotency}
  Let $M$ be a singular Soergel bimodule associated with a
  green-dotted MOY graph $\Gamma$. Then, on the $\F_p$-space $\mHH^\dif_\bullet(M) \otimes \F_p$, we have $\dif_M^p\equiv 0$.
\end{cor}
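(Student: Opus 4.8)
The plan is to use the explicit $H^\prime$-equivariant Koszul resolution $(\K(A),\dif_\K)$ from Section \ref{sec:koszul-resolution-p} to present $\mHH^\dif_\bullet(M)$ concretely, reduce mod $p$, and then verify $\dif_M^p\equiv 0$ at the chain level. First I would recall that by Proposition \ref{prop-resolution-independence} the relative Hochschild homology of the green-dotted Soergel bimodule $M$ over the relevant algebra $A=A_{(k)}$ (or, by basicness, over the edge polynomial algebras, assembled via the MOY graph) is computed by the complex $\K(A)\otimes_{(A,A)}M$, on which the $H^\prime$-action is $\dif_\K\otimes\Id_M+\Id_\K\otimes\dif_M$; this is exactly the operator that descends to $\dif_M$ on homology. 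So it suffices to show that the $p$-th power of this operator acts as zero after tensoring with $\F_p$, since then the induced map on homology is also zero mod $p$. (One should be a little careful: a priori $\dif_M^p$ could be nullhomotopic rather than zero on the nose, but in fact the cleanest route is to show it is literally zero mod $p$ on the chain complex $\K(A)\otimes_{(A,A)}M\otimes\F_p$.)

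Next I would reduce to checking $p$-nilpotency of $\dif$ on the two kinds of tensor factors appearing in $\K(A)\otimes_{(A,A)}M$: the bimodule part $M$ itself, and the exterior-algebra part $\Lambda^\bullet V$. For the $M$ part, Lemma \ref{pDG-ready-green} is exactly the statement that $\dif^p_M(x)$ is divisible by $p$ for every $x\in M$, so $\dif_M^p\equiv 0$ mod $p$ there. For the exterior-algebra directions, the operator is the internal piece of $\dif_\K$ described in \eqref{eqn-dif-K}: on generators $1\otimes e_i\otimes 1$ it sends $e_i\mapsto e_1 e_i+ e_1\wedge(\cdots)$-type terms $-(i+1)e_{i+1}$, i.e. it is governed by the same formulas as $\dif$ acting on symmetric functions in Lemma \ref{lem:dif-easy-sym}. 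Since $\dif_\K$ is a derivation with respect to the superalgebra structure on $\koszul(A)$, and $H^r$ is a Hopf algebra so that $\dif$ acts on tensor products via the coproduct $\Delta(\dif)=\dif\otimes1+1\otimes\dif$, the $p$-th power $\dif_\K^p$ again acts via iterated coproducts; using that in characteristic $p$ the $p$-th power of a primitive-like derivation is controlled by multinomial coefficients $\tfrac{p!}{i_1!\cdots i_s!}$ which vanish mod $p$ unless one index equals $p$, everything collapses to checking $\dif^p\equiv 0$ on each individual generating symbol, which again follows from Lemma \ref{lem:dif-easy-sym} (or equivalently from the $h_i,e_i$ recursions already used in the proof of Lemma \ref{pDG-ready-green}, where $\dif^k(x_i)=k!\,x_i^{k+1}$).

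The main obstacle — really the only non-bookkeeping point — is the interaction between the two derivation-directions: $\dif_\K\otimes\Id_M$ and $\Id_\K\otimes\dif_M$ do not individually square to zero mod $p$ in an obvious product way, so I cannot naively say "$(\partial_1+\partial_2)^p=\partial_1^p+\partial_2^p$". The resolution is that $\dif_\K\otimes\Id_M+\Id_\K\otimes\dif_M$ is itself just the action of the single primitive element $\dif\in H^\prime$ on the tensor product $\K(A)\otimes_{(A,A)}M$ of two $H^\prime$-modules, and by Lemma \ref{lem:pDG-ready} / Lemma \ref{pDG-ready-green} the integral operator $\dif^p$ on each factor already lands in $p\cdot(\text{everything})$; combined with the fact (recalled in Section \ref{secbackground}, e.g.\ around \eqref{eqn-V-p-2}) that $H^\prime\otimes(-)$ respecting tensor products forces $\dif^p$ on a tensor product of $H^\prime$-modules to be the sum $\sum_{i=0}^p\binom{p}{i}\dif^i\otimes\dif^{p-i}$, and all these binomials with $0<i<p$ vanish mod $p$ while the two surviving terms vanish mod $p$ by the single-factor statements. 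Hence $\dif^p\equiv0$ on the whole complex mod $p$, so it induces the zero map on $\mHH^\dif_\bullet(M)\otimes\F_p$, i.e. $\dif_M^p\equiv0$ there. $\square$
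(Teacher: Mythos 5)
There is a genuine gap, concentrated in your treatment of the $i=p$ summand $\dif_\K^p\otimes\Id_M$ of the binomial expansion
\[
\bigl(\dif_\K\otimes\Id_M+\Id_\K\otimes\dif_M\bigr)^p=\sum_{i=0}^p\binom{p}{i}\dif_\K^i\otimes\dif_M^{p-i}.
\]
Your handling of the middle terms (divisibility of $\binom{p}{i}$) and of the $i=0$ term (Lemma \ref{pDG-ready-green}) agrees with the paper. But your claim that $\dif_\K^p\equiv 0\pmod p$ \emph{at the chain level} on $\K(A)$ is false, and the paper's remark immediately after the corollary refutes it with an explicit computation: for $p=3$, $k=2$, $A=\F_3[e_1,e_2]$ one finds $\partial_\K^3(1\otimes e_2\otimes 1)\neq 0$. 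The analogy you draw with Lemma \ref{lem:dif-easy-sym} breaks precisely because the Koszul differential splits the product $e_1e_i$ appearing in $\dif(e_i)=e_1e_i-(i+1)e_{i+1}$ into two terms living in \emph{different} tensor factors, $e_1\otimes e_i\otimes 1 + 1\otimes e_1\otimes e_i$. Under iteration these do not recombine into the recursion that makes $\dif^p(e_i)$ vanish mod $p$ inside $A$; and since $\dif_\K^p$ mod $p$ is again a derivation on $\K(A)$, its nonvanishing on a single algebra generator such as $1\otimes e_2\otimes 1$ already kills the chain-level argument. (In your second paragraph, ``everything collapses to checking $\dif^p\equiv 0$ on each individual generating symbol'' is exactly the step that fails.)

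The paper instead argues up to homotopy: since $\dif_A^p=pD_A$ for some operator $D_A$ on $A$, and $\K(A)$ is an $H^\prime$-equivariant bimodule resolution of $A$, the endomorphism $\dif_\K^p$ and a lift $pD$ of $pD_A$ are two lifts of the same map on $A$, hence are chain homotopic. Tensoring the homotopy with $\Id_M$ shows $(\dif_\K^p-pD)\otimes\Id_M$ is null-homotopic on $\K(A)\otimes_{(A,A)}M$, so on the homology $\mHH^\dif_\bullet(M)$ one has $\dif_\K^p\otimes\Id_M=pD\otimes\Id_M$, which vanishes after base change to $\F_p$. You yourself flag the alternative ``nullhomotopic rather than zero on the nose'' in a parenthetical and then set it aside as less clean -- but it is in fact the only route that works; the ``cleanest'' chain-level vanishing is unavailable here.
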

\begin{proof}
  By construction, $\dif_M$ is the induced operator on homology 
  \[
  \dif_{\K}\otimes \Id_M +\Id_{\K}\otimes \dif_M :
  \K(A)\otimes_{(A,A)} M \lra \K(A)\otimes_{(A,A)} M,
  \]
whose $p$th power is equal to 
  \[
  \left(\dif_{\K}\otimes \Id_M +\Id_{\K}\otimes \dif_M \right)^p=\sum_{i=0}^p \genfrac(){0pt}{0}{p}{i} \dif_{\K}^i \otimes \dif_M^{p-i}.
  \]
Let us analyze the summands of this operator.  There are three cases.

When $i=1,\dots, p-1$, the binomial coefficients $\genfrac(){0pt}{2}{p}{i}$ are all $p$-divisible. Thus $\genfrac(){0pt}{2}{p}{i} \dif_{\K}^i \otimes \dif_M^{p-i}$ acts by zero on $\mHH^\dif_\bullet(M)\otimes \F_p$ for any such $i$. 

Next, for $i=0$, we have, by Lemma \ref{pDG-ready-green}, that the term $\Id_\K \otimes \dif_M^p$ is also $p$-divisible on $\mHH^\dif_\bullet(M)$. It follows that the induced map on homology also vanishes when base changed to $\F_p$.

Finally, for $\dif_\K^p\otimes \Id_M$, observe that $\dif_A^p$ is also $p$-divisible (this can be seen either via a straightforward computation, or as the special case of Lemma \ref{pDG-ready-green} when $M=A$). As $\K(A)$ is an $H^\prime$-equivariant resolution of $A$, $\dif_{\K}^p$ must then be homotopic to a $p$-multiple of an operator on $\K(A)$, which we denote by $D$.
It follows that is $(\dif_{\K}^p-pD)\otimes \Id_M$ is null-homotopic on $\K(A)\otimes_{(A,A)}M$, and thus $\dif_\K^p\otimes \Id_M=pD\otimes \Id_M$ on homology. The result now follows by base changing to $\F_p$. 
\end{proof}

\begin{rem}
 Note that even when tensoring with $\Fp$, the map $\dif_\K$ is still not necessarily $p$-nilpotent. For instance, take $p=3$,
$k=2$ and $A= \Fp[e_1,e_2]$. One could then calculate
\begin{align*}
  \partial_{\koszul}^3(1 \otimes e_2 \otimes 1) =
  e_1\otimes e_1\otimes e_1e_2+ 1\otimes e_1\otimes e_1^2e_2-1\otimes e_1\otimes e_2^2 \neq 0.
\end{align*}
So, in general, $\partial_{\koszul}^p$ is not identically zero on the
Koszul complex tensored by $\Fp$.
\end{rem}

\subsection{Rickard complexes}
\label{sec:rickard-complexes}

\begin{defn}\label{def:rickard}
Let $a$ and $b$ be two positive integers. The \emph{Rickard complexes} $\rickardp{a}{b}$ and $\rickardm{a}{b}$ are two complexes of $H^\prime$-equivariant $(A_{a,b}, A_{b,a})$-bimodules. They have the following diagrammatic definitions. 

\begin{align}\label{eq:rickard-plus}
\rickardp{a}{b}&:=q^{-ab-1} \left( \cdots \longrightarrow
q^{-j} \soergel\left(\NB{\tikz[font=\tiny]{\begin{scope}
  \coordinate (rt) at (+1,   1.2);
  \coordinate (lt) at (-1,   1.2);
  \coordinate (rb) at (+1,  -1.2);
  \coordinate (lb) at (-1,  -1.2);
  \coordinate (r1) at (+1, -0.6);
  \coordinate (l1) at (-1, -0.4);
  \coordinate (r2) at (+1,  0.6);
  \coordinate (l2) at (-1,  0.4);
  \draw[>-] (rb) -- (r1) node[pos=0, below] {$a$};
  \draw[>-] (lb) -- (l1) node[pos=0, below] {$b$}; 
  \draw[->-] (r1) -- (r2) node[pos=0.3, right] {$j$}   coordinate [pos=0.7] (g1);
  \draw[->-] (l1) -- (l2) node[pos=0.5, right] {$a+b-j$};
  \draw[->] (r2) -- (rt) node[pos=1, above] {$b$};
  \draw[->] (l2) -- (lt) node[pos=1, above] {$a$};
  \draw [->-] (r1) -- (l1) node[pos =0.5, below, sloped] {$a-j$};
  \draw [->-] (l2) -- (r2) node[pos =0.5, above, sloped] {$b-j$};
  \filldraw[draw= green!50!black, fill = white] (g1) circle (1mm)
  node[right, green!50!black] {$j-1$};
\end{scope}

}}\right)\longrightarrow
q^{-j-1} \soergel\left(\NB{\tikz[font=\tiny]{\begin{scope}
  \coordinate (rt) at (+1,   1.2);
  \coordinate (lt) at (-1,   1.2);
  \coordinate (rb) at (+1,  -1.2);
  \coordinate (lb) at (-1,  -1.2);
  \coordinate (r1) at (+1, -0.6);
  \coordinate (l1) at (-1, -0.4);
  \coordinate (r2) at (+1,  0.6);
  \coordinate (l2) at (-1,  0.4);
  \draw[>-] (rb) -- (r1) node[pos=0, below] {$a$};
  \draw[>-] (lb) -- (l1) node[pos=0, below] {$b$}; 
  \draw[->-] (r1) -- (r2) node[pos=0.3, right] {$j+1$}   coordinate [pos=0.7] (g1);
  \draw[->-] (l1) -- (l2) node[pos=0.5, right] {$a+b-j-1$};
  \draw[->] (r2) -- (rt) node[pos=1, above] {$b$};
  \draw[->] (l2) -- (lt) node[pos=1, above] {$a$};
  \draw [->-] (r1) -- (l1) node[pos =0.5, below, sloped] {$a-j-1$};
  \draw [->-] (l2) -- (r2) node[pos =0.5, above, sloped] {$b-j-1$};
  \filldraw[draw= green!50!black, fill = white] (g1) circle (1mm)
  node[right, green!50!black] {$j$};
\end{scope}

}}\right)\longrightarrow \cdots \right)
\\ \label{eq:rickard-minus}
  \rickardm{a}{b}&:= q^{ab+1}\left( \cdots \longrightarrow
q^{j+1} \soergel\left(\NB{\tikz[font=\tiny]{\begin{scope}
  \coordinate (rt) at (+1,   1.2);
  \coordinate (lt) at (-1,   1.2);
  \coordinate (rb) at (+1,  -1.2);
  \coordinate (lb) at (-1,  -1.2);
  \coordinate (r1) at (+1, -0.6);
  \coordinate (l1) at (-1, -0.4);
  \coordinate (r2) at (+1,  0.6);
  \coordinate (l2) at (-1,  0.4);
  \draw[>-] (rb) -- (r1) node[pos=0, below] {$a$} coordinate [pos=0.7] (g1);
  \draw[>-] (lb) -- (l1) node[pos=0, below] {$b$} coordinate [pos=0.7] (g2); 
  \draw[->-] (r1) -- (r2) node[pos=0.3, right] {$j+1$}   coordinate [pos=0.7] (g3);
  \draw[->-] (l1) -- (l2) node[pos=0.3, right] {$a+b-j-1$}   coordinate [pos=0.7] (g4);
  \draw[->] (r2) -- (rt) node[pos=1, above] {$b$} coordinate [pos=0.3] (g5);
  \draw[->] (l2) -- (lt) node[pos=1, above] {$a$} coordinate [pos=0.3] (g6);
  \draw [->-] (r1) -- (l1) node[pos =0.6, below, sloped] {$a-j-1$} coordinate [pos=0.3] (g7);
  \draw [->-] (l2) -- (r2) node[pos =0.6, above, sloped] {$b-j-1$} coordinate [pos=0.3] (g8);
  \filldraw[draw= green!50!black, fill = white] (g6) circle (1mm)
  node[left, green!50!black] {$-b$};
  \filldraw[draw= green!50!black, fill = white] (g5) circle (1mm)
  node[right, green!50!black] {$-a$};
  \filldraw[draw= green!50!black, fill = white] (g4) circle (1mm)
  node[left, green!50!black] {$j+1$};
  \filldraw[draw= green!50!black, fill = white] (g3) circle (1mm)
  node[right, green!50!black] {$1$};
  \filldraw[draw= green!50!black, fill = white] (g7) circle (1mm)
  node[above, green!50!black] {$-j-1$};
  \filldraw[draw= green!50!black, fill = white] (g8) circle (1mm)
  node[below, green!50!black] {$-j-1$};
\end{scope}

}}\right)\longrightarrow
q^j  \soergel\left(\NB{\tikz[font=\tiny]{\begin{scope}
  \coordinate (rt) at (+1,   1.2);
  \coordinate (lt) at (-1,   1.2);
  \coordinate (rb) at (+1,  -1.2);
  \coordinate (lb) at (-1,  -1.2);
  \coordinate (r1) at (+1, -0.6);
  \coordinate (l1) at (-1, -0.4);
  \coordinate (r2) at (+1,  0.6);
  \coordinate (l2) at (-1,  0.4);
  \draw[>-] (rb) -- (r1) node[pos=0, below] {$a$} coordinate [pos=0.7] (g1);
  \draw[>-] (lb) -- (l1) node[pos=0, below] {$b$} coordinate [pos=0.7] (g2); 
  \draw[->-] (r1) -- (r2) node[pos=0.3, right] {$j$}   coordinate [pos=0.7] (g3);
  \draw[->-] (l1) -- (l2) node[pos=0.3, right] {$a+b-j$}   coordinate [pos=0.7] (g4);
  \draw[->] (r2) -- (rt) node[pos=1, above] {$b$} coordinate [pos=0.3] (g5);
  \draw[->] (l2) -- (lt) node[pos=1, above] {$a$} coordinate [pos=0.3] (g6);
  \draw [->-] (r1) -- (l1) node[pos =0.6, below, sloped] {$a-j$} coordinate [pos=0.3] (g7);
  \draw [->-] (l2) -- (r2) node[pos =0.6, above, sloped] {$b-j$} coordinate [pos=0.3] (g8);
  \filldraw[draw= green!50!black, fill = white] (g6) circle (1mm)
  node[left, green!50!black] {$-b$};
  \filldraw[draw= green!50!black, fill = white] (g5) circle (1mm)
  node[right, green!50!black] {$-a$};
  \filldraw[draw= green!50!black, fill = white] (g4) circle (1mm)
  node[left, green!50!black] {$j$};
  \filldraw[draw= green!50!black, fill = white] (g3) circle (1mm)
  node[right, green!50!black] {$1$};
  \filldraw[draw= green!50!black, fill = white] (g7) circle (1mm)
  node[above, green!50!black] {$-j$};
  \filldraw[draw= green!50!black, fill = white] (g8) circle (1mm)
  node[below, green!50!black] {$-j$};
\end{scope}

}}\right)\longrightarrow \cdots \right) 
\end{align}
In both cases, $j$ is an integer which runs from $0$ to
$\min(a,b)$. Note that in the first Rickard complex, $j$ increases from
left to right while in the second it decreases. 
For both complexes, the term for $j=0$ is concentrated in cohomological degree zero.
The differentials of
$\rickardp{a}{b}$ and $\rickardm{a}{b}$ are given by a composition of
elementary morphisms described by~(\ref{eq:diff-rickard-plus})
and~(\ref{eq:diff-rickard-minus}) respectively.% 
\begin{equation}
  \label{eq:diff-rickard-plus}
  \begin{tikzpicture}[xscale =4, yscale=2.5 ]
    \node (A) at (-1,1) {$ \soergel\left(\NB{\tikz[font=\tiny]{}}\right)$};
    \node (B) at (1,1) {$ q^{-j+a-1} \soergel\left(\NB{\tikz[font=\tiny]{\begin{scope}[font=\tiny]
  \coordinate (rt) at (+1,   1.5);
  \coordinate (lt) at (-1,   1.5);
  \coordinate (rb) at (+1,  -1.5);
  \coordinate (lb) at (-1,  -1.5);
  \coordinate (r1) at (+1, -0.9);
  \coordinate (l1) at (-1, -0.7);
  \coordinate (r2) at (+1,  0.9);
  \coordinate (l2) at (-1,  0.7);
  \draw[>-] (rb) -- (r1) node[pos=0, below] {$a$} coordinate [pos=0.7] (g1);
  \draw[>-] (lb) -- (l1) node[pos=0, below] {$b$} coordinate [pos=0.7] (g2); 
  \draw[->-] (r1) -- (r2) node[pos=0.5, right] {$j$} coordinate  [pos=0.8] (g3) coordinate [pos = 0.2] (d1); 
  \draw[->-] (l1) -- (l2) node[pos=0.6, right] {$a+b-j$}   coordinate [pos=0.7] (g4)  coordinate[pos = 0.257] (d2);
  \draw[->] (r2) -- (rt) node[pos=1, above] {$b$} coordinate [pos=0.3] (g5);
  \draw[->] (l2) -- (lt) node[pos=1, above] {$a$} coordinate [pos=0.3] (g6);
  \draw [->-] (r1) -- (l1) node[pos =0.5, below, sloped] {$a-j-1$} coordinate [pos=0.3] (g7);
  \draw [->-] (l2) -- (r2) node[pos =0.5, above, sloped] {$b-j$} coordinate [pos=0.3] (g8);
  \draw [->-] (d1) -- (d2) node[pos =0.3, above] {$1$} coordinate [pos=0.7] (g9);
  \filldraw[draw= green!50!black, fill = white] (g3) circle (1mm)
  node[right, green!50!black] {$j-1$};
\end{scope}

}}\right)$};
    \node (C) at (-1,-1) {$q^{-j-1} \soergel\left(\NB{\tikz[font=\tiny]{\begin{scope}[font=\tiny]
  \coordinate (rt) at (+1,   1.5);
  \coordinate (lt) at (-1,   1.5);
  \coordinate (rb) at (+1,  -1.5);
  \coordinate (lb) at (-1,  -1.5);
  \coordinate (r1) at (+1, -0.9);
  \coordinate (l1) at (-1, -0.7);
  \coordinate (r2) at (+1,  0.9);
  \coordinate (l2) at (-1,  0.7);
  \draw[>-] (rb) -- (r1) node[pos=0, below] {$a$} coordinate [pos=0.7] (g1);
  \draw[>-] (lb) -- (l1) node[pos=0, below] {$b$} coordinate [pos=0.7] (g2); 
  \draw[->-] (r1) -- (r2) node[pos=0.5, right] {$j$} coordinate  [pos=0.8] (g3) coordinate [pos = 0.2] (d1); 
  \draw[->-] (l1) -- (l2) node[pos=0.5, right] {$a+b-j-1$}   coordinate [pos=0.5] (g4);
  \draw[->] (r2) -- (rt) node[pos=1, above] {$b$} coordinate [pos=0.3] (g5);
  \draw[->] (l2) -- (lt) node[pos=1, above] {$a$} coordinate [pos=0.3] (g6);
  \draw [->-] (r1) -- (l1) node[pos =0.5, below, sloped] {$a-j-1$} coordinate [pos=0.3] (g7);
  \draw [->-] (l2) -- (r2) node[pos =0.4, above, sloped] {$b-j-1$} coordinate [pos=0.3] (g8) coordinate[pos = 0.75] (d2);
  \draw [->-] (d1) -- (d2) node[pos =0.3, left] {$1$} coordinate [pos=0.7] (g9);
  \filldraw[draw= green!50!black, fill = white] (g3) circle (1mm)
  node[right, green!50!black] {$j-1$};
\end{scope}

}}\right)$};
    \node (D) at (1,-1)
    {$q^{-1} \soergel\left(\NB{\tikz[font=\tiny]{}}\right)$};
    \draw[->] (A) -- (B) node[pos =0.5, above] {$\mapA \circ \mapA \circ  \mapB$};
    \draw (B.east) -- +(0.1, 0) arc (90: 0:0.2) -- +(0, -0.6) arc
    (0:-90:0.2) -- (0,0) node [pos =1, above] {$\mapH$};
    \draw[<-] (C.west) -- +(-0.1,0) arc (270:180:0.2) --+(0, +0.6) arc
    (180:90:0.2) -- (0,0);
    \draw[->] (C) -- (D) node[pos =0.5, above] {$\mapU \circ \mapA$};
  \end{tikzpicture}
\end{equation}
\begin{equation}
  \label{eq:diff-rickard-minus}
    \centering
    \begin{tikzpicture}[xscale =4, yscale=2.5 ]
      \node (A) at (-1,1) {$\soergel\left(\NB{\tikz[font=\tiny]{}}\right)$};

      \node (B) at (1, 1) {$q^j \soergel\left(\NB{\tikz[font=\tiny]{\begin{scope}[font=\tiny]
  \coordinate (rt) at (+1,   1.5);
  \coordinate (lt) at (-1,   1.5);
  \coordinate (rb) at (+1,  -1.5);
  \coordinate (lb) at (-1,  -1.5);
  \coordinate (r1) at (+1, -0.9);
  \coordinate (l1) at (-1, -0.7);
  \coordinate (r2) at (+1,  0.9);
  \coordinate (l2) at (-1,  0.7);
  \draw[>-] (rb) -- (r1) node[pos=0, below] {$a$} coordinate [pos=0.7] (g1);
  \draw[>-] (lb) -- (l1) node[pos=0, below] {$b$} coordinate [pos=0.7] (g2); 
  \draw[->-] (r1) -- (r2) node[pos=0.5, right] {$j$} coordinate  [pos=0.8] (g3) coordinate [pos = 0.2] (d1); 
  \draw[->-] (l1) -- (l2) node[pos=0.3, right] {$a+b-j-1$}   coordinate [pos=0.7] (g4);
  \draw[->] (r2) -- (rt) node[pos=1, above] {$b$} coordinate [pos=0.3] (g5);
  \draw[->] (l2) -- (lt) node[pos=1, above] {$a$} coordinate [pos=0.3] (g6);
  \draw [->-] (r1) -- (l1) node[pos =0.6, below, sloped] {$a-j-1$} coordinate [pos=0.3] (g7);
  \draw [->-] (l2) -- (r2) node[pos =0.4, above, sloped] {$b-j-1$} coordinate [pos=0.3] (g8) coordinate[pos = 0.75] (d2);
  \draw [->-] (d1) -- (d2) node[pos =0.3, left] {$1$} coordinate [pos=0.7] (g9);
  \filldraw[draw= green!50!black, fill = white] (g6) circle (1mm)
  node[left, green!50!black] {$-b+j+1$};
  \filldraw[draw= green!50!black, fill = white] (g5) circle (1mm)
  node[right, green!50!black] {$-a$};
  \filldraw[draw= green!50!black, fill = white] (g3) circle (1mm)
  node[right, green!50!black] {$1$};
  \filldraw[draw= green!50!black, fill = white] (g9) circle (1mm)
  node[left, green!50!black] {$1$};
  \filldraw[draw= green!50!black, fill = white] (g7) circle (1mm)
  node[above, green!50!black] {$-j-1$};
\end{scope}

}}\right)$};
      \node (C) at (-1, -1){$q^{j-a} \soergel\left(\NB{\tikz[font=\tiny]{\begin{scope}[font=\tiny]
  \coordinate (rt) at (+1,   1.5);
  \coordinate (lt) at (-1,   1.5);
  \coordinate (rb) at (+1,  -1.5);
  \coordinate (lb) at (-1,  -1.5);
  \coordinate (r1) at (+1, -0.9);
  \coordinate (l1) at (-1, -0.7);
  \coordinate (r2) at (+1,  0.9);
  \coordinate (l2) at (-1,  0.7);
  \draw[>-] (rb) -- (r1) node[pos=0, below] {$a$} coordinate [pos=0.7] (g1);
  \draw[>-] (lb) -- (l1) node[pos=0, below] {$b$} coordinate [pos=0.7] (g2); 
  \draw[->-] (r1) -- (r2) node[pos=0.5, right] {$j$} coordinate  [pos=0.8] (g3) coordinate [pos = 0.2] (d1); 
  \draw[->-] (l1) -- (l2) node[pos=0.6, right] {$a+b-j$}   coordinate [pos=0.7] (g4)  coordinate[pos = 0.257] (d2);
  \draw[->] (r2) -- (rt) node[pos=1, above] {$b$} coordinate [pos=0.3] (g5);
  \draw[->] (l2) -- (lt) node[pos=1, above] {$a$} coordinate [pos=0.3] (g6);
  \draw [->-] (r1) -- (l1) node[pos =0.6, below, sloped] {$a-j-1$} coordinate [pos=0.3] (g7);
  \draw [->-] (l2) -- (r2) node[pos =0.4, above, sloped] {$b-j$} coordinate [pos=0.3] (g8);
  \draw [->-] (d1) -- (d2) node[pos =0.3, above] {$1$} coordinate [pos=0.7] (g9);
  \filldraw[draw= green!50!black, fill = white] (g6) circle (1mm)
  node[left, green!50!black] {$-b+j$};
  \filldraw[draw= green!50!black, fill = white] (g5) circle (1mm)
  node[right, green!50!black] {$-a$};
  \filldraw[draw= green!50!black, fill = white] (g3) circle (1mm)
  node[right, green!50!black] {$1$};
  \filldraw[draw= green!50!black, fill = white] (g9) circle (1mm)
  node[above, green!50!black] {$1-a$};
  \filldraw[draw= green!50!black, fill = white] (g7) circle (1mm)
  node[above, green!50!black] {$-j-1$};
\end{scope}

}}\right)$};
      \node (D) at (1, -1)
      {$q^{-1} \soergel\left(\NB{\tikz[font=\tiny]{}}\right)$};
      \draw[->] (A) -- (B) node[pos =0.5, above] {$\mapA \circ  \mapB$};
      \draw (B.east) -- +(0.1, 0) arc (90: 0:0.2) -- +(0, -0.6) arc
      (0:-90:0.2) -- (0,0) node [pos =1, above] {$\mapX$};
      \draw[<-] (C.west) -- +(-0.1,0) arc (270:180:0.2) --+(0, +0.6) arc
      (180:90:0.2) -- (0,0);
      \draw[->] (C) -- (D) node[pos =0.5, above] {$\mapU \circ \mapA \circ  \mapA$};
    \end{tikzpicture}
  \end{equation}    
\end{defn}

\begin{rem}
\label{rmk:rickard-is-a-complex}
\begin{enumerate}
\item These are indeed chain complexes, since when one forgets the
  $H^\prime$-module structure, they are precisely the maps used to
  define the classical Rickard complexes, see for instance \cite{ETW}. 
\item In the non-$H^\prime$-equivariant setting, $\rickardp{a}{b}$ and
  $\rickardm{a}{b}$ are homotopy equivalent to other complexes, where
  the two vertical rungs in the squares are oriented oppositely. With
  the $H^\prime$-structure, these other versions of Rickard complexes
  are not homotopy equivalent in the relative homotopy category to the
  ones defined in Definition~\ref{def:rickard}, so we made an
  arbitrary choice here. We believe that any choice gives rise to the
  same homology theory.
\end{enumerate}
\end{rem}

Using $T_{a,b}^+$ and $T_{a,b}^-$, we extend $\soergel(\cdot)$ to MOY
graphs with crossings. At this stage it is not clear at all that this
makes sense topologically.

\begin{equation} \label{eqn:posab}
T_{a,b}^+ = \soergel\left(
\NB{\tikz[]{\begin{scope}
  \draw (0, 0) -- +(0,1);
  \node at (1,0.5) {$\dots$};
    \draw (3, 0) .. controls +(0, 0.2) and +(0, -0.2) ..  +(-1,1)
    node[pos=0, below] {$b$} node[pos=1, above, white] {$b$};
  \fill[white] (2.5, 0.5) circle (2mm);
  \draw (2, 0) .. controls +(0, 0.2) and +(0, -0.2) ..  +(1,1) node[pos=0, below] {$a$};
  \node at (4,0.5) {$\dots$};
  \draw (5, 0) -- +(0,1); 
\end{scope}

}} \right)
\end{equation}

\begin{equation} \label{eqn:negab}
T_{a,b}^- = \left(
\NB{\tikz[xscale = -1, yscale = 1]{\begin{scope}
  \draw (0, 0) -- +(0,1);
  \node at (1,0.5) {$\dots$};
    \draw (3, 0) .. controls +(0, 0.2) and +(0, -0.2) ..  +(-1,1) node[pos=0, below] {$a$};
  \fill[white] (2.5, 0.5) circle (2mm);
  \draw (2, 0) .. controls +(0, 0.2) and +(0, -0.2) ..  +(1,1)
  node[pos=0, below] {$b$} node[pos=1, above, white] {$b$};
  \node at (4,0.5) {$\dots$};
  \draw (5, 0) -- +(0,1); 
\end{scope}}} \right)
\end{equation}

\begin{lem}\label{lem:exact-sequence-square}
  The sequences
  \begin{align*}
\cdots \longrightarrow
 \soergel\left(\NB{\tikz[font=\tiny]{\begin{scope}
  \coordinate (rt) at (+1,   1.2);
  \coordinate (lt) at (-1,   1.2);
  \coordinate (rb) at (+1,  -1.2);
  \coordinate (lb) at (-1,  -1.2);
  \coordinate (r1) at (+1, -0.6);
  \coordinate (l1) at (-1, -0.4);
  \coordinate (r2) at (+1,  0.6);
  \coordinate (l2) at (-1,  0.4);
  \draw[>-] (rb) -- (r1) node[pos=0, below] {$a$};
  \draw[>-] (lb) -- (l1) node[pos=0, below] {$b$}; 
  \draw[->-] (r1) -- (r2) node[pos=0.3, right] {$j$}   coordinate [pos=0.7] (g1);
  \draw[->-] (l1) -- (l2) node[pos=0.5, right] {$a+b-j$};
  \draw[->] (r2) -- (rt) node[pos=1, above] {$b+1$};
  \draw[->] (l2) -- (lt) node[pos=1, above] {$a-1$};
  \draw [->-] (r1) -- (l1) node[pos =0.5, below, sloped] {$a-j$};
  \draw [->-] (l2) -- (r2) node[pos =0.5, above, sloped] {$b+1-j$};
  \filldraw[draw= green!50!black, fill = white] (g1) circle (1mm)
  node[left, green!50!black] {$j-1$};
\end{scope}

}}\right)\longrightarrow
\soergel\left(\NB{\tikz[font=\tiny]{\begin{scope}
  \coordinate (rt) at (+1,   1.2);
  \coordinate (lt) at (-1,   1.2);
  \coordinate (rb) at (+1,  -1.2);
  \coordinate (lb) at (-1,  -1.2);
  \coordinate (r1) at (+1, -0.6);
  \coordinate (l1) at (-1, -0.4);
  \coordinate (r2) at (+1,  0.6);
  \coordinate (l2) at (-1,  0.4);
  \draw[>-] (rb) -- (r1) node[pos=0, below] {$a$};
  \draw[>-] (lb) -- (l1) node[pos=0, below] {$b$}; 
  \draw[->-] (r1) -- (r2) node[pos=0.3, left] {$j+1$}   coordinate [pos=0.7] (g1);
  \draw[->-] (l1) -- (l2) node[pos=0.5, right] {$a+b-j-1$};
  \draw[->] (r2) -- (rt) node[pos=1, above] {$b+1$};
  \draw[->] (l2) -- (lt) node[pos=1, above] {$a-1$};
  \draw [->-] (r1) -- (l1) node[pos =0.5, below, sloped] {$a-j-1$};
  \draw [->-] (l2) -- (r2) node[pos =0.5, above, sloped] {$b-j$};
  \filldraw[draw= green!50!black, fill = white] (g1) circle (1mm)
  node[right, green!50!black] {$j$};
\end{scope}

}}\right)\longrightarrow \cdots
  \end{align*}
  and
  \begin{align*}
\cdots \longrightarrow
\soergel\left(\NB{\tikz[font=\tiny]{\begin{scope}
  \coordinate (rt) at (+1,   1.2);
  \coordinate (lt) at (-1,   1.2);
  \coordinate (rb) at (+1,  -1.2);
  \coordinate (lb) at (-1,  -1.2);
  \coordinate (r1) at (+1, -0.6);
  \coordinate (l1) at (-1, -0.4);
  \coordinate (r2) at (+1,  0.6);
  \coordinate (l2) at (-1,  0.4);
  \draw[>-] (rb) -- (r1) node[pos=0, below] {$a$};
  \draw[>-] (lb) -- (l1) node[pos=0, below] {$b$}; 
  \draw[->-] (r1) -- (r2) node[pos=0.3, right] {$j+1$}   coordinate [pos=0.7] (g1);
  \draw[->-] (l1) -- (l2) node[pos=0.5, right] {$a+b-j-1$};
  \draw[->] (r2) -- (rt) node[pos=1, above] {$b+1$};
  \draw[->] (l2) -- (lt) node[pos=1, above] {$a-1$} coordinate [pos = 0.5] (g2);
  \draw [->-] (r1) -- (l1) node[pos =0.4, below, sloped] {$a-j-1$} coordinate [pos = 0.7] (g3);
  \draw [->-] (l2) -- (r2) node[pos =0.6, below, sloped] {$b-j$} coordinate [pos = 0.3] (g4);
  \filldraw[draw= green!50!black, fill = white] (g1) circle (1mm) node[right, green!50!black] {$1$};
  \filldraw[draw= green!50!black, fill = white] (g2) circle (1mm) node[right, green!50!black] {$j+1-b$};
  \filldraw[draw= green!50!black, fill = white] (g3) circle (1mm) node[above, green!50!black] {$-j-1$};
  \filldraw[draw= green!50!black, fill = white] (g4) circle (1mm) node[below, green!50!black] {$1$};
\end{scope}

}}\right)\longrightarrow
 \soergel\left(\NB{\tikz[font=\tiny]{\begin{scope}
  \coordinate (rt) at (+1,   1.2);
  \coordinate (lt) at (-1,   1.2);
  \coordinate (rb) at (+1,  -1.2);
  \coordinate (lb) at (-1,  -1.2);
  \coordinate (r1) at (+1, -0.6);
  \coordinate (l1) at (-1, -0.4);
  \coordinate (r2) at (+1,  0.6);
  \coordinate (l2) at (-1,  0.4);
  \draw[>-] (rb) -- (r1) node[pos=0, below] {$a$};
  \draw[>-] (lb) -- (l1) node[pos=0, below] {$b$}; 
  \draw[->-] (r1) -- (r2) node[pos=0.3, right] {$j$}   coordinate [pos=0.7] (g1);
  \draw[->-] (l1) -- (l2) node[pos=0.5, right] {$a+b-j$};
  \draw[->] (r2) -- (rt) node[pos=1, above] {$b+1$};
  \draw[->] (l2) -- (lt) node[pos=1, above] {$a-1$} coordinate [pos = 0.5] (g2);
  \draw [->-] (r1) -- (l1) node[pos =0.4, below, sloped] {$a-j$} coordinate [pos = 0.7] (g3);
  \draw [->-] (l2) -- (r2) node[pos =0.6, above, sloped] {$b+1-j$} coordinate [pos = 0.3] (g4);
  \filldraw[draw= green!50!black, fill = white] (g1) circle (1mm) node[right, green!50!black] {$1$};
  \filldraw[draw= green!50!black, fill = white] (g2) circle (1mm) node[right, green!50!black] {$j-b$};
  \filldraw[draw= green!50!black, fill = white] (g3) circle (1mm) node[above, green!50!black] {$-j$};
  \filldraw[draw= green!50!black, fill = white] (g4) circle (1mm) node[below, green!50!black] {$1$};
\end{scope}

}}\right)\longrightarrow \cdots
  \end{align*}
  are exact. The arrows in these sequences are given by compositions of
elementary morphisms given by~(\ref{eq:exactness0-2}) and (\ref{eq:exactness0-1}).
\end{lem}
\begin{equation}
  \label{eq:exactness0-2}
  \NB{
    \begin{tikzpicture}[xscale = 3.5, yscale=2 ]
      \node (A) at (-1,1) {$\soergel\left(\NB{\tikz[font=\tiny]{}}\right)$};
      \node (B) at (1,1) {$q^{a-1-j} \soergel\left(\NB{\tikz[font=\tiny]{\begin{scope}[font=\tiny]
  \coordinate (rt) at (+1,   1.5);
  \coordinate (lt) at (-1,   1.5);
  \coordinate (rb) at (+1,  -1.5);
  \coordinate (lb) at (-1,  -1.5);
  \coordinate (r1) at (+1, -0.9);
  \coordinate (l1) at (-1, -0.7);
  \coordinate (r2) at (+1,  0.9);
  \coordinate (l2) at (-1,  0.7);
  \draw[>-] (rb) -- (r1) node[pos=0, below] {$a$} coordinate [pos=0.7] (g1);
  \draw[>-] (lb) -- (l1) node[pos=0, below] {$b$} coordinate [pos=0.7] (g2); 
  \draw[->-] (r1) -- (r2) node[pos=0.5, right] {$j$} coordinate  [pos=0.8] (g3) coordinate [pos = 0.2] (d1); 
  \draw[->-] (l1) -- (l2) node[pos=0.6, right] {$a+b-j$}   coordinate [pos=0.7] (g4)  coordinate[pos = 0.257] (d2);
  \draw[->] (r2) -- (rt) node[pos=1, above] {$b+1$} coordinate [pos=0.3] (g5);
  \draw[->] (l2) -- (lt) node[pos=1, above] {$a-1$} coordinate [pos=0.3] (g6);
  \draw [->-] (r1) -- (l1) node[pos =0.5, below, sloped] {$a-j-1$} coordinate [pos=0.3] (g7);
  \draw [->-] (l2) -- (r2) node[pos =0.5, above, sloped] {$b+1-j$} coordinate [pos=0.3] (g8);
  \draw [->-] (d1) -- (d2) node[pos =0.3, above] {$1$} coordinate [pos=0.7] (g9);
  \filldraw[draw= green!50!black, fill = white] (g3) circle (1mm)
  node[left, green!50!black] {$j-1$};
\end{scope}

}}\right)$};
      \node (C) at (-1,-1) {$q^{-j}\soergel\left(\NB{\tikz[font=\tiny]{\begin{scope}[font=\tiny]
  \coordinate (rt) at (+1,   1.5);
  \coordinate (lt) at (-1,   1.5);
  \coordinate (rb) at (+1,  -1.5);
  \coordinate (lb) at (-1,  -1.5);
  \coordinate (r1) at (+1, -0.9);
  \coordinate (l1) at (-1, -0.7);
  \coordinate (r2) at (+1,  0.9);
  \coordinate (l2) at (-1,  0.7);
  \draw[>-] (rb) -- (r1) node[pos=0, below] {$a$} coordinate [pos=0.7] (g1);
  \draw[>-] (lb) -- (l1) node[pos=0, below] {$b$} coordinate [pos=0.7] (g2); 
  \draw[->-] (r1) -- (r2) node[pos=0.5, right] {$j$} coordinate  [pos=0.8] (g3) coordinate [pos = 0.2] (d1); 
  \draw[->-] (l1) -- (l2) node[pos=0.5, right] {$a+b-j-1$}   coordinate [pos=0.5] (g4);
  \draw[->] (r2) -- (rt) node[pos=1, above] {$b+1$} coordinate [pos=0.3] (g5);
  \draw[->] (l2) -- (lt) node[pos=1, above] {$a-1$} coordinate [pos=0.3] (g6);
  \draw [->-] (r1) -- (l1) node[pos =0.5, below, sloped] {$a-j-1$} coordinate [pos=0.3] (g7);
  \draw [->-] (l2) -- (r2) node[pos =0.4, above, sloped] {$b-j$} coordinate [pos=0.3] (g8) coordinate[pos = 0.75] (d2);
  \draw [->-] (d1) -- (d2) node[pos =0.3, left] {$1$} coordinate [pos=0.7] (g9);
  \filldraw[draw= green!50!black, fill = white] (g3) circle (1mm)
  node[right, green!50!black] {$j-1$};
\end{scope}

}}\right)$};
      \node (D) at (1,-1)
      {$\soergel\left(\NB{\tikz[font=\tiny]{}}\right)$};
      \draw[->] (A) -- (B) node[pos =0.5, above] {$\mapA \circ \mapA \circ  \mapB$};
      \draw (B.east) -- +(0.1, 0) arc (90: 0:0.2) -- +(0, -0.6) arc
      (0:-90:0.2) -- (0,0) node [pos =1, above] {$\mapH$};
      \draw[<-] (C.west) -- +(-0.1,0) arc (270:180:0.2) --+(0, +0.6) arc
      (180:90:0.2) -- (0,0);
      \draw[->] (C) -- (D) node[pos =0.5, above] {$\mapU \circ \mapA$};
    \end{tikzpicture}}
\end{equation}
\begin{equation}
    \label{eq:exactness0-1}
  \NB{
\begin{tikzpicture}[xscale =4, yscale=2 ]
      \node (A) at (-1,1) {$\soergel\left(\NB{\tikz[font=\tiny]{}}\right)$};

      \node (B) at (1, 1) {$q^j \soergel\left(\NB{\tikz[font=\tiny]{\begin{scope}[font=\tiny]
  \coordinate (rt) at (+1,   1.5);
  \coordinate (lt) at (-1,   1.5);
  \coordinate (rb) at (+1,  -1.5);
  \coordinate (lb) at (-1,  -1.5);
  \coordinate (r1) at (+1, -0.9);
  \coordinate (l1) at (-1, -0.7);
  \coordinate (r2) at (+1,  0.9);
  \coordinate (l2) at (-1,  0.7);
  \draw[>-] (rb) -- (r1) node[pos=0, below] {$a$} coordinate [pos=0.7] (g1);
  \draw[>-] (lb) -- (l1) node[pos=0, below] {$b$} coordinate [pos=0.7] (g2); 
  \draw[->-] (r1) -- (r2) node[pos=0.5, right] {$j$} coordinate  [pos=0.8] (g3) coordinate [pos = 0.2] (d1); 
  \draw[->-] (l1) -- (l2) node[pos=0.3, right] {$a+b-j-1$}   coordinate [pos=0.7] (g4);
  \draw[->] (r2) -- (rt) node[pos=1, above] {$b+1$} coordinate [pos=0.3] (g5);
  \draw[->] (l2) -- (lt) node[pos=1, above] {$a-1$} coordinate [pos=0.3] (g6);
  \draw [->-] (r1) -- (l1) node[pos =0.6, below, sloped] {$a-j-1$} coordinate [pos=0.3] (g7);
  \draw [->-] (l2) -- (r2) node[pos =0.4, below, sloped] {$b-j$} coordinate [pos=0.3] (g8) coordinate[pos = 0.75] (d2);
  \draw [->-] (d1) -- (d2) node[pos =0.3, left] {$1$} coordinate [pos=0.7] (g9);
  \filldraw[draw= green!50!black, fill = white] (g6) circle (1mm)
  node[right, green!50!black] {$j+1-b$};
  \filldraw[draw= green!50!black, fill = white] (g5) circle (1mm)
  node[right, green!50!black] {$1$};
  \filldraw[draw= green!50!black, fill = white] (g7) circle (1mm)
  node[above, green!50!black] {$-j-1$};
\end{scope}

}}\right)$};
      \node (C) at (-1, -1){$q^{j-a+1} \soergel\left(\NB{\tikz[font=\tiny]{\begin{scope}[font=\tiny]
  \coordinate (rt) at (+1,   1.5);
  \coordinate (lt) at (-1,   1.5);
  \coordinate (rb) at (+1,  -1.5);
  \coordinate (lb) at (-1,  -1.5);
  \coordinate (r1) at (+1, -0.9);
  \coordinate (l1) at (-1, -0.7);
  \coordinate (r2) at (+1,  0.9);
  \coordinate (l2) at (-1,  0.7);
  \draw[>-] (rb) -- (r1) node[pos=0, below] {$a$} coordinate [pos=0.7] (g1);
  \draw[>-] (lb) -- (l1) node[pos=0, below] {$b$} coordinate [pos=0.7] (g2); 
  \draw[->-] (r1) -- (r2) node[pos=0.5, right] {$j$} coordinate  [pos=0.8] (g3) coordinate [pos = 0.2] (d1); 
  \draw[->-] (l1) -- (l2) node[pos=0.6, right] {$a+b-j$}   coordinate [pos=0.7] (g4)  coordinate[pos = 0.257] (d2);
  \draw[->] (r2) -- (rt) node[pos=1, above] {$b+1$} coordinate [pos=0.3] (g5);
  \draw[->] (l2) -- (lt) node[pos=1, above] {$a-1$} coordinate [pos=0.3] (g6);
  \draw [->-] (r1) -- (l1) node[pos =0.6, below, sloped] {$a-j-1$} coordinate [pos=0.3] (g7);
  \draw [->-] (l2) -- (r2) node[pos =0.4, below, sloped] {$b+1-j$} coordinate [pos=0.3] (g8);
  \draw [->-] (d1) -- (d2) node[pos =0.3, above] {$1$} coordinate [pos=0.7] (g9);
  \filldraw[draw= green!50!black, fill = white] (g6) circle (1mm)
  node[right, green!50!black] {$j-b$};
  \filldraw[draw= green!50!black, fill = white] (g5) circle (1mm)
  node[right, green!50!black] {$1$};
  \filldraw[draw= green!50!black, fill = white] (g9) circle (1mm)
  node[above, green!50!black] {$1-a$};
  \filldraw[draw= green!50!black, fill = white] (g7) circle (1mm)
  node[above, green!50!black] {$-j-1$};
\end{scope}

}}\right)$};
      \node (D) at (1, -1)
      {$\soergel\left(\NB{\tikz[font=\tiny]{}}\right)$};
      \draw[->] (A) -- (B) node[pos =0.5, above] {$\mapA \circ  \mapB$};
      \draw (B.east) -- +(0.1, 0) arc (90: 0:0.2) -- +(0, -0.6) arc
      (0:-90:0.2) -- (0,0) node [pos =1, above] {$\mapX$};
      \draw[<-] (C.west) -- +(-0.1,0) arc (270:180:0.2) --+(0, +0.6) arc
      (180:90:0.2) -- (0,0);
      \draw[->] (C) -- (D) node[pos =0.5, above] {$\mapU \circ \mapA \circ  \mapA$};
    \end{tikzpicture}} \\[0.5cm]
  \end{equation}

\begin{proof}
  The fact that the composition of two successive maps is zero, is a tedious but
  relatively straightforward computation. In order to prove that the sequences
  are exact, we gives explicit (non $H^\prime$-equivariant) homotopies between the
  identities of these complexes and the zero maps. The elementary pieces
  of these homotopies are given by~(\ref{eq:exactness-2}) and (\ref{eq:exactness-1}) . It is a straightforward but lengthy computation to show that these are indeed homotopy equivalences between the
    given complexes and the zero complex. 
  \begin{equation}
        \label{eq:exactness-2}
        \NB{
  \begin{tikzpicture}[xscale =4, yscale=2 ]
      \node (A) at (-1,1) {$\soergel\left(\NB{\tikz[font=\tiny]{\begin{scope}
  \coordinate (rt) at (+1,   1.2);
  \coordinate (lt) at (-1,   1.2);
  \coordinate (rb) at (+1,  -1.2);
  \coordinate (lb) at (-1,  -1.2);
  \coordinate (r1) at (+1, -0.6);
  \coordinate (l1) at (-1, -0.4);
  \coordinate (r2) at (+1,  0.6);
  \coordinate (l2) at (-1,  0.4);
  \draw[>-] (rb) -- (r1) node[pos=0, below] {$a$};
  \draw[>-] (lb) -- (l1) node[pos=0, below] {$b$}; 
  \draw[->-] (r1) -- (r2) node[pos=0.3, left] {$j+1$}   coordinate [pos=0.7] (g1);
  \draw[->-] (l1) -- (l2) node[pos=0.5, right] {$a+b-j-1$};
  \draw[->] (r2) -- (rt) node[pos=1, above] {$b+1$};
  \draw[->] (l2) -- (lt) node[pos=1, above] {$a-1$};
  \draw [->-] (r1) -- (l1) node[pos =0.5, below, sloped] {$a-j-1$};
  \draw [->-] (l2) -- (r2) node[pos =0.5, above, sloped] {$b-j$};
\end{scope}

}}\right)$};
      \node (B) at ( 1,1) {$q^j \soergel\left(\NB{\tikz[font=\tiny]{\begin{scope}[font=\tiny]
  \coordinate (rt) at (+1,   1.5);
  \coordinate (lt) at (-1,   1.5);
  \coordinate (rb) at (+1,  -1.5);
  \coordinate (lb) at (-1,  -1.5);
  \coordinate (r1) at (+1, -0.9);
  \coordinate (l1) at (-1, -0.7);
  \coordinate (r2) at (+1,  0.9);
  \coordinate (l2) at (-1,  0.7);
  \draw[>-] (rb) -- (r1) node[pos=0, below] {$a$} coordinate [pos=0.7] (g1);
  \draw[>-] (lb) -- (l1) node[pos=0, below] {$b$} coordinate [pos=0.7] (g2); 
  \draw[->-] (r1) -- (r2) node[pos=0.5, right] {$j$} coordinate  [pos=0.8] (g3) coordinate [pos = 0.2] (d1); 
  \draw[->-] (l1) -- (l2) node[pos=0.5, right] {$a+b-j-1$}   coordinate [pos=0.5] (g4);
  \draw[->] (r2) -- (rt) node[pos=1, above] {$b+1$} coordinate [pos=0.3] (g5);
  \draw[->] (l2) -- (lt) node[pos=1, above] {$a-1$} coordinate [pos=0.3] (g6);
  \draw [->-] (r1) -- (l1) node[pos =0.5, below, sloped] {$a-j-1$} coordinate [pos=0.3] (g7);
  \draw [->-] (l2) -- (r2) node[pos =0.4, above, sloped] {$b-j$} coordinate [pos=0.3] (g8) coordinate[pos = 0.75] (d2);
  \draw [->-] (d1) -- (d2) node[pos =0.3, left] {$1$} coordinate [pos=0.7] (g9);
\end{scope}}}\right)$};
      \node (C) at (-1,-1) {$q^{-a+j-1}\soergel\left(\NB{\tikz[font=\tiny]{\begin{scope}[font=\tiny]
  \coordinate (rt) at (+1,   1.5);
  \coordinate (lt) at (-1,   1.5);
  \coordinate (rb) at (+1,  -1.5);
  \coordinate (lb) at (-1,  -1.5);
  \coordinate (r1) at (+1, -0.9);
  \coordinate (l1) at (-1, -0.7);
  \coordinate (r2) at (+1,  0.9);
  \coordinate (l2) at (-1,  0.7);
  \draw[>-] (rb) -- (r1) node[pos=0, below] {$a$} coordinate [pos=0.7] (g1);
  \draw[>-] (lb) -- (l1) node[pos=0, below] {$b$} coordinate [pos=0.7] (g2); 
  \draw[->-] (r1) -- (r2) node[pos=0.5, right] {$j$} coordinate  [pos=0.8] (g3) coordinate [pos = 0.2] (d1); 
  \draw[->-] (l1) -- (l2) node[pos=0.6, right] {$a+b-j$}   coordinate [pos=0.7] (g4)  coordinate[pos = 0.257] (d2);
  \draw[->] (r2) -- (rt) node[pos=1, above] {$b+1$} coordinate [pos=0.3] (g5);
  \draw[->] (l2) -- (lt) node[pos=1, above] {$a-1$} coordinate [pos=0.3] (g6);
  \draw [->-] (r1) -- (l1) node[pos =0.5, below, sloped] {$a-j-1$} coordinate [pos=0.3] (g7);
  \draw [->-] (l2) -- (r2) node[pos =0.5, above, sloped] {$b+1-j$} coordinate [pos=0.3] (g8);
  \draw [->-] (d1) -- (d2) node[pos =0.3, above] {$1$} coordinate [pos=0.7] (g9);
\end{scope}}}\right)$};
      \node (D) at (1,-1)
      {$q^{-2}\soergel\left(\NB{\tikz[font=\tiny]{\begin{scope}
  \coordinate (rt) at (+1,   1.2);
  \coordinate (lt) at (-1,   1.2);
  \coordinate (rb) at (+1,  -1.2);
  \coordinate (lb) at (-1,  -1.2);
  \coordinate (r1) at (+1, -0.6);
  \coordinate (l1) at (-1, -0.4);
  \coordinate (r2) at (+1,  0.6);
  \coordinate (l2) at (-1,  0.4);
  \draw[>-] (rb) -- (r1) node[pos=0, below] {$a$};
  \draw[>-] (lb) -- (l1) node[pos=0, below] {$b$}; 
  \draw[->-] (r1) -- (r2) node[pos=0.3, right] {$j$}   coordinate [pos=0.7] (g1);
  \draw[->-] (l1) -- (l2) node[pos=0.5, right] {$a+b-j$};
  \draw[->] (r2) -- (rt) node[pos=1, above] {$b+1$};
  \draw[->] (l2) -- (lt) node[pos=1, above] {$a-1$};
  \draw [->-] (r1) -- (l1) node[pos =0.5, below, sloped] {$a-j$};
  \draw [->-] (l2) -- (r2) node[pos =0.5, above, sloped] {$b+1-j$};
\end{scope}

}}\right)$};
      \draw[->] (A) -- (B) node[pos =0.5, above] {$\mapA \circ  \mapB$};
      \draw (B.east) -- +(0.1, 0) arc (90: 0:0.2) -- +(0, -0.6) arc
      (0:-90:0.2) -- (0,0) node [pos =1, above] {$\mapE \circ \mapX$};
      \draw[<-] (C.west) -- +(-0.1,0) arc (270:180:0.2) --+(0, +0.6) arc
      (180:90:0.2) -- (0,0);
      \draw[->] (C) -- (D) node[pos =0.5, above] {$\mapU \circ \mapA \circ  \mapA$};
      \end{tikzpicture}}
  \end{equation}
    \begin{equation}
          \label{eq:exactness-1}
\NB{
      \begin{tikzpicture}[xscale =3.5, yscale=2 ]
      \node (A) at (-1,1) {$\soergel\left(\NB{\tikz[font=\tiny]{}}\right)$};
      \node (B) at ( 1,1) {$q^{a-1-j} \soergel\left(\NB{\tikz[font=\tiny]{\begin{scope}[font=\tiny]
  \coordinate (rt) at (+1,   1.5);
  \coordinate (lt) at (-1,   1.5);
  \coordinate (rb) at (+1,  -1.5);
  \coordinate (lb) at (-1,  -1.5);
  \coordinate (r1) at (+1, -0.9);
  \coordinate (l1) at (-1, -0.7);
  \coordinate (r2) at (+1,  0.9);
  \coordinate (l2) at (-1,  0.7);
  \draw[>-] (rb) -- (r1) node[pos=0, below] {$a$} coordinate [pos=0.7] (g1);
  \draw[>-] (lb) -- (l1) node[pos=0, below] {$b$} coordinate [pos=0.7] (g2); 
  \draw[->-] (r1) -- (r2) node[pos=0.5, right] {$j$} coordinate  [pos=0.8] (g3) coordinate [pos = 0.2] (d1); 
  \draw[->-] (l1) -- (l2) node[pos=0.6, right] {$a+b-j$}   coordinate [pos=0.7] (g4)  coordinate[pos = 0.257] (d2);
  \draw[->] (r2) -- (rt) node[pos=1, above] {$b+1$} coordinate [pos=0.3] (g5);
  \draw[->] (l2) -- (lt) node[pos=1, above] {$a-1$} coordinate [pos=0.3] (g6);
  \draw [->-] (r1) -- (l1) node[pos =0.5, below, sloped] {$a-j-1$} coordinate [pos=0.3] (g7);
  \draw [->-] (l2) -- (r2) node[pos =0.5, above, sloped] {$b+1-j$} coordinate [pos=0.3] (g8);
  \draw [->-] (d1) -- (d2) node[pos =0.3, above] {$1$} coordinate [pos=0.7] (g9);
\end{scope}}}\right)$};
      \node (C) at (-1,-1) {$q^{-j-2}\soergel\left(\NB{\tikz[font=\tiny]{\begin{scope}[font=\tiny]
  \coordinate (rt) at (+1,   1.5);
  \coordinate (lt) at (-1,   1.5);
  \coordinate (rb) at (+1,  -1.5);
  \coordinate (lb) at (-1,  -1.5);
  \coordinate (r1) at (+1, -0.9);
  \coordinate (l1) at (-1, -0.7);
  \coordinate (r2) at (+1,  0.9);
  \coordinate (l2) at (-1,  0.7);
  \draw[>-] (rb) -- (r1) node[pos=0, below] {$a$} coordinate [pos=0.7] (g1);
  \draw[>-] (lb) -- (l1) node[pos=0, below] {$b$} coordinate [pos=0.7] (g2); 
  \draw[->-] (r1) -- (r2) node[pos=0.5, right] {$j$} coordinate  [pos=0.8] (g3) coordinate [pos = 0.2] (d1); 
  \draw[->-] (l1) -- (l2) node[pos=0.5, right] {$a+b-j-1$}   coordinate [pos=0.5] (g4);
  \draw[->] (r2) -- (rt) node[pos=1, above] {$b+1$} coordinate [pos=0.3] (g5);
  \draw[->] (l2) -- (lt) node[pos=1, above] {$a-1$} coordinate [pos=0.3] (g6);
  \draw [->-] (r1) -- (l1) node[pos =0.5, below, sloped] {$a-j-1$} coordinate [pos=0.3] (g7);
  \draw [->-] (l2) -- (r2) node[pos =0.4, above, sloped] {$b-j$} coordinate [pos=0.3] (g8) coordinate[pos = 0.75] (d2);
  \draw [->-] (d1) -- (d2) node[pos =0.3, left] {$1$} coordinate [pos=0.7] (g9);
\end{scope}}}\right)$};
      \node (D) at ( 1,-1)
      {$q^{-2}\soergel\left(\NB{\tikz[font=\tiny]{}}\right)$};
      \draw[->] (A) -- (B) node[pos =0.5, above] {$\mapA \circ \mapA \circ  \mapB$};
      \draw (B.east) -- +(0.1, 0) arc (90: 0:0.2) -- +(0, -0.6) arc
      (0:-90:0.2) -- (0,0) node [pos =1, above] {$\mapH \circ \mapE$};
      \draw[<-] (C.west) -- +(-0.1,0) arc (270:180:0.2) --+(0, +0.6) arc
      (180:90:0.2) -- (0,0);
      \draw[->] (C) -- (D) node[pos =0.5, above] {$\mapU \circ  \mapA$};
    \end{tikzpicture}}
  \end{equation}

      \end{proof}

\section{Uncolored homology theories} \label{uncolored} In this
section we review the construction of a link homology categorifying
the Jones polynomial at a $2p$th root of unity given in
\cite{QiSussanLink}.  Strictly speaking, the homology that we describe
here is different from the one in \cite{QiSussanLink}, but the
definition and proofs are similar.  In particular, the construction we
now provide is a bigraded theory instead of a singly-graded
theory. We use the invariance of this uncolored homology in the proof
of invariance of the colored theory later on.

As before, $A = A_n = \myZ[x_1,\ldots,x_n]$
denotes the graded polynomial algebra, where
each generator $x_i$ has degree two.
We let $B_i$ denote the $(A,A)$-bimodule associated to MOY graph
\eqref{dumb11} and so $\soergel(B_i)=q^{-1} B_i$.
\begin{equation} \label{dumb11}
  \NB{\tikz[scale = 0.7, font=\tiny]{\begin{scope}[font=\tiny, scale=0.7]
  \coordinate (bll) at (-1.5, -1);
  \coordinate (brr) at ( 1.5, -1);
  \node (mll) at (-2.25, 0) {$\dots$};
  \node (mrr) at ( 2.25, 0) {$\dots$};
  \coordinate (blll) at (-3, -1);
  \coordinate (brrr) at ( 3, -1);
  \coordinate (bl) at (-0.5, -1);
  \coordinate (br) at ( 0.5, -1);
  \coordinate (bm) at (  0,-0.3);
  \coordinate (tl) at (-0.5,  1);
  \coordinate (tr) at ( 0.5,  1);
  \coordinate (tm) at (  0, 0.3);
  \draw [->-] (brr) -- +(0,2) node[pos=0, below] {$1$};
  \draw [->-] (brrr) -- +(0,2) node[pos=0, below] {$1$};
  \draw [->-] (blll) -- +(0,2) node[pos=0, below] {$1$};
  \draw [->-] (bll) -- +(0,2) node[pos=0, below] {$1$};
  \draw[>-]  (bl) .. controls +( 0, 0.5) and +(0,0) .. (bm)
  node[below, pos = 0] {$1$};
  \draw[>-]  (br) .. controls +( 0, 0.5) and +(0,0) .. (bm)
  node[below, pos = 0] {$1$};
  \draw[<-]  (tl) .. controls +( 0, -0.5) and +(0,0) .. (tm)
  node[above, pos = 0] {$1$};
  \draw[<-]  (tr) .. controls +( 0, -0.5) and +(0,0) .. (tm)
  node[above, pos = 0] {$1$};
  \draw [->-] (bm) -- (tm) node[left, pos = 0.5] {$2$};
  \draw
  [decorate,decoration={brace,amplitude=10pt,mirror,raise=4pt}]
  ($(blll)+(0, -0.5)$)  -- ($(bl)+(0, -0.5)$) node[midway, yshift =
  -0.5cm, below] {$i$};
  \draw
  [decorate,decoration={brace,amplitude=10pt,mirror,raise=4pt}]
  ($(br)+(0, -0.5)$)  -- ($(brrr)+(0, -0.5)$) node[midway, yshift =
  -0.5cm, below] {$n-i$};

\end{scope}
}}
\end{equation}

\subsection{Elementary braiding complexes}
In \cite{KRWitt} (see \cite{QiSussanLink} for the $H$-equivariant setting), it is shown that there are
$(A,A) \# H^\prime$-module homomorphisms
\begin{enumerate}
\item[(i)]
  $rb_i \colon A \longrightarrow q^{-2} B_i^{-(x_i+x_{i+1})}$, where
  $1 \mapsto (x_{i+1} \otimes 1 - 1 \otimes x_{i}) $;
\item[(ii)] $br_i \colon B_i \longrightarrow A$, where
  $1 \otimes 1 \mapsto 1$.
\end{enumerate}
Note that $rb_i$ is precisely the map $\mapX$ given in
Lemma~\ref{lem:pdgmaps}.\ref{it:pdg-maps-merges-special} for $a=b=1$
and that
$br_i$ is the map $\mapH$ given in
Lemma~\ref{lem:pdgmaps}.\ref{it:pdg-maps-splits} for $a=b=1$ and $c=0$.

Thus we have complexes of $(A,A) \# H^\prime$-modules
\begin{equation}\label{eqn-elementary-braids-half-grading}
  T_i^+ :=
  q^{-3} \left( B_i  \xrightarrow{br_i}  A\right)
  ,
  \quad \quad \quad
  T_i^- :=  q^3\left(A \xrightarrow{rb_i} q^{-2}  B_i^{-(x_i+x_{i+1})}\right)
  .
\end{equation}
These complexes are cohomologically graded (\emph{i.e}.{} the
differential has degree $1$), the homological degree is called the
\emph{topological degree} or \emph{$t$-degree}. In both cases the
bimodule $B_i$ sits in $t$-degree $0$. 
Pictorially, these complexes are:
\begin{equation} \label{T+picture}
   T_i^+=
q^{-3} \left(  \soergel\left(\NB{\tikz[scale = 0.7, font
    =\tiny]{\begin{scope}
  \coordinate (bl) at (-0.5, -1);
  \coordinate (br) at ( 0.5, -1);
  \coordinate (bm) at (  0,-0.3);
  \coordinate (tl) at (-0.5,  1);
  \coordinate (tr) at ( 0.5,  1);
  \coordinate (tm) at (  0, 0.3);
  \draw[>-]  (bl) .. controls +( 0, 0.5) and +(0,0) .. (bm)
  node[below, pos = 0] {$1$};
  \draw[>-]  (br) .. controls +( 0, 0.5) and +(0,0) .. (bm)
  node[below, pos = 0] {$1$};
  \draw[<-]  (tl) .. controls +( 0, -0.5) and +(0,0) .. (tm)
  node[above, pos = 0] {$1$};
  \draw[<-]  (tr) .. controls +( 0, -0.5) and +(0,0) .. (tm)
  node[above, pos = 0] {$1$};
  \draw [->-] (bm) -- (tm) node[left, pos = 0.5] {$2$};
\end{scope}}} \right)
  \rightarrow 
 \soergel\left(    \NB{\tikz[scale = 0.7, font =\tiny]{\begin{scope}
  \coordinate (bl) at (-0.5, -1);
  \coordinate (br) at ( 0.5, -1);
  \coordinate (tl) at (-0.5,  1);
  \coordinate (tr) at ( 0.5,  1);
    \coordinate (ml) at (-0.5,  -.8);
        \coordinate (Ml) at (-0.5,  .8);
 \coordinate (mr) at (0.5,  -.6);
\coordinate (Mr) at (0.5,  .6);

   \draw[>->] (bl) -- (tl) node[pos = 0, below] {$1$} node[pos = 1,
  above] {$1$};

    \draw[>->] (br) -- (tr) node[pos = 0, below] {$1$} node[pos = 1, above] {$1$};

\end{scope}}} \right)
    \right) , 
    \quad \quad 
    T_i^-= q^3 
    \left(
 \soergel\left(\NB{\tikz[scale = 0.7, font =\tiny]{}} \right)
     \rightarrow
     q^{-2}
\soergel\left(     \NB{\tikz[scale = 0.7, font =\tiny]{\begin{scope}
  \coordinate (bl) at (-0.5, -1);
  \coordinate (br) at ( 0.5, -1);
  \coordinate (bm) at (  0,-0.3);
  \coordinate (tl) at (-0.5,  1);
  \coordinate (tr) at ( 0.5,  1);
  \coordinate (tm) at (  0, 0.3);
  \draw[>-]  (bl) .. controls +( 0, 0.5) and +(0,0) .. (bm)
  node[below, pos = 0] {$1$};
  \draw[>-]  (br) .. controls +( 0, 0.5) and +(0,0) .. (bm)
  node[below, pos = 0] {$1$};
  \draw[<-]  (tl) .. controls +( 0, -0.5) and +(0,0) .. (tm)
  node[above, pos = 0] {$1$} coordinate[pos = 0.25] (ga) ;
    \filldraw[draw= green!50!black, fill = white] (ga) circle (1mm)
  node[left, green!50!black] {$-1$};

  \draw[<-]  (tr) .. controls +( 0, -0.5) and +(0,0) .. (tm)
  node[above, pos = 0] {$1$} coordinate[pos = 0.25] (gb) ;
    \filldraw[draw= green!50!black, fill = white] (gb) circle (1mm)
  node[left, green!50!black] {$-1$};
  \draw [->-] (bm) -- (tm) node[left, pos = 0.5] {$2$};
 
\end{scope}}} \right)
    \right) \ .
\end{equation}

In the coming sections we will, for presentation reasons, often omit
the various shifts built into the definitions of $T_i^+$ and $T_i^-$.

We associate respectively to the positive and negative crossings $\sigma_i^+$
and $\sigma_i^{-}$ between the $i$th and $(i+1)$st strands in
\eqref{2crossings} the chain complexes of $(A,A)\# H^\prime $-bimodules $T_i^+$
and $T_i^-$:
\begin{equation} \label{2crossings} \sigma_i^+:=\NB{\tikz[scale =
  1]{\begin{scope}
  \draw (0, 0) -- +(0,1);
  \node at (1,0.5) {$\dots$};
  \draw (3, 0) .. controls +(0, 0.2) and +(0, -0.2) ..  +(-1,1);
  \fill[white] (2.5, 0.5) circle (2mm);
  \draw (2, 0) .. controls +(0, 0.2) and +(0, -0.2) ..  +(1,1);
  \node at (4,0.5) {$\dots$};
  \draw (5, 0) -- +(0,1); 
\end{scope}}} \ ,
  \hspace{1in}
  \sigma_i^-:=
  \NB{\tikz[xscale = -1, yscale = 1]{}} \ .
\end{equation}
More generally, if $\beta\in \mathrm{Br}_n$ is a braid group element
written as a product in the elementary generators
$\sigma_{i_i}^{\epsilon_1}\cdots \sigma_{i_k}^{\epsilon_k}$, where
$\epsilon_i\in \{+, -\}$, we assign the chain complex of
$(A,A)\# H^\prime$-bimodules
\begin{equation}
   T_\beta:=T_{i_1}^{\epsilon_1}\otimes_A\cdots \otimes_A T_{i_k}^{\epsilon_k}.
\end{equation}

\begin{thm}[{\cite[Section 4]{KRWitt} and \cite[Section
  2]{QiSussanLink}}]
  \label{thm-braid-invariant}
  The complexes $T_i^+$, $T_i^-$ are mutually inverse complexes
  in the relative homotopy category $\mc{C}^{\dif}(A,A,)$. They
  satisfy the braid relations
  \begin{itemize}
  \item $T_i^+T_j^+\cong T_j^+T_i^+$ if $|i-j|>1$,
  \item $T_i^+T_{i+1}^+ T_i^+ \cong T_{i+1}^+T_i^+ T_{i+1}^+$ for all
    $i=1,\dots, n-2$.
  \end{itemize}
  Consequently, given any braid group element
  $\beta\in \mathrm{Br}_n$, the chain complex of $T_\beta$ associated
  to it is a well-defined element of the relative homotopy category
  $\mc{C}^{\dif}(A,A)$.
\end{thm}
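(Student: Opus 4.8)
The plan is to establish, for the $H^\prime$-equivariant complexes $T_i^{\pm}$, the three standard facts — invertibility of $T_i^+$ with inverse $T_i^-$, far commutativity, and the Reidemeister~III relation — and then deduce well-definedness of $T_\beta$ in the usual way. Two simplifications guide the whole argument. First, once $T_i^+$ is shown invertible with inverse $T_i^-$, the far-commutativity and cubic braid relations for the negative complexes follow by taking $\otimes_A$-inverses of both sides, so only the positive relations require attention. Second, by Lemma~\ref{lem-construction-of-triangle} a morphism of $H^\prime$-equivariant complexes is an isomorphism in $\mc{C}^\dif(A,A)$ precisely when its cone becomes null-homotopic \emph{after} forgetting the $H^\prime$-structure; hence at every stage it suffices to exhibit $H^\prime$-equivariant chain maps together with contracting homotopies that are merely $A$-bilinear. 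This is exactly what makes the $p$-DG refinement of the classical computations feasible: one never has to commute a homotopy with $\dif$.

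For invertibility I would write $T_i^+\otimes_A T_i^-$ and $T_i^-\otimes_A T_i^+$ as two-term complexes built from $A$, $B_i$ and $B_i\otimes_A B_i$, using that $rb_i=\mapX$ and $br_i=\mapH$ are $H^\prime$-equivariant (Lemma~\ref{lem:pdgmaps}.\ref{it:pdg-maps-merges-special} and~\ref{it:pdg-maps-splits}). The composite $B_i\otimes_A B_i\to B_i$ produced from the digon maps $\mapB,\mapU$ of Lemma~\ref{lem:pdgmaps}.\ref{it:pdg-maps-digons} is, after forgetting $\dif$, an isomorphism onto a direct summand; Gaussian elimination of this contractible summand leaves precisely $A$. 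One then checks that the resulting isomorphism in $\mc{C}^\dif(A,A)$ is realized by maps on the list of Lemma~\ref{lem:pdgmaps}, while the homotopy killing the eliminated piece is taken only $A$-bilinear, as permitted above.

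Far commutativity $T_i^+T_j^+\cong T_j^+T_i^+$ for $|i-j|>1$ is essentially formal: $B_i$ and $B_j$ involve disjoint variables, so the two iterated tensor products are literally isomorphic as complexes of bimodules, and the twisting polynomials $-(x_i+x_{i+1})$, $-(x_j+x_{j+1})$ occurring in the negative-crossing complexes also involve disjoint variables, so this isomorphism intertwines the $\dif$-actions. \emph{The main obstacle is the Reidemeister~III relation} $T_i^+T_{i+1}^+T_i^+\cong T_{i+1}^+T_i^+T_{i+1}^+$. Here one expands both sides as totalizations of $3$-dimensional cubes of $H^\prime$-equivariant Soergel bimodules and produces an explicit $H^\prime$-equivariant homotopy equivalence by successively cancelling acyclic subquotients until both sides reduce to a common small complex. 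All of the structure maps needed are among those catalogued in Lemma~\ref{lem:pdgmaps}, the acyclic pieces are exactly those identified in Lemma~\ref{lem:exact-sequence-square}, and — crucially — the contracting homotopies need only be $A$-bilinear, so the classical cancellation computation of \cite[Section~4]{KRWitt} and \cite[Section~2]{QiSussanLink} carries over at the level of $H^\prime$-equivariant complexes. The one genuinely new bookkeeping relative to the uncolored literature is tracking the twisting polynomials (the green dots of Definition~\ref{defn:greendots}) through each cancellation, which is handled by the green-dot migration Lemma~\ref{lem:iso-geen-dots} and the explicit action of $\dif$ on Vandermonde-type products in Lemma~\ref{lem:dif-delta-nabla}.

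Granting these three items, well-definedness of $T_\beta$ is standard. Any two words in the $\sigma_i^{\pm1}$ representing the same element of $\mathrm{Br}_n$ are related by a finite sequence of the moves $\sigma_i\sigma_i^{-1}\leftrightarrow 1$, $\sigma_i\sigma_j\leftrightarrow\sigma_j\sigma_i$ ($|i-j|>1$), and $\sigma_i\sigma_{i+1}\sigma_i\leftrightarrow\sigma_{i+1}\sigma_i\sigma_{i+1}$; each such move replaces one consecutive block of tensor factors by another isomorphic to it in $\mc{C}^\dif(A,A)$, and since $\otimes_A$ is compatible with the relative homotopy category (tensoring a termwise one-sided-projective complex with a complex that is null-homotopic over $A\otimes A^{\op}$ stays null-homotopic), applying the identity on the untouched factors yields an isomorphism of the full complexes. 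Hence $T_\beta$ is independent of the chosen word, as claimed.
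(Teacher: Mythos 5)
The paper does not supply its own proof of this theorem; it is invoked as a citation of \cite[Section 4]{KRWitt} and \cite[Section 2]{QiSussanLink} and used as input to everything that follows. Your sketch is consistent with what those sources do, and you have correctly isolated the structural point that makes the $p$-DG refinement go through essentially for free: since $\mc{C}^\dif(A,A)$ is the Verdier quotient of $\mc{C}((A\otimes A^{\op})\# H^\prime)$ by the kernel of the forgetful functor, an $H^\prime$-equivariant chain map is an isomorphism in the relative homotopy category precisely when its cone contracts after forgetting $\dif$; hence one only needs the \emph{chain maps} to intertwine $\dif$, while the contracting homotopies in the classical computations can be reused verbatim. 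One caution worth underlining in your invertibility and R3 steps: generic Gaussian elimination of a contractible summand produces a retract whose inclusion/projection maps need not be $H^\prime$-equivariant even when the ambient complex is. To remain within the framework you describe, you must fix in advance the smaller complex with its $H^\prime$-structure (for instance $A$, or a totalization of green-dotted Soergel bimodules) and exhibit an explicit $H^\prime$-equivariant comparison map to or from it built out of the morphisms in Lemma~\ref{lem:pdgmaps}; only then does Lemma~\ref{lem-construction-of-triangle} apply. You do in fact say this — you insist that the comparison maps lie on the list of Lemma~\ref{lem:pdgmaps} — so your proposal is sound; it simply should not be read as "arbitrary Gaussian elimination plus a non-equivariant homotopy suffices." With that reading understood, the far-commutativity and inverse arguments are routine, and the R3 argument reduces, exactly as you say, to tracking the twists (green dots) through the cancellation, for which Lemmas~\ref{lem:iso-geen-dots} and~\ref{lem:dif-delta-nabla} are the correct tools.
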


\subsection{Definition and theorem}
\label{sec:def-uncolored}
In this section we categorify the Jones polynomial of any link using
analogous arguments from \cite{Cautisremarks}, \cite{RW} and
\cite{Roulink} adapted to the $p$-DG setting.
We follow the convention of \cite{KR3} that
Hochschild ($a$) degrees are homological and topological ($t$) degrees
are cohomological.

Let $\beta\in \mathrm{Br}_n$ be a braid group element in $n$
strands. By Theorem \ref{thm-braid-invariant}, there is a chain
complex of $(A_n,A_n)\# H^\prime$-bimodules $T_\beta$, well defined up to
homotopy, associated with $\beta$. Let
\begin{equation}\label{eqn-chain-complex-for-braid}
  T_\beta = \left(\dots\stackrel{d_0}{\lra} T_\beta^{i-1} \stackrel{d_0}{\lra} T_\beta^{i} \stackrel{d_0}{\lra} T_\beta^{i+1}\stackrel{d_0}{\lra}\dots\right).
\end{equation}
Then consider the complex
\[
  \mHH_\bullet^{\dif}(T_\beta^{})= \dots \lra
  \mHH_\bullet^{\dif}(T_\beta^{i-1}) \xrightarrow{d_t}
  \mHH_\bullet^{\dif}(T_\beta^{i}) \xrightarrow{d_t}
  \mHH_\bullet^{\dif}(T_\beta^{i+1})\lra \dots
\]
where $d_t:=\mHH_\bullet^{\dif}(d_0)$, called the \emph{topological
  differential}, is the induced map of $d_0$ on Hochschild homology.

The Cautis differential $d_C$ has Hochschild degree $-1$ and
$q$-degree $2$. It acts on Hochschild homology of each $T_\beta^i$,
$i\in \Z$. We now form a chain complex with respect to the \emph{total differential}
$d_T:=d_t+d_C$, yielding a complex of $H^\prime$-modules. 
\begin{equation}\label{eqn-HH-complex-dT}
  \dots \lra  \mHH_\bullet^{\dif}(T_\beta^{i-1}) \xrightarrow{d_T}  \mHH_\bullet^{\dif}(T_\beta^{i}) \xrightarrow{d_T}        \mHH_\bullet^{\dif}(T_\beta^{i+1})\lra \dots 
\end{equation}
Furthermore, because the topological differential preserves $a$ and $q$ degrees while the Cautis differential is inhomogeneous, we are
forced to collapse the $a$-grading onto $t$ and $q$ by
\begin{equation}\label{eqnatqdegreecollapse}
  a=t^{-1}q^2.
\end{equation}
As a result, the complex \eqref{eqn-HH-complex-dT} is a bigraded complex of $H^\prime$-modules.

\begin{rem}[Homology vs cohomology]
  Despite the fact that the complex $T_\beta$ is usually written as a
  cochain complex in the literature (see, for instance, \cite{KR3}),
  we will be treating $\mHT$, the homology with respect to $d_T$, as a
  homology theory for $\beta$, mainly because it is covariant. Thus
  $HT_j$ will stand for the homology in $t$-degree $-j$.
\end{rem}

\begin{defn}
\label{def-HHH}
Let $\beta$ be a braid.
\begin{enumerate}
   \item[(1)]  The \emph{$H^\prime$-module} of $\beta$ is the bigraded
  $H^\prime$-representation
  \[
    \mC^{\prime}(\beta):=q^{-n} \mHT_\bullet \left( \mHH_\bullet^\dif(T_\beta), d_T \right) .
  \]
Here the notation $\mHT$ on the right hand side emphasizes that the homology is taken with respect to the total differential $d_T$.  
 \item[(2)] The \emph{$H$-module} of $\beta$ is the bigraded $H$-representation
 \[
    \mC^{\dif}(\beta):=\mC^\prime(\beta)^{\mathrm{free}}\otimes_\Z \F_p,
  \]
  where $\mC^\prime(\beta)^{\mathrm{free}}$ denotes the free part of the $\Z$-module.
 \item[(3)]
  The \emph{slash cohomology} of $\beta$ is the slash
  cohomology (see \eqref{eqnslashcohomology}) of $\mC^\dif(\beta)$:
  \[
    \mH^{\dif}_/(\beta):=\mH_/ \left( \mC^\dif(\beta)
    \right).
  \]
  Recall that it is the image of the bigraded module $\mC^{\dif}(\beta)$ in
  the stable category $H\udmod$.

\end{enumerate}
\end{defn}

\begin{rem}[Bockstein construction]\label{rmk-Bockstein}
We now record some important comments on Definition \ref{def-HHH}.
 
The first part of Definition \ref{def-HHH} is simply an integral version of the main construction of \cite{RW}, with a compatible $H^\prime$-module structure. In the second part of Definition \ref{def-HHH}, taking the free part of the homology of a chain complex of $\Z$-modules and then tensoring with $\F_p$ is usually known as the infinity page of the \emph{Bockstein spectral sequence} in algebraic topology (see \cite[Example 2.3]{MayPrimerSS} for a nice description), which we will abbreviate as the \emph{Bockstein construction} in what follows. 
  
Using the Bockstein construction in (2) may seem to be unmotivated. However, this is the key tool that retains the finiteness
property of the symmetric homology theory of \cite{RW} in finite
characteristic (see Example~\ref{ex:1varoverFp}). Recall that the
main theorem of \cite[Theorem 6.2]{RW} utilizes Soergel bimodules over
$\Q$, and the invariant $\mHT_\bullet(\mHH_\bullet(\beta))$ is a
finite-dimensional invariant for the closure of $\beta$. This implies
that the (torsion) free part of $\mC^\prime(\beta)$ is a finitely
generated abelian group, so that $\mC^\dif(\beta)$ is a
finite-dimensional $\F_p$-vector space by construction. By Corollary
\ref{cor-p-nilpotency}, $\dif^p\equiv 0$ on $\mC^\dif(\beta)$, so that
part (3) is well-defined.
  
In contrast, if one starts by working with Soergel bimodules in positive characteristic
from the beginning without resorting to the Bockstein construction, then
  the resulting homology theory of a braid might be of infinite
  dimension. To see this phenomena for the unknot, see Example
  \ref{ex:1varoverFp}.
\end{rem}

\begin{example} \label{ex:1varoverFp}
Recall from Example \ref{ex:1varoverZ} that 
$\mHH_\bullet^\dif(A) \simeq A \otimes \Lambda^\bullet V$ 
and that the Cautis differential $d_C$ and the derivation $\dif_A$ act on it.  In this example we consider $\mHH_0^{\dif}(A) / d_C(\mHH_1^{\dif}(A)) $ over $\Fp$ by base changing $A$ over $\F_p$. We will see that this space is sensitive to the characteristic $p$.

Assume that $n<p$.  From the construction earlier, we see that $d_C(1 \otimes e_i)=e_1 e_i - (i+1) e_{i+1}$ (where we assume $e_{n+1}=0$).
This implies that in $\mHH_0^\dif(A)$ there are relations $e_j=\frac{1}{j!} e_1^j $ and $e_1^{n+1}=0$.
Thus $\mHH_0^{\dif}(A) / d_C(\mHH_1^{\dif}(A)) \cong \Fp[e_1]/(e_1^{n+1}) $ and is thus a finite-dimensional algebra over $\Fp$.  It is easy to see that the $p$-differential $\dif_A$ is trivial on this quotient.

The situation is quite different for $n \geq p$ and for concreteness, we restrict to the case $n=p$.
For $j<p$, then again relations in the quotient 
$\mHH_0^{\dif}(A) / d_C(\mHH_1^{\dif}(A)) $ imply that
$e_j=\frac{1}{j!} e_1^j $.  
However the relation $e_1 e_{p-1} = p e_p = 0 $ implies that $e_1^p=0$ in the quotient and gives rise to new phenomena.  We still have $e_1 e_p = 0$ in the quotient. Thus we get that  
$\mHH_0^{\dif}(A) / d_C(\mHH_1^{\dif}(A)) \cong \Fp[e_1,e_p]/(e_1^{p}, e_1 e_p) $.
In contrast to the case $n<p$, this is an infinite-dimensional algebra.  It is easy to see again that the $p$-differential is trivial on it and so even the slash cohomology of $\mHH_0^{\dif}(A) / d_C(\mHH_1^{\dif}(A)) $ in this case is infinite-dimensional.  In fact, when $n=p$, there is non-trivial homology with respect to $d_C$ in Hochschild degree one.  This is in stark contrast to the $n<p$ case as well as the situation when working with rational coefficients.
\end{example}

By definition, the space $\mC^\prime(\beta)$ (resp. $\mC^\dif(\beta)$, $\mH^{\dif}_/(\beta)$) is doubly-graded by
topological ($t$) degree and quantum ($q$) degree. When necessary to
emphasize each graded piece of the space, we will write
$\mC^{\prime}_{i,k}(\beta)$ (respectively $\mC^{\dif}_{i,k}(\beta)$, $\mH^\dif_{/i,k}(\beta)$) to denote the homogeneous component
concentrated in $t$-degree $i$ and $q$-degree $k$.

The following theorem is a particular case of the main result of
\cite{KRWitt}, where we have only kept track of the degree two
derivation. The detailed
verification given in Section~\ref{secmarkov}, however, uses the main ideas of
\cite{Roulink} and \cite{RW} and differs from that of
\cite{KRWitt}. This proof serves as the model for the other link
homology theories in this paper.

\begin{thm}\label{thm-untwisted-HOMFLY}
  The slash cohomology of $\beta$ depends only on the braid
  closure of $\beta$ as a framed link in $\R^3$.
\end{thm}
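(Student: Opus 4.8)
The plan is to reduce to the Markov moves and verify each while tracking the extra structure; this is essentially the specialization of \cite{KRWitt} announced above, but carried out in the style of \cite{Roulink, RW} so as to serve as a template for the colored case (the full details are in Section~\ref{secmarkov}). Recall that two braid words present the same framed link in $\R^3$ if and only if they are related by a finite sequence of (I) conjugation $\beta \leftrightarrow \gamma\beta\gamma^{-1}$ inside a fixed $\mathrm{Br}_k$ and (II) stabilization $\beta \leftrightarrow \beta\sigma_n^{\pm 1}$ along the standard inclusion $\mathrm{Br}_n \hookrightarrow \mathrm{Br}_{n+1}$, with the proviso that for a framed link the two signs of stabilization contribute mutually inverse grading shifts. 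Since $\beta \mapsto \mH^\dif_/(\beta)$ is the composite of functorial operations --- forming $T_\beta$ in $\mc{C}^\dif(A_n)$, applying $\mHH^\dif_\bullet$, passing to the homology of the total differential $d_T = d_t + d_C$, taking the torsion-free part over $\Z$, tensoring with $\F_p$, and taking slash cohomology $\mH_/$ --- it suffices to exhibit for each move a bigraded isomorphism of the intermediate objects compatible with $\dif$, $d_t$ and $d_C$; the remaining operations then transport it to $\mH^\dif_/$. We note that the well-definedness of $\mHH^\dif_\bullet(T_\beta)$ together with $d_T$, independently of the braid word, is itself part of this: it follows because relative Hochschild homology is a functor (Definition~\ref{def-relative-HH}) and both $d_t$ and $d_C$ are defined at the chain level, so that a relative homotopy equivalence $T_\beta \simeq T_{\beta'}$ induces a quasi-isomorphism of the associated $d_T$-complexes.

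For Markov I, Theorem~\ref{thm-braid-invariant} gives $T_{\gamma\beta\gamma^{-1}} \cong T_\gamma \otimes_{A_n} T_\beta \otimes_{A_n} T_{\gamma^{-1}}$ in $\mc{C}^\dif(A_n)$ together with $T_{\gamma^{-1}} \otimes_{A_n} T_\gamma \cong A_n$. Applying the cyclicity isomorphism of Proposition~\ref{HHcyclprop} to the bimodules $T_\gamma \otimes_{A_n} T_\beta$ and $T_{\gamma^{-1}}$ produces an $H^\prime$-equivariant isomorphism $\mHH^\dif_\bullet(T_{\gamma\beta\gamma^{-1}}) \cong \mHH^\dif_\bullet(T_\beta)$. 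One checks that it intertwines $d_t$ (induced by the bimodule differential of $T_\beta$, hence cyclic) and $d_C$ (induced by the Koszul-type operator $d_C$ on the trace variables, hence also cyclic, cf.\ Corollary~\ref{cor:dCdKcomHH}). Passing to $d_T$-homology, torsion-free parts, $\otimes_\Z\F_p$, and $\mH_/$ then gives $\mH^\dif_/(\gamma\beta\gamma^{-1}) \cong \mH^\dif_/(\beta)$.

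Markov II is the crux. With $\beta \in \mathrm{Br}_n \subset \mathrm{Br}_{n+1}$ one has $T_{\beta\sigma_n^{\pm 1}} \cong T_\beta \otimes_{A_{n+1}} T_n^{\pm}$, where on the right $T_\beta$ is the $(A_n, A_n)$-bimodule complex of $\beta$ extended by the variable $x_{n+1}$. Writing $A_{n+1} = A_n[x_{n+1}]$ and factoring the Koszul resolution into its $A_n$-part and the one-variable Koszul complex $\K_1$ in $x_{n+1}$, relative Hochschild homology over $A_{n+1}$ may be computed by first ``tracing out'' $x_{n+1}$ and then applying $\mHH^\dif_\bullet$ over $A_n$. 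As all $x_{n+1}$-dependence lives inside $T_n^{\pm}$, this partial trace only involves the elementary braiding complex: using the explicit description of $B_n$, $A_{n+1}$ and the maps $rb_n$, $br_n$ from Lemma~\ref{lem:pdgmaps}, together with the square exact sequences of Lemma~\ref{lem:exact-sequence-square} to reduce clutter, one computes that the partial trace of $T_n^{\pm}$ is $A_n$ up to an explicit $(q,t)$-shift, $H^\prime$-equivariantly and compatibly with the residual Cautis differential. Substituting this back yields $\mHH^\dif_\bullet(T_{\beta\sigma_n^{\pm 1}}) \simeq (\text{shift})\,\mHH^\dif_\bullet(T_\beta)$ as complexes equipped with $d_T$; the shifts match precisely the change in the prefactor $q^{-n} \rightsquigarrow q^{-(n+1)}$ in Definition~\ref{def-HHH} and the internal shifts of $T_n^{\pm}$ in \eqref{eqn-elementary-braids-half-grading}, and the $+$ and $-$ cases produce inverse shifts, so that after collapsing the $a$-grading via \eqref{eqnatqdegreecollapse} the slash cohomology transforms exactly as a framed-link invariant must.

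I expect Markov II to be the main obstacle, for two reasons. First, the homotopy equivalence realizing the partial trace of $T_n^{\pm}$ must be shown to intertwine the Cautis differential, not merely the topological one; this is precisely where Lemma~\ref{lemma-acylicity-commutator-relation}, Lemma~\ref{lem-commutator-null-homotopic} and Corollary~\ref{cor:dCdKcomHH} are used, to guarantee that $d_C$ descends through the partial trace and that compatible homotopies exist. Second, the argument must survive the Bockstein construction: the $\Z$-module isomorphism obtained above must restrict to torsion-free parts and remain an isomorphism after $\otimes_\Z\F_p$ and after applying $\mH_/$. The former relies on the finiteness of the rational theory (\cite[Theorem~6.2]{RW}), which makes the passage to torsion-free parts well-behaved, and the latter on the $p$-nilpotency of $\dif$ from Corollary~\ref{cor-p-nilpotency}, which is what makes slash cohomology defined throughout. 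Everything else --- conjugation invariance and the grading bookkeeping --- is comparatively routine.
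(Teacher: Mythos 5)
Your overall architecture (reduce to the two Markov moves, prove Markov I by the trace property of relative Hochschild homology, prove Markov II by tracing out the last variable against $\K_1^\prime\otimes T_n^{\pm}$) is exactly the paper's, and your Markov I argument matches Proposition~\ref{HHcyclprop} as used in Section~\ref{secmarkov}. But your Markov II step contains a claim that is false and that the whole theorem hinges on: you assert that the partial trace of $T_n^{\pm}$ is $H^\prime$-equivariantly isomorphic to $A_n$ \emph{up to a $(q,t)$-shift}, with the $+$ and $-$ cases giving inverse shifts that are absorbed by the normalization $q^{-n}\rightsquigarrow q^{-(n+1)}$. What actually survives is not a grading shift but a \emph{twist of the $H^\prime$-action}: Proposition~\ref{prop-Markov-II-for-HHH} gives $\mC^{\prime}((\beta\mathrm{I})\cdot\sigma_n^{\pm})\cong \mC^{\prime}(\beta)^{\pm 2x_n}$, with no residual $q$-power at all. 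Concretely, the surviving piece of the bicomplex is $Y_1=\bigl(A_{n+1}^{x_n+3x_{n+1}}\xrightarrow{x_{n+1}-x_n}A_{n+1}^{2x_{n+1}}\bigr)$, whose cokernel is $A_n^{2x_n}$; the superscript cannot be removed $H^\prime$-equivariantly. This is precisely why the theorem asserts only \emph{framed} invariance, and why the unframed renormalization (Remark~\ref{rmk:unframed} and the remark in Section~\ref{subsecHOMFLYunlink}) is done by inserting green dots rather than $q$-powers. A grading shift and a twist of $\dif$ are genuinely different operations on the stable category $H\udmod$, so your bookkeeping would not close up: it would predict, e.g., that changing the framing of the unknot multiplies its slash cohomology by a power of $q$, which is not what happens.

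Two further points you pass over that the paper has to work for. First, the splitting of the short exact sequence of bicomplexes \eqref{sesbicomplexes} into the $Y_1$- and $Y_2$-parts is \emph{not} $H^\prime$-equivariant (the section $\theta$ fails to intertwine $\dif$); the paper salvages this by observing that $\theta$ dies after taking vertical homology, so the direct sum decomposition \eqref{eqndirectsumHH} only becomes $H^\prime$-equivariant at that stage (Lemma~\ref{lem-technical-3}). Second, the Cautis differential does not respect the naive sub/quotient order of the filtration: Lemma~\ref{lem-technical-2} shows that with respect to $d_T$ the roles of $Y_1$ and $Y_2$ are reversed ($Y_2$ becomes the sub), and only then does one see that the $Y_2$-contribution is a contractible complex of $H^\prime$-modules. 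Your appeal to Lemmas~\ref{lemma-acylicity-commutator-relation} and \ref{lem-commutator-null-homotopic} is in the right spirit but does not substitute for this degree analysis. With the twist $\pm 2x_n$ reinstated and these two points addressed, your outline becomes the paper's proof.
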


As a convention for the framing number of a braid closure, if a strand
for a component of link is altered as in the left of \eqref{framing},
then we say that the framing of the component is increased by $1$
(with respect to the blackboard framing).  If a strand for a component
of link is altered as in the right of \eqref{framing}, then we say
that the framing of the component is decreased by $1$.

\begin{equation} \label{framing}
  \NB{\tikz[yscale = 1]{\begin{scope}
  \draw (0, -1) -- +(0,2);
\end{scope}
\node at (1.25, 0) {$\rightsquigarrow$};
\begin{scope}[xshift = 2.5cm]
  \draw (0, 1) -- (0, 0.5) .. controls +(0,-0.5) and +(0, 0.5)
  .. (1, -0.5) arc (180:360:0.5) -- (2,0);  
    \fill[white] (0.5, 0) circle (1mm);
  \draw (0, -1) -- (0, -0.5) .. controls +(0,0.5) and +(0, -0.5)
  .. (1, 0.5) arc (180:0:0.5) -- (2,0);
\end{scope}}} \qquad\qquad\qquad
  \NB{\tikz[yscale =-1]{}}
\end{equation}

Denote by $\mathtt{f}_i(L)$ the framing number of the $i$th strand of
a link $L$. Then, under the Reidemeister moves of \eqref{framing},
$\mathtt{f}_i(L)$ adds or subtracts one when changing from the
corresponding left local picture to the right local picture.

\subsection{Markov moves} \label{secmarkov} The usual HOMFLYPT
homologies of two braid compositions $\beta_1\beta_2$ and
$\beta_2\beta_1$ are isomorphic due to the trace-like property of the
usual Hochschild homology functor. The relative Hochschild homology
also remembers the $H^\prime$-action. The following result is then immediate.

\begin{prop}
  Let $\beta_1$ and $\beta_2$ be two braids on $n$ strands.  Then
  $\mC^{\prime}(\beta_1 \beta_2) \cong \mC^{\prime}(\beta_2
  \beta_1)$. \qedhere
\end{prop}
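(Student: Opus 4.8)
The plan is to upgrade the trace-like property of relative Hochschild homology (Proposition~\ref{HHcyclprop}) to an isomorphism that is compatible with the \emph{whole} total differential $d_T = d_t + d_C$ and with the $H^\prime$-action. First I would record that, by construction, $T_{\beta_1\beta_2} = T_{\beta_1}\otimes_A T_{\beta_2}$ and $T_{\beta_2\beta_1} = T_{\beta_2}\otimes_A T_{\beta_1}$ as complexes of $(A_n,A_n)\#H^\prime$-bimodules, that $\mC^\prime(-) = q^{-n}\,\mHT_\bullet(\mHH^\dif_\bullet(T_-),d_T)$ carries the same shift $q^{-n}$ on both sides, and that every term of a Rickard complex is a singular Soergel bimodule which is graded free as a one-sided module by Proposition~\ref{prop-grank}, so the $\otimes_A$ occurring here already computes $\otimes^{\mathbf{L}}_A$. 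Hence it suffices to produce an isomorphism of complexes of bigraded $H^\prime$-modules $(\mHH^\dif_\bullet(T_{\beta_1\beta_2}),d_T) \cong (\mHH^\dif_\bullet(T_{\beta_2\beta_1}),d_T)$.

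For the topological differential I would apply Proposition~\ref{HHcyclprop} to each pair of terms $T_{\beta_1}^i$, $T_{\beta_2}^j$, getting $H^\prime$-module isomorphisms $\tau_{i,j}\colon \mHH^\dif_\bullet(T_{\beta_1}^i\otimes_A T_{\beta_2}^j)\xrightarrow{\ \sim\ }\mHH^\dif_\bullet(T_{\beta_2}^j\otimes_A T_{\beta_1}^i)$. The isomorphism of Proposition~\ref{HHcyclprop} is built from the standard trace-like chain maps and is therefore natural in both bimodule arguments; naturality in the $T_{\beta_1}$-variable and in the $T_{\beta_2}$-variable says precisely that the $\tau_{i,j}$ intertwine the two pieces $\mHH^\dif_\bullet(d_0^{\beta_1})\otimes\Id$ and $\pm\,\Id\otimes\mHH^\dif_\bullet(d_0^{\beta_2})$ of the topological differential of $\mHH^\dif_\bullet(T_{\beta_1\beta_2})$ with the corresponding pieces of $\mHH^\dif_\bullet(T_{\beta_2\beta_1})$, the roles of ``horizontal'' and ``vertical'' being exchanged. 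Composing with the standard isomorphism between the total complex of a double complex and that of its transpose (the sign $(-1)^{ij}$ on the $(i,j)$ summand) then gives an isomorphism $(\mHH^\dif_\bullet(T_{\beta_1\beta_2}),d_t)\cong(\mHH^\dif_\bullet(T_{\beta_2\beta_1}),d_t)$ of complexes of bigraded $H^\prime$-modules.

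The substantive point is that this isomorphism also intertwines the Cautis differential. Recall that $d_C$ on $\mHH^\dif_\bullet(L)$ is induced, through the identification $\mHH^\dif_\bullet(L)\cong \mH_\bullet(L\otimes_{(A_n,A_n)}\K_n)$, by the Cautis endomorphism of the Koszul resolution $\K_n$ tensored with $\Id_L$; as noted in the remark after Corollary~\ref{cor:dCdKcomHH}, this is contraction of Hochschild homology with the fixed class $\xi_C = \sum_{i=1}^n x_i^2\frac{\dif}{\dif x_i}\in \mHH^1(A_n)$, i.e.\ the cap-product action of a fixed element of $\mHH^\bullet(A_n)$. The trace isomorphism of Proposition~\ref{HHcyclprop} is a morphism of modules over $\mHH^\bullet(A_n)$ --- this too is part of ``the usual trace-like properties'': realized on the Koszul model $(-)\otimes_{(A_n,A_n)}\K_n$, the chain map implementing it is the identity on the resolution factor $\K_n$ and only reshuffles the bimodule factors, hence visibly commutes with the operator $d_C\otimes\Id$. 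It follows that each $\tau_{i,j}$ commutes with $d_C$, and therefore with $d_T = d_t + d_C$.

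Putting these together, the trace isomorphism is an isomorphism of complexes of bigraded $H^\prime$-modules intertwining $d_T$ (that $d_T$ is genuinely $H^\prime$-equivariant, so that we are dealing with honest complexes of $H^\prime$-modules, follows from Corollary~\ref{cor-dC-commutes-with-H} together with the $H^\prime$-equivariance of the functor $\mHH^\dif_\bullet$). Taking homology $\mHT_\bullet$ and applying the common shift $q^{-n}$ then yields $\mC^\prime(\beta_1\beta_2)\cong\mC^\prime(\beta_2\beta_1)$. I expect the only real obstacle to be the $d_C$-compatibility in the third paragraph: since Proposition~\ref{HHcyclprop} is invoked without an explicit chain-level formula, one must either spell out the trace chain map on the model $(-)\otimes_{(A_n,A_n)}\K_n$ and verify that it acts as $\Id_{\K_n}$ on the resolution factor, or prove once and for all that the trace isomorphism is $\mHH^\bullet(A_n)$-linear; the rest is bookkeeping of signs and naturality.
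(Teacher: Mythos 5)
Your proposal is correct and follows essentially the same route as the paper: the paper derives this proposition in one stroke from the trace-like property of relative Hochschild homology (Proposition \ref{HHcyclprop}), noting only that the relative version remembers the $H^\prime$-action, and declares the result immediate. What you have written is a careful expansion of that same argument, correctly isolating the compatibility of the trace isomorphism with the Cautis differential (its $\mHH^\bullet(A_n)$-linearity) as the one point the paper leaves implicit.
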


To establish the second Markov move, we start by analyzing the
(relative) Hochschild homology of the  $H^\prime$-complexes of bimodules
associated with the two diagrams of \eqref{markov2pic}.
\begin{equation}
  \NB{\tikz[scale=0.7]{\begin{scope}
  \draw (-1, -1.5) -- +(0,3);
  \draw ( 1, -1.5) -- +(0,3);
  \node at (0, -1) {$\dots$};
  \node at (0,  1) {$\dots$};
  \filldraw[fill = white, draw = black] (-1.2, 0.5) rectangle (1.2,
  -0.5) node [pos =0.5] {$M$};
\end{scope}}} \qquad \qquad \qquad 
  \NB{\tikz[scale=0.7]{\begin{scope}
  \draw (-1, -1.5) -- +(0,3);
  \draw ( 2, -1.5) -- (2,-1 ) .. controls +(0, 0.2) and + (0, -0.2)
  .. (1, -0.5) -- (1, 1.5);
  \filldraw[white] (1.5, -0.75) circle (1mm);
  \draw ( 1, -1.5) -- (1,-1 ) .. controls +(0, 0.2) and + (0, -0.2)
  .. (2, -0.5) -- (2, 1.5);
  \node at (0, -1) {$\dots$};
  \node at (0,  1.4) {$\dots$};
  \filldraw[fill = white, draw = black] (-1.2, 1.3) rectangle (1.2,
  0.3) node [pos =0.5] {$M$};
\end{scope}

}}
  \label{markov2pic}
\end{equation}
Let $\Lambda \langle x_{n+1} \rangle$ be the exterior algebra in the
variable $x_{n+1}$.  Recall that $A_n=\myZ[x_1,\ldots,x_n]$ and let
$M \in (A_n,A_n) \#H^\prime \dmod$. Set
$\K_1^\prime=\myZ[x_{n+1}] \otimes \Lambda \langle x_{n+1} \rangle
\otimes \myZ[x_{n+1}] \cong \K_1$.  Letting $\K_n$ denote the Koszul
resolution of $A_n$, we have that
$\K_{n+1}=\K_n \otimes \K_1^\prime$.

As done in \cite{KR3}, we depict the Hochschild homology of $M$ by the
diagram closure
\begin{equation} \label{CMpic} \NB{\tikz[xscale=0.8, yscale =0.6]{\begin{scope}
    \draw (-1.3, -0.5) -- ++(0, 2.5) arc (180:0:3.2) -- ++(0,-2.5) arc
  (360:180:3.2);
  \draw (1.3, -0.5) -- ++(0, 2.5) arc (180:0:0.7) -- ++(0,-2.5) arc
  (360:180:0.7);
  \draw (-0.4, -0.5) -- ++(0, 2.5) arc (180:0:2.3) -- ++(0,-2.5) arc
  (360:180:2.3);
  \node at (0.4, 1.5) {$\dots$};
  \node at (0.4, -0.8) {$\dots$};
  \filldraw[fill=white, draw= black] (-1.6, 1) rectangle (-1, 2)
  node[pos=0.5] {$\K_1$};
  \filldraw[fill=white, draw= black] (-0.8, 1) rectangle (-0.2, 2)
  node[pos=0.5] {$\K_1$};
  \filldraw[fill=white, draw= black] (1, 1) rectangle (1.6, 2) node[pos=0.5] {$\K_1$};
  \filldraw[fill=white, draw= black] (-1.6, -0.5) rectangle (1.6, 0.5)
  node[pos=0.5] {$M$};
\end{scope}}}
\end{equation}
where the single strands connecting the boxes indicate tensor products
over the one-variable polynomial rings labeling those
strands. % 

Then the proof of second Markov move essentially reduces to a
computation of the partial Hochschild homology with respect to the
last variable $x_{n+1}$.  This operation is diagramatically
represented in \eqref{picpartialtrace}.
\begin{equation}
  \label{picpartialtrace}
  \NB{\tikz[scale = 0.7]{\begin{scope}
  \draw (1, 0) -- +(0,1);
  \node at (1.5,0.5) {$\dots$};
  \draw (3, 0) .. controls +(0, 0.2) and +(0, -0.2) ..  +(-1,1);
  \fill[white] (2.5, 0.5) circle (2mm);
  \draw (2, 0) .. controls +(0, 0.2) and +(0, -0.2) ..  +(1,1);
\end{scope}}}
  \qquad
  \rightsquigarrow
  \qquad
  \NB{\tikz[scale = 0.7]{% 
\begin{scope}[xshift = 2.5cm]
 \draw (-1, -.5) -- +(0,1);
  \node at (-.5,0) {$\dots$};
  \draw  (0, 0.5) .. controls +(0,-0.5) and +(0, 0.5)
  .. (1, -0.5) arc (180:360:0.5) -- (2,0);  
    \fill[white] (0.5, 0) circle (1mm);
  \draw  (0, -0.5) .. controls +(0,0.5) and +(0, -0.5)
  .. (1, 0.5) arc (180:0:0.5) -- (2,0);
\end{scope}}} 
\end{equation}
This requires an analysis of
$\K_1^\prime \otimes_{(\myZ[x_{n+1}],\myZ[x_{n+1}])} T_n^+$ in
Proposition \ref{prop-Markov-II-for-HHH}, which will be the heart of
establishing the invariance under the Markov II moves.

Let $\beta$ be a braid with $n$ strands and $\beta \mathrm{I}$ be the
braid with $n+1$ strands obtained from $\beta$ by adjoining a trivial strand
to the right.
\begin{equation}
  \beta \mathrm{I}:= \quad \quad    \NB{\tikz[scale=0.7]{\begin{scope}
  \draw (-1, -1.5) -- +(0,3);
  \draw ( 1, -1.5) -- +(0,3);
  \draw ( 1.5, -1.5) -- +(0,3);
  \node at (0, -1) {$\dots$};
  \node at (0,  1) {$\dots$};
  \filldraw[fill = white, draw = black] (-1.2, 0.5) rectangle (1.2,
  -0.5) node [pos =0.5] {$\beta$};
\end{scope}}} 
  \label{markov2pic-2}
\end{equation}

\begin{prop}\label{prop-Markov-II-for-HHH}
There are isomorphisms of bigraded $H^\prime$-modules:
  \begin{enumerate}
  \item[(i)]
    $\mC^{\prime}((\beta \mathrm{I})\cdot \sigma_n^+) \cong
    \mC^{\prime}(\beta)^{2x_n}$,
  \item[(ii)]
    $\mC^{\prime}((\beta \mathrm{I})\cdot \sigma_n^-) \cong
    \mC^{\prime}(\beta)^{-2x_n}$,
  \end{enumerate}
  where $\mC^{\prime}(\beta)^{\pm 2x_n}$ denotes overall twistings in
  the $H^\prime$-action on the modules.
\end{prop}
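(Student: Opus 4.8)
The plan is to compute the partial (relative) Hochschild homology with respect to the last variable $x_{n+1}$, exactly as in the diagrammatic reduction \eqref{picpartialtrace}, and to keep careful track of the $H^\prime$-action and the twist it picks up. First I would recall that $T_\beta$ is a complex of $(A_n,A_n)\#H^\prime$-bimodules, and that adjoining a trivial strand gives $T_{\beta\mathrm{I}} \cong T_\beta$ as a complex of bimodules, now regarded over $A_{n+1} = A_n \otimes \myZ[x_{n+1}]$ with the factor $\myZ[x_{n+1}]$ acting freely. Tensoring with $\sigma_n^+$ on the right yields $T_{(\beta\mathrm{I})\sigma_n^+} \cong T_\beta \otimes_{A_n} (\text{something involving }x_{n+1})$, and, since $\mHH_\bullet^\dif$ is computed via the Koszul resolution $\K_{n+1} = \K_n \otimes \K_1'$ (Corollary after Lemma \ref{lem:dif-dK-commute}), the Hochschild homology over $A_{n+1}$ factors as the Hochschild homology over $A_n$ of the partial closure over $\myZ[x_{n+1}]$. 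So the whole thing reduces to analyzing $\K_1' \otimes_{(\myZ[x_{n+1}],\myZ[x_{n+1}])} T_n^{\pm}$ as an $H^\prime$-equivariant complex of $(A_n,A_n)$-bimodules — this is the content promised for Proposition \ref{prop-Markov-II-for-HHH} in the text, and it is the heart of the argument.

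Concretely, for the $+$ case I would write $T_n^+ = q^{-3}(B_n \xrightarrow{br_n} A_{n+1})$ and compute the two-term complex $\K_1' \otimes_{(\myZ[x_{n+1}],\myZ[x_{n+1}])} T_n^+$. Using the explicit bimodule structure of $B_n$ (the dumbbell MOY graph \eqref{dumb11}) and the fact that $\K_1'$ is the one-variable Koszul complex \eqref{eqn-ordinary-Koszul-with-Cautis-d} in $x_{n+1}$, one computes that the partial Hochschild homology of $B_n$ over $\myZ[x_{n+1}]$ is a free $A_n$-bimodule of graded rank making it isomorphic, as an ordinary bimodule, to $A_n$ up to a grading shift, while the partial Hochschild homology of $A_{n+1}$ over $\myZ[x_{n+1}]$ contributes an exterior generator. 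The key point is that after taking homology the complex collapses (the differential $br_n$ becomes, up to homotopy, an isomorphism onto one of the two Koszul-homology pieces), leaving a single copy of (the partial trace of) $T_\beta$ but with the $H^\prime$-module structure modified: the derivation $\dif$ now acts with an extra term coming from $\dif(x_{n+1}) = x_{n+1}^2$ and from the identification $x_{n+1} \mapsto x_n$ forced by the closure, producing precisely the overall twist by $2x_n$. For the $-$ case the analogous computation uses $T_n^- = q^3(A_{n+1} \xrightarrow{rb_n} q^{-2}B_n^{-(x_n+x_{n+1})})$; the built-in twist $-(x_n+x_{n+1})$ on $B_n$ combines with the closure identification to give the twist by $-2x_n$ instead. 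The grading shifts $q^{\pm 3}$, $q^{-2}$ in the definitions of $T_n^{\pm}$, together with the $q^{-n}$ versus $q^{-(n+1)}$ normalization in Definition \ref{def-HHH}, have to be bookkept to confirm no residual shift survives; this is routine given Proposition \ref{prop-grank}.

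The step I expect to be the main obstacle is verifying that the induced isomorphism on partial Hochschild homology is genuinely $H^\prime$-equivariant \emph{with the claimed twist}, rather than merely an isomorphism of underlying bimodules. The subtlety is that the null-homotopy witnessing the collapse of $\K_1' \otimes T_n^{\pm}$ over $\myZ[x_{n+1}]$ need not commute with $\dif$ on the nose — only up to homotopy, in the spirit of Lemma \ref{lemma-acylicity-commutator-relation} and Corollary \ref{cor-dC-commutes-with-H}. So I would argue at the level of the relative homotopy category $\mc{C}^\dif(A_n,A_n)$: exhibit the comparison map explicitly on generators, check it intertwines the $\dif$-actions after incorporating the twisting derivation $\dif_{M^{\pm 2x_n}}(m) = \dif_M(m) \pm m\cdot 2x_n$ (notation as in the paragraph defining $M^f$), and invoke Proposition \ref{prop-resolution-independence} to transport the computation from the unwieldy bar resolution to the Koszul resolution. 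Finally, since the total differential $d_T = d_t + d_C$ must also be respected, I would note that $d_C$ restricted to the new $x_{n+1}$-direction contributes the one-variable Cautis piece, which by Lemma \ref{lemma-acylicity-commutator-relation} is acyclic against the Koszul differential and hence contributes nothing after taking homology — so the isomorphism descends to $\mC^\prime$ compatibly with $d_T$, giving the stated isomorphisms of bigraded $H^\prime$-modules.
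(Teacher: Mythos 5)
Your strategy is the paper's: reduce to the partial Hochschild trace in $x_{n+1}$ via $\K_{n+1}=\K_n\otimes\K_1'$, show the partial closure of $T_n^\pm$ collapses to a single piece, and read off the twist $\pm 2x_n$ from the identification $x_{n+1}\equiv x_n$ (respectively from the built-in twist $-(x_n+x_{n+1})$ on $B_n$ in $T_n^-$). Two points where your sketch stops short of a proof, and they are exactly where the content lives. First, the collapse and its equivariance: rather than exhibiting an explicit equivariant comparison map, the paper filters the bicomplex $\K_1'\otimes T_n^+$ by an $H^\prime$-equivariant sub/quotient pair $Y_1, Y_2$ (with $Y_1$ the Hochschild-degree-one row $A_{n+1}^{x_n+3x_{n+1}}\xrightarrow{x_{n+1}-x_n}A_{n+1}^{2x_{n+1}}$), notes that the splitting $\theta$ of this sequence is \emph{not} equivariant but vanishes after taking vertical homology, and shows the $Y_2$-contribution is contractible because its total complex is essentially an identity map; the twist then appears as the $H^\prime$-equivariant cokernel $A_n^{2x_n}$ of $x_{n+1}-x_n$. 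Your plan of ``checking the comparison map on generators'' does not by itself address the fact that the only available splitting fails to intertwine $\dif$. Second, your justification for why the new Cautis piece contributes nothing cites Lemma \ref{lemma-acylicity-commutator-relation}, which asserts that $[d_C,\dif]$ is null-homotopic and does not give what you need; the paper's actual argument (Lemma \ref{lem-technical-2}) is a Hochschild-degree count: after vertical homology, the $Y_1$-part sits entirely in Hochschild degree one and the $Y_2$-part in degree zero, and the one-variable Cautis operator $d_1'$ lowers Hochschild degree by one, so it preserves the filtration and acts trivially on both graded pieces. Neither issue is a wrong turn, but both gaps must be filled for the argument to close.
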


Applying the Bockstein construction and slash homology, the proposition immediately implies the
following.
\begin{cor} 
  Let $\beta$ be a braid. Then there are isomorphisms of bigraded $H$-modules and their slash homology groups
  \begin{enumerate}
  \item[(i)]
    $\mC^{\dif}((\beta \mathrm{I})\cdot \sigma_n^+) \cong
    \mC^{\dif}(\beta)^{2x_n}$ and
    $\mC^{\dif}((\beta \mathrm{I})\cdot \sigma_n^-) \cong
    \mC^{\dif}(\beta)^{-2x_n}$,
  \item[(ii)]
   $\mH^{\dif}_/((\beta \mathrm{I})\cdot \sigma_n^+) \cong
    \mH^{\dif}_/\left((\beta)^{2x_n}\right)$ and
    $\mH^{\dif}_/((\beta \mathrm{I})\cdot \sigma_n^-) \cong
    \mH^{\dif}_/\left((\beta)^{-2x_n}\right)$.
  \end{enumerate}
 Here the ${\pm 2x_n}$ denotes overall twistings in the $H^\prime$-action on
  the corresponding (complexes of) bigraded $H^\prime$-modules.  \hfill \qedhere
\end{cor}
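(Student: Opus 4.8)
The plan is to obtain the corollary as a purely formal consequence of Proposition~\ref{prop-Markov-II-for-HHH} together with the definitions in Definition~\ref{def-HHH}. Recall that the Proposition already provides isomorphisms of bigraded $H^\prime$-modules $\mC^{\prime}((\beta\mathrm{I})\cdot\sigma_n^{\pm})\cong\mC^{\prime}(\beta)^{\pm 2x_n}$, that $\mC^{\dif}(-)$ is built from $\mC^{\prime}(-)$ by the Bockstein construction (extract the free part of the underlying $\Z$-module, then apply $-\otimes_\Z\F_p$), and that $\mH^{\dif}_/(-)$ is obtained from $\mC^{\dif}(-)$ by the slash cohomology functor $\mH_/(-)\colon H\dmod\to H\udmod$. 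Since both the Bockstein construction and $\mH_/(-)$ are functorial, the only thing to check is that they interact correctly with the twisting operation $(-)^{\pm 2x_n}$.

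First I would note that twisting an $H^\prime$- or $H$-equivariant module $M$ by a degree-two polynomial $f$ --- here $f=\pm 2x_n$, acting via the $A_n$-module structure carried by relative Hochschild homology, hence by $\mC^{\prime}(-)$ --- only modifies the differential, $\dif_M\mapsto\dif_M+f\cdot(-)$, and leaves the underlying $\Z$-module together with its $A_n$-action untouched. Hence passage to the free part and $-\otimes_\Z\F_p$ both commute with twisting, and reducing $f$ modulo $p$ gives
\[
\bigl(\mC^{\prime}(\beta)^{\pm 2x_n}\bigr)^{\mathrm{free}}\otimes_\Z\F_p\;\cong\;\bigl(\mC^{\prime}(\beta)^{\mathrm{free}}\otimes_\Z\F_p\bigr)^{\pm 2x_n}\;=\;\mC^{\dif}(\beta)^{\pm 2x_n};
\]
that the right-hand side is again a genuine $p$-complex follows from Corollary~\ref{cor-p-nilpotency} (equivalently Lemma~\ref{pDG-ready-green}). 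Applying the Bockstein construction to the isomorphism supplied by the Proposition then yields part~(i). For part~(ii) I would apply $\mH_/(-)$ to the isomorphism of $H$-modules just obtained; functoriality turns it into $\mH^{\dif}_/((\beta\mathrm{I})\cdot\sigma_n^{\pm})\cong\mH_/(\mC^{\dif}(\beta)^{\pm 2x_n})$, and the right-hand side is by definition $\mH^{\dif}_/((\beta)^{\pm 2x_n})$.

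I do not expect a genuine obstacle here: the content of the corollary is entirely in Proposition~\ref{prop-Markov-II-for-HHH}, and the passage to $\mC^{\dif}$ and $\mH^{\dif}_/$ is a formal unwinding of definitions. The single point that requires a moment's care is the interchange of twisting with the Bockstein construction, which is immediate once one observes that the twist is invisible to the underlying abelian group. One should also keep track of the bigrading: multiplication by $2x_n$ does not move the $t$-degree, and the isomorphisms of the Proposition are already isomorphisms of bigraded modules, so everything above is compatible with the $t$- and $q$-gradings and the corollary holds as stated at the level of bigraded $H$-modules, respectively of their slash cohomology.
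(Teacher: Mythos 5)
Your argument is exactly the paper's: the corollary is stated as an immediate consequence of Proposition \ref{prop-Markov-II-for-HHH} obtained by applying the Bockstein construction and slash homology, and your extra observation that the twist only alters the differential (hence commutes with taking the free part and with $-\otimes_\Z\F_p$) is the correct justification for why this passage is formal. No issues.
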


Set $M:=T_\beta$, the chain complex of Soergel bimodules
associated with $\beta$. Both identities of Proposition \ref{prop-Markov-II-for-HHH} are proved in a similar way.  For the first statement of the proposition,
we start by observing that
\begin{equation*}
  \mHH^{\dif}_\bullet((M\otimes \myZ[x_{n+1}])\otimes_{A_{n+1}} T_n^+) =
  \mH_\bullet^v(\K_{n+1} \otimes_{(A_{n+1},A_{n+1})} ((M \otimes \myZ[x_{n+1}])\otimes_{A_{n+1}} T_n^+)),
\end{equation*}
where the (vertical) homology $\mH_\bullet^v$ above is taken with
respect to the differential coming from the Koszul complex $\K_{n+1}$.

We have
\begin{align}\label{eqn-bimod-iso-tensor-with-Tn}
  \K_{n+1} \otimes_{(A_{n+1},A_{n+1})} ((M \otimes \myZ[x_{n+1}]) \otimes_{A_{n+1}} T_n^+) &=
                                                                                            (\K_n \otimes \K_1^\prime) \otimes_{(A_{n+1},A_{n+1})} ((M \otimes \myZ[x_{n+1}]) \otimes_{A_{n+1}} T_n^+) \nonumber \\
                                                                                          & \cong 
                                                                                            \K_n \otimes_{(A_n,A_n)} (M \otimes_{A_n} (\K_1^\prime \otimes_{(\myZ[x_{n+1}],\myZ[x_{n+1}])} T_n^+)).
\end{align}
These isomorphisms, in terms of diagrammatics, can be interpreted as
taking closures of the following diagrammatic equalities:
\begin{equation} \label{CMinductpic}
  \NB{\tikz[scale=0.6]{\begin{scope}
  \draw (0, 0) -- +(0,5);
  \draw (1, 0) -- +(0,5);
  \draw (3, 0) -- +(0,5);
  \draw (5,0) ..controls + (0, 0.3) and +(0,-0.3) .. ++(-1, 1) coordinate
  [pos =0.5] (Xing) -- +(0,4);
  \fill[white] (Xing) circle (1mm);
  \draw (4,0).. controls + (0, 0.3) and +(0,-0.3) .. ++(1, 1) -- +(0,4);
  \filldraw[fill= white] (-0.1, 1.5) rectangle (4.1, 2.5) node[pos=
  0.5] {$M$} coordinate[pos =0.5, yshift = -0.7cm] (d1);
  \filldraw[fill= white] (-0.1, 3.5 ) rectangle (5.1, 4.5) node[pos=
  0.5] {$K_{n+1}$} coordinate[pos =0.5, yshift = 0.7cm, xshift = -0.5cm] (d2);
  \node at (d1) {$\dots$};
  \node at (d2) {$\dots$};
\end{scope}}}
  ~=~
  \NB{\tikz[scale=0.6]{\begin{scope}
  \draw (0, 0) -- +(0,5);
  \draw (1, 0) -- +(0,5);
  \draw (3, 0) -- +(0,5);
  \draw (5,0) ..controls + (0, 0.3) and +(0,-0.3) .. ++(-1, 1) coordinate
  [pos =0.5] (Xing) -- +(0,4);
  \fill[white] (Xing) circle (1mm);
  \draw (4,0).. controls + (0, 0.3) and +(0,-0.3) .. ++(1, 1) -- +(0,4);
  \filldraw[fill= white] (-0.1, 1.5) rectangle (4.1, 2.5) node[pos=
  0.5] {$M$} coordinate[pos =0.5, yshift = -0.7cm] (d1);
  \filldraw[fill= white] (-0.1, 3.5 ) rectangle (4.1, 4.5) node[pos=
  0.5] {$\K_{n}$} coordinate[pos =0.5, yshift = 0.7cm] (d2);
  \filldraw[fill= white] (4.6, 3.5 ) rectangle (5.4, 4.5) node[pos=
  0.5] {$\K_{1}'$};
  \node at (d1) {$\dots$};
  \node at (d2) {$\dots$};
\end{scope}}}
  ~=~
  \NB{\tikz[scale=0.6]{\begin{scope}
  \draw (0, 0) -- +(0,5);
  \draw (1, 0) -- +(0,5);
  \draw (3, 0) -- +(0,5);
  \draw (5,0) ..controls + (0, 0.3) and +(0,-0.3) .. ++(-1, 1) coordinate
  [pos =0.5] (Xing) -- +(0,4);
  \fill[white] (Xing) circle (1mm);
  \draw (4,0).. controls + (0, 0.3) and +(0,-0.3) .. ++(1, 1) -- +(0,4);
  \filldraw[fill= white] (-0.1, 2.4) rectangle (4.1, 3.4) node[pos=
  0.5] {$M$} coordinate[pos =0.5, yshift = -0.7cm] (d1);
  \filldraw[fill= white] (-0.1, 3.6 ) rectangle (4.1, 4.6) node[pos=
  0.5] {$\K_{n}$} coordinate[pos =0.5, yshift = 0.65cm] (d2);
  \filldraw[fill= white] (4.6, 1.2 ) rectangle (5.4, 2.2) node[pos=
  0.5] {$\K_{1}'$};
  \node at (d1) {$\dots$};
  \node at (d2) {$\dots$};
\end{scope}}}.
\end{equation}
Note that
$\K_1^\prime \otimes_{(\myZ[x_{n+1}],\myZ[x_{n+1}])} T_n^+$ is an
$(a,t)$-bicomplex of $(A_{n+1},A_{n+1})$-bimodules
\begin{equation*}
  \NB{\tikz[xscale=4, yscale =1.5]{
      \node (TL) at (0,1) {$q\left({}^{x_{n+1}}B_n^{x_{n+1}}\right)$};
      \node (TR) at (1,1) {$q A_{n+1}^{2x_{n+1}}$};
      \node (BL) at (0,0) {$q^{-3} B_n $};
      \node (BR) at (1,0) {$q^{-3}A_{n+1}$};
    \draw[-to] (TL) -- (TR) node[pos=0.5, above] {$br$};
    \draw[-to] (BL) -- (BR) node[pos=0.5, below] {$br$};
    \draw[-to] (TL) -- (BL) node[pos=0.5, left, scale = 0.7] {$\begin{array}{c}x_{n+1}\otimes 1 \\- 1\otimes{x_{n+1}}\end{array}$};
    \draw[-to] (TR) -- (BR) node[pos=0.5, right] {$0$};
  }}
\end{equation*}
where the object in the upper-right corner is in $t$-degree $0$.
Here the grading shift conventions follow from
\eqref{eqn-elementary-braids-half-grading} together with the collapse
\eqref{eqnatqdegreecollapse}. For ease of notation, we will mostly
ignore them in this section.

It follows that there is a short exact sequence of bicomplexes of
$(A_{n+1},A_{n+1})$-bimodules
\begin{equation*}
  0 \longrightarrow Y_1 \longrightarrow \K_1^\prime \otimes_{(\myZ[x_{n+1}],\myZ[x_{n+1}])} T_n^+
  \longrightarrow Y_2 \longrightarrow 0
\end{equation*}
where the terms of the sequence are defined by
\begin{equation} \label{sesbicomplexes}
  \begin{tikzpicture}[yscale=1.5, xscale=2]
  \node (O1)  at (4.5,1.5) {$0$}; 
  \node (Y1)  at (4.5,0.5) {$Y_1$}; 
  \node (FX)  at (4.5,-1.5) {$\K_1^\prime \otimes_{(\myZ[x_{n+1}],\myZ[x_{n+1}])} T_n^-$};
  \node (Y2)  at (4.5,-3.5) {$Y_2$};
  \node (O2)  at (4.5,-4.5) {$0$};  
 \begin{scope}
  \node (A1)  at (0,1) {$A_{n+1}^{x_n+3x_{n+1}}$}; 
  \node (B1) at (2,1)   {$A_{n+1}^{2x_{n+1}}$}; 
  \node (C1)  at (0,0) {$0$};       
  \node (D1) at (2,0)   {$0$};        
  \draw[-to] (A1)--(B1) node [midway,above] {\tiny{$(x_{n+1} - x_{n})$}};
  \draw[-to] (A1)--(C1) node [midway,left] {};
  \draw[-to] (B1)--(D1) node [midway,right] {};
  \draw[-to] (C1)--(D1) node [midway,below] {};
  \draw [dotted, rounded corners] (-0.5,-0.3) -- ++(3,0) -- ++(0,1.6) -- ++(-3,0) -- cycle;
  \draw [dotted] (2.5, 0.5) -- (Y1);% 
 \end{scope}
 \begin{scope}[yshift =0cm]
  \node (A2)  at (0,-1) {${}^{x_{n+1}}B_n^{x_{n+1}}$};   
  \node (B2) at (2,-1)  {$A_{n+1}^{2x_{n+1}}$};  
  \node (C2)  at (0,-2) {$B_n $};   
  \node (D2) at (2,-2)  {$A_{n+1}$};  
  \draw[-to] (A2)--(B2) node [midway,above, scale =0.7] {{$br$}};
  \draw[-to] (A2)--(C2) node [midway,right, scale= 0.7] {$\begin{array}{c}x_{n+1}\otimes 1 \\- 1\otimes x_{n+1}\end{array}$};
  \draw[-to] (B2)--(D2) node [midway,right, scale=0.7] {{$0$}};
  \draw[-to] (C2)--(D2) node [midway,below, scale=0.7] {{$br$}};
  \draw [dotted, rounded corners] (-0.5,-2.3) -- ++(3,0) -- ++(0,1.6) -- ++(-3,0) -- cycle;
  \draw [dotted] (2.5, -1.5) -- (FX);% 
 \end{scope}
 \begin{scope}[yshift = 0cm]
  \node (A3)  at (0,-3) {$\widetilde{A}_{n+1}^{x_{n}+x_{n+1}}$};
  \node (B3) at (2,-3)  {$0$};
  \node (C3)  at (0,-4) {$B_n $};
  \node (D3) at (2,-4) {$A_{n+1}$};
  \draw[-to] (A3)--(B3) node [midway,above] {};% 
  \draw[-to] (A3)--(C3) node [midway,right, scale=0.7] 
        {$\begin{array}{c}x_{n+1}\otimes 1 \\ - 1\otimes x_{n+1}\end{array}$};
  \draw[-to] (B3)--(D3) node [midway,right, scale=0.7] {{$0$}};
  \draw[-to] (C3)--(D3) node [midway,below , scale=0.7] {{$br$}};
  \draw [dotted, rounded corners] (-0.5,-4.3) -- ++(3,0) -- ++(0,1.6) -- ++(-3,0) -- cycle;
  \draw [dotted] (2.5, -3.5) -- (Y2);
 \end{scope}
\draw[-to] (A1) .. controls +(-0.5,-0.7) and +(-0.5, 0.7) .. (A2) node[midway,
left, scale =0.7]
        {$\begin{array}{c}x_{n+1}\otimes 1 \\ - 1\otimes x_{n}\end{array}$};
\draw[-to] (A2) .. controls +(-0.5,-0.7) and +(-0.5, 0.7) .. (A3) node[midway,
left, scale=0.7] {$\widetilde{br}$};
\draw[-to] (C2) .. controls +(-0.7,-0.7) and +(-0.7, 0.7) .. (C3) node[midway,
left, scale=0.7] {{$\Id$}};
\draw[-to] (B1) .. controls +( 0.5,-0.7) and +( 0.5, 0.7) .. (B2) node[midway,
right, scale=0.7] {{$\Id$}};
\draw[-to] (D2) .. controls +( 0.5,-0.7) and +( 0.5, 0.7) .. (D3) node[midway,
right, scale= 0.7] {{$\Id$}};
\draw[->] (O1) -- (Y1);
\draw[->] (Y1) -- (FX);
\draw[->] (FX) -- (Y2);
\draw[->] (Y2) -- (O2);
\end{tikzpicture}
\end{equation}
where $\widetilde{A}_{n+1}$ is equal to $A_{n+1}$ as a left
$A_{n+1}$-module but the right action of $A_{n+1}$ is twisted by the
permutation $s_n \in S_{n+1}$, and
$\widetilde{br}(a \otimes b)=br(a s_n(b))$.  It is a straightforward
exercise to check that all maps above are equivariant with respect to
the $H^\prime$-action.  We show it for the map
\begin{equation} \label{defphi}
  \phi := x_{n+1} \otimes 1 - 1 \otimes x_n \colon     
  A_{n+1}^{x_n+3x_{n+1}} \longrightarrow {}^{x_{n+1}}B_n^{x_{n+1}}.
\end{equation}
One calculates
\begin{align*}
  \phi(\dif(1))
  & = \phi(x_n+x_{n+1}+2x_{n+1})
  \\
  & =
    x_{n+1}\otimes(x_n+x_{n+1}) + 2x_{n+1}^2\otimes 1 - 1  \otimes
    (x_n(x_n+x_{n+1})) -2 x_{n+1}\otimes x_n \\
  &=
    x_{n+1}\otimes x_{n+1} +2x_{n+1}^2\otimes 1 -x_{n+1}\otimes x_n -1 \otimes
    x_n^2 - 1\otimes x_nx_{n+1}\\
  &= x_{n+1}^2 \otimes 1 -1 \otimes x_n^2 + (x_{n+1} \otimes 1 +
    1\otimes x_{n+1})(x_{n+1} \otimes 1 - 1\otimes x_n )\\
  &= \dif(\phi(1)).
\end{align*}
There is a splitting of the short exact sequence
\eqref{sesbicomplexes} regarded as a short exact sequence of
$(A_n,A_n)$-bimodules, given by
\begin{equation} \label{sesbicomplexessplitting}
  \begin{tikzpicture}[yscale=1.5, xscale=2]
 \begin{scope}[yshift =0cm]
  \node (A2)  at (0,-1) {${}^{x_{n+1}}B_n^{x_{n+1}}$};   
  \node (B2) at (2,-1)  {$A_{n+1}^{2x_{n+1}}$};  
  \node (C2)  at (0,-2) {$B_n $};   
  \node (D2) at (2,-2)  {$A_{n+1}$};  
  \draw[-to] (A2)--(B2) node [midway,above, scale =0.7] {{$br$}};
  \draw[-to] (A2)--(C2) node [midway,right, scale= 0.7] {$\begin{array}{c}x_{n+1}\otimes 1 \\- 1\otimes x_{n+1}\end{array}$};
  \draw[-to] (B2)--(D2) node [midway,right, scale=0.7] {{$0$}};
  \draw[-to] (C2)--(D2) node [midway,below, scale=0.7] {{$br$}};
  \draw [dotted, rounded corners] (-0.5,-2.3) -- ++(3,0) -- ++(0,1.6) -- ++(-3,0) -- cycle;
 \end{scope}
 \begin{scope}[yshift = 0cm]
  \node (A3)  at (0,-3) {$\widetilde{A}_{n+1}^{x_{n}+x_{n+1}}$};
  \node (B3) at (2,-3)  {$0$};
  \node (C3)  at (0,-4) {$B_n $};
  \node (D3) at (2,-4) {$A_{n+1}$};
  \draw[-to] (A3)--(B3) node [midway,above] {};% 
  \draw[-to] (A3)--(C3) node [midway,right, scale=0.7] 
        {$\begin{array}{c}x_{n+1}\otimes 1 \\ - 1\otimes x_{n+1}\end{array}$};
  \draw[-to] (B3)--(D3) node [midway,right, scale=0.7] {{$0$}};
  \draw[-to] (C3)--(D3) node [midway,below , scale=0.7] {{$br$}};
  \draw [dotted, rounded corners] (-0.5,-4.3) -- ++(3,0) -- ++(0,1.6) -- ++(-3,0) -- cycle;
 \end{scope}
\draw[to-] (A2) .. controls +(-0.5,-0.7) and +(-0.5, 0.7) .. (A3) node[midway,
left, scale=0.9] {$\theta$};
\draw[to-] (C2) .. controls +(-0.7,-0.7) and +(-0.7, 0.7) .. (C3) node[midway,
left, scale=0.7] {{$\Id$}};
\draw[to-] (D2) .. controls +( 0.5,-0.7) and +( 0.5, 0.7) .. (D3) node[midway,
right, scale= 0.7] {{$\Id$}};
\end{tikzpicture}  
\end{equation}
where
\begin{equation*}
  \theta(f(x_1,\ldots,x_{n-1})x_n^i x_{n+1}^j)
  =f(x_1,\ldots,x_{n-1}) x_n^i \otimes x_{n}^j.
\end{equation*}
We briefly explain why $\theta$ is a well-defined bimodule
homomorphism. By definition
$\theta(x_n^i x_{n+1}^j)=x_n^i \otimes x_{n}^j$.  Note that
$\theta(x_n^i x_{n+1}^j)=\theta(x_n^i \cdot x_{n+1}^j)=x_n^i
\theta(x_{n+1}^j)=x_n^i \otimes x_{n}^j $ where we viewed $x_n^i$ as
acting on the left of $x_{n+1}^j$.  Similarly,
$\theta(x_n^i x_{n+1}^j)=\theta(x_n^i \cdot
x_{n}^j)=\theta(x_n^i)x_n^j= x_n^i \otimes x_{n}^j $ where we viewed
$x_n^j$ as acting on the right of $x_{n}^i$.  However, a direct
computation shows that $\theta$ does not intertwine the $H^\prime$-actions,
and thus this splitting is not $H^\prime$-equivariant.

The short exact sequence \eqref{sesbicomplexes} plugged back into
\eqref{eqn-bimod-iso-tensor-with-Tn} gives us a short exact sequence
\begin{equation}\label{eqn-split-exact-sequence}
  0\rightarrow  \K_n \otimes_{(A_n,A_n)} (M \otimes_{A_n} Y_1) \rightarrow  \K_n \otimes_{(A_n,A_n)} ((M \otimes \myZ[x_{n+1}])\otimes_{A_{n+1}} T_n^+) \rightarrow
  \K_n \otimes_{(A_n,A_n)} (M \otimes_{A_n} Y_2) \rightarrow 0,
\end{equation}
which is split as a sequence of $(A_n,A_n)$-bimodules. Taking homology
with respect to the vertical differentials gives rise to a long exact
sequence
\begin{equation*}
  \cdots
  \rightarrow \mH_i^v(\K_n \otimes_{(A_n,A_n)} (M \otimes_{A_n} Y_1)) \rightarrow \mHH_i^{\dif}((M\otimes \myZ[x_{n+1}]) \otimes_{A_{n+1}} T_n^+) \rightarrow 
  \mH_i^v(\K_n \otimes_{(A_n,A_n)} (M \otimes_{A_n} Y_2))
  \rightarrow \cdots.
\end{equation*}
Due to the splitting exactness of \eqref{eqn-split-exact-sequence},
the long exact sequence breaks up into short exact sequences of the
form
\begin{equation}\label{eqn-ses-in-homology}
  0
  \rightarrow \mH_i^v(\K_n \otimes_{(A_n,A_n)} (M \otimes_{A_n} Y_1)) \rightarrow \mHH_i^{\dif}((M\otimes \myZ[x_{n+1}]) \otimes_{A_{n+1}} T_n^+) \rightarrow 
  \mH_i^v(\K_n \otimes_{(A_n,A_n)} (M \otimes_{A_n} Y_2))
  \rightarrow 0,
\end{equation}
one for each $i\in \Z$. When forgetting about the $H^\prime$-module
structure, the splitting \eqref{sesbicomplexessplitting} gives a
decomposition
\begin{equation}\label{eqndirectsumHH}
  \mHH_\bullet^{\dif}((M\otimes \myZ[x_{n+1}]) \otimes_{A_{n+1}} T_n^+) \cong
  \mH_\bullet^v(\K_n \otimes_{(A_n,A_n)} (M \otimes_{A_n} Y_1)) \oplus
  \mH_\bullet^v(\K_n \otimes_{(A_n,A_n)} (M \otimes_{A_n} Y_2)) .
\end{equation}

We will need the following further analysis about the term involving
$Y_2$.

\begin{lem}\label{lem-technical-1}
  There is an $H^\prime$-equivariant short exact sequence of bimodules
  \[
    0\lra Y_2^{\prime \prime} \lra Y_2\stackrel{\psi}{\lra} Y_2^\prime
    \lra 0,
  \]
  where the surjective map $\psi$ is given by
  \begin{equation} \label{bicomplexmor}
  \NB{\tikz[xscale=2.5, yscale =1.5]{
      \node (TL) at (0,1) {$\widetilde{A}_{n+1}^{x_n+x_{n+1}} $};
      \node (TR) at (1,1) {$0$};
      \node (BL) at (0,0) {$B_n $};
      \node (BR) at (1,0) {$A_{n+1}$};
    \draw[-to] (TL) -- (TR);
    \draw[-to] (BL) -- (BR) node[pos=0.5, above] {$br$};
    \draw[-to] (TL) -- (BL) node[pos=0.5, left, font=\tiny] {$\begin{array}{c}x_{n+1}\otimes 1 \\- 1\otimes{x_{n+1}}\end{array}$};
    \draw[-to] (TR) -- (BR);
    \node (Y2) at (-1.1, 0.5) {$\psi:Y_2:=$};
    \begin{scope}[xshift =2cm]
      \node (TL2) at (0,1) {$0$};
      \node (TR2) at (1,1) {$0$};
      \node (BL2) at (0,0) {$A_{n+1}$};
      \node (BR2) at (1,0) {$A_{n+1}$};
    \draw[-to] (TL2) -- (TR2);
    \draw[-to] (BL2) -- (BR2) node[pos=0.5, above] {$\Id$};
    \draw[-to] (TL2) -- (BL2);
    \draw[-to] (TR2) -- (BR2);
    \node[left] (Yp2) at (1.7, 0.5) {$=:Y'_2,$};
  \end{scope}
  \draw[-to] (BL) .. controls +(+1, -0.5) and +(-1, -0.5) .. (BL2)
  node[pos=0.5, below] {$br$};
  \draw[-to] (BR) .. controls +(+1, -0.5) and +(-1, -0.5) .. (BR2)
  node[pos=0.5, below] {$\Id$};
}}
  \end{equation}
  and the kernel of $\psi$ is given by
  \begin{equation}
  \NB{\tikz[xscale=2.5, yscale =1.5]{
      \node (TL) at (0,1) {$ \widetilde{A}_{n+1}^{x_n+x_{n+1}}$};
      \node (BL) at (0,0) {$ \widetilde{A}_{n+1}^{x_n+x_{n+1}}$};
      \node (BR) at (-0.7,0.5) {$Y_2^{\prime \prime} :=$};
      \draw [-to] (TL)-- (BL) node[pos =.5, right] {$\Id$};
    }}. 
  \end{equation}
\end{lem}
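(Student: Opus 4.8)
The plan is to take $\psi$ to be the evident termwise projection appearing in \eqref{bicomplexmor}: it sends the top-left term $\widetilde{A}_{n+1}^{x_n+x_{n+1}}$ of $Y_2$ to $0$, applies the multiplication map $br$ on the term $B_n$, and is the identity on the bottom-right term $A_{n+1}$. Granting the $H^\prime$-equivariance of $br$ (Lemma~\ref{lem:pdgmaps}.\ref{it:pdg-maps-splits} with $a=b=1$, $c=0$), the assertions that $\psi$ is a morphism of bicomplexes (which reduces to $br\circ\phi_0=0$, where $\phi_0:=x_{n+1}\otimes 1 - 1\otimes x_{n+1}$ is the left vertical map of $Y_2$), that it is $H^\prime$-equivariant, and that it is surjective are all immediate. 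So the only real content is the identification of $\ker(\psi)$, which amounts to understanding $\ker(br)$ inside $B_n$. For this I would first recall that $B_n$ is free as a left $A_{n+1}$-module on $1\otimes 1$ and $1\otimes x_{n+1}$ and that $br(p\otimes 1 + q\otimes x_{n+1}) = p + qx_{n+1}$; then $\phi_0$ is injective, $br\circ\phi_0=0$, and conversely any $p\otimes 1 + q\otimes x_{n+1}\in\ker(br)$ satisfies $p=-qx_{n+1}$, hence equals $\phi_0(-q)$. Thus $\phi_0$ identifies $\widetilde{A}_{n+1}$, as a graded abelian group, with $\ker(br)\subseteq B_n$.

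The step I expect to be the main obstacle — although it is really only careful bookkeeping — is checking that $\phi_0$ respects the bimodule and $H^\prime$-structures, i.e.\ that it is an isomorphism onto $\ker(br)$ of $H^\prime$-equivariant $(A_{n+1},A_{n+1})$-bimodules once the source is given the right action twisted by $s_n$ together with the $H^\prime$-twist by $x_n+x_{n+1}$. This is the same kind of computation already performed for the map $\phi$ of \eqref{defphi}: pushing the symmetric polynomials $x_n+x_{n+1}$ and $x_nx_{n+1}$ through the tensor in $B_n$ gives $\phi_0(1)\cdot x_n = \phi_0(x_{n+1})$ and $\phi_0(1)\cdot x_{n+1} = \phi_0(x_n)$, which encodes the $s_n$-twist on the right action, while $\dif_{B_n}(\phi_0(1)) = x_{n+1}^2\otimes 1 - 1\otimes x_{n+1}^2 = \phi_0(x_n+x_{n+1})$, which matches the twist of the $H^\prime$-action by $x_n+x_{n+1}$ (using that $\dif$ kills the generator of $\widetilde{A}_{n+1}$ and that $x_n+x_{n+1}$ is $s_n$-invariant).

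Finally I would assemble the pieces by taking kernels termwise in \eqref{bicomplexmor}. The top-left term of $\ker(\psi)$ is all of $\widetilde{A}_{n+1}^{x_n+x_{n+1}}$, the $B_n$-term contributes $\ker(br)=\mathrm{im}(\phi_0)\cong\widetilde{A}_{n+1}^{x_n+x_{n+1}}$ via $\phi_0$, and the two $A_{n+1}$-terms contribute $0$; under the isomorphism $\phi_0$ the restriction of the vertical differential of $Y_2$ to $\ker(\psi)$ becomes the identity. Hence $\ker(\psi)$ is isomorphic, as an $H^\prime$-equivariant bicomplex of bimodules, to $Y_2'' = \bigl(\widetilde{A}_{n+1}^{x_n+x_{n+1}}\xrightarrow{\Id}\widetilde{A}_{n+1}^{x_n+x_{n+1}}\bigr)$, which yields the desired short exact sequence $0\to Y_2''\to Y_2\xrightarrow{\psi} Y_2'\to 0$.
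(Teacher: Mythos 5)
Your proposal is correct and is exactly the computation the paper has in mind — the paper's own proof is just the sentence ``This is a straightforward exercise,'' so you have supplied the intended details: the identification of $\ker(br)$ with $\mathrm{im}(\phi_0)$, the verification that $\phi_0$ intertwines the $s_n$-twisted right action and the $H^\prime$-twist by $x_n+x_{n+1}$, and the termwise assembly of the kernel bicomplex.
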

\begin{proof}
  This is a straightforward exercise.
\end{proof}

Now let us turn on the total differential $d_T$ on
$\mHH_\bullet^{\dif}((M\otimes \myZ[x_{n+1}]) \otimes_{A_{n+1}} T_n^+)
$. We have the following.

\begin{lem}\label{lem-technical-2}
  There is a short exact exact sequence of $H^\prime$-equivariant graded modules
  with respect to the total differential $d_T$:
  \begin{equation}\label{eqn-dc-filtration}
    0\rightarrow
    \mH_\bullet^v(\K_n \otimes_{(A_n,A_n)} (M \otimes_{A_n} Y_2))
    \rightarrow
    \mHH_\bullet^{\dif}((M\otimes \myZ[x_{n+1}]) \otimes_{A_{n+1}} T_n^+) 
    \rightarrow
    \mH_\bullet^v(\K_n \otimes_{(A_n,A_n)} (M \otimes_{A_n} Y_1)) 
    \rightarrow 0.
  \end{equation}
\end{lem}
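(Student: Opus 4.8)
The plan is to prove \eqref{eqn-dc-filtration} by running an auxiliary filtration on $\mHH^{\dif}_\bullet((M\otimes \myZ[x_{n+1}])\otimes_{A_{n+1}} T_n^+)$: the increasing filtration by the Hochschild degree of the exterior variable $x_{n+1}$, i.e.\ by the $\Lambda\langle x_{n+1}\rangle$-degree, which takes only the values $0$ and $1$. On the Koszul bicomplex $\K_n\otimes_{(A_n,A_n)}\bigl(M\otimes_{A_n}(\K_1^\prime\otimes_{(\myZ[x_{n+1}],\myZ[x_{n+1}])}T_n^+)\bigr)$ whose vertical homology computes this group, the Koszul differential and the topological differential $d_t$ preserve this filtration, whereas the Cautis differential $d_C$ strictly lowers it by one: on the $\K_1^\prime$-factor, $d_C$ is multiplication by $x_{n+1}^2$ on the $\Lambda^1$-part and zero on the $\Lambda^0$-part, by \eqref{eqn-ordinary-Koszul-with-Cautis-d}, so it contracts away one copy of the exterior generator. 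Hence the filtration is stable under $d_\K$, $d_t$ and $d_C$, and therefore under the total differential $d_T=d_t+d_C$ on $\mHH^{\dif}_\bullet$; it is a two-step filtration by $d_T$-subcomplexes.

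The next step is to identify the two filtration pieces. Here the short exact sequence $0\to Y_1\to \K_1^\prime\otimes_{(\myZ[x_{n+1}],\myZ[x_{n+1}])}T_n^+\to Y_2\to 0$ of \eqref{sesbicomplexes} is exactly adapted to the $\Lambda\langle x_{n+1}\rangle$-degree: $Y_1$ is concentrated in exterior degree $1$, while $Y_2$ carries the entire exterior-degree-$0$ part together with the exterior-degree-$1$ summand $\widetilde A_{n+1}$, whose vertical (Koszul) differential into the degree-$0$ part is precisely what is needed so that $\mH_\bullet^v(\K_n\otimes_{(A_n,A_n)}(M\otimes_{A_n}Y_2))$ maps isomorphically onto the filtration-$0$ piece of $\mHH^{\dif}_\bullet$. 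Since the splitting \eqref{sesbicomplexessplitting} respects the exterior grading, this matches the graded-module decomposition \eqref{eqndirectsumHH}: the bottom of the filtration is $\mH_\bullet^v(\K_n\otimes_{(A_n,A_n)}(M\otimes_{A_n}Y_2))$ and the top quotient is $\mH_\bullet^v(\K_n\otimes_{(A_n,A_n)}(M\otimes_{A_n}Y_1))$. On the bottom piece $d_C$ acts by zero since the exterior degree is already $0$, so the induced total differential there is $d_t$; on the quotient $d_C$ induces zero because it shifts the filtration, so the induced differential is again $d_t$.

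It remains to upgrade this two-step filtration to a genuine $H^\prime$-equivariant short exact sequence, i.e.\ to realize $\mH_\bullet^v(\K_n\otimes_{(A_n,A_n)}(M\otimes_{A_n}Y_2))$ as an honest $d_T$-subcomplex rather than merely a subquotient. With respect to $d_t$ alone it is $Y_1$, not $Y_2$, that is the subcomplex --- this is exactly \eqref{eqn-ses-in-homology} --- so there is an a priori obstruction, namely the off-diagonal component of $d_t$ relative to \eqref{eqndirectsumHH}, a $d_t$-chain map $\mH_\bullet^v(\K_n\otimes_{(A_n,A_n)}(M\otimes_{A_n}Y_2))\to \mH_\bullet^v(\K_n\otimes_{(A_n,A_n)}(M\otimes_{A_n}Y_1))[1]$. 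By Lemma~\ref{lem-technical-1}, $Y_2$ is an extension of $Y_2^\prime$ by the vertically contractible complex $Y_2^{\prime\prime}$, so $\mH_\bullet^v(\K_n\otimes_{(A_n,A_n)}(M\otimes_{A_n}Y_2))\cong \mH_\bullet^v(\K_n\otimes_{(A_n,A_n)}(M\otimes_{A_n}Y_2^\prime))$, and the latter, with its $d_t$, is the cone of an identity map and hence contractible via the $H^\prime$-equivariant homotopy $\Id$. A spectral-sequence argument (filtering once more by exterior degree, on which $d_C$ is strictly decreasing) shows that this complex is also $d_T$-acyclic on its own and that the obstruction above is $H^\prime$-equivariantly null-homotopic; absorbing it by a change of the (non-$H^\prime$-equivariant) graded splitting \eqref{eqndirectsumHH} produces $H^\prime$-equivariant $d_T$-chain maps fitting into \eqref{eqn-dc-filtration}. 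The $H^\prime$-equivariance of every map in sight is automatic because $d_t$, $d_C$, $d_\K$ and the homotopy $\Id$ all intertwine the $\partial$-action. I expect the main obstacle to be the second paragraph: checking that the $\Lambda\langle x_{n+1}\rangle$-filtration of $\mHH^{\dif}_\bullet$ has associated graded pieces exactly $\mH_\bullet^v(\K_n\otimes_{(A_n,A_n)}(M\otimes_{A_n}Y_2))$ and $\mH_\bullet^v(\K_n\otimes_{(A_n,A_n)}(M\otimes_{A_n}Y_1))$, which forces one to unwind the explicit bimodule maps in \eqref{sesbicomplexes}--\eqref{sesbicomplexessplitting} and to use the acyclicity of $Y_2^{\prime\prime}$ in the vertical direction.
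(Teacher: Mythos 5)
Your proposal is correct and follows essentially the same route as the paper: the heart of both arguments is that $d_C$ acts on the $x_{n+1}$-factor by the contraction $x_{n+1}^2\otimes\iota_{\partial/\partial x_{n+1}}\otimes 1$ and hence strictly lowers the $x_{n+1}$-Hochschild (exterior) degree, while after vertical homology the $Y_2$-contribution is concentrated in degree $0$ (via $Y_2\simeq Y_2'$ from Lemma~\ref{lem-technical-1}) and the $Y_1$-contribution in degree $1$, so the degree-zero part is forced to be the $d_T$-subobject. The only superfluous step is your third paragraph: since \eqref{sesbicomplexessplitting} is by construction a morphism of bicomplexes, the decomposition \eqref{eqndirectsumHH} already commutes with $d_t$ on the nose, so the ``off-diagonal component of $d_t$'' that you propose to kill by a spectral-sequence argument is zero from the start and no change of splitting is needed.
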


\begin{proof}
Since the topological differential $d_t$ preserves the $a$ and $q$ degrees, we only need to analyze the effect of the Cautis differential $d_C$.

Let us examine how $d_C$ lifts to the Koszul
  complex
  \begin{equation*}
    \K_{n+1}=\K_n\otimes \K_1^\prime=\K_n \otimes \myZ[x_{n+1}] \otimes \Lambda \langle dx_{n+1} \rangle \otimes \myZ[x_{n+1}].
  \end{equation*}
  Under this identification, the differential $d_C$, which depends on
  the number of polynomial variables $n+1$ and will be written as
  $d_{n+1}$, may be inductively defined as
  \begin{equation} \label{Drecursive} d_{n+1}=d_n \otimes\mathrm{Id} +
    \mathrm{Id}\otimes d_1^{\prime}.
  \end{equation}
  Here we have set
  $\K_1^\prime=\myZ[x_{n+1}] \otimes \Lambda \langle dx_{n+1} \rangle
  \otimes \myZ[x_{n+1}] $ equipped with its own part of the Cautis
  differential
  \[ {d}^\prime_{1}:=x_{n+1}^2 \otimes \iota_{\frac{\partial}{\partial
        x_{n+1}} }\otimes 1 .
  \]
  The notation $\iota_{\frac{\partial}{\partial x_{n+1}}}$ denotes the
  contraction of $dx_{n+1}$ with $\frac{\partial}{\partial x_{n+1}}$.

  Now since $d_{n+1}$ acts on the $Y_1$ and $Y_2$ tensor factors via
  $d_1^\prime$, it suffices to verify that $d_1^\prime$ preserves the
  submodule arising from $Y_2$ and presents the part arising from
  $Y_1$ as a quotient. To do this, we re-examine the sequence
  \eqref{sesbicomplexes} under vertical (Hochschild) homology. The
  part $Y_2$, under vertical homotopy equivalence, contributes to the
  horizontal (topological) complex (see  \eqref{bicomplexmor})
  \begin{subequations}
    \begin{equation}
      {Y}^\prime_2:=\left(A_{n+1} \xrightarrow{d_t=\mathrm{Id}} A_{n+1}\right)
    \end{equation}
    sitting entirely in Hochschild degree $0$. Likewise, the part
    $Y_1$ contributes to the horizontal complex
    \begin{equation}
      {Y}^\prime_1:= \left(A_{n+1}^{x_n+3x_{n+1}} \xrightarrow{d_t=(x_{n+1}-x_n)}A_{n+1}^{2x_{n+1}} \right)
    \end{equation}
  \end{subequations}
  sitting entirely in Hochschild degree $1$.  Since $d_1^{\prime}$
  decreases Hochschild degree by one, $ {Y}^\prime_1 $ must be
  preserved under $d_1^\prime$, acting upon it trivially, and
  $ {Y}^\prime_2 $ is equipped with the (zero) quotient action by
  $d_1^\prime$. The claim follows from this degree reasoning.
\end{proof}

Taking homology with respect to $d_T$ of the short exact sequence
\eqref{eqn-dc-filtration} gives us a long exact sequence
\begin{equation}\label{eqnHCHHles}
\NB{      \begin{tikzpicture}[xscale=6.9, yscale=0.5]
        \node (dot1) at (0.4,1) {$\cdots$};
        \node (HTHjY2) at (1,1) {$\mHT_j ( \mH_\bullet ^v(\K_n \otimes_{(A_n,A_n)} (M \otimes_{A_n} Y_2)))$};
        \node (HTHHj) at (2,1) {$\mHT_{j}(\mHH_\bullet^{\dif} ((M\otimes \myZ[x_{n+1}]) \otimes_{A_{n+1}} T_n^+))$};
        \node (HTHjY1) at (1,-1) {$\mHT_j(\mH_\bullet ^v(\K_n \otimes_{(A_n,A_n)} (M\otimes_{A_n} Y_1))) $};
        \node (HTHj1Y2) at (2,-1) {$\mHT_{j-1} ( \mH_{\bullet}^v(\K_n\otimes_{(A_n,A_n)} (M \otimes_{A_n} Y_2)) )  $};
        \node (dot2) at (2.7,-1) {$\cdots$};
        \draw[-to] (dot1) -- (HTHjY2);
        \draw[-to] (HTHjY2) -- (HTHHj);
        \draw[-to] (HTHHj) -- ++(0.5,0) arc (90:-90:0.071 and 0.5) -- ++(-2,0)
        arc (90:270:0.071 and 0.5) -- (HTHjY1);
        \draw[-to] (HTHjY1) -- (HTHj1Y2);
        \draw[-to] (HTHj1Y2) -- (dot2);
      \end{tikzpicture}}\ .
\end{equation}

\begin{lem}\label{lem-technical-3}
  There is an isomorphism of bigraded
  $H^\prime$-modules
  \begin{equation}\label{eqnHCHHfromY1}
    \mHT_\bullet ( \mH_\bullet ^v(\K_n \otimes_{(A_n,A_n)} (M \otimes_{A_n} Y_1))) \cong \mHT_\bullet (\mHH_\bullet^{\dif} ((M\otimes \myZ[x_{n+1}]) \otimes_{A_{n+1}} T_n^+)).
  \end{equation}
\end{lem}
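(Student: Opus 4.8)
The plan is to read off the isomorphism directly from the long exact sequence~\eqref{eqnHCHHles}. It therefore suffices to prove that the $Y_2$-term vanishes after passing to $d_T$-homology, i.e.\ that
\[
\mHT_\bullet\left(\mH_\bullet^v(\K_n \otimes_{(A_n,A_n)} (M \otimes_{A_n} Y_2))\right) = 0 ;
\]
granting this, the connecting maps in~\eqref{eqnHCHHles} give the isomorphism~\eqref{eqnHCHHfromY1}, and it is $H^\prime$-equivariant and respects the bigrading since every arrow appearing in~\eqref{eqnHCHHles} is.

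To prove the vanishing I would first simplify the $Y_2$-term using Lemma~\ref{lem-technical-1}, exactly as in the proof of Lemma~\ref{lem-technical-2}: the subquotient $Y_2''$ is a mapping cone of an identity map for the Koszul differential, hence acyclic, and this acyclicity persists after tensoring with $\K_n$ and with the terms of $M=T_\beta$ (which are free on each side). Consequently, after vertical homology one has
\[
\mH_\bullet^v(\K_n \otimes_{(A_n,A_n)} (M \otimes_{A_n} Y_2)) \cong \mH_\bullet^v(\K_n \otimes_{(A_n,A_n)} (M \otimes_{A_n} Y_2')),
\]
where $Y_2' = \left(A_{n+1} \xrightarrow{\Id} A_{n+1}\right)$ is concentrated in Hochschild degree $0$. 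Moreover, as recorded in the proof of Lemma~\ref{lem-technical-2}, the component $d_1^\prime$ of the Cautis differential (the one built from the variable $x_{n+1}$) acts by zero on this piece for degree reasons, so the differential governing it is $d_T = d_t + d_C$ with $d_C$ the inner Cautis differential coming from $\K_n$.

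Now $Y_2'$ is, up to an overall shift, the mapping cone of $\Id_{A_{n+1}}$ taken in the topological direction, hence null-homotopic as a complex of bimodules. Applying the additive, cone-preserving functor $\mH_\bullet^v(\K_n \otimes_{(A_n,A_n)}(M \otimes_{A_n} -))$ then exhibits $\mH_\bullet^v(\K_n \otimes_{(A_n,A_n)} (M \otimes_{A_n} Y_2'))$ as the mapping cone, with respect to $d_t$, of the identity endomorphism of $\mH_\bullet^v(\K_n \otimes_{(A_n,A_n)} (M \otimes_{A_n} A_{n+1}))$. Since the identity map commutes with $d_t$ and with $d_C$, this is a cone of an identity morphism in the category of complexes carrying the differential $d_T$; such a cone is contractible, so its $d_T$-homology $\mHT_\bullet$ vanishes. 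Feeding this back into~\eqref{eqnHCHHles} yields~\eqref{eqnHCHHfromY1}.

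The only point that requires care is the bookkeeping in the middle step -- tracking which tensor factor the component $d_1^\prime$ of the Cautis differential acts on, and checking that vertical homology commutes with tensoring by $\K_n$ and by the terms of $T_\beta$ -- but this has essentially been carried out in the proofs of Lemmas~\ref{lem-technical-1} and~\ref{lem-technical-2}, so the remaining argument is short.
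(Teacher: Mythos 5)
Your proof is correct and follows essentially the same strategy as the paper: reduce the $Y_2$-contribution to $Y_2'$ via Lemma~\ref{lem-technical-1} (using that $Y_2''$ is killed by vertical homology), observe that $Y_2'$ is a cone of the identity in the topological direction so its $\mHT$ vanishes, and then read the isomorphism off the long exact sequence~\eqref{eqnHCHHles}. The paper in addition explicitly records that~\eqref{eqnHCHHles} splits into the short exact sequences~\eqref{eqnHCHHcomplex}, because the bimodule splitting $\theta$ of~\eqref{sesbicomplexessplitting} becomes $H^\prime$-equivariant after vertical homology, but as you correctly argue this step is not logically needed once the $Y_2$-term is seen to vanish; one small terminology slip is that the isomorphism is then given by the map $\mHT_j(\mHH^\dif_\bullet(\cdots)) \to \mHT_j(\cdots Y_1 \cdots)$ induced by the surjection in~\eqref{eqn-dc-filtration}, while the connecting homomorphisms are forced to be zero.
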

\begin{proof}
As a chain complex of $H^\prime$-modules, \eqref{eqnHCHHles} splits into short exact sequences
\begin{equation}\label{eqnHCHHcomplex}
\NB{      \begin{tikzpicture}[xscale=6.9, yscale=0.5]
        \node (zero1) at (0.4,1) {$0$};
        \node (HTHjY2) at (1,1) {$\mHT_j ( \mH_\bullet ^v(\K_n \otimes_{(A_n,A_n)} (M \otimes_{A_n} Y_2)))$};
        \node (HTHHj) at (2,1) {$\mHT_{j}(\mHH_\bullet^{\dif} ((M\otimes \myZ[x_{n+1}]) \otimes_{A_{n+1}} T_n^+))$};
        \node (HTHjY1) at (1,-1) {$\mHT_j(\mH_\bullet ^v(\K_n \otimes_{(A_n,A_n)} (M\otimes_{A_n} Y_1))) $};
        \node (zero2) at (2,-1) {$0$};
        \draw[-to] (zero1) -- (HTHjY2);
        \draw[-to] (HTHjY2) -- (HTHHj);
        \draw[-to] (HTHHj) -- ++(0.5,0) arc (90:-90:0.071 and 0.5) -- ++(-2,0)
        arc (90:270:0.071 and 0.5) -- (HTHjY1);
        \draw[-to] (HTHjY1) -- (zero2);
      \end{tikzpicture}}\ .
  \end{equation}
since the splitting map \eqref{sesbicomplexessplitting} intertwines $H^\prime$-actions after taking vertical homology. Note that the only non $H^\prime$-intertwining part $\theta$
  becomes zero after taking vertical homology.  Furthermore, with respect to
  the total differential, we have that
\begin{equation}
    \mHT_\bullet(\mH_\bullet ^v(\K_n \otimes_{(A_n,A_n)} (M \otimes_{A_n} Y_2))) \cong 
    \mHT_\bullet(\mH_\bullet ^v(\K_n \otimes_{(A_n,A_n)} (M \otimes_{A_n} Y_2^\prime)))
\end{equation}
is a contractible complex of $H^\prime$-modules. The desired isomorphism follows.
\end{proof}

Let us now finish the proof of the proposition.

\begin{proof}[Proof of Proposition \ref{prop-Markov-II-for-HHH}]
  By Lemma \ref{lem-technical-3}, the term
  $\mC^\prime((\beta \mathrm{I})\cdot \sigma_n^+)$ is isomorphic to the
  homology of the bigraded $H^\prime$-complex
  \[
  \mHT_\bullet\left(  \mH_\bullet ^v(\K_n \otimes_{(A_n,A_n)} (M
      \otimes_{A_n} Y_1)), d_T \right).
\]

  Consider the short exact sequence involving $Y_1$:
  \begin{equation}
    0\lra  A_{n+1}^{x_n+3x_{n+1}} \xrightarrow{x_{n+1}-x_n} A_{n+1}^{2x_{n+1}} \lra A_n^{2x_n} \lra 0 .
  \end{equation}
  Again, the sequence splits as an $H^\prime$-equivariant sequence of
  $A_n$-modules (a splitting can be given, for instance, by
  identifying $A_n^{2x_n}$ with the subalgebra generated by
  $x_1,\dots, x_{n-1}, x_{n+1}$, matching the $H^\prime$-twistings).  This,
  in turn, shows that
  \begin{equation}
    \mHT_\bullet\left(  \mH_\bullet ^v(\K_n \otimes_{(A_n,A_n)} (M \otimes_{A_n} Y_1)), d_T \right)\cong 
    \mHT_\bullet ( \mH_\bullet ^v(\K_n \otimes_{(A_n,A_n)} M^{2x_n}  ))
    \cong \mC^{\prime}(\beta)^{2x_n}.
  \end{equation}
  The first statement of the proposition now follows.

  The second statement is proved in an analogous way.  We just sketch
  the adjustments needed and also refer the reader to the second half
  of the proof of \cite[Proposition 4.12]{QiSussanLink} for
  comparison.

  The computation of
  $\mHH_\bullet^{\dif}((M\otimes \myZ[x_{n+1}]) \otimes_{A_{n+1}}
  T_n^-) $ is very similar. We outline the necessary changes for the
  interested reader.

  Again, first note that
  $\K_1^\prime \otimes_{(\myZ[x_{n+1}],\myZ[x_{n+1}])} T_n^-$
  is a bicomplex of $(A_{n+1},A_{n+1})$-bimodules
  \begin{equation}
        \NB{\tikz[xscale=4, yscale =1.8]{
      \node (TL) at (0,1) {$q^7 A_{n+1}^{2x_{n+1}}$};
      \node (TR) at (1,1) {$q^5 ({}^{x_{n+1}}B_{n}^{-x_n})$};
      \node (BL) at (0,0) {$q^3 A_{n+1}$};
      \node (BR) at (1,0) {$q B_{n}^{-(x_n+x_{n+1})} $.};
    \draw[-to] (TL) -- (TR) node[pos=0.5, above] {$rb$};
    \draw[-to] (BL) -- (BR) node[pos=0.5, below] {$rb$};
    \draw[-to] (TL) -- (BL) node[pos=0.5, left] {$0$};
    \draw[-to] (TR) -- (BR) node[pos=0.5, right, scale = 0.7] {$\begin{array}{c}x_{n+1}\otimes 1 \\- 1\otimes{x_{n+1}}\end{array}$};
  }}
  \end{equation}
Note that the object in the bottom-right corner is in $t$-degree $0$.  Ignore the grading shifts for now for ease of notation. There is a
  short exact sequence of bicomplexes of $(A_{n+1},A_{n+1})$-bimodules
  \[
    0\lra Z_1 \lra \K_1^\prime
    \otimes_{(\myZ[x_{n+1}],\myZ[x_{n+1}])} T_n^- \lra Z_2\lra
    0,
  \]
  whose terms are defined by
  \begin{equation} \label{sesbicomplexes2}
\NB{
  \begin{tikzpicture}[yscale=1.5, xscale=2]
  \node (O1)  at (4.5,1.5) {$0$}; 
  \node (Y1)  at (4.5,0.5) {$Z_1$}; 
  \node (FX)  at (4.5,-1.5) {$\K_1^\prime \otimes_{(\myZ[x_{n+1}],\myZ[x_{n+1}])} T_n^+$};
  \node (Y2)  at (4.5,-3.5) {$Z_2$};
  \node (O2)  at (4.5,-4.5) {$0$};  
 \begin{scope}
  \node (A1)  at (0,1) {$A_{n+1}^{2x_{n+1}}$}; 
  \node (B1) at (2,1)   {$A_{n+1}^{2x_{n+1}}$}; 
  \node (C1)  at (0,0) {$0$};       
  \node (D1) at (2,0)   {$0$};        
  \draw[-to] (A1)--(B1) node [midway,above, scale=0.7] {{$\Id$}};
  \draw[-to] (A1)--(C1) node [midway,left] {};
  \draw[-to] (B1)--(D1) node [midway,right] {};
  \draw[-to] (C1)--(D1) node [midway,below] {};
  \draw [dotted, rounded corners] (-0.5,-0.3) -- ++(3,0) -- ++(0,1.6) -- ++(-3,0) -- cycle;
  \draw [dotted] (2.5, 0.5) -- (Y1);
 \end{scope}
 \begin{scope}[yshift =0cm]
  \node (A2)  at (0,-1) {$A_{n+1}^{2x_{n+1}}$};   
  \node (B2) at (2,-1)  {${}^{x_{n+1}}B_n^{-x_{n}} $};  
  \node (C2)  at (0,-2) {$A_{n+1} $};   
  \node (D2) at (2,-2)  {$B_n^{-x_{n}-x_{n+1}}$};  
  \draw[-to] (A2)--(B2) node [midway,above, scale =0.7] {{$rb$}};
  \draw[-to] (A2)--(C2) node [midway,right, scale= 0.7] {$0$};
  \draw[-to] (B2)--(D2) node [midway,left, scale=0.7] {$\begin{array}{c}x_{n+1}\otimes 1 \\- 1\otimes x_{n+1}\end{array}$};
  \draw[-to] (C2)--(D2) node [midway,below, scale=0.7] {{$rb$}};
  \draw [dotted, rounded corners] (-0.5,-2.3) -- ++(3,0) -- ++(0,1.6) -- ++(-3,0) -- cycle;
  \draw [dotted] (2.5, -1.5) -- (FX);
 \end{scope}
 \begin{scope}[yshift = 0cm]
  \node (A3)  at (0,-3) {$0 $};
  \node (B3) at (2,-3)  {$\widetilde{A}_{n+1}$};
  \node (C3)  at (0,-4) {$A_{n+1}$};
  \node (D3) at (2,-4) {$B_n^{-(x_{n+1} +x_n)}$};
  \draw[-to] (A3)--(B3) node [midway,above] {};% 
  \draw[-to] (A3)--(C3) node [midway,left, scale=0.7] {};
   
  \draw[-to] (B3)--(D3) node [midway,left, scale=0.7]      {$\begin{array}{c}x_{n+1}\otimes 1 \\ - 1\otimes x_{n+1}\end{array}$};
  \draw[-to] (C3)--(D3) node [midway,below , scale=0.7] {{$rb$}};
  \draw [dotted, rounded corners] (-0.5,-4.3) -- ++(3,0) -- ++(0,1.6) -- ++(-3,0) -- cycle;
  \draw [dotted] (2.5, -3.5) -- (Y2);
 \end{scope}
\draw[-to] (A1) .. controls +(-0.5,-0.7) and +(-0.5, 0.7) .. (A2) node[midway,
left, scale =0.7] {$\Id$};
 \draw[-to] (B2) .. controls +(0.7,-0.7) and +(0.7, 0.7) .. (B3) node[midway,
 right, scale=0.7] {$\widetilde{br}$};
\draw[-to] (C2) .. controls +(-0.5,-0.7) and +(-0.5, 0.7) .. (C3) node[midway,
left, scale=0.7] {{$\Id$}};
\draw[-to] (B1) .. controls +( 0.5,-0.7) and +( 0.5, 0.7) .. (B2) node[midway,
right, scale=0.7] {{$rb$}};
\draw[-to] (D2) .. controls +( 0.5,-0.7) and +( 0.5, 0.7) .. (D3) node[midway,
right, scale= 0.7] {{$\Id$}};
\draw[->] (O1) -- (Y1);
\draw[->] (Y1) -- (FX);
\draw[->] (FX) -- (Y2);
\draw[->] (Y2) -- (O2);
\end{tikzpicture}} \ .
  \end{equation}

  As in the previous part, there is a splitting of bicomplexes of
  $(A_n,A_n)$-bimodules given by
  \begin{equation} \label{bicomplexsplitting'}
\NB{
  \begin{tikzpicture}[yscale=1.5, xscale=2]
 \begin{scope}[yshift =0cm]
  \node (A2)  at (0,-1) {$A_{n+1}^{2x_{n+1}}$};   
  \node (B2) at (2,-1)  {${}^{x_{n+1}}B_n^{-x_{n}} $};  
  \node (C2)  at (0,-2) {$A_{n+1} $};   
  \node (D2) at (2,-2)  {$B_n^{-x_{n}-x_{n+1}}$};  
  \draw[-to] (A2)--(B2) node [midway,above, scale =0.7] {{$rb$}};
  \draw[-to] (A2)--(C2) node [midway,right, scale= 0.7] {$0$};
  \draw[-to] (B2)--(D2) node [midway,left, scale=0.7] {$\begin{array}{c}x_{n+1}\otimes 1 \\- 1\otimes x_{n+1}\end{array}$};
  \draw[-to] (C2)--(D2) node [midway,below, scale=0.7] {{$rb$}};
  \draw [dotted, rounded corners] (-0.5,-2.3) -- ++(3,0) -- ++(0,1.6) -- ++(-3,0) -- cycle;
 \end{scope}
 \begin{scope}[yshift = 0cm]
  \node (A3)  at (0,-3) {$0 $};
  \node (B3) at (2,-3)  {$\widetilde{A}_{n+1}$};
  \node (C3)  at (0,-4) {$A_{n+1}$};
  \node (D3) at (2,-4) {$B_n^{-(x_{n+1} +x_n)}$};
  \draw[-to] (A3)--(B3) node [midway,above] {};% 
  \draw[-to] (A3)--(C3) node [midway,left, scale=0.7] {};
   
  \draw[-to] (B3)--(D3) node [midway,left, scale=0.7]      {$\begin{array}{c}x_{n+1}\otimes 1 \\ - 1\otimes x_{n+1}\end{array}$};
  \draw[-to] (C3)--(D3) node [midway,below , scale=0.7] {{$rb$}};
  \draw [dotted, rounded corners] (-0.5,-4.3) -- ++(3,0) -- ++(0,1.6) -- ++(-3,0) -- cycle;
 \end{scope}
 \draw[to-] (B2) .. controls +(0.7,-0.7) and +(0.7, 0.7) .. (B3) node[midway,
 right, scale=0.7] {$\phi$};
\draw[to-] (C2) .. controls +(-0.5,-0.7) and +(-0.5, 0.7) .. (C3) node[midway,
left, scale=0.7] {{$\Id$}};
\draw[to-] (D2) .. controls +( 0.5,-0.7) and +( 0.5, 0.7) .. (D3) node[midway,
right, scale= 0.7] {{$\Id$}};
\end{tikzpicture}}
  \end{equation}
  where $\phi$ was defined in \eqref{defphi}.  Thus we get short exact sequences
  of the form
  \begin{equation}\label{eqn-ses-in-homology-prime}
    0
    \rightarrow \mH_i^v(\K_n \otimes_{(A_n,A_n)} (M \otimes_{A_n} Z_1)) \rightarrow \mHH_i^{\dif}((M\otimes \myZ[x_{n+1}]) \otimes_{A_{n+1}} T_n^-) \rightarrow 
    \mH_i^v(\K_n \otimes_{(A_n,A_n)} (M \otimes_{A_n} Z_2))
    \rightarrow 0
  \end{equation}
  for each $i\in \Z$.
  Taking horizontal homology for this short exact sequence gives us a
  long exact sequence. However, since the term involving $Z_1$ will
  only contribute to a contractible complex of $H^\prime$-modules, we can
  safely remove it, and focus on the homology contribution arising
  from $Z_2$.

  There is a morphism of $(a,t)$-bicomplexes
  \begin{equation} \label{bicomplexmor'}
    \NB{\tikz[xscale=3, yscale =1.8]{
      \node (TL) at (0,1) {$0 $};
      \node (TR) at (1,1) {$\widetilde{A}_{n+1}^{x_n+x_{n+1}}$};
      \node (BL) at (0,0) {$B_n^{-(x_n+x_{n+1})}$};
      \node (BR) at (1,0) {$A_{n+1}$};
    \draw[-to] (TL) -- (TR);
    \draw[-to] (BL) -- (BR) node[pos=0.5, above] {$rb$};
    \draw[-to] (TL) -- (BL);
    \draw[-to] (TR) -- (BR) node[pos=0.5, right, scale=0.7] {$\begin{array}{c}x_{n+1}\otimes 1 \\- 1\otimes{x_{n+1}}\end{array}$};
    \node (Z2) at (-0.7, 0.5) {$Z_2:=$};
    \begin{scope}[xshift =2cm]
      \node (TL2) at (0,1) {$0$};
      \node (TR2) at (1,1) {$0$};
      \node (BL2) at (0,0) {$A_{n+1}$};
      \node (BR2) at (1,0) {$A_{n+1}^{-(x_n+x_{n+1})}$};
    \draw[-to] (TL2) -- (TR2);
    \draw[-to] (BL2) -- (BR2) node[pos=0.5, above, scale=0.7] {$x_{n+1}-x_n$};
    \draw[-to] (TL2) -- (BL2);
    \draw[-to] (TR2) -- (BR2);
    \node[left] (Zp2) at (1.7, 0.5) {$=:Z'_2,$};
  \end{scope}
  \draw[-to] (BL) .. controls +(+1, -0.5) and +(-1, -0.5) .. (BL2)
  node[pos=0.5, below] {$\Id$};
  \draw[-to] (BR) .. controls +(+1, -0.5) and +(-1, -0.5) .. (BR2)
  node[pos=0.5, below] {$br$};
}}
  \end{equation}
  whose kernel is a vertical complex connected by the identity map.
  Thus there is an isomorphism of bigraded $H^\prime$-modules
\[\mHT_\bullet(\mHH_\bullet^{\dif} ((M\otimes \myZ[x_{n+1}]) \otimes_{A_{n+1}} T_n^-) )\cong \mHT_\bullet ( \mH_i^v((M\otimes \myZ[x_{n+1}])\otimes Z_2^\prime))\cong \mHT_\bullet(\mHH_\bullet^{\dif} (M^{-2x_n})).\]
The result then follows.
\end{proof}

\subsection{Unlinks and unframed invariants}\label{subsecHOMFLYunlink}
In this section, we compute $\mC^{\prime}$, $\mC^{\dif}$ and $\mH^{\dif}_/$ for the identity element of
the braid group $\mathrm{Br}_n$.

For the unknot, recall from the previous section that the Koszul resolution
$\K_1$ of $\Z[x]$ as bimodules is given by
\begin{equation*}
  \NB{\tikz[xscale=4.5, yscale =1.5]{
      \node (TL) at (0,1) {$q^2a\Z[x]^{x} \otimes \Z[x]^{x}$};
      \node (TR) at (1,1) {$\Z^{}[x] \otimes \Z^{}[x]$};
    \draw[-to] (TL) -- (TR) node[pos=0.5, above, scale=0.8] {$x\otimes 1-1\otimes x$};
  }}
  \ .
\end{equation*}
Tensoring this complex with $\Z[x]$ as a bimodule yields
\begin{equation*}
  \NB{\tikz[xscale=3, yscale =1.5]{
      \node (TL) at (0,1) {$q^2a\Z[x]^{2x}$};
      \node (TR) at (1,1) {$\Z[x]$};
    \draw[-to] (TL) -- (TR) node[pos=0.5, above] {$0$};
  }}
  \ .
\end{equation*}
Thus the homology of the unknot (up to shift) is identified with the
triply-graded $H^\prime$-module ($t$-degree zero)
\begin{equation*}
  q^2 a\Z[x]^{2x} \oplus  \Z[x]
  \ .
\end{equation*}
Impose the Cautis differential $d_C$ ($d_t=0$ for the unknot) on the above module and collapse
the $a$ degree according to \eqref{eqnatqdegreecollapse}. Then $d_C$
sends an element $aq^2 x^i \mapsto x^{i+2} $.  Thus after taking
homology with respect to $d_C$, we get that 
\[
\mC^\prime(\mathrm{I})\cong q^{-1}\Z\langle 1, x \rangle 
\] 
with the trivial
derivation $\dif$. Here the $q^{-1}$ factor comes from the overall
$q$-degree shift in Definition \ref{def-HHH}.

Applying the Bockstein construction and taking slash homology, we obtain
\[
\mC^\dif(\mathrm{I})\cong q^{-1}\F_p\langle 1, x \rangle ,\quad \quad
\mH_/^\dif(\mathrm{I})\cong q^{-1}\F_p\langle 1, x \rangle.
\] 

More generally, via the Koszul complex $\K_n=\K_1^{\otimes n}$, we
have that the homology (before taking $d_C$) of the $n$-component
unlink $L_0$ is equal to
\begin{equation}
  q^{-n}\bigotimes_{i=1}^n \left(  aq^2 \Z[x_i]^{2x_i} \oplus \Z[x_i]  \right).
\end{equation}
Then upon taking homology with respect to $d_C$, we obtain that the slash
homology of the unlink is isomorphic to
\[
  q^{-n}\F_p[x_1,\ldots,x_n] / (x_1^2, \ldots, x_n^2)
\]
with the trivial $p$-differential.

\begin{rem}
  Using the computation of the unlink above, we could introduce a
  twisting framing factor on the homology of a link.  As done in
  \cite{QiSussanLink}, using this framing factor, one could modify the
  definition of $\mH^{\dif}_/$ and obtain a homological invariant of
  unframed links. In \cite[Section 4.3 and 5.3]{QiSussanLink}, it was
  shown that this invariant of a link is represented by a
  finite-dimensional $p$-complex, whose symbol in $K_0(H\udmod)$ is
  equal to the Jones polynomial specialized at a $2p$th root of unity.
\end{rem}

\begin{rem}
  This approach to a categorification of the Jones polynomial, at
  generic values of $q$, was first developed by Cautis
  \cite{Cautisremarks}.  We have followed the exposition % 
  of \cite{RW} and the closely related approach of Queffelec,
  Rose, and Sartori \cite{QRS}.
\end{rem}

\section{Braid diagram moves} 
\label{sec:topred}
This section constitutes the technical heart of the  construction of categorical colored link invariants at roots of unity, and contains useful tools to help prove our main theorem. 

Section \ref{sec:blist} contains an algebro-diagrammatic reduction, which we call the \emph{blist hypothesis}, that allows us to deduce the colored invariance from the uncolored case described in Section \ref{uncolored}. In Section \ref{sec:pitchfork-reduction} we move on to show that in any algebraic category, where the blist hypothesis holds, and is the target category of a functor from colored braids and trivalent graphs, colored braid invariance can be reduced to checking eight moves of sliding pitchforks. Then in Section \ref{sec:pitchforks} we show that the pitchforks sliding moves do hold in the relative homotopy category of singular Soergel bimodules. Finally, in Sections \ref{sec:sliding-twists} and \ref{sec:forktwists}, we explore how twistings of $H^\prime$-actions interact with crossings and pitchforks.

For simplicity, we will often abbreviate the polynomial ring $A=A_{\underline{k}}$ when $\underline{k}$ is clear from the context.

\subsection{The blist hypothesis}
\label{sec:blist}

In what follows, we will prove invariance of some quantities under some local
moves on green-dotted MOY graphs which contain crossings. It will often be
convenient to \emph{blist} either partially or completely an edge at the
boundary (or \emph{leg}) of diagrams we consider, and prove invariance
on these blisted diagrams.

\[
  \NB{\tikz[]{\begin{scope}[xscale =1.5, font =\tiny]
  \begin{scope}[yshift = 0.5cm]
    \begin{scope}
      \draw[densely dotted] (-0.75,1) -- +(1.5,0);
      \draw[->] (0,0) -- +(0,1) node[pos =1, above] {$k$};
    \end{scope}
    \begin{scope}[xshift = -3cm]
      \draw[densely dotted] (-0.75,1) -- +(1.5,0);
      \draw[->-] (0,0) -- +(0,0.3) node[pos =0.5, left] {$k$};
      \draw[->] (0,0.3) -- ( -0.3,1) node [pos =1, above] {$1$};
      \draw[->] (0,0.3) -- ( 0.3,1) node [pos =1, above] {$k-1$};
    \end{scope}
    \begin{scope}[xshift = 3cm]
      \draw[densely dotted] (-0.75,1) -- +(1.5,0);
      \draw[->-] (0,0) -- +(0,0.3) node[pos =0.5, left] {$k$};
      \draw[->] (0,0.3) -- ( -0.5,1) node [pos =1, above] {$1$}
      coordinate[pos =0.2] (a) coordinate[pos =0.8] (b);
      \draw[->] (b) -- ( -0.3,1) node [pos =1, above] {$1$};
      \draw[->] (a) -- (0.3,1) node [pos =1, above] {$1$};
      \draw[->] (0,0.3) -- ( 0.5,1) node [pos =1, above] {$1$};
      \node[above] at (0,1) {$\cdots$};
      \draw [decorate,decoration={brace,amplitude=2pt},yshift=0.4cm] (-0.5,1) -- (0.5,1) node [black,pos=0.5,above] {$k$};
    \end{scope}
    \node[font = \normalsize, rotate=180] at (-1.5, 0.5)  {$\rightsquigarrow$};
    \node[font = \normalsize] at ( 1.5, 0.5) {$\rightsquigarrow$};
  \end{scope}
  \node[font = \normalsize] at (-1.5, 0) {blist};
  \node[font = \normalsize] at ( 1.5, 0) {full blist};
  \begin{scope}[yshift = -0.5cm, yscale = -1]
    \node[font = \normalsize, rotate=180] at (-1.5, 0.5) {$\rightsquigarrow$};
    \node[font = \normalsize] at ( 1.5, 0.5) {$\rightsquigarrow$};
    \begin{scope}
      \draw[densely dotted] (-0.75,1) -- +(1.5,0);
      \draw[-<] (0,0) -- +(0,1) node[pos =1, below] {$k$};
    \end{scope}
    \begin{scope}[xshift = -3cm]
      \draw[densely dotted] (-0.75,1) -- +(1.5,0);
      \draw[-<-] (0,0) -- +(0,0.3) node[pos =0.5, left] {$k$};
      \draw[-<] (0,0.3) -- ( -0.3,1) node [pos =1, below] {$1$};
      \draw[-<] (0,0.3) -- ( 0.3,1) node [pos =1, below] {$k-1$};
    \end{scope}
    \begin{scope}[xshift = 3cm]
      \draw[densely dotted] (-0.75,1) -- +(1.5,0);
      \draw[-<-] (0,0) -- +(0,0.3) node[pos =0.5, left] {$k$};
      \draw[-<] (0,0.3) -- ( -0.5,1) node [pos =1, below] {$1$}
      coordinate[pos =0.2] (a) coordinate[pos =0.8] (b);
      \draw[-<] (b) -- ( -0.3,1) node [pos =1, below] {$1$};
      \draw[-<] (a) -- (0.3,1) node [pos =1, below] {$1$};
      \draw[-<] (0,0.3) -- ( 0.5,1) node [pos =1, below] {$1$};
      \node[below] at (0,1) {$\cdots$};
      \draw [decorate,decoration={brace,amplitude=2pt},yshift=0.4cm] (0.5,1) -- (-0.5,1) node [black,pos=0.5,below] {$k$};
    \end{scope}
  \end{scope}
\end{scope}
}}
\]

Under circumstances we shall explain below, this \emph{blisting
  procedure} is legitimate. These circumstances will be referred to as
the \emph{blist hypothesis} in the upcoming sections.

We let $\somecategorypdg$ denote the relative homotopy category of
$H^\prime$-equivariant Soergel bimodules, $\somecategorynopdg$ be the homotopy
category of Soergel bimodules and $\nopdg: \somecategorypdg \to
\somecategorynopdg$ be the forgetful functor. 

Let $D$ be a diagram such that $\soergel(D)$ is an $H^\prime$-equivariant Soergel bimodule.
In this case, the graded endomorphism space $\End_{\mc{C}}(\mc{F}(\soergel(D)))$ carries a natural $H^\prime$-action, given by
\begin{equation}
    \dif(\phi) (x):=\dif(\phi(x))-\phi(\dif(x)),
\end{equation}
where $\phi\in \End_{\mc{C}}(\mc{F}\soergel(D))$ and $x\in \mc{F}(\soergel(D))$. Then $\phi$ is $H^\prime$-equivariant if and only if $\dif(\phi)=0$.

\begin{prop}\label{prop:blist-hyp}
  Let $D$ and $D'$ be two diagrams with identical boundaries and
  suppose that  $\End_\somecategorynopdg(\nopdg(\soergel(D)))$ is positively
  graded and that $\End_\somecategorynopdg(\nopdg(\soergel(D)))_0$ is free and has rank
  $1$. Blist the same leg of thickness $k$ on both
  $D$ and $D'$ and denote by $D_b$ and $D_{b}'$ the resulting diagrams.
  Further assume that $\soergel(D_b) \simeq \soergel(D'_b)$ (in
  $\somecategorypdg$). Then we have the following consequences.
  \begin{itemize}
  \item[1] The space $\End_\somecategorynopdg(\nopdg(D_b))$ is positively
  graded and  $\End_\somecategorynopdg(\nopdg(D_b))_0$ is free and has rank $1$.
  \item[2] There is an isomorphism $\soergel(D) \simeq \soergel(D')$.
  \end{itemize}
\end{prop}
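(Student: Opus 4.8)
The plan is to recognise blisting as an extension of scalars and then combine a Krull--Schmidt argument with a positivity/filtration argument. Blisting a leg of thickness $k$ along a composition $\underline k$ refining the one-block composition $(k)$ corresponds, at the level of bimodules, to the functor $\mathrm{Ind}:=(-)\otimes_{A_{(k)}}A_{\underline k}$ applied on that leg, where the inclusion of polynomial subalgebras $A_{(k)}\hookrightarrow A_{\underline k}$ is $H^\prime$-equivariant (Lemma~\ref{lem:iso-geen-dots}) and exhibits $A_{\underline k}$ as a graded-free $A_{(k)}$-module of finite rank, with graded rank $\sum_i q^{a_i}$ where $a_0=0<a_1\le a_2\le\cdots$ and the basis vector $1$ is the unique one of degree $0$ (cf.\ Proposition~\ref{prop-grank}). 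Hence $\soergel(D_b)\cong\mathrm{Ind}(\soergel(D))$ and $\soergel(D_b')\cong\mathrm{Ind}(\soergel(D'))$ as complexes of $H^\prime$-equivariant bimodules; $\mathrm{Ind}$ is exact, compatible with $\nopdg$ and with the $H^\prime$-structures, descends to the relative homotopy category, and has restriction of scalars $\mathrm{Res}$ as its right adjoint there. Moreover, since $A_{\underline k}$ is commutative and free over the \emph{central} subalgebra $A_{(k)}$, it splits (forgetting $H^\prime$) as a $(A_{(k)},A_{(k)})$-bimodule: $A_{\underline k}\cong\bigoplus_i q^{a_i}A_{(k)}$.

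For part~1 I would use the adjunction: $\mathrm{Res}\,\mathrm{Ind}\,\nopdg(\soergel(D))\cong\bigoplus_i q^{a_i}\nopdg(\soergel(D))$ in $\somecategorynopdg$, whence
\[
  \End_{\somecategorynopdg}\!\big(\nopdg(\soergel(D_b))\big)\;\cong\;\Hom_{\somecategorynopdg}\!\big(\nopdg(\soergel(D)),\,\mathrm{Res}\,\mathrm{Ind}\,\nopdg(\soergel(D))\big)\;\cong\;\bigoplus_i q^{a_i}\,\End_{\somecategorynopdg}\!\big(\nopdg(\soergel(D))\big).
\]
Because all $a_i\ge0$ this is again positively graded, and its degree-$0$ part receives a contribution only from $a_0=0$, hence is free of rank $1$.

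For part~2 I would first prove the non-equivariant assertion $\nopdg(\soergel(D))\simeq\nopdg(\soergel(D'))$ and then promote it. Forgetting $H^\prime$-structures in the hypothesis $\soergel(D_b)\simeq\soergel(D_b')$ and using the bimodule splitting of $A_{\underline k}$ gives $\bigoplus_i q^{a_i}\nopdg(\soergel(D))\simeq\bigoplus_i q^{a_i}\nopdg(\soergel(D'))$ in $\somecategorynopdg$. The endomorphism hypothesis makes $\nopdg(\soergel(D))$ indecomposable (a positively graded endomorphism ring whose degree-$0$ part is $\mathbb{Z}$ has no nontrivial idempotents) and admits no isomorphism onto a nontrivial grading shift of itself. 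A Krull--Schmidt/cancellation comparison of the multisets of shifted indecomposable summands on the two sides, together with $a_0=0$ being the unique minimal shift, then forces $\nopdg(\soergel(D'))\simeq\nopdg(\soergel(D))$; in particular $\Hom_{\somecategorynopdg}(\nopdg(\soergel(D)),\nopdg(\soergel(D')))$ is positively graded with free rank-$1$ degree-$0$ part.

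Finally I would upgrade this to an $H^\prime$-equivariant isomorphism, and I expect this to be the main obstacle. The inclusion $A_{(k)}\hookrightarrow A_{\underline k}$ is $H^\prime$-equivariant and split over $A_{(k)}$, so $j_{D}\colon\soergel(D)=\soergel(D)\otimes1\hookrightarrow\soergel(D_b)$ and $j_{D'}\colon\soergel(D')\hookrightarrow\soergel(D_b')$ are $H^\prime$-equivariant termwise-$A$-split inclusions. Refining $(k)$ one part at a time produces a finite $H^\prime$-equivariant filtration of $A_{\underline k}$ by $(A_{(k)},A_{(k)})$-sub-bimodules whose bottom term is $A_{(k)}$ and whose higher subquotients are concentrated in $q$-degrees $\ge2$; tensoring with $\soergel(D')$ yields an $H^\prime$-equivariant filtration $\soergel(D')=F_0\subseteq F_1\subseteq\cdots\subseteq F_N=\soergel(D_b')$ with $F_j/F_{j-1}$ concentrated in $q$-degrees strictly above those of $\soergel(D')$ for $j\ge1$. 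Set $g:=f\circ j_{D}\colon\soergel(D)\to\soergel(D_b')$, an $H^\prime$-equivariant degree-$0$ map, $f$ being the given isomorphism. A descending induction on the filtration shows that $g$ lands in $F_0$: an $H^\prime$-equivariant degree-$0$ map from $\soergel(D)$ to $F_j/F_{j-1}$ corresponds, under the identification $\nopdg(\soergel(D))\simeq\nopdg(\soergel(D'))$, to maps of strictly negative degree in the positively graded group $\End_{\somecategorynopdg}(\nopdg(\soergel(D)))$, hence vanishes. Thus $g=j_{D'}\circ\widetilde g$ for an $H^\prime$-equivariant map $\widetilde g\colon\soergel(D)\to\soergel(D')$; one checks that $\nopdg(\widetilde g)$ is an isomorphism (it is the restriction of $\nopdg(f)$ to the unique degree-$0$ indecomposable summand), and a morphism of $\somecategorypdg$ that is sent to an isomorphism by $\nopdg$ is an isomorphism, since its mapping cone lies in $\mathrm{Ker}(\nopdg)$ and is therefore zero in $\somecategorypdg$. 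This gives $\soergel(D)\simeq\soergel(D')$. The delicate point throughout the last step is that one must argue inside the relative homotopy category, not merely after applying $\nopdg$: this is exactly where both the positivity hypothesis on $\End_{\somecategorynopdg}$ and the extra datum of an $H^\prime$-equivariant isomorphism after blisting are indispensable, and where one needs the filtration of $A_{\underline k}$ to have all higher subquotients in strictly positive $q$-degree.
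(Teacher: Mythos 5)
Your reduction of blisting to the free base change along $A_{(k)}\hookrightarrow A_{\underline{k}}$, and your proof of part~1 via the resulting isomorphism $\End_\somecategorynopdg(\nopdg(\soergel(D_b)))\cong\bigoplus_i q^{a_i}\End_\somecategorynopdg(\nopdg(\soergel(D)))$ with $a_0=0$ the unique non-positive shift, is exactly the paper's argument. The divergence, and the problem, is in part~2. Your descending induction needs the composite $\soergel(D)\to\soergel(D_b')\to (F_j/F_{j-1})\otimes\soergel(D')$ to vanish \emph{in $\somecategorypdg$} in order to factor $g$ through $F_{j-1}$ along the distinguished triangle. But your degree argument only shows that the corresponding Hom space vanishes in $\somecategorynopdg$, i.e.\ after applying $\nopdg$. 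Since $\somecategorypdg$ is the Verdier quotient $\mc{C}(A\#H^r)/\mathrm{Ker}(\nopdg)$, the induced functor to $\somecategorynopdg$ is conservative (which you correctly use at the very end) but it is \emph{not} faithful: a morphism of $\somecategorypdg$ whose image under $\nopdg$ is null-homotopic need not be zero, because the required null-homotopy must exist $H^\prime$-equivariantly after passing to a suitable roof. So the factorization, and hence the existence of $\widetilde g$, does not follow. A secondary issue is the Krull--Schmidt cancellation you invoke to get $\nopdg(\soergel(D))\simeq\nopdg(\soergel(D'))$: over $\Z$ the degree-zero endomorphism ring is $\Z$, not a local ring, so uniqueness of decompositions is not automatic. (This step is avoidable --- positivity of $\Hom_\somecategorynopdg(\nopdg(\soergel(D)),\nopdg(\soergel(D')))$ already follows from base change as in part~1 --- but as written it is a gap.)

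The paper's route avoids all of this and is worth internalizing. One never tries to \emph{construct} an equivariant map by a filtration; instead one uses the non-equivariant splitting $A_{(k-1,1)}\cong\bigoplus_i A_{(k)}x_k^i$ to produce a (possibly non-equivariant) isomorphism $\phi\cl\nopdg(\soergel(D))\to\nopdg(\soergel(D'))$ from the given $\varphi$, and then observes that $\phi\otimes_{A_{(k)}}1$ and $\varphi$ are two degree-zero isomorphisms living in a free rank-one degree-zero Hom space, hence $\phi\otimes_{A_{(k)}}1=\pm\varphi$. Applying $\dif$ gives $\dif(\phi)\otimes_{A_{(k)}}1=\pm\dif(\varphi)=0$, and flatness of $A_{(k-1,1)}$ over $A_{(k)}$ forces $\dif(\phi)=0$, i.e.\ $\phi$ is already $H^\prime$-equivariant. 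The rank-one hypothesis is thus used to \emph{transfer} equivariance from $\varphi$ to $\phi$, rather than to kill obstructions to a lift; that is the idea missing from your proposal, and it sidesteps the faithfulness problem entirely.
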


\begin{proof}
  Recall that $A_{(k)}\cong \Z[x_1,\dots, x_k]^{S_k}$ is the ring of symmetric polynomials in $k$
  variables and $A_{(k-1,1)}\cong \Z[x_1,\dots, x_{k-1}]^{S_{k-1}}\otimes \Z[x_k]$ is the ring of polynomials in $k$
  variables which are symmetric in the first $(k-1)$  variables.
  We identify $A_{(k-1,1)}$ as a free $A_{(k)}$-module of graded rank $q^{k-1}[k]$ via the decomposition (see Lemma~\ref{lem:some-dim-end-space} for generalizations)
  \begin{equation}\label{eqn-decompostion-k-1-k}
      A_{(k-1,1)}\cong \bigoplus_{i=0}^{k-1} A_{(k)}x_k^i.
  \end{equation}
  Observe that this decomposition does not respect the $H^\prime$ actions.

  Now we have
  $\nopdg(\soergel(D_b)) \cong \nopdg(\soergel(D)) \otimes_{A_{(k)}} A_{(k-1, 1)}\cong q^{k-1}[k]
  \nopdg(\soergel(D))$ and
  $\nopdg(\soergel(D'_b)) \cong \nopdg(\soergel(D')) \otimes_{A_{(k)}} A_{(k-1, 1)} \cong q^{k-1}[k]
  \nopdg(\soergel(D'))$. In both cases for the second isomorphisms, the
  $A_{(k-1,1)}$-module structures are restricted to $A_{(k)}$-modules
  structures. At this stage it is important to notice that there is
  \emph{a priori} no $H^\prime$-equivariant direct sum decomposition for $\soergel(D_b)$ and $\soergel(D'_b)$.

  The first statement about the rank of the space of degree zero endomorphisms directly follows from the decomposition \eqref{eqn-decompostion-k-1-k} and the resulting base change isomorphism
  \begin{equation}\label{eqn-blist-base-change}
  \End_\somecategorynopdg(\nopdg(\soergel(D_b))) \cong \End_\somecategorynopdg(\nopdg(\soergel(D))) \otimes_{A_{(k)}} A_{(k-1,1)}\cong q^{k-1}[k] \End_\somecategorynopdg(\nopdg(\soergel(D))).
  \end{equation}
  Here, the last isomorphism is, again, as right modules over $A_{(k)}$.
  One then obtains that
  \[\rkq^{} \Hom_\somecategorynopdg(\nopdg(\soergel(D_b)), \nopdg(\soergel(D'_b)))_0 =1.
  \]
  This finishes the proof of the first statement.
  
  Next, let $\varphi \cl \soergel(D_b) \to \soergel(D_b')$ be an
  isomorphism in $\somecategorypdg$. By the discussion before the
  proposition, $\dif(\varphi)=0$.  Applying the forgetful functor
  $\nopdg: \somecategorypdg \to \somecategorynopdg$, one gets an
  isomorphism
  $\varphi \cl \nopdg(\soergel(D_b)) \to \nopdg(\soergel(D'_b))$ in
  $\somecategorynopdg$. Using (not necessarily $H^\prime$-equivariant)
  projections and injections induced from identifying $A_{(k)} $ with
  $A_{(k)}x_k^0$ in \eqref{eqn-decompostion-k-1-k}, one obtains an
  isomorphism $\phi\cl \nopdg(\soergel(D)) \to \nopdg(\soergel(D'))$
  in $\somecategorynopdg$. To show the second statement, it suffices
  to prove that $\dif(\phi)=0 $.

  On the one hand, we have, under the identification of \eqref{eqn-blist-base-change}, that
  $ \phi\otimes_{A_{(k)}} 1 $ is an isomorphism from
  $\nopdg(\soergel(D_b))$ to $\nopdg(\soergel(D'_b))$ in $\somecategorynopdg$ (here $1$ stands for the unit element of both $A_{(k-1,1)}$ and $A_{(k)}$). On the other
  hand, $\varphi$ is another one. Hence one has
  $\phi \otimes_{A_{(k)}} 1 = \pm \varphi$ by the first statement, so that
  \[
  0 =\pm \dif(\varphi)= \dif(\phi \otimes_{A_{(k)}}1) =\dif(\phi)\otimes_{A_{(k)}}1.
  \]
As $A_{(k-1,1)}$ is flat over $A_{(k)}$, it follows that $\dif(\phi)=0$ and thus $\phi$ is an $H^\prime$-equivariant isomorphism.
\end{proof}

\begin{rem}
  \begin{enumerate}
  \item Property~\ref{prop:blist-hyp} can be used inductively, so that
    we can fully blist a leg and/or blist several legs.
  \item This argument should be compared with Bar-Natan's strategy for
    proving projective functoriality of Khovanov homology \cite{BN}.
  \end{enumerate}
\end{rem}

Computing the rank of the space of endomorphisms of a complex of
Soergel bimodules associated with a tangle diagram is often easier
than it may first appear. The cases which will be needed are gathered
in the next lemma which in particular shows that these diagrams
satisfy the dimension hypothesis of Proposition~\ref{prop:blist-hyp}.

\begin{lem}
  The following identities holds:
  \label{lem:some-dim-end-space}
  \begin{enumerate}
    \item \begin{equation}
    \rkq\End_\somecategorynopdg\left( \nopdg\left(
    \NB{\tikz[font=\tiny, yscale= 0.5]{}}
    \right)\right)
     = \frac{1}{\prod_{k=1}^a(1-q^{2k})} ,  \label{eq:dim-end-strand}
     \end{equation}
    \item 
    \begin{equation} \rkq\End_\somecategorynopdg\left( \nopdg\left(
    \NB{\tikz[font=\tiny, yscale=0.6]{}} 
    \right)\right)
    = \frac{1}{\prod_{k=1}^a(1-q^{2k}) \prod_{k=1}^b(1-q^{2k})} , \label{eq:dim-end-Y} \end{equation}
    \item
    \begin{equation}
     \rkq\End_\somecategorynopdg\left( \nopdg\left(
    \NB{\tikz[font=\tiny, xscale =0.7]{\begin{scope}[font=\tiny]
  \draw[->] (0.5, -0.5) ..controls +(0,0.3) and +(0,-0.3) .. (-0.5,
  0.5) node[pos=1, above] {$a$} coordinate[pos =0.2] (t1);
  \fill[white] (0,0) circle (2mm);
  \draw[->] (-0.5, -0.5) ..controls +(0,0.3) and +(0,-0.3) .. (0.5,
  0.5) node[pos=1, above] {$b$} coordinate[pos =0.2] (t2);
\end{scope}}}  
    \right)\right) =
    \rkq\End_\somecategorynopdg\left( \nopdg\left(
    \NB{\tikz[font=\tiny, yscale=0.5, xscale =0.7]{\begin{scope}[font=\tiny]
  \draw[->] (0.5, -0.5) ..controls +(0,0.3) and +(0,-0.3) .. (0.5,
  1.5) node[pos=1, above] {$b$} coordinate[pos =0.2] (t1);
  \draw[->] (-0.5, -0.5) ..controls +(0,0.3) and +(0,-0.3) .. (-0.5,
  1.5) node[pos=1, above] {$a$} coordinate[pos =0.2] (t2);
\end{scope}}}  
    \right)\right)
    = \frac{1}{\prod_{k=1}^a(1-q^{2k})\prod_{k=1}^b(1-q^{2k})} ,  \label{eq:dim-end-crossing}
    \end{equation}
    \item\begin{equation}
    \rkq\End_\somecategorynopdg\left( \nopdg\left(
    \NB{\tikz[font=\tiny, yscale=0.6]{\begin{scope}[yscale=-1]
  \coordinate (b) at (0, -1);
  \coordinate (o) at (0, -0.5);
  \coordinate (tl) at (-0.5, 1);
  \coordinate (tr) at ( 0.5, 1);
  \coordinate (bl) at (-0.5, -1);
  \coordinate (trr) at ( 1,  1);
  \draw[<-<] (bl) .. controls (+0,1) and +(0, -0.5) .. (trr)
  node[pos=1, below] {$c$} node[pos =0, above] {$c$}  coordinate[pos =0.19] (d1)  coordinate[pos =0.5] (d2); 
  \fill[white] (d1) circle (0.1);
  \fill[white] (d2) circle (0.1);
  \draw[<-] (b) -- (o) node [pos= 0, above] {$a+b$};
  \draw[-<] (o) .. controls +(0,0) and +(0, -0.5) .. (tl) node[pos= 1, below] {$a$}; 
  \draw[-<] (o) .. controls +(0,0) and +(0,  -0.5) .. (tr) node[pos= 1, below] {$b$};
\end{scope}
}}
    \right)\right)
    =\frac{1}{\prod_{k=1}^a(1-q^{2k})\prod_{k=1}^b(1-q^{2k})\prod_{k=1}^c(1-q^{2k})} ,
    \label{eq:dim-end-pf}\end{equation}
    \item\begin{equation}  \label{eq:dim-end-r3}
     \rkq\End_\somecategorynopdg\left( \nopdg\left(
    \NB{\tikz[font=\tiny, xscale =0.6, yscale=0.6]{\begin{scope}[font=\tiny]
  \draw (0.5, -0.5) ..controls +(0,0.3) and +(0,-0.3) .. (-0.5,
  0.5);% 
  \fill[white] (0,0) circle (2mm);
  \draw (-0.5, -0.5) ..controls +(0,0.3) and +(0,-0.3) .. (0.5,
  0.5);% 
  \draw (1.5, 0.5) ..controls +(0,0.3) and +(0,-0.3) .. (0.5,
  1.5);% 
  \fill[white] (1,1) circle (2mm);
  \draw (0.5, 0.5) ..controls +(0,0.3) and +(0,-0.3) .. (1.5,
  1.5);% 
  \draw[->] (0.5, 1.5) ..controls +(0,0.3) and +(0,-0.3) .. (-0.5,
  2.5) node[pos=1, above] {$c$} coordinate[pos =0.2] (t2);
  \fill[white] (0,2) circle (2mm);
  \draw[->] (-0.5, 1.5) ..controls +(0,0.3) and +(0,-0.3) .. (0.5,
  2.5) node[pos=1, above] {$b$} coordinate[pos =0.2] (t1);
  \draw (-0.5, 0.5) -- (-0.5, 1.5);
  \draw (1.5, -0.5) -- (1.5, 0.5);
  \draw[->] (1.5, 1.5) -- (1.5, 2.5) node[pos=1, above] {$a$};
\end{scope}}}
    \right)\right)
    =\frac{1}{\prod_{k=1}^a(1-q^{2k})\prod_{k=1}^b(1-q^{2k})\prod_{k=1}^c(1-q^{2k})}.
      \end{equation}
  \end{enumerate}
  Moreover taking mirror images and changing crossings in the diagrams
  give the same graded ranks. All the endomorphisms spaces considered
  are free as $\Z$-modules.
\end{lem}

\begin{proof}
In order to compute these graded ranks, first take one of the diagrams $D$ above and reflect it in the bottom horizontal axis resulting in a diagram $D^{\dagger}$.
Then one computes the graded ranks of the bimodule associated to the concatenated diagram $D D^{\dagger}$.

The first item follows trivially from the graded rank of the algebra of symmetric polynomials in $a$ variables.

The second item follows from the graded rank of partially symmetric polynomials $A_{(a,b)}$ as a module over symmetric polynomials $A_{(a+b)}$ along with the graded rank calculated in the first item.

For the third item, note that $D D^{\dagger}$ gives rise to a complex in the homotopy category of Soergel bimodules isomorphic to $A_{(a,b)}$.  The graded rank now follows from the first item.

The graded ranks for the fourth and fifth items are computed in a similar way to the third item.  One also uses the results from the first two items.

\end{proof}

\subsection{Forkslide reductions}
\label{sec:pitchfork-reduction}
In this subsection we consider certain diagrams which contain both crossings and trivalent vertices.  In order to prove braid invariance later on, we will need certain relations to hold between these diagrams.  We introduce these relations and prove a fact that reduces the set of relations to a simpler set of relations.

By \emph{forkslide moves}, we mean one of the following 8 local changes
in diagrams:
\begin{align*}
    \NB{\tikz[font=\tiny, yscale =0.7]{}} 
  &\leftrightsquigarrow
  \NB{\tikz[font=\tiny, yscale =0.7]{\begin{scope}[yscale=-1]
  \coordinate (b) at (0, -1);
  \coordinate (o) at (0, 0.3);
  \coordinate (tl) at (-0.5, 1);
  \coordinate (tr) at ( 0.5, 1);
  \coordinate (bl) at ( -0.5, -1);
  \coordinate (trr) at (1,  1);
  \draw[<-<] (bl) .. controls (+0,0.5) and +(0, -1) .. (trr)
  node[pos=1, below] {$c$} node[pos =0, above] {$c$}  coordinate[pos =0.278] (d0);
  \fill[white] (d0) circle (0.1);
  \draw[<-] (b) -- (o) node [pos= 0, above] {$a+b$};
  \draw[-<] (o) .. controls +(0,0) and +(0, -0.5) .. (tl) node[pos= 1, below] {$a$}; 
  \draw[-<] (o) .. controls +(0,0) and +(0,  -0.5) .. (tr) node[pos= 1, below] {$b$};
\end{scope}

}}  &&\qquad&
  \NB{\tikz[font=\tiny, yscale =0.7]{\begin{scope}[yscale=-1]
  \coordinate (b) at (0, -1);
  \coordinate (o) at (0, -0.5);
  \coordinate (tl) at (-0.5, 1);
  \coordinate (tr) at ( 0.5, 1);
  \coordinate (br) at ( 0.5, -1);
  \coordinate (tll) at (-1,  1);
  \draw[<-<] (br) .. controls (+0,1) and +(0, -0.5) .. (tll)
  node[pos=1, below] {$c$} node[pos =0, above] {$c$}  coordinate[pos =0.19] (d1)  coordinate[pos =0.5] (d2);
  \fill[white] (d1) circle (0.1);
  \fill[white] (d2) circle (0.1);
  \draw[<-] (b) -- (o) node [pos= 0, above] {$a+b$};
  \draw[-<] (o) .. controls +(0,0) and +(0, -0.5) .. (tl) node[pos= 1, below] {$a$};
  \draw[-<] (o) .. controls +(0,0) and +(0,  -0.5) .. (tr) node[pos= 1, below] {$b$};

\end{scope}
}} 
  &\leftrightsquigarrow
  \NB{\tikz[font=\tiny, yscale =0.7]{\begin{scope}[yscale=-1]
  \coordinate (b) at (0, -1);
  \coordinate (o) at (0, 0.3);
  \coordinate (tl) at (-0.5, 1);
  \coordinate (tr) at ( 0.5, 1);
  \coordinate (br) at ( 0.5, -1);
  \coordinate (tll) at (-1,  1);
  \draw[<-<] (br) .. controls (+0,0.5) and +(0, -1) .. (tll)
  node[pos=1, below] {$c$} node[pos =0, above] {$c$}  coordinate[pos =0.278] (d0);
  \fill[white] (d0) circle (0.1);
  \draw[<-] (b) -- (o) node [pos= 0, above] {$a+b$};
  \draw[-<] (o) .. controls +(0,0) and +(0, -0.5) .. (tl) node[pos= 1, below] {$a$}; 
  \draw[-<] (o) .. controls +(0,0) and +(0,  -0.5) .. (tr) node[pos= 1, below] {$b$};
\end{scope}
}} \\
  \NB{\tikz[font=\tiny, yscale =0.7]{\begin{scope}[yscale=-1]
  \coordinate (b) at (0, -1);
  \coordinate (o) at (0, -0.5);
  \coordinate (tl) at (-0.5, 1);
  \coordinate (tr) at ( 0.5, 1);
  \coordinate (bl) at (-0.5, -1);
  \coordinate (trr) at ( 1,  1);
  \draw[<-] (b) -- (o) node [pos= 0, above] {$a+b$};
  \draw[-<] (o) .. controls +(0,0) and +(0, -0.5) .. (tl) node[pos= 1, below] {$a$} coordinate[pos =0.4] (d1);
  \draw[-<] (o) .. controls +(0,0) and +(0,  -0.5) .. (tr) node[pos= 1, below] {$b$} coordinate[pos =0.74] (d2); 
  \fill[white] (d1) circle (0.1);
  \fill[white] (d2) circle (0.1);
  \draw[<-<] (bl) .. controls (+0,1) and +(0, -0.5) .. (trr) node[pos=1, below] {$c$} node[pos =0, above] {$c$}; 
\end{scope}}} 
  &\leftrightsquigarrow
  \NB{\tikz[font=\tiny, yscale =0.7]{    \begin{scope}[yscale=-1]
  \coordinate (b) at (0, -1);
  \coordinate (o) at (0, 0.3);
  \coordinate (tl) at (-0.5, 1);
  \coordinate (tr) at ( 0.5, 1);
  \coordinate (bl) at ( -0.5, -1);
  \coordinate (trr) at (1,  1);
  \draw[<-] (b) -- (o) node [pos= 0, above] {$a+b$}  coordinate[pos =0.68] (d0);
  \draw[-<] (o) .. controls +(0,0) and +(0, -0.5) .. (tl) node[pos= 1, below] {$a$}; 
  \draw[-<] (o) .. controls +(0,0) and +(0,  -0.5) .. (tr) node[pos= 1, below] {$b$};
  \fill[white] (d0) circle (0.1);
  \draw[<-<] (bl) .. controls (+0,0.5) and +(0, -1) .. (trr) node[pos=1, below] {$c$} node[pos =0, above] {$c$}; 
\end{scope}
}} &&\qquad&
  \NB{\tikz[font=\tiny, yscale =0.7]{\begin{scope}[yscale=-1]
  \coordinate (b) at (0, -1);
  \coordinate (o) at (0, -0.5);
  \coordinate (tl) at (-0.5, 1);
  \coordinate (tr) at ( 0.5, 1);
  \coordinate (br) at ( 0.5, -1);
  \coordinate (tll) at (-1,  1);
  \draw[<-] (b) -- (o) node [pos= 0, above] {$a+b$};
  \draw[-<] (o) .. controls +(0,0) and +(0, -0.5) .. (tl) node[pos= 1, below] {$a$} coordinate[pos =0.74] (d1);
  \draw[-<] (o) .. controls +(0,0) and +(0,  -0.5) .. (tr) node[pos= 1, below] {$b$} coordinate[pos =0.4] (d2); % 
  \fill[white] (d1) circle (0.1);
  \fill[white] (d2) circle (0.1);
  \draw[<-<] (br) .. controls (+0,1) and +(0, -0.5) .. (tll) node[pos=1, below] {$c$} node[pos =0, above] {$c$}; 
\end{scope}}} 
  &\leftrightsquigarrow
  \NB{\tikz[font=\tiny, yscale =0.7]{\begin{scope}[yscale=-1]
  \coordinate (b) at (0, -1);
  \coordinate (o) at (0, 0.3);
  \coordinate (tl) at (-0.5, 1);
  \coordinate (tr) at ( 0.5, 1);
  \coordinate (br) at ( 0.5, -1);
  \coordinate (tll) at (-1,  1);
  \draw[<-] (b) -- (o) node [pos= 0, above] {$a+b$}  coordinate[pos =0.68] (d0);
  \draw[-<] (o) .. controls +(0,0) and +(0, -0.5) .. (tl) node[pos= 1, below] {$a$}; 
  \draw[-<] (o) .. controls +(0,0) and +(0,  -0.5) .. (tr) node[pos= 1, below] {$b$};
  \fill[white] (d0) circle (0.1);
  \draw[<-<] (br) .. controls (+0,0.5) and +(0, -1) .. (tll) node[pos=1, below] {$c$} node[pos =0, above] {$c$}; 
\end{scope}
}} \\
  \NB{\tikz[font=\tiny, yscale =0.7]{\begin{scope}
  \coordinate (b) at (0, -1);
  \coordinate (o) at (0, -0.5);
  \coordinate (tl) at (-0.5, 1);
  \coordinate (tr) at ( 0.5, 1);
  \coordinate (br) at ( 0.5, -1);
  \coordinate (tll) at (-1,  1);
  \draw[>->] (br) .. controls (+0,1) and +(0, -0.5) .. (tll)
  node[pos=1, above] {$c$} node[pos =0, below] {$c$}  coordinate[pos =0.19] (d1)  coordinate[pos =0.5] (d2);
  \fill[white] (d1) circle (0.1);
  \fill[white] (d2) circle (0.1);
  \draw[>-] (b) -- (o) node [pos= 0, below] {$a+b$};
  \draw[->] (o) .. controls +(0,0) and +(0, -0.5) .. (tl) node[pos= 1, above] {$a$};
  \draw[->] (o) .. controls +(0,0) and +(0,  -0.5) .. (tr) node[pos= 1, above] {$b$};

\end{scope}
}} 
  &\leftrightsquigarrow
  \NB{\tikz[font=\tiny, yscale =0.7]{\begin{scope}
  \coordinate (b) at (0, -1);
  \coordinate (o) at (0, 0.3);
  \coordinate (tl) at (-0.5, 1);
  \coordinate (tr) at ( 0.5, 1);
  \coordinate (br) at ( 0.5, -1);
  \coordinate (tll) at (-1,  1);
  \draw[>->] (br) .. controls (+0,0.5) and +(0, -1) .. (tll)
  node[pos=1, above] {$c$} node[pos =0, below] {$c$}  coordinate[pos =0.278] (d0);
  \fill[white] (d0) circle (0.1);
  \draw[>-] (b) -- (o) node [pos= 0, below] {$a+b$};
  \draw[->] (o) .. controls +(0,0) and +(0, -0.5) .. (tl) node[pos= 1, above] {$a$}; 
  \draw[->] (o) .. controls +(0,0) and +(0,  -0.5) .. (tr) node[pos= 1, above] {$b$};
\end{scope}
}}  &&\qquad&
  \NB{\tikz[font=\tiny, yscale =0.7]{\begin{scope}
  \coordinate (b) at (0, -1);
  \coordinate (o) at (0, -0.5);
  \coordinate (tl) at (-0.5, 1);
  \coordinate (tr) at ( 0.5, 1);
  \coordinate (bl) at (-0.5, -1);
  \coordinate (trr) at ( 1,  1);
  \draw[>->] (bl) .. controls (+0,1) and +(0, -0.5) .. (trr)
  node[pos=1, above] {$c$} node[pos =0, below] {$c$}  coordinate[pos =0.19] (d1)  coordinate[pos =0.5] (d2); 
  \fill[white] (d1) circle (0.1);
  \fill[white] (d2) circle (0.1);
  \draw[>-] (b) -- (o) node [pos= 0, below] {$a+b$};
  \draw[->] (o) .. controls +(0,0) and +(0, -0.5) .. (tl) node[pos= 1, above] {$a$}; 
  \draw[->] (o) .. controls +(0,0) and +(0,  -0.5) .. (tr) node[pos= 1, above] {$b$};
\end{scope}
}} 
  &\leftrightsquigarrow
  \NB{\tikz[font=\tiny, yscale =0.7]{\begin{scope}
  \coordinate (b) at (0, -1);
  \coordinate (o) at (0, 0.3);
  \coordinate (tl) at (-0.5, 1);
  \coordinate (tr) at ( 0.5, 1);
  \coordinate (bl) at ( -0.5, -1);
  \coordinate (trr) at (1,  1);
  \draw[>->] (bl) .. controls (+0,0.5) and +(0, -1) .. (trr)
  node[pos=1, above] {$c$} node[pos =0, below] {$c$}  coordinate[pos =0.278] (d0);
  \fill[white] (d0) circle (0.1);
  \draw[>-] (b) -- (o) node [pos= 0, below] {$a+b$};
  \draw[->] (o) .. controls +(0,0) and +(0, -0.5) .. (tl) node[pos= 1, above] {$a$}; 
  \draw[->] (o) .. controls +(0,0) and +(0,  -0.5) .. (tr) node[pos= 1, above] {$b$};
\end{scope}
}} \\
  \NB{\tikz[font=\tiny, yscale =0.7]{\begin{scope}
  \coordinate (b) at (0, -1);
  \coordinate (o) at (0, -0.5);
  \coordinate (tl) at (-0.5, 1);
  \coordinate (tr) at ( 0.5, 1);
  \coordinate (br) at ( 0.5, -1);
  \coordinate (tll) at (-1,  1);
  \draw[>-] (b) -- (o) node [pos= 0, below] {$a+b$};
  \draw[->] (o) .. controls +(0,0) and +(0, -0.5) .. (tl) node[pos= 1, above] {$a$} coordinate[pos =0.74] (d1);
  \draw[->] (o) .. controls +(0,0) and +(0,  -0.5) .. (tr) node[pos= 1, above] {$b$} coordinate[pos =0.4] (d2); % 
  \fill[white] (d1) circle (0.1);
  \fill[white] (d2) circle (0.1);
  \draw[>->] (br) .. controls (+0,1) and +(0, -0.5) .. (tll) node[pos=1, above] {$c$} node[pos =0, below] {$c$}; 
\end{scope}}} 
  &\leftrightsquigarrow
  \NB{\tikz[font=\tiny, yscale =0.7]{\begin{scope}
  \coordinate (b) at (0, -1);
  \coordinate (o) at (0, 0.3);
  \coordinate (tl) at (-0.5, 1);
  \coordinate (tr) at ( 0.5, 1);
  \coordinate (br) at ( 0.5, -1);
  \coordinate (tll) at (-1,  1);
  \draw[>-] (b) -- (o) node [pos= 0, below] {$a+b$}  coordinate[pos =0.68] (d0);
  \draw[->] (o) .. controls +(0,0) and +(0, -0.5) .. (tl) node[pos= 1, above] {$a$}; 
  \draw[->] (o) .. controls +(0,0) and +(0,  -0.5) .. (tr) node[pos= 1, above] {$b$};
  \fill[white] (d0) circle (0.1);
  \draw[>->] (br) .. controls (+0,0.5) and +(0, -1) .. (tll) node[pos=1, above] {$c$} node[pos =0, below] {$c$}; 
\end{scope}}}  &&\qquad&
  \NB{\tikz[font=\tiny, yscale =0.7]{\begin{scope}
  \coordinate (b) at (0, -1);
  \coordinate (o) at (0, -0.5);
  \coordinate (tl) at (-0.5, 1);
  \coordinate (tr) at ( 0.5, 1);
  \coordinate (bl) at (-0.5, -1);
  \coordinate (trr) at ( 1,  1);
  \draw[>-] (b) -- (o) node [pos= 0, below] {$a+b$};
  \draw[->] (o) .. controls +(0,0) and +(0, -0.5) .. (tl) node[pos= 1, above] {$a$} coordinate[pos =0.4] (d1);
  \draw[->] (o) .. controls +(0,0) and +(0,  -0.5) .. (tr) node[pos= 1, above] {$b$} coordinate[pos =0.74] (d2); 
  \fill[white] (d1) circle (0.1);
  \fill[white] (d2) circle (0.1);
  \draw[>->] (bl) .. controls (+0,1) and +(0, -0.5) .. (trr) node[pos=1, above] {$c$} node[pos =0, below] {$c$}; 
\end{scope}}} 
  &\leftrightsquigarrow
  \NB{\tikz[font=\tiny, yscale =0.7]{    \begin{scope}
  \coordinate (b) at (0, -1);
  \coordinate (o) at (0, 0.3);
  \coordinate (tl) at (-0.5, 1);
  \coordinate (tr) at ( 0.5, 1);
  \coordinate (bl) at ( -0.5, -1);
  \coordinate (trr) at (1,  1);
  \draw[>-] (b) -- (o) node [pos= 0, below] {$a+b$}  coordinate[pos =0.68] (d0);
  \draw[->] (o) .. controls +(0,0) and +(0, -0.5) .. (tl) node[pos= 1, above] {$a$}; 
  \draw[->] (o) .. controls +(0,0) and +(0,  -0.5) .. (tr) node[pos= 1, above] {$b$};
  \fill[white] (d0) circle (0.1);
  \draw[>->] (bl) .. controls (+0,0.5) and +(0, -1) .. (trr) node[pos=1, above] {$c$} node[pos =0, below] {$c$}; 
\end{scope}}} \\
\end{align*}
for any $a,b, c$ in $\Z_{>0}$. 

In what follows, we will show that $\soergel(\cdot)$ is invariant
under forkslide moves in the relative homotopy category of singular Soergel bimodules. It turns out that the  $H^\prime$-equivariant complexes associated to all these diagrams satisfy
the hypothesis of Proposition~\ref{prop:blist-hyp} (see
\eqref{eq:dim-end-pf}). This simplifies considerably our task as stated
by the claim below.
\begin{claim}
It is enough to prove invariance of
$\soergel(\cdot)$ under the following moves:
\begin{align*}
  \NB{\tikz[font=\tiny, yscale =0.7]{\begin{scope}[yscale=-1]
  \coordinate (b) at (0, -1);
  \coordinate (o) at (0, -0.5);
  \coordinate (tl) at (-0.5, 1);
  \coordinate (tr) at ( 0.5, 1);
  \coordinate (bl) at (-0.5, -1);
  \coordinate (trr) at ( 1,  1);
  \draw[<-<] (bl) .. controls (+0,1) and +(0, -0.5) .. (trr)
  node[pos=1, below] {$ c$} node[pos =0, above] {$ c$}  coordinate[pos =0.19] (d1)  coordinate[pos =0.5] (d2); 
  \fill[white] (d1) circle (0.1);
  \fill[white] (d2) circle (0.1);
  \draw[<-] (b) -- (o) node [pos= 0, above] {$1+b $};
  \draw[-<] (o) .. controls +(0,0) and +(0, -0.5) .. (tl) node[pos= 1, below] {$1$}; 
  \draw[-<] (o) .. controls +(0,0) and +(0,  -0.5) .. (tr) node[pos=
  1, below] {$b $};
\end{scope}

}} 
  &\leftrightsquigarrow
  \NB{\tikz[font=\tiny, yscale =0.7]{\begin{scope}[yscale=-1]
  \coordinate (b) at (0, -1);
  \coordinate (o) at (0, 0.3);
  \coordinate (tl) at (-0.5, 1);
  \coordinate (tr) at ( 0.5, 1);
  \coordinate (bl) at ( -0.5, -1);
  \coordinate (trr) at (1,  1);
  \draw[<-<] (bl) .. controls (+0,0.5) and +(0, -1) .. (trr)
  node[pos=1, below] {$ c$} node[pos =0, above] {$ c$}  coordinate[pos =0.278] (d0);
  \fill[white] (d0) circle (0.1);
  \draw[<-] (b) -- (o) node [pos= 0, above] {$1+b $};
  \draw[-<] (o) .. controls +(0,0) and +(0, -0.5) .. (tl) node[pos= 1, below] {$1$}; 
  \draw[-<] (o) .. controls +(0,0) and +(0,  -0.5) .. (tr) node[pos=
  1, below] {$b $};
\end{scope}

}}  &&\qquad&
  \NB{\tikz[font=\tiny, yscale =0.7]{\begin{scope}[yscale=-1]
  \coordinate (b) at (0, -1);
  \coordinate (o) at (0, -0.5);
  \coordinate (tl) at (-0.5, 1);
  \coordinate (tr) at ( 0.5, 1);
  \coordinate (br) at ( 0.5, -1);
  \coordinate (tll) at (-1,  1);
  \draw[<-<] (br) .. controls (+0,1) and +(0, -0.5) .. (tll)
  node[pos=1, below] {$1$} node[pos =0, above] {$1$}  coordinate[pos =0.19] (d1)  coordinate[pos =0.5] (d2);
  \fill[white] (d1) circle (0.1);
  \fill[white] (d2) circle (0.1);
  \draw[<-] (b) -- (o) node [pos= 0, above] {$1+b$};
  \draw[-<] (o) .. controls +(0,0) and +(0, -0.5) .. (tl) node[pos= 1, below] {$1$};
  \draw[-<] (o) .. controls +(0,0) and +(0,  -0.5) .. (tr) node[pos= 1, below] {$b$};

\end{scope}
}} 
  &\leftrightsquigarrow
  \NB{\tikz[font=\tiny, yscale =0.7]{\begin{scope}[yscale=-1]
  \coordinate (b) at (0, -1);
  \coordinate (o) at (0, 0.3);
  \coordinate (tl) at (-0.5, 1);
  \coordinate (tr) at ( 0.5, 1);
  \coordinate (br) at ( 0.5, -1);
  \coordinate (tll) at (-1,  1);
  \draw[<-<] (br) .. controls (+0,0.5) and +(0, -1) .. (tll)
  node[pos=1, below] {$1$} node[pos =0, above] {$1$}  coordinate[pos =0.278] (d0);
  \fill[white] (d0) circle (0.1);
  \draw[<-] (b) -- (o) node [pos= 0, above] {$1+b$};
  \draw[-<] (o) .. controls +(0,0) and +(0, -0.5) .. (tl) node[pos= 1, below] {$1$}; 
  \draw[-<] (o) .. controls +(0,0) and +(0,  -0.5) .. (tr) node[pos= 1, below] {$b$};
\end{scope}
}} \\
  \NB{\tikz[font=\tiny, yscale =0.7]{\begin{scope}[yscale=-1]
  \coordinate (b) at (0, -1);
  \coordinate (o) at (0, -0.5);
  \coordinate (tl) at (-0.5, 1);
  \coordinate (tr) at ( 0.5, 1);
  \coordinate (bl) at (-0.5, -1);
  \coordinate (trr) at ( 1,  1);
  \draw[<-] (b) -- (o) node [pos= 0, above] {$1+b$};
  \draw[-<] (o) .. controls +(0,0) and +(0, -0.5) .. (tl) node[pos= 1, below] {$1$} coordinate[pos =0.4] (d1);
  \draw[-<] (o) .. controls +(0,0) and +(0,  -0.5) .. (tr) node[pos= 1, below] {$b$} coordinate[pos =0.74] (d2); 
  \fill[white] (d1) circle (0.1);
  \fill[white] (d2) circle (0.1);
  \draw[<-<] (bl) .. controls (+0,1) and +(0, -0.5) .. (trr)
  node[pos=1, below] {$ c$} node[pos =0, above] {$ c$}; 
\end{scope}
}} 
  &\leftrightsquigarrow
  \NB{\tikz[font=\tiny, yscale =0.7]{    \begin{scope}[yscale=-1]
  \coordinate (b) at (0, -1);
  \coordinate (o) at (0, 0.3);
  \coordinate (tl) at (-0.5, 1);
  \coordinate (tr) at ( 0.5, 1);
  \coordinate (bl) at ( -0.5, -1);
  \coordinate (trr) at (1,  1);
  \draw[<-] (b) -- (o) node [pos= 0, above] {$1+b$}  coordinate[pos =0.68] (d0);
  \draw[-<] (o) .. controls +(0,0) and +(0, -0.5) .. (tl) node[pos= 1, below] {$1$}; 
  \draw[-<] (o) .. controls +(0,0) and +(0,  -0.5) .. (tr) node[pos= 1, below] {$b$};
  \fill[white] (d0) circle (0.1);
  \draw[<-<] (bl) .. controls (+0,0.5) and +(0, -1) .. (trr)
  node[pos=1, below] {$ c$} node[pos =0, above] {$ c$}; 
\end{scope}
}} &&\qquad&
  \NB{\tikz[font=\tiny, yscale =0.7]{\begin{scope}[yscale=-1]
  \coordinate (b) at (0, -1);
  \coordinate (o) at (0, -0.5);
  \coordinate (tl) at (-0.5, 1);
  \coordinate (tr) at ( 0.5, 1);
  \coordinate (br) at ( 0.5, -1);
  \coordinate (tll) at (-1,  1);
  \draw[<-] (b) -- (o) node [pos= 0, above] {$1+b$};
  \draw[-<] (o) .. controls +(0,0) and +(0, -0.5) .. (tl) node[pos= 1, below] {$1$} coordinate[pos =0.74] (d1);
  \draw[-<] (o) .. controls +(0,0) and +(0,  -0.5) .. (tr) node[pos= 1, below] {$b$} coordinate[pos =0.4] (d2); % 
  \fill[white] (d1) circle (0.1);
  \fill[white] (d2) circle (0.1);
  \draw[<-<] (br) .. controls (+0,1) and +(0, -0.5) .. (tll) node[pos=1, below] {$1$} node[pos =0, above] {$1$}; 
\end{scope}}} 
  &\leftrightsquigarrow
  \NB{\tikz[font=\tiny, yscale =0.7]{\begin{scope}[yscale=-1]
  \coordinate (b) at (0, -1);
  \coordinate (o) at (0, 0.3);
  \coordinate (tl) at (-0.5, 1);
  \coordinate (tr) at ( 0.5, 1);
  \coordinate (br) at ( 0.5, -1);
  \coordinate (tll) at (-1,  1);
  \draw[<-] (b) -- (o) node [pos= 0, above] {$1+b$}  coordinate[pos =0.68] (d0);
  \draw[-<] (o) .. controls +(0,0) and +(0, -0.5) .. (tl) node[pos= 1, below] {$1$}; 
  \draw[-<] (o) .. controls +(0,0) and +(0,  -0.5) .. (tr) node[pos= 1, below] {$b$};
  \fill[white] (d0) circle (0.1);
  \draw[<-<] (br) .. controls (+0,0.5) and +(0, -1) .. (tll) node[pos=1, below] {$1$} node[pos =0, above] {$1$}; 
\end{scope}
}} \\
  \NB{\tikz[font=\tiny, yscale =0.7]{\begin{scope}
  \coordinate (b) at (0, -1);
  \coordinate (o) at (0, -0.5);
  \coordinate (tl) at (-0.5, 1);
  \coordinate (tr) at ( 0.5, 1);
  \coordinate (br) at ( 0.5, -1);
  \coordinate (tll) at (-1,  1);
  \draw[>->] (br) .. controls (+0,1) and +(0, -0.5) .. (tll)
  node[pos=1, above] {$1$} node[pos =0, below] {$1$}  coordinate[pos =0.19] (d1)  coordinate[pos =0.5] (d2);
  \fill[white] (d1) circle (0.1);
  \fill[white] (d2) circle (0.1);
  \draw[>-] (b) -- (o) node [pos= 0, below] {$1+b$};
  \draw[->] (o) .. controls +(0,0) and +(0, -0.5) .. (tl) node[pos= 1, above] {$1$};
  \draw[->] (o) .. controls +(0,0) and +(0,  -0.5) .. (tr) node[pos= 1, above] {$b$};

\end{scope}
}} 
  &\leftrightsquigarrow
  \NB{\tikz[font=\tiny, yscale =0.7]{\begin{scope}
  \coordinate (b) at (0, -1);
  \coordinate (o) at (0, 0.3);
  \coordinate (tl) at (-0.5, 1);
  \coordinate (tr) at ( 0.5, 1);
  \coordinate (br) at ( 0.5, -1);
  \coordinate (tll) at (-1,  1);
  \draw[>->] (br) .. controls (+0,0.5) and +(0, -1) .. (tll)
  node[pos=1, above] {$1$} node[pos =0, below] {$1$}  coordinate[pos =0.278] (d0);
  \fill[white] (d0) circle (0.1);
  \draw[>-] (b) -- (o) node [pos= 0, below] {$1+b$};
  \draw[->] (o) .. controls +(0,0) and +(0, -0.5) .. (tl) node[pos= 1, above] {$1$}; 
  \draw[->] (o) .. controls +(0,0) and +(0,  -0.5) .. (tr) node[pos= 1, above] {$b$};
\end{scope}
}}  &&\qquad&  \NB{\tikz[font=\tiny, yscale =0.7]{\begin{scope}
  \coordinate (b) at (0, -1);
  \coordinate (o) at (0, -0.5);
  \coordinate (tl) at (-0.5, 1);
  \coordinate (tr) at ( 0.5, 1);
  \coordinate (bl) at (-0.5, -1);
  \coordinate (trr) at ( 1,  1);
  \draw[>->] (bl) .. controls (+0,1) and +(0, -0.5) .. (trr)
  node[pos=1, above] {$1$} node[pos =0, below] {$1$}  coordinate[pos =0.19] (d1)  coordinate[pos =0.5] (d2); 
  \fill[white] (d1) circle (0.1);
  \fill[white] (d2) circle (0.1);
  \draw[>-] (b) -- (o) node [pos= 0, below] {$1+b$};
  \draw[->] (o) .. controls +(0,0) and +(0, -0.5) .. (tl) node[pos= 1, above] {$1$}; 
  \draw[->] (o) .. controls +(0,0) and +(0,  -0.5) .. (tr) node[pos= 1, above] {$b$};
\end{scope}
}} 
  &\leftrightsquigarrow
  \NB{\tikz[font=\tiny, yscale =0.7]{\begin{scope}
  \coordinate (b) at (0, -1);
  \coordinate (o) at (0, 0.3);
  \coordinate (tl) at (-0.5, 1);
  \coordinate (tr) at ( 0.5, 1);
  \coordinate (bl) at ( -0.5, -1);
  \coordinate (trr) at (1,  1);
  \draw[>->] (bl) .. controls (+0,0.5) and +(0, -1) .. (trr)
  node[pos=1, above] {$1$} node[pos =0, below] {$1$}  coordinate[pos =0.278] (d0);
  \fill[white] (d0) circle (0.1);
  \draw[>-] (b) -- (o) node [pos= 0, below] {$1+b$};
  \draw[->] (o) .. controls +(0,0) and +(0, -0.5) .. (tl) node[pos= 1, above] {$1$}; 
  \draw[->] (o) .. controls +(0,0) and +(0,  -0.5) .. (tr) node[pos= 1, above] {$b$};
\end{scope}
}} 
\\
  \NB{\tikz[font=\tiny, yscale =0.7]{\begin{scope}
  \coordinate (b) at (0, -1);
  \coordinate (o) at (0, -0.5);
  \coordinate (tl) at (-0.5, 1);
  \coordinate (tr) at ( 0.5, 1);
  \coordinate (br) at ( 0.5, -1);
  \coordinate (tll) at (-1,  1);
  \draw[>-] (b) -- (o) node [pos= 0, below] {$1+b$};
  \draw[->] (o) .. controls +(0,0) and +(0, -0.5) .. (tl) node[pos= 1, above] {$1$} coordinate[pos =0.74] (d1);
  \draw[->] (o) .. controls +(0,0) and +(0,  -0.5) .. (tr) node[pos= 1, above] {$b$} coordinate[pos =0.4] (d2); % 
  \fill[white] (d1) circle (0.1);
  \fill[white] (d2) circle (0.1);
  \draw[>->] (br) .. controls (+0,1) and +(0, -0.5) .. (tll) node[pos=1, above] {$1$} node[pos =0, below] {$1$}; 
\end{scope}}} 
  &\leftrightsquigarrow
  \NB{\tikz[font=\tiny, yscale =0.7]{\begin{scope}
  \coordinate (b) at (0, -1);
  \coordinate (o) at (0, 0.3);
  \coordinate (tl) at (-0.5, 1);
  \coordinate (tr) at ( 0.5, 1);
  \coordinate (br) at ( 0.5, -1);
  \coordinate (tll) at (-1,  1);
  \draw[>-] (b) -- (o) node [pos= 0, below] {$1+b$}  coordinate[pos =0.68] (d0);
  \draw[->] (o) .. controls +(0,0) and +(0, -0.5) .. (tl) node[pos= 1, above] {$1$}; 
  \draw[->] (o) .. controls +(0,0) and +(0,  -0.5) .. (tr) node[pos= 1, above] {$b$};
  \fill[white] (d0) circle (0.1);
  \draw[>->] (br) .. controls (+0,0.5) and +(0, -1) .. (tll) node[pos=1, above] {$1$} node[pos =0, below] {$1$}; 
\end{scope}}}  &&\qquad&  \NB{\tikz[font=\tiny, yscale =0.7]{\begin{scope}
  \coordinate (b) at (0, -1);
  \coordinate (o) at (0, -0.5);
  \coordinate (tl) at (-0.5, 1);
  \coordinate (tr) at ( 0.5, 1);
  \coordinate (bl) at (-0.5, -1);
  \coordinate (trr) at ( 1,  1);
  \draw[>-] (b) -- (o) node [pos= 0, below] {$1+b$};
  \draw[->] (o) .. controls +(0,0) and +(0, -0.5) .. (tl) node[pos= 1, above] {$1$} coordinate[pos =0.4] (d1);
  \draw[->] (o) .. controls +(0,0) and +(0,  -0.5) .. (tr) node[pos= 1, above] {$b$} coordinate[pos =0.74] (d2); 
  \fill[white] (d1) circle (0.1);
  \fill[white] (d2) circle (0.1);
  \draw[>->] (bl) .. controls (+0,1) and +(0, -0.5) .. (trr) node[pos=1, above] {$1$} node[pos =0, below] {$1$}; 
\end{scope}}} 
  &\leftrightsquigarrow
  \NB{\tikz[font=\tiny, yscale =0.7]{    \begin{scope}
  \coordinate (b) at (0, -1);
  \coordinate (o) at (0, 0.3);
  \coordinate (tl) at (-0.5, 1);
  \coordinate (tr) at ( 0.5, 1);
  \coordinate (bl) at ( -0.5, -1);
  \coordinate (trr) at (1,  1);
  \draw[>-] (b) -- (o) node [pos= 0, below] {$1+b$}  coordinate[pos =0.68] (d0);
  \draw[->] (o) .. controls +(0,0) and +(0, -0.5) .. (tl) node[pos= 1, above] {$1$}; 
  \draw[->] (o) .. controls +(0,0) and +(0,  -0.5) .. (tr) node[pos= 1, above] {$b$};
  \fill[white] (d0) circle (0.1);
  \draw[>->] (bl) .. controls (+0,0.5) and +(0, -1) .. (trr) node[pos=1, above] {$1$} node[pos =0, below] {$1$}; 
\end{scope}}}.
\\ 
\end{align*} 
\end{claim}

\begin{proof}
  The proof of this claim does not really depend on how $\soergel$ is
  defined but rather on the fact that we can use the blist hypothesis (\emph{i.e.{}}
  Proposition~\ref{prop:blist-hyp}). It permits us to deduce invariance under
  certain moves from the invariance under blisted versions of these
  moves. % 
  
  We start by proving invariance under
  \[
    \NB{\tikz[font=\tiny, yscale =0.7]{}} 
  \leftrightsquigarrow
  \NB{\tikz[font=\tiny, yscale =0.7]{}}  \qquad \text{and}
  \qquad
  \NB{\tikz[font=\tiny, yscale =0.7]{\begin{scope}[yscale=-1]
  \coordinate (b) at (0, -1);
  \coordinate (o) at (0, -0.5);
  \coordinate (tl) at (-0.5, 1);
  \coordinate (tr) at ( 0.5, 1);
  \coordinate (bl) at (-0.5, -1);
  \coordinate (trr) at ( 1,  1);
  \draw[<-] (b) -- (o) node [pos= 0, above] {$a+b$};
  \draw[-<] (o) .. controls +(0,0) and +(0, -0.5) .. (tl) node[pos= 1, below] {$a$} coordinate[pos =0.4] (d1);
  \draw[-<] (o) .. controls +(0,0) and +(0,  -0.5) .. (tr) node[pos= 1, below] {$b$} coordinate[pos =0.74] (d2); 
  \fill[white] (d1) circle (0.1);
  \fill[white] (d2) circle (0.1);
  \draw[<-<] (bl) .. controls (+0,1) and +(0, -0.5) .. (trr) node[pos=1, below] {$c$} node[pos =0, above] {$c$}; 
\end{scope}}} 
  \leftrightsquigarrow
  \NB{\tikz[font=\tiny, yscale =0.7]{}}
  \]
  by induction on $a$. Both cases are similar, so we only deal with the
  first one. Due to the blist hypothesis, it is enough to show
  invariance under
  \[
    \NB{\tikz[font=\tiny, yscale =0.7]{\begin{scope}[yscale=-1]
  \coordinate (b) at (0, -1);
  \coordinate (o) at (0, -0.5);
  \coordinate (tl) at (-0.5, 1);
  \coordinate (tm) at (   0, 1);
  \coordinate (tr) at ( 0.5, 1);
  \coordinate (bl) at (-0.5, -1);
  \coordinate (trr) at ( 1,  1);
  \draw[<-<] (bl) .. controls (+0,1) and +(0, -0.5) .. (trr)
  node[pos=1, below] {$c$} node[pos =0, above] {$c$}  coordinate[pos =0.19] (d1)  coordinate[pos =0.5] (d2); 
  \fill[white] (d1) circle (0.1);
  \fill[white] (d2) circle (0.1);
  \draw[<-] (b) -- (o) node [pos= 0, above] {$a+b$};
  \draw[-<] (o) .. controls +(0,0) and +(0, -0.5) .. (tl) node[pos= 1,
  below] {$a-1$} coordinate[pos=0.7] (m);
  \draw[-<] (m) .. controls +(0,0) and +(0, -0.5) .. (tm) node[pos= 1,
  below] {$1$};
  \draw[-<] (o) .. controls +(0,0) and +(0,  -0.5) .. (tr) node[pos= 1, below] {$b$};
\end{scope}

}} 
  \leftrightsquigarrow
  \NB{\tikz[font=\tiny, yscale =0.7]{\begin{scope}[yscale=-1]
  \coordinate (b) at (0, -1);
  \coordinate (o) at (0, 0.3);
  \coordinate (tl) at (-0.5, 1);
  \coordinate (tm) at ( 0, 1);
  \coordinate (tr) at ( 0.5, 1);
  \coordinate (bl) at ( -0.5, -1);
  \coordinate (trr) at (1,  1);
  \draw[<-<] (bl) .. controls (+0,0.5) and +(0, -1) .. (trr)
  node[pos=1, below] {$c$} node[pos =0, above] {$c$}  coordinate[pos =0.278] (d0);
  \fill[white] (d0) circle (0.1);
  \draw[<-] (b) -- (o) node [pos= 0, above] {$a+b$};
  \draw[-<] (o) .. controls +(0,0) and +(0, -0.5) .. (tl) node[pos= 1,
  below] {$a-1$} coordinate[pos = 0.6] (m); 
  \draw[-<] (o) .. controls +(0,0) and +(0,  -0.5) .. (tr) node[pos= 1, below] {$b$};
  \draw[-<] (m) .. controls +(0,0) and +(0, -0.3) .. (tm) node[pos= 1,
  below] {$1$};
\end{scope}
}},
  \]
  which follows from the sequence:
  \[
    \NB{\tikz[font=\tiny, yscale =0.7]{}} 
  \leftrightsquigarrow
  \NB{\tikz[font=\tiny, yscale =0.7]{\begin{scope}[yscale=-1]
  \coordinate (b) at (0, -1);
  \coordinate (o) at (0, -0.5);
  \coordinate (tl) at (-0.5, 1);
  \coordinate (tm) at (   0, 1);
  \coordinate (tr) at ( 0.5, 1);
  \coordinate (bl) at (-0.5, -1);
  \coordinate (trr) at ( 1,  1);
  \draw[<-<] (bl) .. controls (+0,1.5) and +(0, -1) .. (trr)
  node[pos=1, below] {$c$} node[pos =0, above] {$c$}  coordinate[pos
  =0.172] (d1)  coordinate[pos =0.5] (d2) coordinate[pos=0.27] (d3); 
  \fill[white] (d1) circle (0.1);
  \fill[white] (d2) circle (0.1);
  \fill[white] (d3) circle (0.1);
  \draw[<-] (b) -- (o) node [pos= 0, above] {$a+b$};
  \draw[-<] (o) .. controls +(0,0) and +(0, -0.5) .. (tl) node[pos= 1,
  below] {$a-1$} coordinate[pos=0.25] (m);
  \draw[-<] (o) .. controls +(0,0) and +(0,  -0.5) .. (tr) node[pos=
  1, below] {$b$};
  \draw[-<] (m) .. controls +(0,0) and +(0, -0.5) .. (tm) node[pos= 1,
  below] {$1$};

\end{scope}

}}
  \leftrightsquigarrow
  \NB{\tikz[font=\tiny, yscale =0.7]{\begin{scope}[yscale=-1]
  \coordinate (b) at (0, -1);
  \coordinate (o) at (0, -0.5);
  \coordinate (tl) at (-0.5, 1);
  \coordinate (tm) at (   0, 1);
  \coordinate (tr) at ( 0.5, 1);
  \coordinate (bl) at (-0.5, -1);
  \coordinate (trr) at ( 1,  1);
  \draw[<-<] (bl) .. controls (+0,1) and +(0, -0.5) .. (trr)
  node[pos=1, below] {$c$} node[pos =0, above] {$c$}  coordinate[pos
  =0.19] (d1)  coordinate[pos =0.5] (d2) coordinate[pos=0.325] (d3); 
  \fill[white] (d1) circle (0.1);
  \fill[white] (d2) circle (0.1);
  \fill[white] (d3) circle (0.1);
  \draw[<-] (b) -- (o) node [pos= 0, above] {$a+b$};
  \draw[-<] (o) .. controls +(0,0) and +(0, -0.5) .. (tl) node[pos= 1,
  below] {$a-1$};
  \draw[-<] (o) .. controls +(0,0) and +(0,  -0.5) .. (tr) node[pos=
  1, below] {$b$}  coordinate[pos=0.4] (m);
  \draw[-<] (m) .. controls +(0,0) and +(0, -0.5) .. (tm) node[pos= 1,
  below] {$1$};

\end{scope}

}}
  \leftrightsquigarrow
  \NB{\tikz[font=\tiny, yscale =0.7]{\begin{scope}[yscale=-1]
  \coordinate (b) at (0, -1);
  \coordinate (o) at (0, -0.5);
  \coordinate (tl) at (-0.5, 1);
  \coordinate (tm) at (   0, 1);
  \coordinate (tr) at ( 0.5, 1);
  \coordinate (bl) at (-0.5, -1);
  \coordinate (trr) at ( 1,  1);
  \draw[<-<] (bl) .. controls (+0,0.7) and +(0, -1.2) .. (trr)
  node[pos=1, below] {$c$} node[pos =0, above] {$c$}  coordinate[pos
  =0.215] (d1)  coordinate[pos =0.41] (d2);
  \fill[white] (d1) circle (0.1);
  \fill[white] (d2) circle (0.1);
  \draw[<-] (b) -- (o) node [pos= 0, above] {$a+b$};
  \draw[-<] (o) .. controls +(0,0) and +(0, -0.5) .. (tl) node[pos= 1,
  below] {$a-1$};
  \draw[-<] (o) .. controls +(0,0) and +(0,  -0.5) .. (tr) node[pos=
  1, below] {$b$}  coordinate[pos=0.7] (m);
  \draw[-<] (m) .. controls +(0,0) and +(0, -0.3) .. (tm) node[pos= 1,
  below] {$1$};
\end{scope}

}}
  \leftrightsquigarrow
  \NB{\tikz[font=\tiny, yscale =0.7]{\begin{scope}[yscale=-1]
  \coordinate (b) at (0, -1);
  \coordinate (o) at (0, 0.3);
  \coordinate (tl) at (-0.5, 1);
  \coordinate (tm) at ( 0, 1);
  \coordinate (tr) at ( 0.5, 1);
  \coordinate (bl) at ( -0.5, -1);
  \coordinate (trr) at (1,  1);
  \draw[<-<] (bl) .. controls (+0,0.5) and +(0, -1) .. (trr)
  node[pos=1, below] {$c$} node[pos =0, above] {$c$}  coordinate[pos =0.278] (d0);
  \fill[white] (d0) circle (0.1);
  \draw[<-] (b) -- (o) node [pos= 0, above] {$a+b$};
  \draw[-<] (o) .. controls +(0,0) and +(0, -0.5) .. (tl) node[pos= 1,
  below] {$a-1$};
  \draw[-<] (o) .. controls +(0,0) and +(0,  -0.5) .. (tr) node[pos=
  1, below] {$b$}  coordinate[pos = 0.6] (m); 
  \draw[-<] (m) .. controls +(0,0) and +(0, -0.3) .. (tm) node[pos= 1,
  below] {$1$};
\end{scope}
}}
  \leftrightsquigarrow
  \NB{\tikz[font=\tiny, yscale =0.7]{}}. 
\]

Let us now prove invariance under the following two moves:
\[
  \NB{\tikz[font=\tiny, yscale =0.7]{}} 
  \leftrightsquigarrow
  \NB{\tikz[font=\tiny, yscale =0.7]{}}
  \qquad \text{and} \qquad
  \NB{\tikz[font=\tiny, yscale =0.7]{\begin{scope}[yscale=-1]
  \coordinate (b) at (0, -1);
  \coordinate (o) at (0, -0.5);
  \coordinate (tl) at (-0.5, 1);
  \coordinate (tr) at ( 0.5, 1);
  \coordinate (br) at ( 0.5, -1);
  \coordinate (tll) at (-1,  1);
  \draw[<-] (b) -- (o) node [pos= 0, above] {$a+b$};
  \draw[-<] (o) .. controls +(0,0) and +(0, -0.5) .. (tl) node[pos= 1, below] {$a$} coordinate[pos =0.74] (d1);
  \draw[-<] (o) .. controls +(0,0) and +(0,  -0.5) .. (tr) node[pos= 1, below] {$b$} coordinate[pos =0.4] (d2); % 
  \fill[white] (d1) circle (0.1);
  \fill[white] (d2) circle (0.1);
  \draw[<-<] (br) .. controls (+0,1) and +(0, -0.5) .. (tll) node[pos=1, below] {$c$} node[pos =0, above] {$c$}; 
\end{scope}}} 
  \leftrightsquigarrow
  \NB{\tikz[font=\tiny, yscale =0.7]{}}   .
\]
Both cases are similar, so we only deal with the first one. We only need
to prove invariance under the move
\[
  \NB{\tikz[font=\tiny, yscale =0.7]{\begin{scope}[yscale=-1]
  \coordinate (b) at (0, -1);
  \coordinate (o) at (0, -0.5);
  \coordinate (tl) at (-0.5, 1);
  \coordinate (tr) at ( 0.5, 1);
  \coordinate (br) at ( 0.5, -1);
  \coordinate (tll) at (-1,  1);
  \draw[<-<] (br) .. controls (+0,1) and +(0, -0.5) .. (tll)
  node[pos=1, below] {$c$} node[pos =0, above] {$c$}  coordinate[pos =0.19] (d1)  coordinate[pos =0.5] (d2);
  \fill[white] (d1) circle (0.1);
  \fill[white] (d2) circle (0.1);
  \draw[<-] (b) -- (o) node [pos= 0, above] {$1+b$};
  \draw[-<] (o) .. controls +(0,0) and +(0, -0.5) .. (tl) node[pos= 1, below] {$1$};
  \draw[-<] (o) .. controls +(0,0) and +(0,  -0.5) .. (tr) node[pos= 1, below] {$b$};

\end{scope}
}} 
  \leftrightsquigarrow
  \NB{\tikz[font=\tiny, yscale =0.7]{\begin{scope}[yscale=-1]
  \coordinate (b) at (0, -1);
  \coordinate (o) at (0, 0.3);
  \coordinate (tl) at (-0.5, 1);
  \coordinate (tr) at ( 0.5, 1);
  \coordinate (br) at ( 0.5, -1);
  \coordinate (tll) at (-1,  1);
  \draw[<-<] (br) .. controls (+0,0.5) and +(0, -1) .. (tll)
  node[pos=1, below] {$c$} node[pos =0, above] {$c$}  coordinate[pos =0.278] (d0);
  \fill[white] (d0) circle (0.1);
  \draw[<-] (b) -- (o) node [pos= 0, above] {$1+b$};
  \draw[-<] (o) .. controls +(0,0) and +(0, -0.5) .. (tl) node[pos= 1, below] {$1$}; 
  \draw[-<] (o) .. controls +(0,0) and +(0,  -0.5) .. (tr) node[pos= 1, below] {$b$};
\end{scope}
}}
  \]
  since we can then argue as we did before to recover the general
  case. We proceed by induction on $c$. Due to the blist
  hypothesis, it is enough to prove invariance under the following
  move:
\[
  \NB{\tikz[font=\tiny, yscale =0.7]{\begin{scope}[yscale=-1]
  \coordinate (b) at (0, -1);
  \coordinate (o) at (0, -0.5);
  \coordinate (tl) at (-0.5, 1);
  \coordinate (tr) at ( 0.5, 1);
  \coordinate (br) at ( 0.5, -1);
  \coordinate (tll) at (-1,  1);
  \coordinate (tlll) at (-1.5,  1);
  \draw[<-<] (br) .. controls (+0,1) and +(0, -0.5) .. (tlll)
  node[pos=1, below] {$1$} node[pos =0, above] {$c$}  coordinate[pos
  =0.177] (d1)  coordinate[pos =0.387] (d2) coordinate[pos = 0.65] (m);
  \draw[-<] (m) .. controls +(0,0) and +(0, -0.2) .. (tll) node[below] {$c-1$};
  \fill[white] (d1) circle (0.1);
  \fill[white] (d2) circle (0.1);
  \draw[<-] (b) -- (o) node [pos= 0, above] {$1+b$};
  \draw[-<] (o) .. controls +(0,0) and +(0, -0.5) .. (tl) node[pos= 1, below] {$1$};
  \draw[-<] (o) .. controls +(0,0) and +(0,  -0.5) .. (tr) node[pos= 1, below] {$b$};

\end{scope}
}} 
  \leftrightsquigarrow
  \NB{\tikz[font=\tiny, yscale =0.7]{\begin{scope}[yscale=-1]
  \coordinate (b) at (0, -1);
  \coordinate (o) at (0, 0.3);
  \coordinate (tl) at (-0.5, 1);
  \coordinate (tr) at ( 0.5, 1);
  \coordinate (br) at ( 0.5, -1);
  \coordinate (tll) at (-1,  1);
  \coordinate (tlll) at (-1.5,  1);
  \draw[<-<] (br) .. controls (+0,0.5) and +(0, -1) .. (tlll)
  node[pos=1, below] {$1$} node[pos =0, above] {$c$}  coordinate[pos
  =0.25] (d0) coordinate[pos = 0.75] (m);
  \draw[-<] (m) .. controls +(0,0) and +(0, -0.2) .. (tll) node[below] {$c-1$};
  \fill[white] (d0) circle (0.1);
  \draw[<-] (b) -- (o) node [pos= 0, above] {$1+b$};
  \draw[-<] (o) .. controls +(0,0) and +(0, -0.5) .. (tl) node[pos= 1, below] {$1$}; 
  \draw[-<] (o) .. controls +(0,0) and +(0,  -0.5) .. (tr) node[pos= 1, below] {$b$};
\end{scope}
}}.
  \]
  This is obtained from the sequence:
\[
  \NB{\tikz[font=\tiny, yscale =0.7]{}} 
  \leftrightsquigarrow
  \NB{\tikz[font=\tiny, yscale =0.7]{\begin{scope}[yscale=-1]
  \coordinate (b) at (0, -1);
  \coordinate (o) at (0, -0.5);
  \coordinate (tl) at (-0.5, 1);
  \coordinate (tr) at ( 0.5, 1);
  \coordinate (br) at ( 0.5, -1);
  \coordinate (tll) at (-1,  1);
  \coordinate (tlll) at (-1.5,  1);
  \draw[<-<] (br) .. controls (+0,1) and +(0, -1) .. (tlll)
  node[pos=1, below] {$1$} node[pos =0, above] {$c$}  coordinate[pos
  =0.177] (d1)  coordinate[pos =0.387] (d2) coordinate[pos = 0.08] (m);
  \draw[-<] (m) .. controls +(0,1.5) and +(0, -0.3) .. (tll) node[below]
  {$c-1$} coordinate[pos =0.177] (d3)  coordinate[pos =0.577] (d4);
  \fill[white] (d1) circle (0.1);
  \fill[white] (d2) circle (0.1);
  \fill[white] (d3) circle (0.1);
  \fill[white] (d4) circle (0.1);
  \draw[<-] (b) -- (o) node [pos= 0, above] {$1+b$};
  \draw[-<] (o) .. controls +(0,0) and +(0, -0.5) .. (tl) node[pos= 1, below] {$1$};
  \draw[-<] (o) .. controls +(0,0) and +(0,  -0.5) .. (tr) node[pos= 1, below] {$b$};

\end{scope}
}} 
  \leftrightsquigarrow
  \NB{\tikz[font=\tiny, yscale =0.7]{\begin{scope}[yscale=-1]
  \coordinate (b) at (0, -1);
  \coordinate (o) at (0, 0);
  \coordinate (tl) at (-0.5, 1);
  \coordinate (tr) at ( 0.5, 1);
  \coordinate (br) at ( 0.5, -1);
  \coordinate (tll) at (-1,  1);
  \coordinate (tlll) at (-1.5,  1);
  \draw[<-<] (br) .. controls (+0,0.5) and +(0, -1.5) .. (tlll)
  node[pos=1, below] {$1$} node[pos =0, above] {$c$}  coordinate[pos
  =0.24] (d1)  coordinate[pos = 0.08] (m);
  \draw[-<] (m) .. controls +(0,2) and +(0, -0.5) .. (tll) node[below]
  {$c-1$} coordinate[pos =0.21] (d3)  coordinate[pos =0.577] (d4);
  \fill[white] (d1) circle (0.1);
  \fill[white] (d3) circle (0.1);
  \fill[white] (d4) circle (0.1);
  \draw[<-] (b) -- (o) node [pos= 0, above] {$1+b$};
  \draw[-<] (o) .. controls +(0,0) and +(0, -0.5) .. (tl) node[pos= 1, below] {$1$};
  \draw[-<] (o) .. controls +(0,0) and +(0,  -0.5) .. (tr) node[pos= 1, below] {$b$};

\end{scope}
}} 
  \leftrightsquigarrow
  \NB{\tikz[font=\tiny, yscale =0.7]{\begin{scope}[yscale=-1]
  \coordinate (b) at (0, -1);
  \coordinate (o) at (0, 0.3);
  \coordinate (tl) at (-0.5, 1);
  \coordinate (tr) at ( 0.5, 1);
  \coordinate (br) at ( 0.5, -1);
  \coordinate (tll) at (-1,  1);
  \coordinate (tlll) at (-1.5,  1);
  \draw[<-<] (br) .. controls (+0,0.4) and +(0, -1.5) .. (tlll)
  node[pos=1, below] {$1$} node[pos =0, above] {$c$}  coordinate[pos
  =0.24] (d0) coordinate[pos = 0.08] (m);
  \draw[-<] (m) .. controls +(0,0.8) and +(0, -0.4) .. (tll)
  node[below] {$c-1$} coordinate[pos   =0.345] (d1);
  \fill[white] (d0) circle (0.1);
  \fill[white] (d1) circle (0.1);
  \draw[<-] (b) -- (o) node [pos= 0, above] {$1+b$};
  \draw[-<] (o) .. controls +(0,0) and +(0, -0.5) .. (tl) node[pos= 1, below] {$1$}; 
  \draw[-<] (o) .. controls +(0,0) and +(0,  -0.5) .. (tr) node[pos= 1, below] {$b$};
\end{scope}
}} 
  \leftrightsquigarrow
  \NB{\tikz[font=\tiny, yscale =0.7]{}}.
  \]

  Invariance of the remaining moves is proved in a similar way:
  introduce a blist on the vertex-less strand and argue by induction.
\end{proof}

\subsection{Forkslide isomorphisms}
\label{sec:pitchforks}

This section contains proofs of various categorical forkslide moves. They are key for proving braid group relations for colored braids.

\begin{prop} \label{prop:allpitches}
There are isomorphisms in the relative homotopy category of all the forkslide reduction moves.  
\end{prop}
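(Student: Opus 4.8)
The plan is to prove Proposition~\ref{prop:allpitches} by combining the combinatorial reduction already established in the Claim of Section~\ref{sec:pitchfork-reduction} with direct verifications in the relative homotopy category $\somecategorypdg$ of the remaining ``simple'' forkslide moves (those in which one of the three legs has been maximally blisted). By that Claim, it suffices to exhibit, for each of the eight simplified forkslide moves, an $H^\prime$-equivariant homotopy equivalence between the two complexes $\soergel(D)$ and $\soergel(D')$; the blist hypothesis (Proposition~\ref{prop:blist-hyp}) then propagates this to invariance under all forkslide moves, using the rank computations of Lemma~\ref{lem:some-dim-end-space} (specifically \eqref{eq:dim-end-pf} and \eqref{eq:dim-end-r3}) to check that the relevant endomorphism rings are positively graded with free rank-one degree-zero part.

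First I would recall that, after forgetting the $H^\prime$-action, the forkslide homotopy equivalences are classical and well documented in the literature on singular Soergel bimodules and colored $\mathfrak{sl}_N$ homology (see, e.g., \cite{ETW}, \cite{QRS}, \cite{RW}); so the content here is purely about the $H^\prime$-equivariance of these maps. The strategy for each simple move is: (i) write down the two Rickard-type complexes $\rickardp{a}{b}$ or $\rickardm{a}{b}$ (suitably merged with a trivalent vertex and green dots) on each side, using Definition~\ref{def:rickard} and the elementary maps $\mapA,\mapH,\mapX,\mapB,\mapU,\mapE,\mapD,\mapN$ of Lemma~\ref{lem:pdgmaps}; (ii) build the candidate chain map and homotopy out of these elementary $H^\prime$-equivariant morphisms together with green-dot migrations (Lemma~\ref{lem:iso-geen-dots}); (iii) check it is a homotopy equivalence by the usual Gaussian-elimination / cancellation of contractible summands, where the contractible summands come from the exact sequences of Lemma~\ref{lem:exact-sequence-square}. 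The key point is that every building block used — the elementary maps, the green-dot migrations, the exact squares — has already been shown to be $H^\prime$-equivariant, so the composite chain homotopy equivalence is automatically $H^\prime$-equivariant, i.e. an isomorphism in $\somecategorypdg$.

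The main obstacle I anticipate is bookkeeping of the green dots (i.e. the twists in the $H^\prime$-structure) under these moves. Unlike the non-equivariant setting, where one is free to slide polynomial multiplications across vertices, here only multiplication by linear combinations of first elementary symmetric functions respects $\dif$ (Remark~\ref{rmk:nopdgmaps}), so the twists produced by the maps $\mapX$, $\mapD$, $\mapN$, $\mapU$ in Lemma~\ref{lem:pdgmaps} must match exactly on the two sides of each move. I expect this to force a careful choice of the integer multiplicities $a_1,a_2,b_1,b_2,\gamma$ appearing in Lemma~\ref{lem:pdgmaps}.\ref{it:pdg-maps-merges-special}, together with repeated use of Lemma~\ref{lem:iso-geen-dots} to redistribute green dots through vertices until the two twisted Soergel bimodules literally agree. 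Once the twists are reconciled, the homotopy-equivalence verification reduces to the classical computation, which I would present schematically rather than grinding through every term.

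Concretely, I would organize the proof as a sequence of lemmas, one per simplified forkslide move (eight of them, or fewer after noting symmetries: mirror images and crossing changes are handled uniformly as in Lemma~\ref{lem:some-dim-end-space}). Each lemma states the $\somecategorypdg$-isomorphism and its proof consists of the explicit equivariant chain map plus the cancellation argument. A short concluding paragraph then assembles these lemmas via the Claim of Section~\ref{sec:pitchfork-reduction} to obtain Proposition~\ref{prop:allpitches} in full. The genuinely new ingredient, relative to the non-$H^\prime$ literature, is isolated in the twist-matching step, and that is where I would spend the bulk of the written argument.
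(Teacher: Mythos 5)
Your overall strategy matches the paper's: reduce to the eight simplified moves via the Claim in Section~\ref{sec:pitchfork-reduction}, then verify each simplified move in $\somecategorypdg$ using the elementary maps of Lemma~\ref{lem:pdgmaps}, green-dot migration, and the exact squares of Lemma~\ref{lem:exact-sequence-square}. This is exactly what the paper does in Propositions~\ref{prop:pitch-1} and~\ref{prop:pitch-3} (the paper only writes out two cases explicitly and asserts the rest are analogous or simpler; you propose to write out all eight, which is more work but not a different approach).

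However, there is a genuine conceptual gap in step~(ii)--(iii) of your outline. You write that the ``candidate chain map \emph{and homotopy}'' are to be built out of $H^\prime$-equivariant elementary morphisms, ``so the composite chain homotopy equivalence is automatically $H^\prime$-equivariant.'' This is not how the argument works, and insisting on an equivariant homotopy would leave you stuck. The whole purpose of the relative homotopy category (Definition~\ref{def-relative-homotopy-category}, Lemma~\ref{lem-construction-of-triangle}) is that one only needs the \emph{chain maps} to be $H^\prime$-equivariant; the null-homotopies may be, and in fact must be, taken non-equivariantly. Indeed, the null-homotopies constructed in the proof of Lemma~\ref{lem:exact-sequence-square} are introduced with the explicit caveat ``we give explicit (non $H^\prime$-equivariant) homotopies,'' because they use the map $\mapE$ of Lemma~\ref{lem:pdgmaps}.\ref{it:pdg-maps-mult-e} in a way that changes twists. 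Likewise, the null-homotopy check at the end of the proof of Proposition~\ref{prop:pitch-1} explicitly begins ``Forgetting the $H^\prime$-structure, we have that $N \simeq [b+1-j]Q$ and $P\simeq M\oplus [b-j]Q$'': these are categorified MOY-calculus isomorphisms, which Remark~\ref{rmk:nopdgmaps} emphasizes do not lift to the $H^\prime$-equivariant setting. So the correct logic is: build $H^\prime$-equivariant chain maps (here $\iota$, $\varphi$, $\psi$, $\pi$), form cones, and then \emph{forget the $H^\prime$-action} to check the relevant cone is contractible over $A$; Lemma~\ref{lem-construction-of-triangle} then delivers the isomorphism in $\somecategorypdg$. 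If you instead search for an $H^\prime$-equivariant contracting homotopy, it does not exist, and your plan will stall at exactly the point the paper flags.

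One more small remark: the paper also does not produce a single direct chain map from one side of the move to the other. It goes through an intermediate object (a mapping cone $C(\iota)$ or $C(\pi)$) and shows both sides are isomorphic to it in $\somecategorypdg$. This is not essential to the strategy, but it is a cleaner way to package the ``cancellation of the contractible $H_\pm$'' you allude to, since the cancellation itself is not realized by an equivariant direct-sum decomposition.
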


\begin{proof}
The details of the two most difficult cases are provided below in Propositions \ref{prop:pitch-1} and \ref{prop:pitch-3}.
The other cases are proved in analogous fashions and in fact most of the details simplify.
\end{proof}

\begin{prop} \label{prop:pitch-1}
  The complexes of $H^\prime$-equivariant bimodules 
  \[
    \soergel\left(
  \NB{\tikz[font=\tiny, yscale =0.7]{}} 
    \right)
    \qquad \text{and} \qquad
    \soergel\left(
  \NB{\tikz[font=\tiny, yscale =0.7]{}}   \right)
  \]
are isomorphic in the relative homotopy category.
\end{prop}

\begin{proof}
  To save space, in this proof we will often omit $\soergel\left( \cdot
  \right)$ around diagrams and give grading shifts only in a couple of key places.  First note that
\[  
C:= \soergel\left(
  \NB{\tikz[font=\tiny, yscale =0.7]{}} 
    \right)
\]
  is the flattening (total complex) of the bicomplex of  $H^\prime$-equivariant bimodules
  \[
    C':=\NB{
    % [inline block 0: 9 envs, 22625 chars -> data_tex | \begin{tikzpicture}[xscale =2.5, yscale =2.7]       \node (A) at (-1,1) {\NB{\tikz[font= \tiny, scale=0.85]{\begin{scope...]
 
\]
which is null-homotopic.
This null-homotopic chain complex constitutes the homogeneous terms of the mapping cone of
\[
D \xrightarrow{(-\psi,\varphi)} C(\iota).
\]
This implies that $D$ and
$C(\iota)$ are isomorphic in the relative homotopy category. Keeping track of overall shifts, it follows that
\[
    \soergel\left(
  \NB{\tikz[font=\tiny, yscale =0.7]{}} 
    \right)
\simeq    \soergel\left(
  \NB{\tikz[font=\tiny, yscale =0.7]{}}   \right).
  \]  
\end{proof}

\begin{prop} \label{prop:pitch-3}
    The complexes of $H^\prime$-equivariant bimodules 
  \[
    \soergel\left(
  \NB{\tikz[font=\tiny, yscale =0.7]{}} 
    \right)
    \qquad \text{and} \qquad
    \soergel\left(
  \NB{\tikz[font=\tiny, yscale =0.7]{}} 
    \right)
  \]
are isomorphic in the relative homotopy category.
\end{prop}

  \begin{proof}
  As before, we will often omit $\soergel\left( \cdot
  \right)$ around diagrams. 
Consider 
\[  
C:= \soergel\left(
  \NB{\tikz[font=\tiny, yscale =0.7]{}}
    \right)
\]
  as the flattening (total complex) of the bicomplex of  $H^\prime$-equivariant bimodules
  \[
    C':=\NB{
    % [inline block 1: 9 envs, 31442 chars -> data_tex | \begin{tikzpicture}[xscale =2.5, yscale =2.7]       \node (A) at (-1,1) {\NB{\tikz[font= \tiny, scale=0.85]{\begin{scope...]

\]
which is null-homotopic. This null-homotopic chain complex constitutes the homogeneous terms of the mapping cone of
\[
C(\pi) \xrightarrow{-\psi +\varphi} D. % 
\]
This implies that $D$ and
$C(\pi)$ are isomorphic in the relative homotopy category. Keeping track of overall shifts, it follows that
\[
      \soergel\left(
  \NB{\tikz[font=\tiny, yscale =0.7]{}} 
    \right)
    \quad \simeq \quad
    \soergel\left(
  \NB{\tikz[font=\tiny, yscale =0.7]{}} 
    \right).
  \] 
\end{proof}

\subsection{Sliding twists}
\label{sec:sliding-twists}

The aim of this subsection is to prove that twists (or green dots) can
slide past crossings (this is made precise in
Proposition~\ref{prop:twist-slides}).

\begin{lem}\label{lem:twotwists}
  Let $a$ and $b$ be two non-negative integers and $c$ be an integer. Then there are isomorphisms of complexes of $H^\prime$-equivariant bimodules
  \begin{align}
    \soergel\left(\NB{\tikz[font=\tiny]{\begin{scope}[font=\tiny]
  \draw[->] (0.5, -0.5) ..controls +(0,0.3) and +(0,-0.3) .. (-0.5,
  0.5) node[pos=1, above] {$a$} coordinate[pos =0.2] (t1);
  \fill[white] (0,0) circle (2mm);
  \draw[->] (-0.5, -0.5) ..controls +(0,0.3) and +(0,-0.3) .. (0.5,
  0.5) node[pos=1, above] {$b$} coordinate[pos =0.2] (t2);
  \filldraw[draw= green!50!black, fill = white] (t2) circle (1mm)
  node[left, green!50!black] {$c$};
  \filldraw[draw= green!50!black, fill = white] (t1) circle (1mm)
  node[right, green!50!black] {$c$};
\end{scope}}}\right)
    \simeq
    \soergel\left(\NB{\tikz[font=\tiny]{\begin{scope}[font=\tiny]
  \draw[->] (0.5, -0.5) ..controls +(0,0.3) and +(0,-0.3) .. (-0.5,
  0.5) node[pos=1, above] {$a$} coordinate[pos =0.8] (t1);
  \fill[white] (0,0) circle (2mm);
  \draw[->] (-0.5, -0.5) ..controls +(0,0.3) and +(0,-0.3) .. (0.5,
  0.5) node[pos=1, above] {$b$} coordinate[pos =0.8] (t2);
  \filldraw[draw= green!50!black, fill = white] (t1) circle (1mm)
  node[left, green!50!black] {$c$};
  \filldraw[draw= green!50!black, fill = white] (t2) circle (1mm)
  node[right, green!50!black] {$c$};
\end{scope}}}\right)
    \qquad \text{and} \qquad 
    \soergel\left(\NB{\tikz[font=\tiny]{\begin{scope}[font=\tiny]
  \draw[->] (-0.5, -0.5) ..controls +(0,0.3) and +(0,-0.3) .. (0.5,
  0.5) node[pos=1, above] {$b$} coordinate[pos =0.2] (t2);
  \fill[white] (0,0) circle (2mm);
  \draw[->] (0.5, -0.5) ..controls +(0,0.3) and +(0,-0.3) .. (-0.5,
  0.5) node[pos=1, above] {$a$} coordinate[pos =0.2] (t1);
  \filldraw[draw= green!50!black, fill = white] (t2) circle (1mm)
  node[left, green!50!black] {$c$};
  \filldraw[draw= green!50!black, fill = white] (t1) circle (1mm)
  node[right, green!50!black] {$c$};
\end{scope}}}\right)
    \simeq
    \soergel\left(\NB{\tikz[font=\tiny]{\begin{scope}[font=\tiny]
  \draw[->] (-0.5, -0.5) ..controls +(0,0.3) and +(0,-0.3) .. (0.5,
  0.5) node[pos=1, above] {$b$} coordinate[pos =0.8] (t2);
  \fill[white] (0,0) circle (2mm);
  \draw[->] (0.5, -0.5) ..controls +(0,0.3) and +(0,-0.3) .. (-0.5,
  0.5) node[pos=1, above] {$a$} coordinate[pos =0.8] (t1);
  \filldraw[draw= green!50!black, fill = white] (t1) circle (1mm)
  node[left, green!50!black] {$c$};
  \filldraw[draw= green!50!black, fill = white] (t2) circle (1mm)
  node[right, green!50!black] {$c$};
\end{scope}}}\right).
  \end{align}
\end{lem}

\begin{proof}
  This follows directly from green dots migration on the diagrams
  defining Rickard complexes. For instance, for
  the second isomorphism, the composition 
\[
  \NB{ \begin{tikzpicture}[xscale= 6]
    \node (bot) at (-1,0) {$\soergel\left(\NB{\tikz[font=\tiny, scale
      =0.8]{\begin{scope}
  \coordinate (rt) at (+1,   1.2);
  \coordinate (lt) at (-1,   1.2);
  \coordinate (rb) at (+1,  -1.2);
  \coordinate (lb) at (-1,  -1.2);
  \coordinate (r1) at (+1, -0.6);
  \coordinate (l1) at (-1, -0.4);
  \coordinate (r2) at (+1,  0.6);
  \coordinate (l2) at (-1,  0.4);
  \draw[>-] (rb) -- (r1) node[pos=0, below] {$a$} coordinate [pos=0.7] (g1);
  \draw[>-] (lb) -- (l1) node[pos=0, below] {$b$} coordinate [pos=0.7] (g2); 
  \draw[->-] (r1) -- (r2) node[pos=0.3, right] {$j$}   coordinate [pos=0.7] (g3);
  \draw[->-] (l1) -- (l2) node[pos=0.3, left] {$a+b-j$}   coordinate [pos=0.7] (g4);
  \draw[->] (r2) -- (rt) node[pos=1, above] {$b$} coordinate [pos=0.3] (g5);
  \draw[->] (l2) -- (lt) node[pos=1, above] {$a$} coordinate [pos=0.3] (g6);
  \draw [->-] (r1) -- (l1) node[pos =0.6, below, sloped] {$a-j$} coordinate [pos=0.3] (g7);
  \draw [->-] (l2) -- (r2) node[pos =0.6, above, sloped] {$b-j$} coordinate [pos=0.3] (g8);
  \filldraw[draw= green!50!black, fill = white] (g6) circle (1mm)
  node[left, green!50!black] {$-b$};
  \filldraw[draw= green!50!black, fill = white] (g5) circle (1mm)
  node[right, green!50!black] {$-a$};
  \filldraw[draw= green!50!black, fill = white] (g4) circle (1mm)
  node[left, green!50!black] {$j$};
  \filldraw[draw= green!50!black, fill = white] (g3) circle (1mm)
  node[right, green!50!black] {$1$};
  \filldraw[draw= green!50!black, fill = white] (g7) circle (1mm)
  node[above, green!50!black] {$-j$};
  \filldraw[draw= green!50!black, fill = white] (g8) circle (1mm)
  node[below, green!50!black] {$-j$};
  \filldraw[draw= green!50!black, fill = white] (g1) circle (1mm)
  node[right, green!50!black] {$c$};
  \filldraw[draw= green!50!black, fill = white] (g2) circle (1mm)
  node[left, green!50!black] {$c$};
\end{scope}

}}\right)$} ;
    \node (mid) at ( 0,0) {$\soergel\left(\NB{\tikz[font=\tiny, scale
      =0.8]{\begin{scope}
  \coordinate (rt) at (+1,   1.2);
  \coordinate (lt) at (-1,   1.2);
  \coordinate (rb) at (+1,  -1.2);
  \coordinate (lb) at (-1,  -1.2);
  \coordinate (r1) at (+1, -0.6);
  \coordinate (l1) at (-1, -0.4);
  \coordinate (r2) at (+1,  0.6);
  \coordinate (l2) at (-1,  0.4);
  \draw[>-] (rb) -- (r1) node[pos=0, below] {$a$} coordinate [pos=0.7] (g1);
  \draw[>-] (lb) -- (l1) node[pos=0, below] {$b$} coordinate [pos=0.7] (g2); 
  \draw[->-] (r1) -- (r2) node[pos=0.3, right] {$j$}   coordinate [pos=0.7] (g3);
  \draw[->-] (l1) -- (l2) node[pos=0.3, left] {$a+b-j$}   coordinate [pos=0.7] (g4);
  \draw[->] (r2) -- (rt) node[pos=1, above] {$b$} coordinate [pos=0.3] (g5);
  \draw[->] (l2) -- (lt) node[pos=1, above] {$a$} coordinate [pos=0.3] (g6);
  \draw [->-] (r1) -- (l1) node[pos =0.6, below, sloped] {$a-j$} coordinate [pos=0.3] (g7);
  \draw [->-] (l2) -- (r2) node[pos =0.6, above, sloped] {$b-j$} coordinate [pos=0.3] (g8);
  \filldraw[draw= green!50!black, fill = white] (g6) circle (1mm)
  node[left, green!50!black] {$-b$};
  \filldraw[draw= green!50!black, fill = white] (g5) circle (1mm)
  node[right, green!50!black] {$-a$};
  \filldraw[draw= green!50!black, fill = white] (g4) circle (1mm)
  node[left, green!50!black] {$c+j$};
  \filldraw[draw= green!50!black, fill = white] (g3) circle (1mm)
  node[right, green!50!black] {$c+1$};
  \filldraw[draw= green!50!black, fill = white] (g7) circle (1mm)
  node[above, green!50!black] {$-j$};
  \filldraw[draw= green!50!black, fill = white] (g8) circle (1mm)
  node[below, green!50!black] {$-j$};
\end{scope}

}}\right)$};
    \node (top) at ( 1,0) {$\soergel\left(\NB{\tikz[font=\tiny, scale
      =0.8]{\begin{scope}
  \coordinate (rt) at (+1,   1.2);
  \coordinate (lt) at (-1,   1.2);
  \coordinate (rb) at (+1,  -1.2);
  \coordinate (lb) at (-1,  -1.2);
  \coordinate (r1) at (+1, -0.6);
  \coordinate (l1) at (-1, -0.4);
  \coordinate (r2) at (+1,  0.6);
  \coordinate (l2) at (-1,  0.4);
  \draw[>-] (rb) -- (r1) node[pos=0, below] {$a$} coordinate [pos=0.7] (g1);
  \draw[>-] (lb) -- (l1) node[pos=0, below] {$b$} coordinate [pos=0.7] (g2); 
  \draw[->-] (r1) -- (r2) node[pos=0.3, right] {$j$}   coordinate [pos=0.7] (g3);
  \draw[->-] (l1) -- (l2) node[pos=0.3, left] {$a+b-j$}   coordinate [pos=0.7] (g4);
  \draw[->] (r2) -- (rt) node[pos=1, above] {$b$} coordinate [pos=0.3] (g5);
  \draw[->] (l2) -- (lt) node[pos=1, above] {$a$} coordinate [pos=0.3] (g6);
  \draw [->-] (r1) -- (l1) node[pos =0.6, below, sloped] {$a-j$} coordinate [pos=0.3] (g7);
  \draw [->-] (l2) -- (r2) node[pos =0.6, above, sloped] {$b-j$} coordinate [pos=0.3] (g8);
  \filldraw[draw= green!50!black, fill = white] (g6) circle (1mm)
  node[left, green!50!black] {$c-b$};
  \filldraw[draw= green!50!black, fill = white] (g5) circle (1mm)
  node[right, green!50!black] {$c-a$};
  \filldraw[draw= green!50!black, fill = white] (g4) circle (1mm)
  node[left, green!50!black] {$j$};
  \filldraw[draw= green!50!black, fill = white] (g3) circle (1mm)
  node[right, green!50!black] {$1$};
  \filldraw[draw= green!50!black, fill = white] (g7) circle (1mm)
  node[above, green!50!black] {$-j$};
  \filldraw[draw= green!50!black, fill = white] (g8) circle (1mm)
  node[below, green!50!black] {$-j$};
\end{scope}

}}\right)$};    
  \draw[-to] (bot) -- (mid) node[above, pos= 0.5] {$\Id$};
  \draw[-to] (mid) -- (top) node[above, pos= 0.5] {$\Id$};
\end{tikzpicture}}
  \]
  gives an isomorphism of $H'$-equivariant Soergel bimodules which obviously commutes
  with the topological differential of the Rickard complex.
\end{proof}

\begin{lem}\label{lem:twist-slides-11}
Let $c$ be an integer. Then there are isomorphisms in the relative homotopy category
  \begin{align}
    \soergel\left(\NB{\tikz[font=\tiny]{\begin{scope}[font=\tiny]
  \draw[->] (0.5, -0.5) ..controls +(0,0.3) and +(0,-0.3) .. (-0.5,
  0.5) node[pos=1, above] {$1$} coordinate[pos =0.2] (t1);
  \fill[white] (0,0) circle (2mm);
  \draw[->] (-0.5, -0.5) ..controls +(0,0.3) and +(0,-0.3) .. (0.5,
  0.5) node[pos=1, above] {$1$} coordinate[pos =0.2] (t2);
  \filldraw[draw= green!50!black, fill = white] (t2) circle (1mm)
  node[left, green!50!black] {$c$};
\end{scope}}}\right)
    &\simeq
    \soergel\left(\NB{\tikz[font=\tiny]{\begin{scope}[font=\tiny]
  \draw[->] (0.5, -0.5) ..controls +(0,0.3) and +(0,-0.3) .. (-0.5,
  0.5) node[pos=1, above] {$1$} coordinate[pos =0.8] (t1);
  \fill[white] (0,0) circle (2mm);
  \draw[->] (-0.5, -0.5) ..controls +(0,0.3) and +(0,-0.3) .. (0.5,
  0.5) node[pos=1, above] {$1$} coordinate[pos =0.8] (t2);
  \filldraw[draw= green!50!black, fill = white] (t2) circle (1mm)
  node[right, green!50!black] {$c$};
\end{scope}}}\right),&\qquad& \qquad&
    \soergel\left(\NB{\tikz[font=\tiny]{\begin{scope}[font=\tiny]
  \draw[->] (0.5, -0.5) ..controls +(0,0.3) and +(0,-0.3) .. (-0.5,
  0.5) node[pos=1, above] {$1$} coordinate[pos =0.2] (t1);
  \fill[white] (0,0) circle (2mm);
  \draw[->] (-0.5, -0.5) ..controls +(0,0.3) and +(0,-0.3) .. (0.5,
  0.5) node[pos=1, above] {$1$} coordinate[pos =0.2] (t2);
  \filldraw[draw= green!50!black, fill = white] (t1) circle (1mm)
  node[right, green!50!black] {$c$};
\end{scope}}}\right)
    &\simeq
    \soergel\left(\NB{\tikz[font=\tiny]{\begin{scope}[font=\tiny]
  \draw[->] (0.5, -0.5) ..controls +(0,0.3) and +(0,-0.3) .. (-0.5,
  0.5) node[pos=1, above] {$1$} coordinate[pos =0.8] (t1);
  \fill[white] (0,0) circle (2mm);
  \draw[->] (-0.5, -0.5) ..controls +(0,0.3) and +(0,-0.3) .. (0.5,
  0.5) node[pos=1, above] {$1$} coordinate[pos =0.8] (t2);
  \filldraw[draw= green!50!black, fill = white] (t1) circle (1mm)
  node[left, green!50!black] {$c$};
\end{scope}}}\right), \\
    \soergel\left(\NB{\tikz[font=\tiny]{\begin{scope}[font=\tiny]
  \draw[->] (-0.5, -0.5) ..controls +(0,0.3) and +(0,-0.3) .. (0.5,
  0.5) node[pos=1, above] {$1$} coordinate[pos =0.2] (t2);
  \fill[white] (0,0) circle (2mm);
  \draw[->] (0.5, -0.5) ..controls +(0,0.3) and +(0,-0.3) .. (-0.5,
  0.5) node[pos=1, above] {$1$} coordinate[pos =0.2] (t1);
  \filldraw[draw= green!50!black, fill = white] (t2) circle (1mm)
  node[left, green!50!black] {$c$};
\end{scope}}}\right)
    &\simeq
    \soergel\left(\NB{\tikz[font=\tiny]{\begin{scope}[font=\tiny]
  \draw[->] (-0.5, -0.5) ..controls +(0,0.3) and +(0,-0.3) .. (0.5,
  0.5) node[pos=1, above] {$1$} coordinate[pos =0.8] (t2);
  \fill[white] (0,0) circle (2mm);
  \draw[->] (0.5, -0.5) ..controls +(0,0.3) and +(0,-0.3) .. (-0.5,
  0.5) node[pos=1, above] {$1$} coordinate[pos =0.8] (t1);
  \filldraw[draw= green!50!black, fill = white] (t2) circle (1mm)
  node[right, green!50!black] {$c$};
\end{scope}}}\right),% 
    &\qquad & \qquad& 
    \soergel\left(\NB{\tikz[font=\tiny]{\begin{scope}[font=\tiny]
  \draw[->] (-0.5, -0.5) ..controls +(0,0.3) and +(0,-0.3) .. (0.5,
  0.5) node[pos=1, above] {$1$} coordinate[pos =0.2] (t2);
  \fill[white] (0,0) circle (2mm);
  \draw[->] (0.5, -0.5) ..controls +(0,0.3) and +(0,-0.3) .. (-0.5,
  0.5) node[pos=1, above] {$1$} coordinate[pos =0.2] (t1);
  \filldraw[draw= green!50!black, fill = white] (t1) circle (1mm)
  node[right, green!50!black] {$c$};
\end{scope}}}\right)
    &\simeq
    \soergel\left(\NB{\tikz[font=\tiny]{\begin{scope}[font=\tiny]
  \draw[->] (-0.5, -0.5) ..controls +(0,0.3) and +(0,-0.3) .. (0.5,
  0.5) node[pos=1, above] {$1$} coordinate[pos =0.8] (t2);
  \fill[white] (0,0) circle (2mm);
  \draw[->] (0.5, -0.5) ..controls +(0,0.3) and +(0,-0.3) .. (-0.5,
  0.5) node[pos=1, above] {$1$} coordinate[pos =0.8] (t1);
  \filldraw[draw= green!50!black, fill = white] (t1) circle (1mm)
  node[left, green!50!black] {$c$};
\end{scope}}}\right).
  \end{align}
\end{lem}

\begin{proof}
  Proving these isomorphisms essentially amounts to
  showing the usual ``dot sliding'' relation where the green dots are
  replaced by solid dots. 
  The proof follows closely that of \cite[Lemma 5.2]{KRWitt} and is adapted to fit
  our situation.

  It is only necessary to give the isomorphisms of the left column, since
  the other ones can be then deduced from Lemma~\ref{lem:twotwists}.
  We can also assume that $c=1$. 
  We first show 
  \begin{equation} \label{eq:homotopyslide1}
    \soergel\left(\NB{\tikz[font=\tiny]{\begin{scope}[font=\tiny]
  \draw[->] (0.5, -0.5) ..controls +(0,0.3) and +(0,-0.3) .. (-0.5,
  0.5) node[pos=1, above] {$1$} coordinate[pos =0.2] (t1);
  \fill[white] (0,0) circle (2mm);
  \draw[->] (-0.5, -0.5) ..controls +(0,0.3) and +(0,-0.3) .. (0.5,
  0.5) node[pos=1, above] {$1$} coordinate[pos =0.2] (t2);
  \filldraw[draw= green!50!black, fill = white] (t2) circle (1mm)
  node[left, green!50!black] {$1$};
\end{scope}}}\right)
    \simeq
    \soergel\left(\NB{\tikz[font=\tiny]{\begin{scope}[font=\tiny]
  \draw[->] (0.5, -0.5) ..controls +(0,0.3) and +(0,-0.3) .. (-0.5,
  0.5) node[pos=1, above] {$1$} coordinate[pos =0.8] (t1);
  \fill[white] (0,0) circle (2mm);
  \draw[->] (-0.5, -0.5) ..controls +(0,0.3) and +(0,-0.3) .. (0.5,
  0.5) node[pos=1, above] {$1$} coordinate[pos =0.8] (t2);
  \filldraw[draw= green!50!black, fill = white] (t2) circle (1mm)
  node[right, green!50!black] {$1$};
\end{scope}}}\right).
  \end{equation}
  Let $A=\Z[x_1,x_2]$ and $A[z]=\Z[x_1,x_2,z]$. We will also regard
  $A[z]$ as an $H^\prime$-equivariant $(A,A)$-bimodule by setting $\dif(z):=z^2$.
  Similary, we will denote the bimodule $B[z]:=B\otimes\Z[z]$ as an
  $H^\prime$-equivariant bimodule over $A[z]$, and thus over $A$ via
  restriction of scalars. To distinguish between the left and right
  $A$-actions, we will denote by $y_i$ the corresponding left
  $x_i$-action of $A$ (graphically, the action on the top) and
  still denote by $x_i$ the corresponding right action (graphically, the action on the bottom). In particular, on the bimodules $A$ and $A[z]$, the actions of $x_i$ and $y_i$ agree for $i=1,2$.
  
  Ignoring shifts, recall that
  \begin{align*}
  C_{x_1}:=    \soergel\left(\NB{\tikz[font=\tiny]{\begin{scope}[font=\tiny]
  \draw[->] (-0.5, -0.5) ..controls +(0,0.3) and +(0,-0.3) .. (0.5,
  0.5) node[pos=1, above] {$1$} coordinate[pos =0.2] (t2);
  \fill[white] (0,0) circle (2mm);
  \draw[->] (0.5, -0.5) ..controls +(0,0.3) and +(0,-0.3) .. (-0.5,
  0.5) node[pos=1, above] {$1$} coordinate[pos =0.2] (t1);
  \filldraw[draw= green!50!black, fill = white] (t2) circle (1mm)
  node[left, green!50!black] {$1$};
\end{scope}}}\right)
      =
    \left(B^{x_1} \xrightarrow{br}  A^{x_1}\right) \ , \qquad
    \text{and} \qquad
C_{y_2}:=    \soergel\left(\NB{\tikz[font=\tiny]{\begin{scope}[font=\tiny]
  \draw[->] (-0.5, -0.5) ..controls +(0,0.3) and +(0,-0.3) .. (0.5,
  0.5) node[pos=1, above] {$1$} coordinate[pos =0.8] (t2);
  \fill[white] (0,0) circle (2mm);
  \draw[->] (0.5, -0.5) ..controls +(0,0.3) and +(0,-0.3) .. (-0.5,
  0.5) node[pos=1, above] {$1$} coordinate[pos =0.8] (t1);
  \filldraw[draw= green!50!black, fill = white] (t2) circle (1mm)
  node[right, green!50!black] {$1$};
\end{scope}}}\right)
      =
    \left(B^{y_2} \xrightarrow{br}  A^{y_2}\right) \ .
  \end{align*}
We will also use the bimodule map determined by
\begin{equation}
rb: A\lra B, \quad     rb(1_A)=(y_2-x_1)\cdot 1_B.
\end{equation}

We can fit $C_{x_1}$ in a short exact sequence of
$H^\prime$-equivariant $(A,A)$-bimodules:
  \begin{equation}
  \NB{
  \begin{tikzpicture}[xscale =4, yscale=2]
    \node (CT) at (2,3) {$C_{x_1}$};
    \node (Ct) at (2,2) {$C'_{x_1, \lambda}$};
    \node (Cb) at (2,1) {$C''_{x_1,\lambda}$};
    \node (zeroT) at (2,3.5) {$0$};
    \node (zerob) at (2,0.5) {$0$};
    \draw[densely dotted] (CT) -- (1.2, 3);
    \draw[densely dotted] (Ct) -- (1.2, 2);
    \draw[densely dotted] (Cb) -- (1.2, 1);
    \draw[densely dotted, rounded corners] (-0.2, 3.2) rectangle (1.2, 2.8);
    \draw[densely dotted, rounded corners] (-1.2, 2.38) rectangle (1.2, 1.8);
    \draw[densely dotted, rounded corners] (-1.2, 1.38) rectangle
    (1.2,0.7);
    \draw [-to] (CT) -- (zeroT);
    \draw [-to] (Ct) -- (CT);
    \draw [-to] (Cb) -- (Ct);
    \draw [-to] (zerob) -- (Cb);
    \node (BT) at (0,3) {$B^{x_1}$};
    \node (AT) at (1,3) {$A^{x_1}$};
    \node (Zt) at (-1,2) {$B[z]^{t_\lambda}$};
    \node (Bt) at (0,2) {$B[z]^{z}\oplus A[z]^{t_\lambda}$};
    \node (At) at (1,2) {$A[z]^{z}$};
    \node (Zb) at (-1,1) {$B[z]^{t_\lambda}$};
    \node (Bb) at (0,1) {$B[z]^{2z+x_1}\oplus A[z]^{t_\lambda}$};
    \node (Ab) at (1,1) {$A[z]^{2z+x_1}$};
    \draw[-to] (BT) -- (AT) node[pos=0.5, above, scale = 0.7] {$br$};
    \draw[-to] (Bt) -- (At) node[pos=0.5, above, scale = 0.7] {$\left(br \,\,
        x_1 -z\right)$};
    \draw[-to] (Bb) -- (Ab) node[pos=0.5, above, scale = 0.7] {$\left(br \,\,
        1 \right)$};
    \draw[-to] (Zt) -- (Bt) node[pos=0.5, above, scale = 0.7]
    {$\left( \begin{array}{c}z-x_1 \\ br\end{array}\right)$};    
    \draw[-to] (Zb) -- (Bb) node[pos=0.5, above, scale = 0.7]
    {$\left( \begin{array}{c} 1 \\ -br\end{array}\right)$};
    \draw[-to] (Zb) -- (Zt) node[pos=0.5, left, scale=0.7] {$-\Id$};
    \draw[-to] (Ab) -- (At) node[pos=0.5, left, scale=0.7] {$x_1-z$};
    \draw[-to] (Bb) -- (Bt) node[pos=0.65, left, scale=0.7]
    {$\begin{pmatrix}  x_1-z & 0 \\ 0 & 1 \end{pmatrix}
      $};
    \draw[-to] (Bt) -- (BT) node[pos=0.5, left, scale=0.7]
    {$\left(z\mapsto x_1 \,\, 0 \right)$};
    \draw[-to] (At) -- (AT) node[pos=0.5, left, scale=0.7]
    {$z\mapsto x_1$};
     \draw[-to, green!50!black] ($(Bt)+(0.15, 0.1)$) .. controls
    +(-0.1,0.1) and +(0.1, 0.1) .. +(-0.3, 0)
    node[scale=0.7,green!50!black, pos=0.1, above, xshift=0.4cm]
    {$\lambda(z-x_1)  rb$};% 
    \draw[-to, green!50!black] ($(Bb)+(0.15, -0.1)$) .. controls
    +(-0.1,-0.1) and +(0.1, -0.1) .. +(-0.3, 0)
    node[scale=0.7,green!50!black, pos=0.1, below] {$-\lambda rb$};% 
  \end{tikzpicture}}
\ ,
\end{equation}
with $\lambda\in \Z$ and $t_\lambda= 2z+x_1 +\lambda(y_2-x_1)$.
Green arrows are meant to encode $H^\prime$-module structure twists. Namely the
twists on the $(A,A)$-bimodules
 \[
 \NB{\tikz[xscale =4, yscale=2]{\node (Bt) at (0,2)
     {$B[z]^{z}\oplus A[z]^{t_\lambda}$};      \draw[-to, green!50!black] ($(Bt)+(0.15, 0.1)$) .. controls
    +(-0.1,0.1) and +(0.1, 0.1) .. +(-0.3, 0)
    node[scale=0.7,green!50!black, pos=0.1, above] {$\lambda(z-x_1)  rb $};
}} \qquad \text{and} \qquad \NB{\tikz[xscale =4, yscale=2]{    \node (Bb) at (0,1) {$B[z]^{2z+x_1}\oplus
    A[z]^{t_\lambda}$};
      \draw[-to, green!50!black] ($(Bb)+(0.15, -0.1)$) .. controls
    +(-0.1,-0.1) and +(0.1, -0.1) .. +(-0.3, 0)
    node[scale=0.7,green!50!black, pos=0.1, below] {$-\lambda  rb $};
  }} \]
are given by the matrices
\[
  \begin{pmatrix}
    z & \lambda(z-x_1)  rb\\ 0 & t_\lambda
  \end{pmatrix} \qquad \text{and} \qquad
  \begin{pmatrix}
    2z+x_1 & -\lambda rb \\ 0 & t_\lambda
  \end{pmatrix}
\]
respectively. We let the reader check that all maps are indeed
$H^\prime$-equivariant. The complex $C''_{x_1,\lambda}$ is null-homotopic and 
this sequence splits when one forgets the $H^\prime$-module
structures. Hence $C_{x_1}$ and $C'_{x_1,\lambda}$ are isomorphic in the
relative homotopy category (see Lemma \ref{lem-construction-of-triangle}).

Similarly, for any $\lambda \in \Z$, one has an isomorphism
$C_{y_2}\cong C'_{y_2,\lambda}$ in the 
relative homotopy category with:
\[
C'_{y_2,\lambda} := \quad \left(\NB{\tikz[xscale =4, yscale=2]{
    \node (Zt) at (-1,2) {$B[z]^{s_\lambda}$};
    \node (Bt) at (0,2) {$B[z]^{z}\oplus A[z]^{s_\lambda}$};
    \node (At) at (1,2) {$A[z]^{z}$};
    \draw[-to] (Bt) -- (At) node[pos=0.5, above, scale = 0.7] {$\left(br \,\,
        y_2-z\right)$};
    \draw[-to] (Zt) -- (Bt) node[pos=0.5, above, scale = 0.7]
    {$\left( \begin{array}{c}z-y_2 \\ br\end{array}\right)$};    
     \draw[-to, green!50!black] ($(Bt)+(0.15, 0.1)$) .. controls
    +(-0.1,0.1) and +(0.1, 0.1) .. +(-0.3, 0)
    node[scale=0.7,green!50!black, pos=0.1, above] {$\lambda(z-y_2)  rb$}}}\right),
\]
where $s_\lambda = 2z+y_2 +\lambda(y_2 -x_1)$.
The complexes $C'_{x_1,0}$ and $C'_{y_2,1}$ are isomorphic as
$H'$-equivariant complexes of $(A,A)$-bimodules. An isomorphism is
given below:
\begin{align}
\NB{
  \begin{tikzpicture}[xscale =4, yscale=2]
    \node (Ct) at (2,2) {$C'_{x_1,0}$};
    \node (Cb) at (2,1) {$C'_{y_2,-1}$};
    \draw[densely dotted] (Cb) -- (1.2, 1);
    \draw[densely dotted] (Ct) -- (1.2, 2);
    \draw[densely dotted, rounded corners] (-1.2, 2.38) rectangle (1.2, 1.8);
    \draw[densely dotted, rounded corners] (-1.2, 1.38) rectangle  (1.2, 0.68);
    \draw[-to] (Cb) -- (Ct);
    \node (Zt) at (-1,2) {$B[z]^{2z+x_1}$};
    \node (Bt) at (0,2) {$B[z]^{z}\oplus A[z]^{2z+x_1}$};
    \node (At) at (1,2) {$A[z]^{z}$};
    \node (Zb) at (-1,1) {$B[z]^{2z+x_1}$};
    \node (Bb) at (0,1) {$B[z]^{z}\oplus A[z]^{2z+x_1}$};
    \node (Ab) at (1,1) {$A[z]^{z}$};
    \draw[-to] (Bt) -- (At) node[pos=0.5, above, scale = 0.7] {$\left(br \,\,
        x_1 -z\right)$};
    \draw[-to] (Bb) -- (Ab) node[pos=0.5, above, scale = 0.7] {$\left(br \,\,
        y_2 -z \right)$};
    \draw[-to] (Zt) -- (Bt) node[pos=0.5, above, scale = 0.7]
    {$\left( \begin{array}{c}z-x_1 \\ br\end{array}\right)$};    
    \draw[-to] (Zb) -- (Bb) node[pos=0.5, above, scale = 0.7]
    {$\left( \begin{array}{c} z-y_2 \\ br\end{array}\right)$};
    \draw[-to] (Zb) -- (Zt) node[pos=0.5, left, scale=0.7] {$\Id$};
    \draw[-to] (Ab) -- (At) node[pos=0.5, left, scale=0.7] {$\Id$};
    \draw[-to] (Bb) -- (Bt) node[pos=0.65, left, scale=0.7]
    {$\begin{pmatrix}  1 & rb \\ 0 & 1 \end{pmatrix}
      $};
    \draw[-to, green!50!black] ($(Bb)+(0.15, -0.1)$) .. controls
    +(-0.1,-0.1) and +(0.1, -0.1) .. +(-0.3, 0)
    node[scale=0.7,green!50!black, pos=0.1, below] {$(y_2-z)  rb$};
  \end{tikzpicture}} \ .
\end{align}
This proves that $C_{x_1} \cong C_{y_2}$. 

The twist slide for the negative crossing can be proved
analogously. It may also be deduced from that for the positive
crossing and invariance under Reidemeister II, as depicted below.
\[
\soergel\left(\NB{\NB{\tikz[yscale=1,font=\tiny]{}}}\right)
\cong
\soergel\left(\NB{\NB{\tikz[yscale=0.333,font=\tiny]{\begin{scope}[font=\tiny]
  \draw[->] (-0.5, -0.5) ..controls +(0,0.3) and +(0,-0.3) .. (0.5,
  0.5) node[pos=1, above] {$1$} coordinate[pos =0.2] (t2);
  \fill[white] (0,0) circle (2mm);
  \draw[->] (0.5, -0.5) ..controls +(0,0.3) and +(0,-0.3) .. (-0.5,
  0.5) node[pos=1, above] {$1$} coordinate[pos =0] (t1);
  \draw (0.5, -1.5) ..controls +(0,0.3) and +(0,-0.3) .. (-0.5,
  -0.5);% 
  \fill[white] (0,-1) circle (2mm);
  \draw (-0.5, -1.5) ..controls +(0,0.3) and +(0,-0.3) .. (0.5,
  -0.5);% 
  \draw (-0.5, -2.5) ..controls +(0,0.3) and +(0,-0.3) .. (0.5,
  -1.5) node[pos=0, below] {$1$};% 
  \fill[white] (0,-2) circle (2mm);
  \draw (0.5, -2.5) ..controls +(0,0.3) and +(0,-0.3) .. (-0.5,
  -1.5) node[pos=0, below] {$1$};% 
  \filldraw[draw= green!50!black, fill = white] (t1) circle (1mm and 3mm)
  node[right, green!50!black] {$c$};
\end{scope}}}}\right)
\cong
\soergel\left(\NB{\NB{\tikz[yscale=0.333,font=\tiny]{\begin{scope}[font=\tiny]
  \draw[->] (-0.5, -0.5) ..controls +(0,0.3) and +(0,-0.3) .. (0.5,
  0.5) node[pos=1, above] {$1$};% 
  \fill[white] (0,0) circle (2mm);
  \draw[->] (0.5, -0.5) ..controls +(0,0.3) and +(0,-0.3) .. (-0.5,
  0.5) node[pos=1, above] {$1$};% 
  \draw (0.5, -1.5) ..controls +(0,0.3) and +(0,-0.3) .. (-0.5,
  -0.5);% 
  \fill[white] (0,-1) circle (2mm);
  \draw (-0.5, -1.5) ..controls +(0,0.3) and +(0,-0.3) .. (0.5,
  -0.5) coordinate[pos =0] (t1);
  \draw (-0.5, -2.5) ..controls +(0,0.3) and +(0,-0.3) .. (0.5,
  -1.5) node[pos=0, below] {$1$};% 
  \fill[white] (0,-2) circle (2mm);
  \draw (0.5, -2.5) ..controls +(0,0.3) and +(0,-0.3) .. (-0.5,
  -1.5) node[pos=0, below] {$1$};% 
  \filldraw[draw= green!50!black, fill = white] (t1) circle (1mm and 3mm)
  node[left, green!50!black] {$c$};
\end{scope}}}}\right)
\cong
\soergel\left(\NB{\NB{\tikz[yscale=1,font=\tiny]{}}}\right) \ .
\]
Here, the first and last isomorphisms arise from
Theorem~\ref{thm-braid-invariant}, while the middle one follows from the
previous case. This finishes the proof of the lemma.
\end{proof}

\begin{prop} \label{prop:twist-slides}
Let $a$ and $b$ be two non-negative integers and $c$ and be $d$
integers. Then one has the following isomorphism in the relative homotopy
category of $H^\prime$-equivariant bimodules: % 
\begin{align}
    \soergel\left(\NB{\tikz[font=\tiny]{\begin{scope}[font=\tiny]
  \draw[->] (0.5, -0.5) ..controls +(0,0.3) and +(0,-0.3) .. (-0.5,
  0.5) node[pos=1, above] {$a$} coordinate[pos =0.2] (t1);
  \fill[white] (0,0) circle (2mm);
  \draw[->] (-0.5, -0.5) ..controls +(0,0.3) and +(0,-0.3) .. (0.5,
  0.5) node[pos=1, above] {$b$} coordinate[pos =0.2] (t2);
  \filldraw[draw= green!50!black, fill = white] (t2) circle (1mm)
  node[left, green!50!black] {$c$};
  \filldraw[draw= green!50!black, fill = white] (t1) circle (1mm)
  node[right, green!50!black] {$d$};
\end{scope}}}\right)
    \simeq
    \soergel\left(\NB{\tikz[font=\tiny]{\begin{scope}[font=\tiny]
  \draw[->] (0.5, -0.5) ..controls +(0,0.3) and +(0,-0.3) .. (-0.5,
  0.5) node[pos=1, above] {$a$} coordinate[pos =0.8] (t1);
  \fill[white] (0,0) circle (2mm);
  \draw[->] (-0.5, -0.5) ..controls +(0,0.3) and +(0,-0.3) .. (0.5,
  0.5) node[pos=1, above] {$b$} coordinate[pos =0.8] (t2);
  \filldraw[draw= green!50!black, fill = white] (t2) circle (1mm)
  node[right, green!50!black] {$c$};
  \filldraw[draw= green!50!black, fill = white] (t1) circle (1mm)
  node[left, green!50!black] {$d$};
\end{scope}}}\right)
    \qquad \text{and} \qquad 
    \soergel\left(\NB{\tikz[font=\tiny]{\begin{scope}[font=\tiny]
  \draw[->] (-0.5, -0.5) ..controls +(0,0.3) and +(0,-0.3) .. (0.5,
  0.5) node[pos=1, above] {$b$} coordinate[pos =0.2] (t2);
  \fill[white] (0,0) circle (2mm);
  \draw[->] (0.5, -0.5) ..controls +(0,0.3) and +(0,-0.3) .. (-0.5,
  0.5) node[pos=1, above] {$a$} coordinate[pos =0.2] (t1);
  \filldraw[draw= green!50!black, fill = white] (t2) circle (1mm)
  node[left, green!50!black] {$c$};
  \filldraw[draw= green!50!black, fill = white] (t1) circle (1mm)
  node[right, green!50!black] {$d$};
\end{scope}}}\right)
    \simeq
    \soergel\left(\NB{\tikz[font=\tiny]{\begin{scope}[font=\tiny]
  \draw[->] (-0.5, -0.5) ..controls +(0,0.3) and +(0,-0.3) .. (0.5,
  0.5) node[pos=1, above] {$b$} coordinate[pos =0.8] (t2);
  \fill[white] (0,0) circle (2mm);
  \draw[->] (0.5, -0.5) ..controls +(0,0.3) and +(0,-0.3) .. (-0.5,
  0.5) node[pos=1, above] {$a$} coordinate[pos =0.8] (t1);
  \filldraw[draw= green!50!black, fill = white] (t2) circle (1mm)
  node[right, green!50!black] {$c$};
  \filldraw[draw= green!50!black, fill = white] (t1) circle (1mm)
  node[left, green!50!black] {$d$};
\end{scope}}}\right).  
\end{align}
\end{prop}

\begin{proof}
  Both cases are similar, and so we only consider the second one. Using the
  blist hypothesis, we are reduced to proving:
\begin{align*}
    \soergel\left(\NB{\tikz[font=\tiny]{\begin{scope}[font=\tiny]
  \draw[->] (-0.5, -0.5) ..controls +(0,0.3) and +(0,-0.3) .. (0.5,
  0.5) node[pos=1, above] {$b$} coordinate[pos =0.2] (t2);
  \draw[>-] (-0.5, -1.5) -- +(0, 1) coordinate[pos=1] (b1)
  coordinate[pos=0.6] (b2) coordinate[pos=0.3] (b3) node [pos= 0, below] {$1$};
  \draw[>-] (-1.5, -1.5) -- (b1) node [pos= 0, below] {$1$};
  \draw[>-] (-1.1, -1.5) -- (b2) node [pos= 0, below] {$1$};
  \draw [decorate,decoration={brace,amplitude=2pt},yshift=-0.4cm] (-0.5,-1.5) -- (-1.5,-1.5) node [black,pos=0.5,below] {$b$};

  \node[below] at (-0.8, -1.5) {$\cdots$};
  \fill[white] (0,0) circle (2mm);
  \draw[->] (0.5, -0.5) ..controls +(0,0.3) and +(0,-0.3) .. (-0.5,
  0.5) node[pos=1, above] {$a$} coordinate[pos =0.2] (t1);
  \draw[>-] ( 0.5, -1.5) -- +(0, 1) coordinate[pos=1] (a1)
  coordinate[pos=0.6] (a2) coordinate[pos=0.3] (a3) node [pos= 0, below] {$1$};
  \draw[>-] ( 1.5, -1.5) -- (a1) node [pos= 0, below] {$1$};
  \draw[>-] ( 1.1, -1.5) -- (a2) node [pos= 0, below] {$1$};
  \node[below] at ( 0.8, -1.5) {$\cdots$};
  \draw [decorate,decoration={brace,amplitude=2pt},yshift=-0.4cm] (1.5,-1.5) -- (0.5,-1.5) node [black,pos=0.5,below] {$a$};
  \filldraw[draw= green!50!black, fill = white] (t2) circle (1mm)
  node[left, green!50!black] {$c$};
  \filldraw[draw= green!50!black, fill = white] (t1) circle (1mm)
  node[right, green!50!black] {$d$};
\end{scope}}}\right)
    \simeq
    \soergel\left(\NB{\tikz[font=\tiny]{\begin{scope}[font=\tiny]
  \draw[->] (-0.5, -0.5) ..controls +(0,0.3) and +(0,-0.3) .. (0.5,
  0.5) node[pos=1, above] {$b$} coordinate[pos =0.8] (t2);
  \draw[>-] (-0.5, -1.5) -- +(0, 1) coordinate[pos=1] (b1)
  coordinate[pos=0.6] (b2) coordinate[pos=0.3] (b3) node [pos= 0, below] {$1$};
  \draw[>-] (-1.5, -1.5) -- (b1) node [pos= 0, below] {$1$};
  \draw[>-] (-1.1, -1.5) -- (b2) node [pos= 0, below] {$1$};
  \draw [decorate,decoration={brace,amplitude=2pt},yshift=-0.4cm] (-0.5,-1.5) -- (-1.5,-1.5) node [black,pos=0.5,below] {$b$};

  \node[below] at (-0.8, -1.5) {$\cdots$};
  \fill[white] (0,0) circle (2mm);
  \draw[->] (0.5, -0.5) ..controls +(0,0.3) and +(0,-0.3) .. (-0.5,
  0.5) node[pos=1, above] {$a$} coordinate[pos =0.8] (t1);
  \draw[>-] ( 0.5, -1.5) -- +(0, 1) coordinate[pos=1] (a1)
  coordinate[pos=0.6] (a2) coordinate[pos=0.3] (a3) node [pos= 0, below] {$1$};
  \draw[>-] ( 1.5, -1.5) -- (a1) node [pos= 0, below] {$1$};
  \draw[>-] ( 1.1, -1.5) -- (a2) node [pos= 0, below] {$1$};
  \node[below] at ( 0.8, -1.5) {$\cdots$};
  \draw [decorate,decoration={brace,amplitude=2pt},yshift=-0.4cm] (1.5,-1.5) -- (0.5,-1.5) node [black,pos=0.5,below] {$a$};
  \filldraw[draw= green!50!black, fill = white] (t2) circle (1mm)
  node[right, green!50!black] {$c$};
  \filldraw[draw= green!50!black, fill = white] (t1) circle (1mm)
  node[left, green!50!black] {$d$};
\end{scope}
}}\right) .
\end{align*}
This is achieved using green dot migration, forkslide moves, and
Lemma~\ref{lem:twist-slides-11}. For
simplicity $\soergel( \cdot)$ is removed:
\begin{align*}
    \NB{\tikz[font=\tiny]{}}
  &\simeq
    \NB{\tikz[font=\tiny]{\begin{scope}[font=\tiny]
  \draw[->] (-0.5, -0.5) ..controls +(0,0.3) and +(0,-0.3) .. (0.5,
  0.5) node[pos=1, above] {$b$} coordinate[pos =0.2] (t2);
  \draw[>-] (-0.5, -1.5) -- +(0, 1) coordinate[pos=1] (b1)
  coordinate[pos=0.6] (b2) coordinate[pos=0.3] (tb3) node [pos= 0, below] {$1$};
  \draw[>-] (-1.5, -1.5) -- (b1) node [pos= 0, below] {$1$} coordinate[pos=0.5] (tb1);
  \draw[>-] (-1.1, -1.5) -- (b2) node [pos= 0, below] {$1$} coordinate[pos=0.5] (tb2);
  \draw [decorate,decoration={brace,amplitude=2pt},yshift=-0.4cm] (-0.5,-1.5) -- (-1.5,-1.5) node [black,pos=0.5,below] {$b$};

  \node[below] at (-0.8, -1.5) {$\cdots$};
  \fill[white] (0,0) circle (2mm);
  \draw[->] (0.5, -0.5) ..controls +(0,0.3) and +(0,-0.3) .. (-0.5,
  0.5) node[pos=1, above] {$a$} coordinate[pos =0.2] (t1);
  \draw[>-] ( 0.5, -1.5) -- +(0, 1) coordinate[pos=1] (a1)
  coordinate[pos=0.6] (a2) coordinate[pos=0.3] (ta3) node [pos= 0, below] {$1$};
  \draw[>-] ( 1.5, -1.5) -- (a1) node [pos= 0, below] {$1$} coordinate[pos=0.5] (ta1);
  \draw[>-] ( 1.1, -1.5) -- (a2) node [pos= 0, below] {$1$} coordinate[pos=0.5] (ta2);
  \node[below] at ( 0.8, -1.5) {$\cdots$};
  \draw [decorate,decoration={brace,amplitude=2pt},yshift=-0.4cm] (1.5,-1.5) -- (0.5,-1.5) node [black,pos=0.5,below] {$a$};
  \filldraw[draw= green!50!black, fill = white] (tb3) circle (1mm)
  node[right, green!50!black] {$c$};
  \filldraw[draw= green!50!black, fill = white] (tb2) circle (1mm)
  node[below, green!50!black] {$c$};
  \filldraw[draw= green!50!black, fill = white] (tb1) circle (1mm)
  node[left, green!50!black] {$c$};
  \filldraw[draw= green!50!black, fill = white] (ta3) circle (1mm)
  node[left, green!50!black] {$d$};
  \filldraw[draw= green!50!black, fill = white] (ta2) circle (1mm)
  node[below, green!50!black] {$d$};
  \filldraw[draw= green!50!black, fill = white] (ta1) circle (1mm)
  node[right, green!50!black] {$d$};
\end{scope}}}
  \simeq
    \NB{\tikz[font=\tiny]{\begin{scope}[font=\tiny]
  \node[ above] at (0.5,0.5) {$b$};
  \draw[>-] (-0.5, -1.5) .. controls +(0,0) and +(0, -0.2).. +(2, 1.2)
  node [pos= 0, below] {$1$} coordinate[pos=0.2] (tb3) .. controls
  +(0, 0.2) and +(0,0) .. (0.5, 0.3) coordinate[pos = 0.57] (bb);
  \draw[>-] (-1.5, -1.5) .. controls +(0,0) and +(0, -0.2).. +(2,1.2)
  node [pos= 0, below] {$1$} coordinate[pos=0.2] (tb1);
  \draw[->] (0.5, -0.3)-- (0.5, 0.5);
  \draw[>-] (-1.1, -1.5) .. controls +(0,0) and +(0, -0.2).. +(2,1.2)
  node [pos= 0, below] {$1$} coordinate[pos=0.2] (tb2) -- (bb);
  \draw [decorate,decoration={brace,amplitude=2pt},yshift=-0.4cm] (-0.5,-1.5) -- (-1.5,-1.5) node [black,pos=0.5,below] {$b$};
  \node[below] at (-0.8, -1.5) {$\cdots$};
  \fill[white] (0,0) circle (2mm);
  \node[above] at (-0.5, 0.5) {$a$}; % 

  \draw[white, line width=1mm] ( 0.5, -1.5) .. controls +(0,0) and
  +(0, -0).. +(-2, 1);
  \draw[white, line width=1mm] ( 1.5, -1.5) .. controls +(0,0) and +(0, -0).. +(-2,1);
  \draw[white, line width=1mm] ( 1.1, -1.5) .. controls +(0,0) and
  +(0, -0).. +(-2,1);
  
  \draw[>-] ( 0.5, -1.5) .. controls +(0,0) and +(0, -0.2).. +(-2, 1.2)
  node [pos= 0, below] {$1$} coordinate[pos=0.2] (ta3) .. controls
  +(0, 0.2) and +(0,0) .. (-0.5, 0.3) coordinate[pos = 0.57] (aa);
  \draw[>-] ( 1.5, -1.5) .. controls +(0,0) and +(0, -0.2).. +(-2,1.2)
  node [pos= 0, below] {$1$} coordinate[pos=0.2] (ta1);% 
  \draw[->] (-0.5, -0.3)-- (-0.5, 0.5);
  \draw[>-] ( 1.1, -1.5) .. controls +(0,0) and +(0, -0.2).. +(-2,1.2)
  node [pos= 0, below] {$1$} coordinate[pos=0.2] (ta2) -- (aa);

  \node[below] at ( 0.8, -1.5) {$\cdots$};
  \draw [decorate,decoration={brace,amplitude=2pt},yshift=-0.4cm] (1.5,-1.5) -- (0.5,-1.5) node [black,pos=0.5,below] {$a$};

  \filldraw[draw= green!50!black, fill = white] (tb3) circle (1mm)
  node[left, green!50!black] {$c$};
  \filldraw[draw= green!50!black, fill = white] (tb2) circle (1mm)
  node[left, green!50!black] {$c$};
  \filldraw[draw= green!50!black, fill = white] (tb1) circle (1mm)
  node[left, green!50!black] {$c$};
  \filldraw[draw= green!50!black, fill = white] (ta3) circle (1mm)
  node[right, green!50!black] {$d$};
  \filldraw[draw= green!50!black, fill = white] (ta2) circle (1mm)
  node[right, green!50!black] {$d$};
  \filldraw[draw= green!50!black, fill = white] (ta1) circle (1mm)
  node[right, green!50!black] {$d$};
\end{scope}}}
  \simeq
    \NB{\tikz[font=\tiny]{\begin{scope}[font=\tiny]
  \node[ above] at (0.5,0.5) {$b$};
  \draw[>-] (-0.5, -1.5) .. controls +(0,0) and +(0, -0.2).. +(2, 1.2)
  node [pos= 0, below] {$1$} coordinate[pos=1] (tb3) .. controls
  +(0, 0.2) and +(0,0) .. (0.5, 0.3) coordinate[pos = 0.57] (bb);
  \draw[>-] (-1.5, -1.5) .. controls +(0,0) and +(0, -0.2).. +(2,1.2)
  node [pos= 0, below] {$1$} coordinate[pos=1] (tb1);
  \draw[->] (0.5, -0.3)-- (0.5, 0.5);
  \draw[>-] (-1.1, -1.5) .. controls +(0,0) and +(0, -0.2).. +(2,1.2)
  node [pos= 0, below] {$1$} coordinate[pos=1] (tb2) -- (bb);
  \draw [decorate,decoration={brace,amplitude=2pt},yshift=-0.4cm] (-0.5,-1.5) -- (-1.5,-1.5) node [black,pos=0.5,below] {$b$};
  \node[below] at (-0.8, -1.5) {$\cdots$};
  \fill[white] (0,0) circle (2mm);
  \node[above] at (-0.5, 0.5) {$a$}; % 

  \draw[white, line width=1mm] ( 0.5, -1.5) .. controls +(0,0) and
  +(0, -0).. +(-2, 1);
  \draw[white, line width=1mm] ( 1.5, -1.5) .. controls +(0,0) and +(0, -0).. +(-2,1);
  \draw[white, line width=1mm] ( 1.1, -1.5) .. controls +(0,0) and
  +(0, -0).. +(-2,1);
  
  \draw[>-] ( 0.5, -1.5) .. controls +(0,0) and +(0, -0.2).. +(-2, 1.2)
  node [pos= 0, below] {$1$} coordinate[pos=1] (ta3) .. controls
  +(0, 0.2) and +(0,0) .. (-0.5, 0.3) coordinate[pos = 0.57] (aa);
  \draw[>-] ( 1.5, -1.5) .. controls +(0,0) and +(0, -0.2).. +(-2,1.2)
  node [pos= 0, below] {$1$} coordinate[pos=1] (ta1);% 
  \draw[->] (-0.5, -0.3)-- (-0.5, 0.5);
  \draw[>-] ( 1.1, -1.5) .. controls +(0,0) and +(0, -0.2).. +(-2,1.2)
  node [pos= 0, below] {$1$} coordinate[pos=1] (ta2) -- (aa);

  \node[below] at ( 0.8, -1.5) {$\cdots$};
  \draw [decorate,decoration={brace,amplitude=2pt},yshift=-0.4cm] (1.5,-1.5) -- (0.5,-1.5) node [black,pos=0.5,below] {$a$};

  \filldraw[draw= green!50!black, fill = white] (tb3) circle (1mm)
  node[right, green!50!black] {$c$};
  \filldraw[draw= green!50!black, fill = white] (tb2) circle (1mm)
  node[right, green!50!black] {$c$};
  \filldraw[draw= green!50!black, fill = white] (tb1) circle (1mm)
  node[right, green!50!black] {$c$};
  \filldraw[draw= green!50!black, fill = white] (ta3) circle (1mm)
  node[left, green!50!black] {$d$};
  \filldraw[draw= green!50!black, fill = white] (ta2) circle (1mm)
  node[left, green!50!black] {$d$};
  \filldraw[draw= green!50!black, fill = white] (ta1) circle (1mm)
  node[left, green!50!black] {$d$};
\end{scope}}} \\
  &\simeq
    \NB{\tikz[font=\tiny]{\begin{scope}[font=\tiny]
  \node[ above] at (0.5,0.8) {$b$};
  \draw[>-] (-0.5, -1.5) .. controls +(0,0) and +(0, -0.2).. +(2, 1.2)
  node [pos= 0, below] {$1$} coordinate[pos=1] (tb3) .. controls
  +(0, 0.2) and +(0,0) .. (0.5, 0.3) coordinate[pos = 0.57] (bb);
  \draw[>-] (-1.5, -1.5) .. controls +(0,0) and +(0, -0.2).. +(2,1.2)
  node [pos= 0, below] {$1$} coordinate[pos=1] (tb1);
  \draw[->] (0.5, -0.3)-- (0.5, 0.8) coordinate [pos = 0.75] (tb);
  \draw[>-] (-1.1, -1.5) .. controls +(0,0) and +(0, -0.2).. +(2,1.2)
  node [pos= 0, below] {$1$} coordinate[pos=1] (tb2) -- (bb);
  \draw [decorate,decoration={brace,amplitude=2pt},yshift=-0.4cm] (-0.5,-1.5) -- (-1.5,-1.5) node [black,pos=0.5,below] {$b$};
  \node[below] at (-0.8, -1.5) {$\cdots$};
  \fill[white] (0,0) circle (2mm);
  \node[above] at (-0.5, 0.8) {$a$}; % 

  \draw[white, line width=1mm] ( 0.5, -1.5) .. controls +(0,0) and
  +(0, -0).. +(-2, 1);
  \draw[white, line width=1mm] ( 1.5, -1.5) .. controls +(0,0) and +(0, -0).. +(-2,1);
  \draw[white, line width=1mm] ( 1.1, -1.5) .. controls +(0,0) and
  +(0, -0).. +(-2,1);
  
  \draw[>-] ( 0.5, -1.5) .. controls +(0,0) and +(0, -0.2).. +(-2, 1.2)
  node [pos= 0, below] {$1$} coordinate[pos=1] (ta3) .. controls
  +(0, 0.2) and +(0,0) .. (-0.5, 0.3) coordinate[pos = 0.57] (aa);
  \draw[>-] ( 1.5, -1.5) .. controls +(0,0) and +(0, -0.2).. +(-2,1.2)
  node [pos= 0, below] {$1$} coordinate[pos=1] (ta1);% 
  \draw[->] (-0.5, -0.3)-- (-0.5, 0.8) coordinate [pos = 0.75] (ta);
  \draw[>-] ( 1.1, -1.5) .. controls +(0,0) and +(0, -0.2).. +(-2,1.2)
  node [pos= 0, below] {$1$} coordinate[pos=1] (ta2) -- (aa);

  \node[below] at ( 0.8, -1.5) {$\cdots$};
  \draw [decorate,decoration={brace,amplitude=2pt},yshift=-0.4cm] (1.5,-1.5) -- (0.5,-1.5) node [black,pos=0.5,below] {$a$};

  \filldraw[draw= green!50!black, fill = white] (tb) circle (1mm)
  node[right, green!50!black] {$c$};
  \filldraw[draw= green!50!black, fill = white] (ta) circle (1mm)
  node[left, green!50!black] {$d$};
\end{scope}}}
  \simeq
      \NB{\tikz[font=\tiny]{}} .
\end{align*}  
\end{proof}

\subsection{Forktwists}
\label{sec:forktwists}
In this subsection we show that a fork resolves a nearby crossing up to a shift and twist in the $H^\prime$-structure.
\begin{lem}\label{lem:elementary-ft}
  The following isomorphisms hold in the relative homotopy category:
  \begin{align}
    \soergel\left(\NB{\tikz[scale=0.7, font= \tiny]{\begin{scope}
  \coordinate (t) at (0,0.5);
  \coordinate (o) at (0,0);
  \coordinate (l) at (-1,-1.5);
  \coordinate (r) at ( 1,-1.5);
  \draw[->] (o) -- (t) node [pos=1, above] {$2$};
  \draw[>-] (r) .. controls +(0, 0.3) and +(-1,-1).. (o)  coordinate[pos=0.56] (c) node[pos=0, below] {$1$};
  \fill[white] (c) circle (1mm);
  \draw[>-] (l) .. controls +(0, 0.3) and +(1,-1).. (o) node[pos=0, below] {$1$};
\end{scope}}}
    \right) 
    &\simeq
    q^{-1} \soergel\left(\NB{\tikz[scale=0.7, font= \tiny]{\begin{scope}
  \coordinate (t) at (0,0.5);
  \coordinate (o) at (0,-0.30);
  \coordinate (l) at (-1,-1.5);
  \coordinate (r) at ( 1,-1.5);
  \draw[->] (o) -- (t) node [pos=1, above] {$2$};
  \draw[>-] (l) .. controls +(0, 0.3) and +(-1,-1).. (o)
  node[pos=0, below] {$1$}  coordinate[pos=0.7] (twistl);
  \draw[>-] (r) .. controls +(0, 0.3) and +(1,-1).. (o) node[pos=0,
  below] {$1$} coordinate[pos=0.7] (twistr);
    \filldraw[draw= green!50!black, fill = white] (twistl) circle
  (1mm) node[left, green!50!black] {$1$};
  \filldraw[draw= green!50!black, fill = white] (twistr) circle
  (1mm) node[right, green!50!black] {$1$};

\end{scope}

}}
      \right), \label{eq:ft-positive} \\ % 
    \soergel\left(\NB{\tikz[scale=0.7, font= \tiny]{\begin{scope}
  \coordinate (t) at (0,0.5);
  \coordinate (o) at (0,0);
  \coordinate (l) at (-1,-1.5);
  \coordinate (r) at ( 1,-1.5);
  \draw[->] (o) -- (t) node [pos=1, above] {$2$};
  \draw[>-] (l) .. controls +(0, 0.3) and +(1,-1).. (o) coordinate[pos=0.56] (c) node[pos=0, below] {$1$};
  \fill[white] (c) circle (1mm);
  \draw[>-] (r) .. controls +(0, 0.3) and +(-1,-1).. (o) node[pos=0, below] {$1$};
\end{scope}}}
    \right)
    &\simeq
    q^{} \soergel\left(\NB{\tikz[scale=0.7, font= \tiny]{\begin{scope}
  \coordinate (t) at (0,0.5);
  \coordinate (o) at (0,-0.30);
  \coordinate (l) at (-1,-1.5);
  \coordinate (r) at ( 1,-1.5);
  \draw[->] (o) -- (t) node [pos=1, above] {$2$};
  \draw[>-] (l) .. controls +(0, 0.3) and +(-1,-1).. (o)
  node[pos=0, below] {$1$}  coordinate[pos=0.7] (twistl);
  \draw[>-] (r) .. controls +(0, 0.3) and +(1,-1).. (o) node[pos=0,
  below] {$1$} coordinate[pos=0.7] (twistr);
    \filldraw[draw= green!50!black, fill = white] (twistl) circle
  (1mm) node[left, green!50!black] {$-1$};
  \filldraw[draw= green!50!black, fill = white] (twistr) circle
  (1mm) node[right, green!50!black] {$-1$};
\end{scope}

}}
    \right), \label{eq:ft-negative} \\
        \soergel\left(\NB{\tikz[scale=0.7, font= \tiny]{\begin{scope}[rotate=180]
  \coordinate (t) at (0,0.5);
  \coordinate (o) at (0,0);
  \coordinate (l) at (-1,-1.5);
  \coordinate (r) at ( 1,-1.5);
  \draw[-<] (o) -- (t) node [pos=1, below] {$2$};
  \draw[<-] (r) .. controls +(0, 0.3) and +(-1,-1).. (o)  coordinate[pos=0.56] (c) node[pos=0, above] {$1$};
  \fill[white] (c) circle (1mm);
  \draw[<-] (l) .. controls +(0, 0.3) and +(1,-1).. (o) node[pos=0, above] {$1$};
\end{scope}

}}
    \right)
    &\simeq
   q^{-1} \soergel\left(\NB{\tikz[scale=0.7, font= \tiny]{\begin{scope}[rotate=180]
  \coordinate (t) at (0,0.5);
  \coordinate (o) at (0,-0.30);
  \coordinate (l) at (-1,-1.5);
  \coordinate (r) at ( 1,-1.5);
  \draw[-<] (o) -- (t) node [pos=1, below] {$2$};
  \draw[<-] (l) .. controls +(0, 0.3) and +(-1,-1).. (o)  node[pos=0,
  above] {$1$} coordinate[pos=0.7] (twistl);
  \draw[<-] (r) .. controls +(0, 0.3) and +(1,-1).. (o) node[pos=0,
  above] {$1$} coordinate[pos=0.7] (twistr);
    \filldraw[draw= green!50!black, fill = white] (twistl) circle
  (1mm) node[right, green!50!black] {$1$};
  \filldraw[draw= green!50!black, fill = white] (twistr) circle
  (1mm) node[left, green!50!black] {$1$};
\end{scope}

}}
      \right) \label{eq:ft-positive-Y}% 
                \\
    \soergel\left(\NB{\tikz[scale=0.7, font= \tiny]{\begin{scope}[rotate=180]
  \coordinate (t) at (0,0.5);
  \coordinate (o) at (0,0);
  \coordinate (l) at (-1,-1.5);
  \coordinate (r) at ( 1,-1.5);
  \draw[-<] (o) -- (t) node [pos=1, below] {$2$};
  \draw[<-] (l) .. controls +(0, 0.3) and +(1,-1).. (o)
  coordinate[pos=0.56] (c) node[pos=0, above] {$1$};
    \fill[white] (c) circle (1mm);
  \draw[<-] (r) .. controls +(0, 0.3) and +(-1,-1).. (o) node[pos=0, above] {$1$};

\end{scope}

}}
    \right)
    &\simeq
   q^{} \soergel\left(\NB{\tikz[scale=0.7, font= \tiny]{\begin{scope}[rotate=180]
  \coordinate (t) at (0,0.5);
  \coordinate (o) at (0,-0.30);
  \coordinate (l) at (-1,-1.5);
  \coordinate (r) at ( 1,-1.5);
  \draw[-<] (o) -- (t) node [pos=1, below] {$2$};
  \draw[<-] (l) .. controls +(0, 0.3) and +(-1,-1).. (o)  node[pos=0,
  above] {$1$} coordinate[pos=0.7] (twistl);
  \draw[<-] (r) .. controls +(0, 0.3) and +(1,-1).. (o) node[pos=0,
  above] {$1$} coordinate[pos=0.7] (twistr);
    \filldraw[draw= green!50!black, fill = white] (twistl) circle
  (1mm) node[right, green!50!black] {$-1$};
  \filldraw[draw= green!50!black, fill = white] (twistr) circle
  (1mm) node[left, green!50!black] {$-1$};
\end{scope}

}}
    \right) \label{eq:ft-negative-Y}.
  \end{align}
\end{lem}

\begin{proof}
Let us start with (\ref{eq:ft-positive}). One can fit
  \[
\soergel\left( \NB{\tikz[font= \tiny, scale =
  0.7]{\begin{scope}
  \coordinate (t) at (0,0.5);
  \coordinate (o) at (0,0);
  \coordinate (l) at (-1,-1.5);
  \coordinate (r) at ( 1,-1.5);
  \draw[->] (o) -- (t) node [pos=1, above] {$2$};
  \draw[>-] (r) .. controls +(0, 0.3) and +(-1,-1).. (o)  coordinate[pos=0.56] (c) node[pos=0, below] {$1$};
  \fill[white] (c) circle (1mm);
  \draw[>-] (l) .. controls +(0, 0.3) and +(1,-1).. (o) node[pos=0, below] {$1$};
\end{scope}}} \right)
=\NB{\tikz[scale =3.5]{
    \node (A) at (0,0) {$q^{-2} \soergel \left( \NB{\tikz[font= \tiny, scale =
  0.7]{\begin{scope}
  \coordinate (t) at (0,0.5);
  \coordinate (o1) at (0,0.1);
  \coordinate (o2) at (0,-0.6);
  \coordinate (o3) at (0,-1);
  \coordinate (l) at (-1,-1.5);
  \coordinate (r) at ( 1,-1.5);
  \draw[->] (o1) -- (t) node [pos=1, above] {$2$};
  \draw[->-] (o3) -- (o2) node [pos=0.5, left] {$2$};
  \draw[->-] (o2) .. controls +(0.3,0.3) and +(0.3, -0.3) .. (o1) node
 [pos= 0.5, right] {$1$};
   \draw[->-] (o2) .. controls +(-0.3,0.3) and +(-0.3, -0.3) .. (o1) node
 [pos= 0.5, left] {$1$};
 \draw[>-] (l) .. controls +(0, 0.3) and +(-0.5,-0.5).. (o3) coordinate[pos=0.56] (c) node[pos=0, below] {$1$};
  \draw[>-] (r) .. controls +(0, 0.3) and +(0.5,-0.5).. (o3) node[pos=0, below] {$1$};
\end{scope}

}} \right) $};
    \node (B) at (1.2,0) {$ q^{-3} \soergel \left( \NB{\tikz[font= \tiny, scale =
  0.7]{\begin{scope}
  \coordinate (t) at (0,0.5);
  \coordinate (o) at (0,0);
  \coordinate (l) at (-1,-1.5);
  \coordinate (r) at ( 1,-1.5);
  \draw[->] (o) -- (t) node [pos=1, above] {$2$};
  \draw[>-] (l) .. controls +(0, 0.3) and +(-1,-1).. (o)  coordinate[pos=0.56] (c) node[pos=0, below] {$1$};
  \draw[>-] (r) .. controls +(0, 0.3) and +(1,-1).. (o) node[pos=0, below] {$1$};
\end{scope}

}} \right)$};
    \draw[->] (A) -- (B) node[midway, above] {$\mapH$};
   }}
\quad\text{and}\quad
\soergel\left( \NB{\tikz[font= \tiny, scale =0.7]{}} \right)
\]
into the following short exact sequence of complexes of $H^\prime$-equivariant Soergel bimodules 
\[
\NB{\tikz[xscale =4, yscale = 3]{
    \node (i1) at (-1.8, 1) {$0$};
    \node (i2) at (-1.8, 0) {$0$};
    \node (i3) at ( 1.8, 1) {$0$};
    \node (i4) at ( 1.8, 0) {$0$};
\node (A) at (-1,1) {$ q^{-3} \soergel \left( \NB{\tikz[font= \tiny,
  scale=0.7]{}} \right) $};
\node (B) at (-1,0) {$q^{-3} \soergel \left( \NB{\tikz[font= \tiny,
  scale=0.7]{}} \right) $ };
\node (C) at (0,1) {$ q^{-2} \soergel \left( \NB{\tikz[font= \tiny,
  scale=0.7]{}} \right) $};
\node (D) at (0,0) {$ q^{-3} \soergel \left( \NB{\tikz[font= \tiny,
  scale=0.7]{}} \right) $};
\node (E) at (1,1) {$q^{-1} \soergel \left( \NB{\tikz[font= \tiny,
  scale=0.7]{}} \right) $};
\node (F) at (1,0) {$0$};
\draw[->] (A) -- (B) node [left, pos= 0.5] {$\mathrm{Id}$};
\draw[->] (C) -- (D) node [left, pos= 0.5] {$\mapH$};
\draw[->] (E) -- (F) node [right, pos= 0.5] {};
\draw[->] (A) -- (C) node [above, pos = 0.5] {$\mapB$};
\draw[->] (C) -- (E) node [above, pos = 0.5] {$\mapU$};
\draw[->] (B) -- (D) node [below, pos = 0.5] {$\mathrm{Id}$};
\draw[->] (D) -- (F) node [below, pos = 0.5] {};
\draw[->] (i1) -- (A);
\draw[->] (i2) -- (B);
\draw[->] (i1) -- (i2);
\draw[->] (E) -- (i3);
\draw[->] (F) -- (i4);
\draw[->] (i3) -- (i4);
\node[draw, densely dotted ,fit=(C) (D) ,inner sep=1ex,rectangle] {};
\node[draw, densely dotted ,fit=(A) (B) ,inner sep=1ex,rectangle] {};
\node[draw, densely dotted ,fit=(C) (D) ,inner sep=1ex,rectangle,
xshift =4cm] {};
}}
\]
whose first column is obviously null-homotopic. This gives an
isomorphism in the relative homotopy category.

  We now consider (\ref{eq:ft-negative}). One can fit
  \[
\soergel\left( \NB{\tikz[font= \tiny, scale =
  0.7]{\begin{scope}
  \coordinate (t) at (0,0.5);
  \coordinate (o) at (0,0);
  \coordinate (l) at (-1,-1.5);
  \coordinate (r) at ( 1,-1.5);
  \draw[->] (o) -- (t) node [pos=1, above] {$2$};
  \draw[>-] (l) .. controls +(0, 0.3) and +(1,-1).. (o) coordinate[pos=0.56] (c) node[pos=0, below] {$1$};
  \fill[white] (c) circle (1mm);
  \draw[>-] (r) .. controls +(0, 0.3) and +(-1,-1).. (o) node[pos=0, below] {$1$};
\end{scope}}} \right) = \NB{\tikz[scale =3.5]{
    \node (A) at (0,0) {$ q^3 \soergel \left( \NB{\tikz[font= \tiny, scale =
  0.7]{}}\right)$};
    \node (B) at (1.2,0) {$ q^2 \soergel \left(\NB{\tikz[font= \tiny, scale =
  0.7]{\begin{scope}[font=\tiny]
  \coordinate (t) at (0,0.5);
  \coordinate (o1) at (0,0.1);
  \coordinate (o2) at (0,-0.6);
  \coordinate (o3) at (0,-1);
  \coordinate (l) at (-1,-1.5);
  \coordinate (r) at ( 1,-1.5);
  \draw[->] (o1) -- (t) node [pos=1, above] {$2$};
  \draw[->-] (o3) -- (o2) node [pos=0.5, left] {$2$};
  \draw[->-] (o2) .. controls +(0.3,0.3) and +(0.3, -0.3) .. (o1) node
 [pos= 0.7, right] {$1$} coordinate[pos=0.2] (g1);
   \draw[->-] (o2) .. controls +(-0.3,0.3) and +(-0.3, -0.3) .. (o1) node
 [pos= 0.4, left] {$1$} coordinate[pos=0.8] (g2);
 \draw[>-] (l) .. controls +(0, 0.3) and +(-0.5,-0.5).. (o3) coordinate[pos=0.56] (c) node[pos=0, below] {$1$};
  \draw[>-] (r) .. controls +(0, 0.3) and +(0.5,-0.5).. (o3)
  node[pos=0, right] {$1$};
    \filldraw[draw= green!50!black, fill = white] (g1) circle
  (1mm) node[right, green!50!black] {$-1$};
      \filldraw[draw= green!50!black, fill = white] (g2) circle
  (1mm) node[left, green!50!black] {$-1$};
\end{scope}

}}\right) $ };
    \draw[->] (A) -- (B) node[midway, above] {$\mapX$};
   }}
\quad\text{and}\quad
\soergel\left( \NB{\tikz[font= \tiny, scale =0.7]{}} \right)
\]
into the following short exact sequence of complexes of $H^\prime$-equivariant bimodules
\[
\NB{\tikz[xscale = 4, yscale = 3]{
    \node (i1) at (-1.8, 1) {$0$};
    \node (i2) at (-1.8, 0) {$0$};
    \node (i3) at ( 1.8, 1) {$0$};
    \node (i4) at ( 1.8, 0) {$0$};
\node (E) at ( 1,1) {$ q^3 \soergel \left( \NB{\tikz[font= \tiny,
  scale=0.7]{}}\right) $};
\node (F) at ( 1,0) {$ q^3 \soergel \left( \NB{\tikz[font= \tiny,
  scale=0.7]{}} \right) $};
\node (D) at (0,0) {$q^2 \soergel \left( \NB{\tikz[font= \tiny,
  scale=0.7]{}} \right)$};
\node (C) at (0,1) {$ q^3 \soergel \left( \NB{\tikz[font= \tiny,
  scale=0.7]{}}\right) $};
\node (B) at (-1,0) {$q \soergel \left( \NB{\tikz[font= \tiny,
  scale=0.7]{}} \right)$};
\node (A) at (-1,1) {$0$};
\draw[->] (A) -- (B) node [left, pos= 0.5] {};
\draw[->] (C) -- (D) node [left, pos= 0.5] {$\mapX$};
\draw[->] (E) -- (F) node [right, pos= 0.5] {$\mathrm{Id}$};
\draw[->] (A) -- (C) node [above, pos = 0.5] {};
\draw[->] (C) -- (E) node [above, pos = 0.5] {$\mathrm{Id}$};
\draw[->] (B) -- (D) node [below, pos = 0.5] {$\mapB$};
\draw[->] (D) -- (F) node [below, pos = 0.5] {$\mapU$};
\draw[->] (i1) -- (A);
\draw[->] (i2) -- (B);
\draw[->] (i1) -- (i2);
\draw[->] (E) -- (i3);
\draw[->] (F) -- (i4);
\draw[->] (i3) -- (i4);
\node[draw, densely dotted ,fit=(C) (D) ,inner sep=1ex,rectangle] {};
\node[draw, densely dotted ,fit=(E) (F) ,inner sep=1ex,rectangle] {};
\node[draw, densely dotted ,fit=(C) (D) ,inner sep=1ex,rectangle,
xshift =-4cm] {};
}}
\]
whose last term is obviously null-homotopic. This gives an isomorphism in the relative homotopy category.

Isomorphisms (\ref{eq:ft-positive-Y}) and (\ref{eq:ft-negative-Y})
are analogous.
\end{proof}

\begin{cor}\label{cor:ft-ab}
  For any positive integers $a$ and $b$, the following isomorphisms hold in the relative homotopy category:
  \begin{align}
    \soergel\left(\NB{\tikz[font= \tiny]{\begin{scope}
  \coordinate (t) at (0,0.5);
  \coordinate (o) at (0,0);
  \coordinate (l) at (-1,-1.5);
  \coordinate (r) at ( 1,-1.5);
  \draw[->] (o) -- (t) node [pos=1, above] {$a+b$};
  \draw[>-] (r) .. controls +(0, 0.3) and +(-1,-1).. (o)  coordinate[pos=0.56] (c) node[pos=0, below] {$b$};
  \fill[white] (c) circle (1mm);
  \draw[>-] (l) .. controls +(0, 0.3) and +(1,-1).. (o) node[pos=0, below] {$a$};
\end{scope}}}
    \right) 
    &\simeq
    q^{-ab} \soergel\left(\NB{\tikz[font= \tiny]{\begin{scope}
  \coordinate (t) at (0,0.5);
  \coordinate (o) at (0,-0.30);
  \coordinate (l) at (-1,-1.5);
  \coordinate (r) at ( 1,-1.5);
  \draw[->] (o) -- (t) node [pos=1, above] {$a+b$};
  \draw[>-] (l) .. controls +(0, 0.3) and +(-1,-1).. (o)
  node[pos=0, below] {$a$}  coordinate[pos=0.7] (twistl);
  \draw[>-] (r) .. controls +(0, 0.3) and +(1,-1).. (o) node[pos=0,
  below] {$b$} coordinate[pos=0.7] (twistr);
    \filldraw[draw= green!50!black, fill = white] (twistl) circle
  (1mm) node[left, green!50!black] {$b$};
  \filldraw[draw= green!50!black, fill = white] (twistr) circle
  (1mm) node[right, green!50!black] {$a$};

\end{scope}

}}
      \right), \label{eq:ft-positive-ab} \\ % 
    \soergel\left(\NB{\tikz[font= \tiny]{\begin{scope}
  \coordinate (t) at (0,0.5);
  \coordinate (o) at (0,0);
  \coordinate (l) at (-1,-1.5);
  \coordinate (r) at ( 1,-1.5);
  \draw[->] (o) -- (t) node [pos=1, above] {$a+b$};
  \draw[>-] (l) .. controls +(0, 0.3) and +(1,-1).. (o) coordinate[pos=0.56] (c) node[pos=0, below] {$a$};
  \fill[white] (c) circle (1mm);
  \draw[>-] (r) .. controls +(0, 0.3) and +(-1,-1).. (o) node[pos=0, below] {$b$};
\end{scope}}}
    \right)
    &\simeq
   q^{ab} \soergel\left(\NB{\tikz[font= \tiny]{\begin{scope}
  \coordinate (t) at (0,0.5);
  \coordinate (o) at (0,-0.30);
  \coordinate (l) at (-1,-1.5);
  \coordinate (r) at ( 1,-1.5);
  \draw[->] (o) -- (t) node [pos=1, above] {$a+b$};
  \draw[>-] (l) .. controls +(0, 0.3) and +(-1,-1).. (o)
  node[pos=0, below] {$a$}  coordinate[pos=0.7] (twistl);
  \draw[>-] (r) .. controls +(0, 0.3) and +(1,-1).. (o) node[pos=0,
  below] {$b$} coordinate[pos=0.7] (twistr);
  \filldraw[draw= green!50!black, fill = white] (twistl) circle
  (1mm) node[left, green!50!black] {$-b$};
  \filldraw[draw= green!50!black, fill = white] (twistr) circle
  (1mm) node[right, green!50!black] {$-a$};
\end{scope}

}}
    \right), \label{eq:ft-negative-ab} \\
        \soergel\left(\NB{\tikz[font= \tiny]{\begin{scope}[rotate=180]
  \coordinate (t) at (0,0.5);
  \coordinate (o) at (0,0);
  \coordinate (l) at (-1,-1.5);
  \coordinate (r) at ( 1,-1.5);
  \draw[-<] (o) -- (t) node [pos=1, below] {$a+b$};
  \draw[<-] (r) .. controls +(0, 0.3) and +(-1,-1).. (o)  coordinate[pos=0.56] (c) node[pos=0, above] {$a$};
  \fill[white] (c) circle (1mm);
  \draw[<-] (l) .. controls +(0, 0.3) and +(1,-1).. (o) node[pos=0,
  above] {$b$};
\end{scope}

}}
    \right)
    &\simeq
   q^{-ab} \soergel\left(\NB{\tikz[font= \tiny]{\begin{scope}[rotate=180]
  \coordinate (t) at (0,0.5);
  \coordinate (o) at (0,-0.30);
  \coordinate (l) at (-1,-1.5);
  \coordinate (r) at ( 1,-1.5);
  \draw[-<] (o) -- (t) node [pos=1, below] {$a+b$};
  \draw[<-] (l) .. controls +(0, 0.3) and +(-1,-1).. (o)  node[pos=0,
  above] {$b$} coordinate[pos=0.7] (twistl);
  \draw[<-] (r) .. controls +(0, 0.3) and +(1,-1).. (o) node[pos=0,
  above] {$a$} coordinate[pos=0.7] (twistr);
    \filldraw[draw= green!50!black, fill = white] (twistl) circle
  (1mm) node[right, green!50!black] {$a$};
  \filldraw[draw= green!50!black, fill = white] (twistr) circle
  (1mm) node[left, green!50!black] {$b$};
\end{scope}

}}
      \right), \label{eq:ft-positive-Y-ab}% 
                \\
    \soergel\left(\NB{\tikz[font= \tiny]{\begin{scope}[rotate=180]
  \coordinate (t) at (0,0.5);
  \coordinate (o) at (0,0);
  \coordinate (l) at (-1,-1.5);
  \coordinate (r) at ( 1,-1.5);
  \draw[-<] (o) -- (t) node [pos=1, below] {$a+b$};
  \draw[<-] (l) .. controls +(0, 0.3) and +(1,-1).. (o)
  coordinate[pos=0.56] (c) node[pos=0, above] {$b$};
    \fill[white] (c) circle (1mm);
  \draw[<-] (r) .. controls +(0, 0.3) and +(-1,-1).. (o) node[pos=0, above] {$a$};

\end{scope}

}}
    \right)
    &\simeq
   q^{ab} \soergel\left(\NB{\tikz[font= \tiny]{\begin{scope}[rotate=180]
  \coordinate (t) at (0,0.5);
  \coordinate (o) at (0,-0.30);
  \coordinate (l) at (-1,-1.5);
  \coordinate (r) at ( 1,-1.5);
  \draw[-<] (o) -- (t) node [pos=1, below] {$a+b$};
  \draw[<-] (l) .. controls +(0, 0.3) and +(-1,-1).. (o)  node[pos=0,
  above] {$b$} coordinate[pos=0.7] (twistl);
  \draw[<-] (r) .. controls +(0, 0.3) and +(1,-1).. (o) node[pos=0,
  above] {$a$} coordinate[pos=0.7] (twistr);
    \filldraw[draw= green!50!black, fill = white] (twistl) circle
  (1mm) node[right, green!50!black] {$-a$};
  \filldraw[draw= green!50!black, fill = white] (twistr) circle
  (1mm) node[left, green!50!black] {$-b$};
\end{scope}

}}
    \right) \label{eq:ft-negative-Y-ab}.
  \end{align}
\end{cor}

\begin{proof}
  All cases are similar and may be proved by induction. We only give details
  for (\ref{eq:ft-positive-ab}). First we suppose that $b=1$ and argue
  by induction on $a$. The case $a=1$ is given by
  Lemma~\ref{lem:elementary-ft}. Using the blist hypothesis, it is
  enough to prove:
  \[
    \soergel\left(\NB{\tikz[font= \tiny]{\begin{scope}
  \coordinate (t) at (0,0.5);
  \coordinate (o) at (0,0);
  \coordinate (l) at (-0.75,-1.5);
  \coordinate (ll) at (-1.5,-1.5);
  \coordinate (r) at ( 1,-1.5);
  \draw[->] (o) -- (t) node [pos=1, above] {$a+2$};
  \draw[>-] (r) .. controls +(0, 0.3) and +(-1,-1).. (o)  coordinate[pos=0.6] (c) node[pos=0, below] {$1$};
  \fill[white] (c) circle (1mm);
  \draw[>-] (ll) .. controls +(0, 0.3) and +(1,-1).. (o) node[pos=0,
  below] {$a$} coordinate[pos =0.4] (i);
  \draw[>-] (l) .. controls +(0, 0.3) and +(0,-0).. (i) node[pos=0, below] {$1$};
\end{scope}

}}
    \right)
    \simeq
   q^{-a-1} \soergel\left(\NB{\tikz[font= \tiny]{\begin{scope}
  \coordinate (t) at (0,0.5);
  \coordinate (o) at (0,0);
  \coordinate (l) at (-0.75,-1.5);
  \coordinate (ll) at (-1.5,-1.5);
  \coordinate (r) at ( 1,-1.5);
  \draw[->] (o) -- (t) node [pos=1, above] {$a+2$};
  \draw[>-] (r) .. controls +(0, 0.3) and +(1,-1).. (o)
  coordinate[pos=0.6] (c) node[pos=0, below] {$1$} coordinate[pos=0.7] (twistr);
  \draw[>-] (ll) .. controls +(0, 0.3) and +(-1,-1).. (o) node[pos=0,
  below] {$a$} coordinate[pos =0.5] (i) coordinate[pos=0.8] (twistl);;
  \draw[>-] (l) .. controls +(0, 0.3) and +(0,-0).. (i) node[pos=0, below] {$1$};
  \filldraw[draw= green!50!black, fill = white] (twistl) circle
  (1mm) node[left, green!50!black] {$1$};
  \filldraw[draw= green!50!black, fill = white] (twistr) circle
  (1mm) node[right, green!50!black] {$a+1$};
\end{scope}

}}
    \right).
  \]
  This is obtained as follows:
  \begin{align*}
    \soergel\left(\NB{\tikz[font= \tiny]{}}
    \right)&\simeq
    \soergel\left(\NB{\tikz[font= \tiny]{\begin{scope}
  \coordinate (t) at (0,0.5);
  \coordinate (o) at (0,0);
  \coordinate (l) at (-0.75,-1.5);
  \coordinate (ll) at (-1.5,-1.5);
  \coordinate (r) at ( 1,-1.5);
  \draw[->] (o) -- (t) node [pos=1, above] {$a+2$};
  \draw[>-] (r) .. controls +(0, 0.3) and +(-1,-1).. (o)
  coordinate[pos=0.65] (c1) coordinate[pos=0.48] (c2) node[pos=0, below] {$1$} coordinate[pos=
  0.95] (oo);
  \fill[white] (c2) circle (1mm);
  \fill[white] (c1) circle (1mm);
  \draw[>-] (ll) .. controls +(0, 0.3) and +(0.8,-0.8).. (oo) node[pos=0,
  below] {$a$} coordinate[pos =0.4] (i);
  \draw[>-] (l) .. controls +(0, 0.3) and +(1.2,-1.2).. (o) node[pos=0,
  below] {$1$} coordinate[pos =0.4] (i);
\end{scope}

}}
    \right)\simeq
    q^{-a}\soergel\left(\NB{\tikz[font= \tiny]{\begin{scope}
  \coordinate (t) at (0,0.5);
  \coordinate (o) at (0,0);
  \coordinate (oo) at (-0.3,-0.30);
  \coordinate (l) at (-0.75,-1.5);
  \coordinate (ll) at (-1.5,-1.5);
  \coordinate (r) at ( 1,-1.5);
  \draw[->-] (oo) -- (o);
  \draw[->] (o) -- (t) node [pos=1, above] {$a+2$};
  \draw[>-] (r) .. controls +(0, 0.3) and +( 1,-1).. (oo)
  coordinate[pos=0.75] (c1)  node[pos=0, below] {$1$} coordinate[pos=0.92] (twistr);
  \fill[white] (c1) circle (1mm);
  \draw[>-] (ll) .. controls +(0, 0.3) and +(-0.8,-0.8).. (oo) node[pos=0,
  below] {$a$}  coordinate[pos = 0.6] (twistll);
  \draw[>-] (l) .. controls +(0, 0.3) and +(1.2,-1.2).. (o) node[pos=0,
  below] {$1$} ;
  \filldraw[draw= green!50!black, fill = white] (twistr) circle
  (1mm) node[below, green!50!black] {$a$};
  \filldraw[draw= green!50!black, fill = white] (twistll) circle
  (1mm) node[left, green!50!black] {$1$};

\end{scope}

}}
    \right)\\
           &\simeq
    q^{-a}\soergel\left(\NB{\tikz[font= \tiny]{\begin{scope}
  \coordinate (t) at (0,0.5);
  \coordinate (o) at (0,0);
  \coordinate (oo) at (0.3,-0.30);
  \coordinate (l) at (-0.75,-1.5);
  \coordinate (ll) at (-1.5,-1.5);
  \coordinate (r) at ( 1,-1.5);
  \draw[->-] (oo) -- (o);
  \draw[->] (o) -- (t) node [pos=1, above] {$a+2$};
  \draw[>-] (r) .. controls +(0, 0.3) and +( -1,-1).. (oo)
  coordinate[pos=0.585] (c1)  node[pos=0, below] {$1$} coordinate[pos=0.32] (twistr);
  \fill[white] (c1) circle (1mm);
  \draw[>-] (ll) .. controls +(0, 0.3) and +(-0.8,-0.8).. (o) node[pos=0,
  below] {$a$}  coordinate[pos = 0.6] (twistll);
  \draw[>-] (l) .. controls +(0, 0.3) and +(0.8,-0.8).. (oo) node[pos=0,
  below] {$1$} ;
  \filldraw[draw= green!50!black, fill = white] (twistr) circle
  (1mm) node[above, green!50!black] {$a$};
  \filldraw[draw= green!50!black, fill = white] (twistll) circle
  (1mm) node[left, green!50!black] {$1$};
\end{scope}

}}
    \right)\simeq
    q^{-a-1}\soergel\left(\NB{\tikz[font= \tiny]{\begin{scope}
  \coordinate (t) at (0,0.5);
  \coordinate (o) at (0,0);
  \coordinate (oo) at (0.3,-0.30);
  \coordinate (l) at (-0.75,-1.5);
  \coordinate (ll) at (-1.5,-1.5);
  \coordinate (r) at ( 1,-1.5);
  \draw[->-] (oo) -- (o);
  \draw[->] (o) -- (t) node [pos=1, above] {$a+2$};
  \draw[>-] (r) .. controls +(0, 0.3) and +( 0.8,-0.8).. (oo)
  coordinate[pos=0.585] (c1)  node[pos=0, below] {$1$} coordinate[pos=0.5] (twistr);
  \draw[>-] (ll) .. controls +(0, 0.3) and +(-0.8,-0.8).. (o) node[pos=0,
  below] {$a$}  coordinate[pos = 0.6] (twistll);
  \draw[>-] (l) .. controls +(0, 0.3) and +(-0.8,-0.8).. (oo) node[pos=0,
  below] {$1$} coordinate[pos = 0.6] (twistl);
  \filldraw[draw= green!50!black, fill = white] (twistr) circle
  (1mm) node[right, green!50!black] {$a+1$};
  \filldraw[draw= green!50!black, fill = white] (twistll) circle
  (1mm) node[left, green!50!black] {$1$};
  \filldraw[draw= green!50!black, fill = white] (twistl) circle
  (1mm) node[right, green!50!black] {$1$};

\end{scope}

}}
    \right)\\
           &\simeq
    q^{-a-1}\soergel\left(\NB{\tikz[font= \tiny]{}}
    \right).  
  \end{align*}
  The first isomorphism follows from a forkslide move and associativity.  The second isomorphism uses the induction hypothesis.  The third isomorphism is a consequence of a twist slide and associativity.  The fourth isomorphism uses Lemma \ref{lem:elementary-ft}.  The last isomorphism is a consequence of associativity and green dot migration.

  Then one can argue by induction on $b$ in a similar fashion: split
  the strand labeled $b+1$ and use the forkslide move. Then use the twist slide and the induction
  hypothesis to obtain the desired isomorphism.
\end{proof}

In the following corollary, $\Delta_k$ is the braid usually called the
\emph{positive half-twist} on $k$ strands. There are two notions of
twist at play here. We hope this will not create too much confusion.

\begin{cor}\label{cor:ft-half-twists}
  For any positive integer $k$, the following isomorphisms hold in the relative homotopy category:
  \begin{align}
    \soergel\left(\NB{\tikz[font= \tiny]{\begin{scope}
  \coordinate (t) at (0,1);
  \coordinate (o) at (0,0.5);
  \coordinate (a0) at (-1.5, -1);
  \coordinate (a1) at (-1, -1);
  \coordinate (a2) at (-0.5, -1);
  \node (a3) at (0.25, -1) {$\dots$};
  \coordinate (a5) at (1.5, -1);
  \coordinate (a4) at (1, -1);
  \coordinate (d0) at (-1.5, -2.5);
  \coordinate (d1) at (-1, -2.5);
  \coordinate (d2) at (-0.5, -2.5);
  \node (d3) at (0.25, -2.5) {$\dots$};
  \coordinate (d5) at (1.5, -2.5);
  \coordinate (d4) at (1, -2.5);
  \coordinate (B) at (-1.6, -1.2);
  \draw[->] (o) -- (t) node [above, pos = 1] {$k$};
  \draw (a0) .. controls +(0, 0.2) and +(0,0) .. (o) coordinate[pos
  =0.2] (b1) coordinate[pos =0.25] (b1) coordinate[pos =0.4] (b2) coordinate[pos =0.75] (b4);
  \draw[->-] (a5) .. controls +(0, 0.2)  and +(0,0) .. (o) ;
  \draw[->-] (a1) .. controls +(0, 0.15) and +(0,0) .. (b1);
  \draw[->-] (a2) .. controls +(0, 0.2)  and +(0,0) .. (b2);
  \draw[->-] (a4) .. controls +(0, 0.2)  and +(0,0) .. (b4);
  \draw[<-] (a0)  -- + (0,-0.2) node [pos =0, left] {$1$};;
  \draw (a1)  -- + (0,-0.2) node [pos =0, left] {$1$};
  \draw (a2)  -- + (0,-0.2) node [pos =0, right] {$1$};
  \draw (a4)  -- + (0,-0.2) node [pos =0, left] {$1$};
  \draw (a5)  -- + (0,-0.2) node [pos =0, right] {$1$};
  \draw (B) rectangle +(3.2,-0.8) node[pos =0.5, font=\small] {$\Delta_k$};
  \draw[>-] (d0)  -- + (0, 0.5) node [pos =0, below] {$1$};
  \draw[>-] (d1)  -- + (0, 0.5) node [pos =0, below] {$1$};
  \draw[>-] (d2)  -- + (0, 0.5) node [pos =0, below] {$1$};
  \draw[>-] (d4)  -- + (0, 0.5) node [pos =0, below] {$1$};
  \draw[>-] (d5)  -- + (0, 0.5) node [pos =0, below] {$1$};

\end{scope}

}}
    \right) 
    &\simeq
q^{\frac{-k(k-1)}{2}}    \soergel\left(\NB{\tikz[font= \tiny]{\begin{scope}
  \coordinate (t) at (0,1);
  \coordinate (o) at (0,-0);
  \coordinate (a0) at (-1.5, -2.5);
  \coordinate (a1) at (-1, -2.5);
  \coordinate (a2) at (-0.5, -2.5);
  \node (a3) at (0.25, -2.5) {$\dots$};
  \coordinate (a5) at (1.5, -2.5);
  \coordinate (a4) at (1, -2.5);
  \draw[->] (o) -- (t) node [above, pos = 1] {$k$} coordinate[pos
  =0.5] (g);
  \draw (a0) .. controls +(0, 0.2) and +(0,0) .. (o) coordinate[pos
  =0.2] (b1) coordinate[pos =0.25] (b1) coordinate[pos =0.4] (b2) coordinate[pos =0.75] (b4);
  \draw (a5) .. controls +(0, 0.2)  and +(0,0) .. (o) ;
  \draw (a1) .. controls +(0, 0.15) and +(0,0) .. (b1);
  \draw (a2) .. controls +(0, 0.2)  and +(0,0) .. (b2);
  \draw (a4) .. controls +(0, 0.2)  and +(0,0) .. (b4);
  \draw[-<] (a0)  -- + (0,-0.2) node [pos =1, below] {$1$};;
  \draw[-<] (a1)  -- + (0,-0.2) node [pos =1, below] {$1$};
  \draw[-<] (a2)  -- + (0,-0.2) node [pos =1, below] {$1$};
  \draw[-<] (a4)  -- + (0,-0.2) node [pos =1, below] {$1$};
  \draw[-<] (a5)  -- + (0,-0.2) node [pos =1, below] {$1$};
  \filldraw[draw= green!50!black, fill = white] (g) circle
  (1mm) node[left, green!50!black] {$k-1$};
\end{scope}

}}
      \right), \label{eq:ft-positive-delta} \\ % 
    \soergel\left(\NB{\tikz[font= \tiny]{\begin{scope}
  \coordinate (t) at (0,1);
  \coordinate (o) at (0,0.5);
  \coordinate (a0) at (-1.5, -1);
  \coordinate (a1) at (-1, -1);
  \coordinate (a2) at (-0.5, -1);
  \node (a3) at (0.25, -1) {$\dots$};
  \coordinate (a5) at (1.5, -1);
  \coordinate (a4) at (1, -1);
  \coordinate (d0) at (-1.5, -2.5);
  \coordinate (d1) at (-1, -2.5);
  \coordinate (d2) at (-0.5, -2.5);
  \node (d3) at (0.25, -2.5) {$\dots$};
  \coordinate (d5) at (1.5, -2.5);
  \coordinate (d4) at (1, -2.5);
  \coordinate (B) at (-1.6, -1.2);
  \draw[->] (o) -- (t) node [above, pos = 1] {$k$};
  \draw (a0) .. controls +(0, 0.2) and +(0,0) .. (o) coordinate[pos
  =0.2] (b1) coordinate[pos =0.25] (b1) coordinate[pos =0.4] (b2) coordinate[pos =0.75] (b4);
  \draw[->-] (a5) .. controls +(0, 0.2)  and +(0,0) .. (o) ;
  \draw[->-] (a1) .. controls +(0, 0.15) and +(0,0) .. (b1);
  \draw[->-] (a2) .. controls +(0, 0.2)  and +(0,0) .. (b2);
  \draw[->-] (a4) .. controls +(0, 0.2)  and +(0,0) .. (b4);
  \draw[<-] (a0)  -- + (0,-0.2) node [pos =0, left] {$1$};;
  \draw (a1)  -- + (0,-0.2) node [pos =0, left] {$1$};
  \draw (a2)  -- + (0,-0.2) node [pos =0, right] {$1$};
  \draw (a4)  -- + (0,-0.2) node [pos =0, left] {$1$};
  \draw (a5)  -- + (0,-0.2) node [pos =0, right] {$1$};
  \draw (B) rectangle +(3.2,-0.8) node[pos =0.5, font=\small] {$\Delta_k^{-1}$};
  \draw[>-] (d0)  -- + (0, 0.5) node [pos =0, below] {$1$};
  \draw[>-] (d1)  -- + (0, 0.5) node [pos =0, below] {$1$};
  \draw[>-] (d2)  -- + (0, 0.5) node [pos =0, below] {$1$};
  \draw[>-] (d4)  -- + (0, 0.5) node [pos =0, below] {$1$};
  \draw[>-] (d5)  -- + (0, 0.5) node [pos =0, below] {$1$};

\end{scope}

}}
    \right)
    &\simeq
 q^{\frac{k(k-1)}{2}}    \soergel\left(\NB{\tikz[font= \tiny]{\begin{scope}
  \coordinate (t) at (0,1);
  \coordinate (o) at (0,-0);
  \coordinate (a0) at (-1.5, -2.5);
  \coordinate (a1) at (-1, -2.5);
  \coordinate (a2) at (-0.5, -2.5);
  \node (a3) at (0.25, -2.5) {$\dots$};
  \coordinate (a5) at (1.5, -2.5);
  \coordinate (a4) at (1, -2.5);
  \draw[->] (o) -- (t) node [above, pos = 1] {$k$} coordinate[pos
  =0.5] (g);
  \draw (a0) .. controls +(0, 0.2) and +(0,0) .. (o) coordinate[pos
  =0.2] (b1) coordinate[pos =0.25] (b1) coordinate[pos =0.4] (b2) coordinate[pos =0.75] (b4);
  \draw (a5) .. controls +(0, 0.2)  and +(0,0) .. (o) ;
  \draw (a1) .. controls +(0, 0.15) and +(0,0) .. (b1);
  \draw (a2) .. controls +(0, 0.2)  and +(0,0) .. (b2);
  \draw (a4) .. controls +(0, 0.2)  and +(0,0) .. (b4);
  \draw[-<] (a0)  -- + (0,-0.2) node [pos =1, below] {$1$};;
  \draw[-<] (a1)  -- + (0,-0.2) node [pos =1, below] {$1$};
  \draw[-<] (a2)  -- + (0,-0.2) node [pos =1, below] {$1$};
  \draw[-<] (a4)  -- + (0,-0.2) node [pos =1, below] {$1$};
  \draw[-<] (a5)  -- + (0,-0.2) node [pos =1, below] {$1$};
  \filldraw[draw= green!50!black, fill = white] (g) circle
  (1mm) node[left, green!50!black] {$1-k$};
\end{scope}

}}
    \right), \label{eq:ft-negative-delta} \\
        \soergel\left(\NB{\tikz[font= \tiny]{\begin{scope}[yscale=-1]
  \coordinate (t) at (0,1);
  \coordinate (o) at (0,0.5);
  \coordinate (a0) at (-1.5, -1);
  \coordinate (a1) at (-1, -1);
  \coordinate (a2) at (-0.5, -1);
  \node (a3) at (0.25, -1) {$\dots$};
  \coordinate (a5) at (1.5, -1);
  \coordinate (a4) at (1, -1);
  \coordinate (d0) at (-1.5, -2.5);
  \coordinate (d1) at (-1, -2.5);
  \coordinate (d2) at (-0.5, -2.5);
  \node (d3) at (0.25, -2.5) {$\dots$};
  \coordinate (d5) at (1.5, -2.5);
  \coordinate (d4) at (1, -2.5);
  \coordinate (B) at (-1.6, -1.2);
  \draw[-<] (o) -- (t) node [below, pos = 1] {$k$};
  \draw (a0) .. controls +(0, 0.2) and +(0,0) .. (o) coordinate[pos
  =0.2] (b1) coordinate[pos =0.25] (b1) coordinate[pos =0.4] (b2) coordinate[pos =0.75] (b4);
  \draw[-<-] (a5) .. controls +(0, 0.2)  and +(0,0) .. (o) ;
  \draw[-<-] (a1) .. controls +(0, 0.15) and +(0,0) .. (b1);
  \draw[-<-] (a2) .. controls +(0, 0.2)  and +(0,0) .. (b2);
  \draw[-<-] (a4) .. controls +(0, 0.2)  and +(0,0) .. (b4);
  \draw[>-] (a0)  -- + (0,-0.2) node [pos =0, left] {$1$};;
  \draw (a1)  -- + (0,-0.2) node [pos =0, left] {$1$};
  \draw (a2)  -- + (0,-0.2) node [pos =0, right] {$1$};
  \draw (a4)  -- + (0,-0.2) node [pos =0, left] {$1$};
  \draw (a5)  -- + (0,-0.2) node [pos =0, right] {$1$};
  \draw (B) rectangle +(3.2,-0.8) node[pos =0.5, font=\small] {$\Delta_k$};
  \draw[<-] (d0)  -- + (0, 0.5) node [pos =0, above] {$1$};
  \draw[<-] (d1)  -- + (0, 0.5) node [pos =0, above] {$1$};
  \draw[<-] (d2)  -- + (0, 0.5) node [pos =0, above] {$1$};
  \draw[<-] (d4)  -- + (0, 0.5) node [pos =0, above] {$1$};
  \draw[<-] (d5)  -- + (0, 0.5) node [pos =0, above] {$1$};

\end{scope}

}}
    \right)
    &\simeq
q^{\frac{-k(k-1)}{2}}     \soergel\left(\NB{\tikz[font= \tiny]{\begin{scope}[yscale=-1]
  \coordinate (t) at (0,1);
  \coordinate (o) at (0,-0);
  \coordinate (a0) at (-1.5, -2.5);
  \coordinate (a1) at (-1, -2.5);
  \coordinate (a2) at (-0.5, -2.5);
  \node (a3) at (0.25, -2.5) {$\dots$};
  \coordinate (a5) at (1.5, -2.5);
  \coordinate (a4) at (1, -2.5);
  \draw[-<] (o) -- (t) node [below, pos = 1] {$k$} coordinate[pos
  =0.5] (g);
  \draw (a0) .. controls +(0, 0.2) and +(0,0) .. (o) coordinate[pos
  =0.2] (b1) coordinate[pos =0.25] (b1) coordinate[pos =0.4] (b2) coordinate[pos =0.75] (b4);
  \draw (a5) .. controls +(0, 0.2)  and +(0,0) .. (o) ;
  \draw (a1) .. controls +(0, 0.15) and +(0,0) .. (b1);
  \draw (a2) .. controls +(0, 0.2)  and +(0,0) .. (b2);
  \draw (a4) .. controls +(0, 0.2)  and +(0,0) .. (b4);
  \draw[->] (a0)  -- + (0,-0.2) node [pos =1, above] {$1$};;
  \draw[->] (a1)  -- + (0,-0.2) node [pos =1, above] {$1$};
  \draw[->] (a2)  -- + (0,-0.2) node [pos =1, above] {$1$};
  \draw[->] (a4)  -- + (0,-0.2) node [pos =1, above] {$1$};
  \draw[->] (a5)  -- + (0,-0.2) node [pos =1, above] {$1$};
  \filldraw[draw= green!50!black, fill = white] (g) circle
  (1mm) node[left, green!50!black] {$k-1$};
\end{scope}

}}
      \right) \label{eq:ft-positive-Y-delta},% 
                \\
    \soergel\left(\NB{\tikz[font= \tiny]{\begin{scope}[yscale=-1]
  \coordinate (t) at (0,1);
  \coordinate (o) at (0,0.5);
  \coordinate (a0) at (-1.5, -1);
  \coordinate (a1) at (-1, -1);
  \coordinate (a2) at (-0.5, -1);
  \node (a3) at (0.25, -1) {$\dots$};
  \coordinate (a5) at (1.5, -1);
  \coordinate (a4) at (1, -1);
  \coordinate (d0) at (-1.5, -2.5);
  \coordinate (d1) at (-1, -2.5);
  \coordinate (d2) at (-0.5, -2.5);
  \node (d3) at (0.25, -2.5) {$\dots$};
  \coordinate (d5) at (1.5, -2.5);
  \coordinate (d4) at (1, -2.5);
  \coordinate (B) at (-1.6, -1.2);
  \draw[-<] (o) -- (t) node [below, pos = 1] {$k$};
  \draw (a0) .. controls +(0, 0.2) and +(0,0) .. (o) coordinate[pos
  =0.2] (b1) coordinate[pos =0.25] (b1) coordinate[pos =0.4] (b2) coordinate[pos =0.75] (b4);
  \draw[-<-] (a5) .. controls +(0, 0.2)  and +(0,0) .. (o) ;
  \draw[-<-] (a1) .. controls +(0, 0.15) and +(0,0) .. (b1);
  \draw[-<-] (a2) .. controls +(0, 0.2)  and +(0,0) .. (b2);
  \draw[-<-] (a4) .. controls +(0, 0.2)  and +(0,0) .. (b4);
  \draw[>-] (a0)  -- + (0,-0.2) node [pos =0, left] {$1$};;
  \draw (a1)  -- + (0,-0.2) node [pos =0, left] {$1$};
  \draw (a2)  -- + (0,-0.2) node [pos =0, right] {$1$};
  \draw (a4)  -- + (0,-0.2) node [pos =0, left] {$1$};
  \draw (a5)  -- + (0,-0.2) node [pos =0, right] {$1$};
  \draw (B) rectangle +(3.2,-0.8) node[pos =0.5, font=\small] {$\Delta_k^{-1}$};
  \draw[<-] (d0)  -- + (0, 0.5) node [pos =0, above] {$1$};
  \draw[<-] (d1)  -- + (0, 0.5) node [pos =0, above] {$1$};
  \draw[<-] (d2)  -- + (0, 0.5) node [pos =0, above] {$1$};
  \draw[<-] (d4)  -- + (0, 0.5) node [pos =0, above] {$1$};
  \draw[<-] (d5)  -- + (0, 0.5) node [pos =0, above] {$1$};

\end{scope}

}}
    \right)
    &\simeq
q^{\frac{k(k-1)}{2}}     \soergel\left(\NB{\tikz[font= \tiny]{\begin{scope}[yscale=-1]
  \coordinate (t) at (0,1);
  \coordinate (o) at (0,-0);
  \coordinate (a0) at (-1.5, -2.5);
  \coordinate (a1) at (-1, -2.5);
  \coordinate (a2) at (-0.5, -2.5);
  \node (a3) at (0.25, -2.5) {$\dots$};
  \coordinate (a5) at (1.5, -2.5);
  \coordinate (a4) at (1, -2.5);
  \draw[-<] (o) -- (t) node [below, pos = 1] {$k$} coordinate[pos
  =0.5] (g);
  \draw (a0) .. controls +(0, 0.2) and +(0,0) .. (o) coordinate[pos
  =0.2] (b1) coordinate[pos =0.25] (b1) coordinate[pos =0.4] (b2) coordinate[pos =0.75] (b4);
  \draw (a5) .. controls +(0, 0.2)  and +(0,0) .. (o) ;
  \draw (a1) .. controls +(0, 0.15) and +(0,0) .. (b1);
  \draw (a2) .. controls +(0, 0.2)  and +(0,0) .. (b2);
  \draw (a4) .. controls +(0, 0.2)  and +(0,0) .. (b4);
  \draw[->] (a0)  -- + (0,-0.2) node [pos =1, above] {$1$};;
  \draw[->] (a1)  -- + (0,-0.2) node [pos =1, above] {$1$};
  \draw[->] (a2)  -- + (0,-0.2) node [pos =1, above] {$1$};
  \draw[->] (a4)  -- + (0,-0.2) node [pos =1, above] {$1$};
  \draw[->] (a5)  -- + (0,-0.2) node [pos =1, above] {$1$};
  \filldraw[draw= green!50!black, fill = white] (g) circle
  (1mm) node[left, green!50!black] {$1-k$};
\end{scope}

}}
    \right) \label{eq:ft-negative-Y-delta}.
  \end{align}
\end{cor}

\begin{proof}
  All cases are similar and proved by induction on $k$. We only give details
  for (\ref{eq:ft-positive-delta}).  The case $k=1$ is trivial and the
  case $k=2$ is given by
  Lemma~\ref{lem:elementary-ft}.

    \begin{align*}
    \soergel\left(\NB{\tikz[font= \tiny]{\begin{scope}
  \coordinate (t) at (0,1);
  \coordinate (o) at (0,0.5);
  \coordinate (a0) at (-1.5, -1);
  \coordinate (a1) at (-1, -1);
  \coordinate (a2) at (-0.5, -1);
  \node (a3) at (0.25, -1) {$\dots$};
  \coordinate (a5) at (1.5, -1);
  \coordinate (a4) at (1, -1);
  \coordinate (d0) at (-1.5, -2.5);
  \coordinate (d1) at (-1, -2.5);
  \coordinate (d2) at (-0.5, -2.5);
  \node (d3) at (0.25, -2.5) {$\dots$};
  \coordinate (d5) at (1.5, -2.5);
  \coordinate (d4) at (1, -2.5);
  \coordinate (B) at (-1.6, -1.2);
  \draw[->] (o) -- (t) node [above, pos = 1] {$k+1$};
  \draw (a0) .. controls +(0, 0.2) and +(0,0) .. (o) coordinate[pos
  =0.2] (b1) coordinate[pos =0.25] (b1) coordinate[pos =0.4] (b2) coordinate[pos =0.75] (b4);
  \draw[->-] (a5) .. controls +(0, 0.2)  and +(0,0) .. (o) ;
  \draw[->-] (a1) .. controls +(0, 0.15) and +(0,0) .. (b1);
  \draw[->-] (a2) .. controls +(0, 0.2)  and +(0,0) .. (b2);
  \draw[->-] (a4) .. controls +(0, 0.2)  and +(0,0) .. (b4);
  \draw[<-] (a0)  -- + (0,-0.2) node [pos =0, left] {$1$};;
  \draw (a1)  -- + (0,-0.2) node [pos =0, left] {$1$};
  \draw (a2)  -- + (0,-0.2) node [pos =0, right] {$1$};
  \draw (a4)  -- + (0,-0.2) node [pos =0, left] {$1$};
  \draw (a5)  -- + (0,-0.2) node [pos =0, right] {$1$};
  \draw (B) rectangle +(3.2,-0.8) node[pos =0.5, font=\small] {$\Delta_{k+1}$};
  \draw[>-] (d0)  -- + (0, 0.5) node [pos =0, below] {$1$};
  \draw[>-] (d1)  -- + (0, 0.5) node [pos =0, below] {$1$};
  \draw[>-] (d2)  -- + (0, 0.5) node [pos =0, below] {$1$};
  \draw[>-] (d4)  -- + (0, 0.5) node [pos =0, below] {$1$};
  \draw[>-] (d5)  -- + (0, 0.5) node [pos =0, below] {$1$};
\end{scope}

}}
    \right) 
      &\simeq
    \soergel\left(\NB{\tikz[font= \tiny]{\begin{scope}
  \coordinate (t) at (0,1);
  \coordinate (o) at (0,0.5);
  \coordinate (a0) at (-1.5, -1);
  \coordinate (a1) at (-1, -1);
  \coordinate (a2) at (-0.5, -1);
  \node (a3) at (0.25, -1) {$\dots$};
  \coordinate (a5) at (1.5, -1);
  \coordinate (a4) at (1, -1);
  \coordinate (d0) at (-1.5, -2.5);
  \coordinate (d1) at (-1, -2.5);
  \coordinate (d2) at (-0.5, -2.5);
  \node (dd) at (0.75, -2.5) {$\dots$};
  \coordinate (d5) at (1.5, -2.5);
  \coordinate (d4) at (1, -2.5);
  \coordinate (d3) at (0, -2.5);
  \coordinate (B) at (-1.6, -1.2);
  \draw[->] (o) -- (t) node [above, pos = 1] {$k+1$};
  \draw (a0) .. controls +(0, 0.2) and +(0,0) .. (o) coordinate[pos
  =0.2] (b1) coordinate[pos =0.25] (b1) coordinate[pos =0.4] (b2) coordinate[pos =0.75] (b4);
  \draw[->-] (a5) .. controls +(0, 0.2)  and +(0,0) .. (o) ;
  \draw[->-] (a1) .. controls +(0, 0.15) and +(0,0) .. (b1);
  \draw[->-] (a2) .. controls +(0, 0.2)  and +(0,0) .. (b2);
  \draw[->-] (a4) .. controls +(0, 0.2)  and +(0,0) .. (b4);
  \draw[<-] (a0)  -- + (0,-0.2) node [pos =0, left] {$1$};;
  \draw (a1)  -- + (0,-0.2) node [pos =0, left] {$1$};
  \draw (a2)  -- + (0,-0.2) node [pos =0, right] {$1$};
  \draw (a4)  -- + (0,-0.2) node [pos =0, left] {$1$};
  \draw (a5)  -- + (0,-0.2) node [pos =0, right] {$1$};
  \draw (B) rectangle +(2.7,-0.8) node[pos =0.5, font=\small] {$\Delta_{k}$};
  \draw[>-] (d1)  .. controls +(0,0.2) and +(0,-0.20) .. + (-0.5, 0.5) node [pos =0, below] {$1$};
  \draw[>-] (d2)  .. controls +(0,0.2) and +(0,-0.20) .. + (-0.5, 0.5) node [pos =0, below] {$1$};
  \draw[>-] (d3)  .. controls +(0,0.2) and +(0,-0.20) .. + (-0.5, 0.5) node [pos =0, below] {$1$};
  \draw[>-] (d5)  .. controls +(0,0.2) and +(0,-0.20) .. + (-0.5, 0.5) node [pos =0, below] {$1$};
  \draw[line width=1mm, white] (d0)  .. controls +(0,0.3) and +(0,-0.3) .. ++(3, 0.5);
  \draw[>-] (d0)  .. controls +(0,0.3) and +(0,-0.3) .. ++(3, 0.5)
  node [pos =0, below] {$1$} -- +(0,0.8);

\end{scope}}}
        \right) \simeq
         q^{\frac{-k(k-1)}{2}}   \soergel\left(\NB{\tikz[font=
        \tiny]{\begin{scope}
  \coordinate (t) at (0,1);
  \coordinate (o) at (0,0.5);
  \coordinate (a0) at (-1.5, -1);
  \coordinate (a1) at (-1, -1);
  \coordinate (a2) at (-0.5, -1);
  \coordinate (a5) at (1.5, -1);
  \coordinate (a4) at (1, -1);
  \coordinate (d0) at (-1.5, -2.5);
  \coordinate (d1) at (-1, -2.5);
  \coordinate (d2) at (-0.5, -2.5);
  \node (dd) at (0.75, -2.5) {$\dots$};
  \coordinate (d5) at (1.5, -2.5);
  \coordinate (d4) at (1, -2.5);
  \coordinate (d3) at (0, -2.5);
  \coordinate (B) at (-1.6, -1.2);
  \draw[->] (o) -- (t) node [above, pos = 1] {$k+1$};
  \draw (a0) .. controls +(0, 0.2) and +(0,0) .. (o) coordinate[pos
  =0.2] (b1) coordinate[pos =0.25] (b1) coordinate[pos =0.4] (b2)
  coordinate[pos =0.7] (b4) coordinate[pos= 0.8] (twist) ;
  \draw[->-] (a5) .. controls +(0, 0.2)  and +(0,0) .. (o) ;
  \draw[>-] (d1)  .. controls +(0,0.2) and +(0,-0.20) ..  (a0) node [pos =0, below] {$1$};
  \draw[>-] (d2)  .. controls +(0,0.2) and +(0,-0.20) ..  (b1) node [pos =0, below] {$1$};
  \draw[>-] (d3)  .. controls +(0,0.2) and +(0,-0.20) .. (b2) node [pos =0, below] {$1$};
  \draw[>-] (d5)  .. controls +(0,0.2) and +(0,-0.20) ..  (b4) node [pos =0, below] {$1$};
  \draw[line width=1mm, white] (d0)  .. controls +(0,0.3) and +(0,-0.3) .. (a5);
  \draw[>-] (d0)  .. controls +(0,0.3) and +(0,-0.3) .. (a5) node [pos =0, below] {$1$};
\filldraw[draw= green!50!black, fill = white] (twist) circle
  (1mm) node[left, green!50!black] {$k-1$};
\end{scope}

}} \right) \\
      &\simeq
   q^{\frac{-k(k+1)}{2}} \soergel\left(\NB{\tikz[font= \tiny]{\begin{scope}
  \coordinate (t) at (0,1);
  \coordinate (o) at (0.3,-0.5);
  \coordinate (oo) at (0,-0);
  \coordinate (a0) at (-1.5, -2.5);
  \coordinate (a1) at (-1, -2.5);
  \coordinate (a2) at (-0.5, -2.5);
  \node (a3) at (0.25, -2.5) {$\dots$};
  \coordinate (a5) at (1.5, -2.5);
  \coordinate (a4) at (1, -2.5);
  \draw (o) -- (oo) coordinate[pos=0.5] (twistr); 
  \draw[->] (oo) -- (t) node [above, pos = 1] {$k+1$} coordinate[pos
  =0.5] (g);
  \draw (a0) .. controls +(0, 0.2) and +(0,0) .. (oo) coordinate[pos
  =0.2] (b1) coordinate[pos = 0.5] (twistl);% 
  \draw (a5) .. controls +(0, 0.2)  and +(0,0) .. (o) ;
  \draw (a1) .. controls +(0, 0.2) and +(0,0) .. (o) coordinate[pos =0.3] (b2)
  coordinate[pos =0.7] (b4);
  \draw (a2) .. controls +(0, 0.15)  and +(0,0) .. (b2);
  \draw (a4) .. controls +(0, 0.2)  and +(0,0) .. (b4);
  \draw[-<] (a0)  -- + (0,-0.2) node [pos =1, below] {$1$};;
  \draw[-<] (a1)  -- + (0,-0.2) node [pos =1, below] {$1$};
  \draw[-<] (a2)  -- + (0,-0.2) node [pos =1, below] {$1$};
  \draw[-<] (a4)  -- + (0,-0.2) node [pos =1, below] {$1$};
  \draw[-<] (a5)  -- + (0,-0.2) node [pos =1, below] {$1$};
  \filldraw[draw= green!50!black, fill = white] (twistl) circle
   (1mm) node[left, green!50!black] {$k$};
  \filldraw[draw= green!50!black, fill = white] (twistr) circle
   (1mm) node[right, green!50!black] {$k-1$};
   \filldraw[draw= green!50!black, fill = white] (a1) circle
   (1mm) node[left, green!50!black] {$1$};
  \filldraw[draw= green!50!black, fill = white] (a2) circle
   (1mm) node[right, green!50!black] {$1$};
  \filldraw[draw= green!50!black, fill = white] (a4) circle
   (1mm) node[left, green!50!black] {$1$};
   \filldraw[draw= green!50!black, fill = white] (a5) circle
   (1mm) node[right, green!50!black] {$1$};
\end{scope}

}}
        \right) \simeq
        q^{\frac{-k(k+1)}{2}}    \soergel\left(\NB{\tikz[font=
        \tiny]{\begin{scope}
  \coordinate (t) at (0,1);
  \coordinate (o) at (0,-0);
  \coordinate (a0) at (-1.5, -2.5);
  \coordinate (a1) at (-1, -2.5);
  \coordinate (a2) at (-0.5, -2.5);
  \node (a3) at (0.25, -2.5) {$\dots$};
  \coordinate (a5) at (1.5, -2.5);
  \coordinate (a4) at (1, -2.5);
  \draw[->] (o) -- (t) node [above, pos = 1] {$k+1$} coordinate[pos
  =0.5] (g);
  \draw (a0) .. controls +(0, 0.2) and +(0,0) .. (o) coordinate[pos
  =0.2] (b1) coordinate[pos =0.25] (b1) coordinate[pos =0.4] (b2) coordinate[pos =0.75] (b4);
  \draw (a5) .. controls +(0, 0.2)  and +(0,0) .. (o) ;
  \draw (a1) .. controls +(0, 0.15) and +(0,0) .. (b1);
  \draw (a2) .. controls +(0, 0.2)  and +(0,0) .. (b2);
  \draw (a4) .. controls +(0, 0.2)  and +(0,0) .. (b4);
  \draw[-<] (a0)  -- + (0,-0.2) node [pos =1, below] {$1$};;
  \draw[-<] (a1)  -- + (0,-0.2) node [pos =1, below] {$1$};
  \draw[-<] (a2)  -- + (0,-0.2) node [pos =1, below] {$1$};
  \draw[-<] (a4)  -- + (0,-0.2) node [pos =1, below] {$1$};
  \draw[-<] (a5)  -- + (0,-0.2) node [pos =1, below] {$1$};
  \filldraw[draw= green!50!black, fill = white] (g) circle
  (1mm) node[left, green!50!black] {$k$};
\end{scope}

}} \right)
    \end{align*}
\end{proof}
The first isomorphism only uses the definition of the positive half-twist.  The second isomorphism holds by the induction hypothesis.  The third isomorphism follows from repeated use of Lemma \ref{lem:elementary-ft}, twist slides, associativity, and green dot migration.  The last isomorphism is a direct consequence of green dot migration and associativity.

\section{Main theorem}
\label{sec:coloredmarkII}
In this section we construct a categorification of the colored
Jones polynomial evaluated at a root of unity. Links are framed,
oriented and colored by
non-negative integers, and the integer $k$ represents the $k$th
symmetric power of the standard $U_q(\mathfrak{sl}_2)$-representation. These
colored links are presented as braid closures.  In particular braids
we consider are colored and the colorings of their top and bottom boundaries
match so that their closures give rise to well-defined colored
links. In what follows, such braids are simply called \emph{colored
  braids}. The usual Alexander and Markov theorems have natural analogs in this
context. One can deduce them immediately from the classical Alexander
and Markov theorems. For stating the first Markov move, one actually
needs to consider two colored braids whose top and bottom colors do not
necessarily match, but for which both concatenations are well-defined.

\subsection{Construction}
\label{sec:construction}
Here we repeat the construction of Section~\ref{sec:def-uncolored} in
the colored case.
Let $\beta$ be a colored braid diagram on $r$ strands (such colored
braids will be assumed to have matching top and bottom boundaries). It
is composed of colored crossings. As prescribed by \eqref{eqn:posab}
and \eqref{eqn:negab}, consider the hyperrectangle $R_\beta$ obtained by
replacing each colored crossing by the corresponding 
$H^\prime$-equivariant complex of Soergel bimodules given in Definition
\ref{def:rickard}, and flatten it to obtain an $H^\prime$-equivariant
complex of Soergel bimodules $\soergel(\beta)$. In the uncolored case (Section~\ref{sec:def-uncolored}), $\soergel(\beta)$ is denoted $T_\beta$.

The Hochschild homology $\mHH_\bullet^{\dif}(\soergel(\beta))$ of
$\soergel(\beta)$ is triply-graded by its Hochschild degree (or
$a$-degree), its quantum degree (or $q$-degree) and its topological
degree (or $t$-degree). It is endowed with two super differentials: the
Cautis differential $d_C$ with degree $(-1, 2, 0)$ and the topological
differential $d_t$ with degree $(0,0,1)$. We now form a chain complex with respect to the
\emph{total differential} $d_T:=d_t+d_C$, yielding a complex $\left(
  \mHH_\bullet^\dif(\soergel(\beta)), d_T \right)$ of
$H^\prime$-modules. Because of the disagreement of the degrees of
$d_t$ and $d_C$, we are forced to collapse the $a$-grading onto $t$ and
$q$ by
\begin{equation}\label{eqnatqdegreecollapse-r1}
  a=t^{-1}q^2.
\end{equation}
As a result, the complex $\left( \mHH_\bullet^\dif(\soergel(\beta)), d_T \right)$ is a 
bigraded complex of $H^\prime$-modules.

\begin{defn}
\label{def-HHH-colored}
Let $\beta$ be a colored braid on $r$ strands colored by $a_1, \dots , a_r$.
\begin{enumerate}
   \item[(1)]  The \emph{$H^\prime$-module} of $\beta$ is the bigraded
  $H^\prime$-representation
  \[
    \mC^{\prime}(\beta):=q^{-{\sum_{i=1}^r a_i}} \mHT\left( \mHH_\bullet^\dif(\soergel(\beta)), d_T \right) .
  \]
Here the notation $\mHT$ on the right hand side emphasizes that the homology is taken with respect to the total differential $d_T$.  
 \item[(2)] The \emph{$H$-module} of $\beta$ is the bigraded $H$-representation (see Corollary \ref{cor-p-nilpotency})
 \[
    \mC^{\dif}(\beta):=\mC^\prime(\beta)^{\mathrm{free}}\otimes_\Z \F_p,
  \]
  where $\mC^\prime(\beta)^{\mathrm{free}}$ denotes the free part of the $\Z$-module.
 \item[(3)]
  The \emph{slash cohomology} of $\beta$ is the slash
  cohomology (see \eqref{eqnslashcohomology}) of $\mC^\dif(\beta)$:
  \[
    \mH^{\dif}_/(\beta):=\mH_/ \left( \mC^\dif(\beta)
    \right).
  \]
  By definition, it is the image of the bigraded module $\mC^{\dif}(\beta)$ in
  the stable category $H\udmod$.
\end{enumerate}
\end{defn}

The definition of $\mC^{\prime}$ extends straightforwardly to diagram
of knotted MOY graphs.

\subsection{Invariance}
\label{sec:invariance-col}

\begin{prop}\label{prop:braidrel-col}
  In the relative homotopy category, the $H^\prime$-equivariant complex of
  Soergel bimodule $\soergel(\beta)$ is invariant under braid relations.
\end{prop}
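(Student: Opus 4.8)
The statement asserts that the $H^\prime$-equivariant complex $\soergel(\beta)$ associated to a colored braid $\beta$ is invariant, in the relative homotopy category $\somecategorypdg$, under the braid group relations: namely (i) the far-commutativity relation, and (ii) the braid relation $\sigma_i\sigma_{i+1}\sigma_i \cong \sigma_{i+1}\sigma_i\sigma_{i+1}$ for adjacently colored strands (where one must allow all sign combinations of crossings, though by the already-established invertibility of $T^{\pm}_{a,b}$ in the relative homotopy category it suffices to treat the positive generators). The approach is to \emph{bootstrap from the uncolored case} (Theorem~\ref{thm-braid-invariant}) using the forkslide isomorphisms of Proposition~\ref{prop:allpitches} and the blist hypothesis of Proposition~\ref{prop:blist-hyp}, exactly the strategy advertised in Section~\ref{sec:topred}. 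The key geometric idea is that a colored strand labeled $a$ may be locally ``cabled'' — resolved into $a$ strands each labeled $1$ — by splitting at a trivalent vertex; braid moves among thick strands then reduce to braid moves among thin strands (where Theorem~\ref{thm-braid-invariant} applies) once one knows that crossings and trivalent vertices can be slid past one another.

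\textbf{Step 1: Far commutativity.} If $|i-j|>1$ the relevant Soergel bimodules act on disjoint sets of variables, so $\soergel(\sigma_i^{\pm})\otimes_A \soergel(\sigma_j^{\pm}) \cong \soergel(\sigma_j^{\pm})\otimes_A \soergel(\sigma_i^{\pm})$ as complexes of $H^\prime$-equivariant bimodules essentially on the nose: the two tensor factors, and their $H^\prime$-structures, are supported on independent polynomial subalgebras. This requires only unwinding Definition~\ref{def:rickard} and checking the differentials commute with the flip, which is immediate because both are built from the elementary maps of Lemma~\ref{lem:pdgmaps} acting on disjoint variable blocks.

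\textbf{Step 2: The braid relation, reduction via blisting.} For three adjacent strands colored $a,b,c$, both $\soergel(\sigma_a\sigma_b\sigma_a)$ and $\soergel(\sigma_b\sigma_a\sigma_b)$ are complexes whose non-equivariant endomorphism rings satisfy the hypothesis of Proposition~\ref{prop:blist-hyp} — this is precisely what Lemma~\ref{lem:some-dim-end-space}\eqref{eq:dim-end-r3} guarantees (the $R3$ diagram has endomorphism graded rank $\prod (1-q^{2k})^{-1}$, positively graded with free rank-one degree-zero part). Hence, by Proposition~\ref{prop:blist-hyp}, it suffices to exhibit a relative-homotopy isomorphism between the fully blisted versions of the two sides. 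Blisting each of the three legs fully replaces every strand of color $a$ (resp. $b$, $c$) by a bundle of thin strands meeting a tree of merge/split vertices at the boundary; the interior of the diagram now consists of thin-strand crossings together with trivalent vertices.

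\textbf{Step 3: Sliding vertices through crossings and invoking the uncolored $R3$.} With the legs blisted, I push the boundary trees of trivalent vertices into the bulk using the forkslide isomorphisms (Proposition~\ref{prop:allpitches}); each application costs only an overall grading shift and a twist in the $H^\prime$-structure, which are tracked by green-dot bookkeeping (Definition~\ref{defn:greendots}, Lemma~\ref{lem:iso-geen-dots} for green-dot migration, and Proposition~\ref{prop:twist-slides} for sliding twists past crossings). After all vertices have been slid out of the braided region, the braided region is a genuine uncolored braid word on $a+b+c$ thin strands, to which Theorem~\ref{thm-braid-invariant} applies: the two thin-strand braid words representing the two sides of $R3$ (cabled) are related by ordinary thin braid relations, hence their $\soergel(\cdot)$ are isomorphic in $\somecategorypdg$. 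Reassembling — sliding the trivalent trees back to the boundary on the other side, again via forkslides — produces the blisted version of $\soergel(\sigma_b\sigma_a\sigma_b)$, with a net grading shift and net twist that one checks cancels (this cancellation is forced, since on the non-equivariant level it must reproduce the classical colored $R3$, and the rank-one degree-zero endomorphism space from Step 2 rigidifies the isomorphism up to sign). This step also uses Corollary~\ref{cor:ft-half-twists} to handle the half-twist configurations of thin strands produced when a thick crossing is cabled. Applying the blist hypothesis then transfers the isomorphism from the blisted diagrams back to the original colored ones.

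\textbf{Main obstacle.} The genuinely delicate point is the bookkeeping of $H^\prime$-structure twists throughout Step 3: each forkslide and each twist-slide introduces a linear-polynomial twist (a green dot), and one must verify that the accumulated twists on the two sides agree so that the final comparison is an isomorphism \emph{of $H^\prime$-equivariant} complexes, not merely of underlying complexes. Here the blist hypothesis is doing essential work — Proposition~\ref{prop:blist-hyp} says that once the underlying non-equivariant isomorphism exists and the degree-zero endomorphism space is free of rank one, any non-equivariant isomorphism between the blisted complexes that commutes with $\dif$ (which one arranges by composing the $H^\prime$-equivariant forkslide and twist-slide isomorphisms of Propositions~\ref{prop:allpitches} and~\ref{prop:twist-slides}) automatically descends to an $H^\prime$-equivariant isomorphism of the unblisted complexes. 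So the conceptual load is really carried by the earlier sections; the proof here is the assembly of those pieces in the correct order, with the sign/shift/twist accounting being the one place where care is required.
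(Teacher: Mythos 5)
Your proposal is correct and follows essentially the same route as the paper: reduce the colored Reidemeister II and III moves to their fully blisted (cabled) versions via Proposition~\ref{prop:blist-hyp} and Lemma~\ref{lem:some-dim-end-space}, then slide the trivalent trees through the braided region with the forkslide isomorphisms and invoke the uncolored Theorem~\ref{thm-braid-invariant}. One small caution: the invertibility of the colored $\rickardp{a}{b}$ is \emph{not} established prior to this proposition (only the uncolored case is), so the colored Reidemeister II move must itself be proved by the same blist-and-forkslide reduction rather than cited as known — which is exactly what the paper does.
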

\begin{proof}
  One only need to prove that:
  \[
\soergel\left(\NB{\tikz[scale =0.8]{\begin{scope}[font=\tiny]
  \draw (0.5, -0.5) ..controls +(0,0.3) and +(0,-0.3) .. (-0.5,
  0.5);% 
  \fill[white] (0,0) circle (2mm);
  \draw (-0.5, -0.5) ..controls +(0,0.3) and +(0,-0.3) .. (0.5,
  0.5);% 
  \draw[->] (-0.5, 0.5) ..controls +(0,0.3) and +(0,-0.3) .. (0.5,
  1.5) node[pos=1, above] {$a$} coordinate[pos =0.2] (t1);
  \fill[white] (0,1) circle (2mm);
  \draw[->] (0.5, 0.5) ..controls +(0,0.3) and +(0,-0.3) .. (-0.5,
  1.5) node[pos=1, above] {$b$} coordinate[pos =0.2] (t2);
\end{scope}}} \right) \simeq
\soergel\left(\NB{\tikz[scale =0.8]{}} \right) \simeq
\soergel\left(\NB{\tikz[scale =0.8]{\begin{scope}[font=\tiny]
  \draw (-0.5, -0.5) ..controls +(0,0.3) and +(0,-0.3) .. (0.5,
  0.5);% 
  \fill[white] (0,0) circle (2mm);
  \draw (0.5, -0.5) ..controls +(0,0.3) and +(0,-0.3) .. (-0.5,
  0.5);% 
  \draw[->] (0.5, 0.5) ..controls +(0,0.3) and +(0,-0.3) .. (-0.5,
  1.5) node[pos=1, above] {$a$} coordinate[pos =0.2] (t2);
  \fill[white] (0,1) circle (2mm);
  \draw[->] (-0.5, 0.5) ..controls +(0,0.3) and +(0,-0.3) .. (0.5,
  1.5) node[pos=1, above] {$b$} coordinate[pos =0.2] (t1);
\end{scope}}} \right) 
\]
and
\[
\soergel\left(\NB{\tikz[scale =0.8]{\begin{scope}[font=\tiny]
  \draw (0.5, -0.5) ..controls +(0,0.3) and +(0,-0.3) .. (-0.5,
  0.5);% 
  \fill[white] (0,0) circle (2mm);
  \draw (-0.5, -0.5) ..controls +(0,0.3) and +(0,-0.3) .. (0.5,
  0.5);% 
  \draw (1.5, 0.5) ..controls +(0,0.3) and +(0,-0.3) .. (0.5,
  1.5);% 
  \fill[white] (1,1) circle (2mm);
  \draw (0.5, 0.5) ..controls +(0,0.3) and +(0,-0.3) .. (1.5,
  1.5);% 
  \draw[->] (0.5, 1.5) ..controls +(0,0.3) and +(0,-0.3) .. (-0.5,
  2.5) node[pos=1, above] {$c$} coordinate[pos =0.2] (t2);
  \fill[white] (0,2) circle (2mm);
  \draw[->] (-0.5, 1.5) ..controls +(0,0.3) and +(0,-0.3) .. (0.5,
  2.5) node[pos=1, above] {$b$} coordinate[pos =0.2] (t1);
  \draw (-0.5, 0.5) -- (-0.5, 1.5);
  \draw (1.5, -0.5) -- (1.5, 0.5);
  \draw[->] (1.5, 1.5) -- (1.5, 2.5) node[pos=1, above] {$a$};
\end{scope}}} \right) \simeq
\soergel\left(\NB{\tikz[scale =0.8]{\begin{scope}[font=\tiny]
  \draw (1.5, -0.5) ..controls +(0,0.3) and +(0,-0.3) .. ( 0.5,
  0.5);% 
  \fill[white] (1,0) circle (2mm);
  \draw (0.5, -0.5) ..controls +(0,0.3) and +(0,-0.3) .. (1.5,
  0.5);% 
  \draw (0.5, 0.5) ..controls +(0,0.3) and +(0,-0.3) .. (-0.5,
  1.5);% 
  \fill[white] (0,1) circle (2mm);
  \draw (-0.5, 0.5) ..controls +(0,0.3) and +(0,-0.3) .. (0.5,
  1.5);% 
  \draw[->] (1.5, 1.5) ..controls +(0,0.3) and +(0,-0.3) .. (0.5,
  2.5) node[pos=1, above] {$b$} coordinate[pos =0.2] (t2);
  \fill[white] (1,2) circle (2mm);
  \draw[->] (0.5, 1.5) ..controls +(0,0.3) and +(0,-0.3) .. (1.5,
  2.5) node[pos=1, above] {$a$} coordinate[pos =0.2] (t1);
  \draw (1.5, 0.5) -- (1.5, 1.5);
  \draw (-0.5, -0.5) -- (-0.5, 0.5);
  \draw[->] (-0.5, 1.5) -- (-0.5, 2.5) node[pos=1, above] {$c$};
\end{scope}}} \right) \ .
\]
Thanks to the blist hypothesis, this amounts to showing that:
  \[
\soergel\left(\NB{\tikz[scale =0.8]{\begin{scope}[font=\tiny]
  \draw (0.5, -0.5) ..controls +(0,0.3) and +(0,-0.3) .. (-0.5,
  0.5);% 
  \fill[white] (0,0) circle (2mm);
  \draw (-0.5, -0.5) ..controls +(0,0.3) and +(0,-0.3) .. (0.5,
  0.5);% 
  \draw[->] (-0.5, 0.5) ..controls +(0,0.3) and +(0,-0.3) .. (0.5,
  1.5) node[pos=1, above] {$a$} coordinate[pos =0.2] (t1);
  \fill[white] (0,1) circle (2mm);
  \draw[->] (0.5, 0.5) ..controls +(0,0.3) and +(0,-0.3) .. (-0.5,
  1.5) node[pos=1, above] {$b$} coordinate[pos =0.2] (t2);
  \draw (0.5, -0.5) -- (1,-1) node[pos=1, below] {$1$};
  \draw (0.5, -0.5) -- (0.5, -1)node[pos=1, below] {$1$};
  \node at (0.75, -1 ) {$\dots$};
  \draw (-0.5, -0.5) -- (-1,-1) node[pos=1, below] {$1$};
  \draw (-0.5, -0.5) -- (-0.5, -1) node[pos=1, below] {$1$};
  \node at (-0.75, -1 ) {$\dots$};

\end{scope}}} \right) \simeq
\soergel\left(\NB{\tikz[scale =0.8]{\begin{scope}[font=\tiny]
  \draw[->] (0.5, -0.5) ..controls +(0,0.3) and +(0,-0.3) .. (0.5,
  1.5) node[pos=1, above] {$b$} coordinate[pos =0.2] (t1);
  \draw[->] (-0.5, -0.5) ..controls +(0,0.3) and +(0,-0.3) .. (-0.5,
  1.5) node[pos=1, above] {$a$} coordinate[pos =0.2] (t2);
    \draw (0.5, -0.5) -- (1,-1) node[pos=1, below] {$1$};
  \draw (0.5, -0.5) -- (0.5, -1)node[pos=1, below] {$1$};
  \node at (0.75, -1 ) {$\dots$};
  \draw (-0.5, -0.5) -- (-1,-1) node[pos=1, below] {$1$};
  \draw (-0.5, -0.5) -- (-0.5, -1) node[pos=1, below] {$1$};
  \node at (-0.75, -1 ) {$\dots$};
\end{scope}}} \right) \simeq
\soergel\left(\NB{\tikz[scale =0.8]{\begin{scope}[font=\tiny]
  \draw (-0.5, -0.5) ..controls +(0,0.3) and +(0,-0.3) .. (0.5,
  0.5);% 
  \fill[white] (0,0) circle (2mm);
  \draw (0.5, -0.5) ..controls +(0,0.3) and +(0,-0.3) .. (-0.5,
  0.5);% 
  \draw[->] (0.5, 0.5) ..controls +(0,0.3) and +(0,-0.3) .. (-0.5,
  1.5) node[pos=1, above] {$a$} coordinate[pos =0.2] (t2);
  \fill[white] (0,1) circle (2mm);
  \draw[->] (-0.5, 0.5) ..controls +(0,0.3) and +(0,-0.3) .. (0.5,
  1.5) node[pos=1, above] {$b$} coordinate[pos =0.2] (t1);
  \draw (0.5, -0.5) -- (1,-1) node[pos=1, below] {$1$};
  \draw (0.5, -0.5) -- (0.5, -1)node[pos=1, below] {$1$};
  \node at (0.75, -1 ) {$\dots$};
  \draw (-0.5, -0.5) -- (-1,-1) node[pos=1, below] {$1$};
  \draw (-0.5, -0.5) -- (-0.5, -1) node[pos=1, below] {$1$};
  \node at (-0.75, -1 ) {$\dots$};

\end{scope}}} \right) 
\]
and
\[
\soergel\left(\NB{\tikz[scale =0.8]{\begin{scope}[font=\tiny]
  \draw (0.5, -0.5) ..controls +(0,0.3) and +(0,-0.3) .. (-0.5,
  0.5);% 
  \fill[white] (0,0) circle (2mm);
  \draw (-0.5, -0.5) ..controls +(0,0.3) and +(0,-0.3) .. (0.5,
  0.5);% 
  \draw (1.5, 0.5) ..controls +(0,0.3) and +(0,-0.3) .. (0.5,
  1.5);% 
  \fill[white] (1,1) circle (2mm);
  \draw (0.5, 0.5) ..controls +(0,0.3) and +(0,-0.3) .. (1.5,
  1.5);% 
  \draw[->] (0.5, 1.5) ..controls +(0,0.3) and +(0,-0.3) .. (-0.5,
  2.5) node[pos=1, above] {$c$} coordinate[pos =0.2] (t2);
  \fill[white] (0,2) circle (2mm);
  \draw[->] (-0.5, 1.5) ..controls +(0,0.3) and +(0,-0.3) .. (0.5,
  2.5) node[pos=1, above] {$b$} coordinate[pos =0.2] (t1);
  \draw (-0.5, 0.5) -- (-0.5, 1.5);
  \draw (1.5, -0.5) -- (1.5, 0.5);
  \draw[->] (1.5, 1.5) -- (1.5, 2.5) node[pos=1, above] {$a$};
  \draw (1.5, -0.5) -- (2,-1) node[pos=1, below] {$1$};
  \draw (1.5, -0.5) -- (1.5, -1)node[pos=1, below] {$1$};
  \node at (1.75, -1 ) {$\dots$};
  \draw (-0.5, -0.5) -- (-1,-1) node[pos=1, below] {$1$};
  \draw (-0.5, -0.5) -- (-0.5, -1) node[pos=1, below] {$1$};
  \node at (-0.75, -1 ) {$\dots$};
  \draw (0.5, -0.5) -- (0.25,-1) node[pos=1, below] {$1$};
  \draw (0.5, -0.5) -- (0.75, -1) node[pos=1, below] {$1$};
  \node at (0.5, -1 ) {$\dots$};
\end{scope}}} \right) \simeq
\soergel\left(\NB{\tikz[scale =0.8]{\begin{scope}[font=\tiny]
    \draw (1.5, -0.5) ..controls +(0,0.3) and +(0,-0.3) .. ( 0.5,
  0.5);% 
  \fill[white] (1,0) circle (2mm);
  \draw (0.5, -0.5) ..controls +(0,0.3) and +(0,-0.3) .. (1.5,
  0.5);% 
  \draw (0.5, 0.5) ..controls +(0,0.3) and +(0,-0.3) .. (-0.5,
  1.5);% 
  \fill[white] (0,1) circle (2mm);
  \draw (-0.5, 0.5) ..controls +(0,0.3) and +(0,-0.3) .. (0.5,
  1.5);% 
  \draw[->] (1.5, 1.5) ..controls +(0,0.3) and +(0,-0.3) .. (0.5,
  2.5) node[pos=1, above] {$b$} coordinate[pos =0.2] (t2);
  \fill[white] (1,2) circle (2mm);
  \draw[->] (0.5, 1.5) ..controls +(0,0.3) and +(0,-0.3) .. (1.5,
  2.5) node[pos=1, above] {$a$} coordinate[pos =0.2] (t1);
  \draw (1.5, 0.5) -- (1.5, 1.5);
  \draw (-0.5, -0.5) -- (-0.5, 0.5);
  \draw[->] (-0.5, 1.5) -- (-0.5, 2.5) node[pos=1, above] {$c$};
  \draw (1.5, -0.5) -- (2,-1) node[pos=1, below] {$1$};
  \draw (1.5, -0.5) -- (1.5, -1)node[pos=1, below] {$1$};
  \node at (1.75, -1 ) {$\dots$};
  \draw (-0.5, -0.5) -- (-1,-1) node[pos=1, below] {$1$};
  \draw (-0.5, -0.5) -- (-0.5, -1) node[pos=1, below] {$1$};
  \node at (-0.75, -1 ) {$\dots$};
  \draw (0.5, -0.5) -- (0.25,-1) node[pos=1, below] {$1$};
  \draw (0.5, -0.5) -- (0.75, -1) node[pos=1, below] {$1$};
  \node at (0.5, -1 ) {$\dots$};
\end{scope}}} \right) \ .
\]
Both identities follows from forkslide isomorphisms, and the uncolored version of
the statement (Theorem~\ref{thm-braid-invariant}).
\end{proof}

\begin{cor}
  The bigraded $H^\prime$-module $\mC^{\prime}(\beta)$, the bigraded
  $H$-module $\mC^{\dif}(\beta)$ and the slash cohomology
$\mH^{\dif}_/(\beta)$ of $\beta$  are invariant under braid relations.
\end{cor}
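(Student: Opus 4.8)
The statement to be proved is that $\mC^{\prime}(\beta)$, $\mC^{\dif}(\beta)$ and $\mH^{\dif}_/(\beta)$ depend only on the braid $\beta$ through its class in the braid group, i.e.\ are invariant under the braid relations (Reidemeister II and III for colored braids). The strategy is to deduce this formally from Proposition~\ref{prop:braidrel-col}, which asserts that the underlying $H^\prime$-equivariant complex of Soergel bimodules $\soergel(\beta)$ is already invariant under braid relations in the relative homotopy category $\somecategorypdg$. Everything after that is an application of functoriality of the construction $\soergel(\beta)\mapsto \mC^{\prime}(\beta)$.

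\textbf{Step 1: passage to relative Hochschild homology.} First I would observe that if $\soergel(\beta)\simeq \soergel(\beta')$ in the relative homotopy category of $H^\prime$-equivariant $(A_{\listk{k}},A_{\listk{k}})$-bimodules, then applying relative Hochschild homology $\mHH^{\dif}_\bullet(-)$ term by term and invoking its functoriality (equation~\eqref{eqn-functoriality-HH}) yields an isomorphism of complexes of bigraded $H^\prime$-modules $\mHH^{\dif}_\bullet(\soergel(\beta))\simeq \mHH^{\dif}_\bullet(\soergel(\beta'))$ — here one uses that a homotopy equivalence of complexes is sent to a homotopy equivalence by an additive functor, and that the relative homotopy category is the Verdier quotient of $\mc{C}((A\otimes A^{\mathrm{op}})\# H^r)$ in which the distinguished triangles are controlled by Lemma~\ref{lem-construction-of-triangle}. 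Both differentials $d_t$ (from the topological differential $d_0$) and $d_C$ (the Cautis differential, which by Corollary~\ref{cor:dCdKcomHH} commutes with $\dif$ on Hochschild homology and by the first part of Lemma~\ref{lem-commutator-null-homotopic} descends to homology) are natural in the bimodule, so the total differential $d_T = d_t + d_C$ is preserved. Hence there is an isomorphism of bigraded $H^\prime$-modules
\[
  \mHT\left(\mHH^{\dif}_\bullet(\soergel(\beta)),d_T\right)\cong \mHT\left(\mHH^{\dif}_\bullet(\soergel(\beta')),d_T\right),
\]
and after the overall $q$-shift of Definition~\ref{def-HHH-colored} this gives $\mC^{\prime}(\beta)\cong \mC^{\prime}(\beta')$. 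This settles the first assertion.

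\textbf{Step 2: descent to $\mC^{\dif}$ and to slash cohomology.} For $\mC^{\dif}(\beta)$, I would note that an isomorphism of bigraded $\Z$-modules-with-$H^\prime$-action restricts to an isomorphism of the torsion-free parts (the free part is a functorial invariant of a finitely generated $\Z$-module once one fixes that the module is finitely generated, which holds here by the finiteness discussed in Remark~\ref{rmk-Bockstein}), and base change $-\otimes_\Z\F_p$ then gives $\mC^{\dif}(\beta)\cong\mC^{\dif}(\beta')$ as bigraded $H$-modules, the $p$-nilpotency of $\dif$ being guaranteed by Corollary~\ref{cor-p-nilpotency}. For slash cohomology, $\mH^{\dif}_/(-)$ is functorial (it is the composite of the functor $M\mapsto \mH^\bullet_/(M)$, which by the discussion around \eqref{eqn-decomp-of-M-into-cohomology-and-free} is isomorphic to the image functor $H\udmod\to H\udmod$ and in particular sends isomorphisms to isomorphisms), so $\mC^{\dif}(\beta)\cong\mC^{\dif}(\beta')$ immediately yields $\mH^{\dif}_/(\beta)\cong\mH^{\dif}_/(\beta')$.

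\textbf{Main obstacle.} The genuinely nontrivial input is Proposition~\ref{prop:braidrel-col} itself, which has already been proved (via the blist hypothesis, the forkslide isomorphisms of Section~\ref{sec:pitchforks}, and Theorem~\ref{thm-braid-invariant}); given that, the corollary is essentially bookkeeping. The one subtle point I would be careful about is that the isomorphisms produced along the way must respect \emph{all} the relevant structure simultaneously — the two gradings after the collapse~\eqref{eqnatqdegreecollapse-r1}, the $H^\prime$-action, and both pieces of the differential $d_T$ — and that the homotopy equivalence $\soergel(\beta)\simeq\soergel(\beta')$ lives in the relative homotopy category rather than the ordinary one, so I must use the functoriality of $\mHH^\dif_\bullet$ as a functor out of $\somecategorypdg$ rather than merely a functor on homotopy classes of $H^\prime$-linear maps. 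This is exactly the content packaged in Definition~\ref{def-relative-HH} and Lemma~\ref{lem-construction-of-triangle}, so no new work is required.
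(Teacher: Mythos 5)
Your overall strategy — reduce everything to Proposition~\ref{prop:braidrel-col} and then apply $\mHH^{\dif}_\bullet$ functorially, with Step~2 (passing to the free part, base change, slash cohomology) being formal — is the same as the paper's. Step~2 is fine. But Step~1 has a genuine gap at the most delicate point of the whole argument.

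An isomorphism $\soergel(\beta)\simeq\soergel(\beta')$ in the relative homotopy category is witnessed by $H^\prime$-equivariant chain maps together with $A$-bimodule null-homotopies on the cone that are \emph{not} $H^\prime$-equivariant, and more importantly have no reason whatsoever to commute with the Cautis differential $d_C$: the Cautis differential is a separate endomorphism coming from the Koszul resolution (contraction with $\xi_C$), and it is not controlled by $H^\prime$-equivariance. So after applying $\mHH^{\dif}_\bullet$ termwise you obtain chain maps that commute with the full $d_T=d_t+d_C$ (by naturality, as you say), but the homotopies you inherit are only $d_t$-homotopies. The complexes $(\mHH^{\dif}_\bullet(\soergel(\beta)),d_T)$ and $(\mHH^{\dif}_\bullet(\soergel(\beta')),d_T)$ are neither isomorphic as complexes nor obviously $d_T$-homotopy equivalent. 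Your sentence ``\,\ldots the total differential $d_T$ is preserved. Hence there is an isomorphism of bigraded $H^\prime$-modules $\mHT(\cdot,d_T)\cong\mHT(\cdot,d_T)$\,'' asserts exactly the implication that needs a proof: a chain map commuting with $d_T$ which admits a $d_t$-homotopy inverse need not be a $d_T$-quasi-isomorphism without an additional argument.

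What is required is the spectral sequence (equivalently, acyclic assembly) argument, and this is precisely what the paper's own proof supplies. One filters by Hochschild degree, which is bounded above by the number of strands; the resulting spectral sequence converges, its $E_2$ page is $\mH_\bullet(\mHHH^{\dif}(\widehat{\beta}),d_C)$, and it abuts to $\mC^\prime(\beta)$. The input from Proposition~\ref{prop:braidrel-col} and the functoriality of $\mHH^{\dif}_\bullet$ gives invariance of $\mHHH^{\dif}$, and the well-definedness of $d_C$ on $\mHHH^{\dif}$ then gives invariance of the $E_2$ page; convergence of the spectral sequence carries this to the abutment. In your formulation, the equivalent statement is that the cone of your map of Hochschild complexes is $d_t$-contractible and therefore $d_T$-acyclic by acyclic assembly over the bounded Hochschild filtration. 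Your closing paragraph flags the right concern — that all pieces of $d_T$ must be respected simultaneously — but then dismisses it by pointing to Definition~\ref{def-relative-HH} and Lemma~\ref{lem-construction-of-triangle}, neither of which provides the needed convergence argument. That extra step is the real content of the corollary.
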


\begin{proof}
Since the Cautis differential $d_C$ and the topological differential $d_t$ commute, there is a converging spectral sequence starting with the $E_2$ page:
\begin{equation}
\mH_\bullet( \mH_\bullet(\mHH_\bullet^\dif (\soergel(\beta)), d_t),d_C)=\mH_\bullet (\mHHH^\dif(\widehat{\beta}),d_C)  \Rightarrow  \mHH_\bullet^\dif(\soergel(\beta),d_T)=\mC^\prime(\beta) .
\end{equation}
 Here, the $\mHHH^{\dif}(\widehat{\beta})$ is the colored HOMFLYPT homology of $\beta$ as a framed invariant of the braid closure $\widehat{\beta}$, which is now equipped with a compatible $H^\prime$-structure, thanks to Proposition \ref{prop:braidrel-col}. Since $\mHHH^\dif$ is invariant under braid moves, and $d_C$ is also well-defined on $\mHHH^\dif(\widehat{\beta})$ (see, for instance \cite[Lemma 6.5]{RW}), the braid invariance follows.
\end{proof}

As in Section~\ref{secmarkov}, we obtain invariance under the first
Markov move  from the trace-like property of Hochschild homology (see Proposition \ref{HHcyclprop}).

\begin{prop}\label{prop:markov1col}
  Let $\beta_1$ and $\beta_2$ be two colored braids (top and bottom
  colors of $\beta_1$, respectively $\beta_2$, do not need to match) on $n$
  strands.  Then
  $\mC^{\prime}(\beta_1 \beta_2) \cong \mC^{\prime}(\beta_2
  \beta_1)$. \qedhere
\end{prop}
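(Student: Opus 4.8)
\textbf{Proof strategy for Proposition \ref{prop:markov1col}.}

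The plan is to reduce the statement to the trace-like property of relative Hochschild homology recorded in Proposition \ref{HHcyclprop}, exactly as in the uncolored case treated in Section \ref{secmarkov}. First I would observe that the underlying combinatorics is identical to the uncolored situation: writing $\soergel(\beta_1)$ and $\soergel(\beta_2)$ for the $H^\prime$-equivariant complexes of singular Soergel bimodules over $(A,A)$ attached to the two colored braids $\beta_1$ and $\beta_2$ (here $A = A_{\underline{k}}$ for the appropriate tuple $\underline{k}$ determined by the shared top and bottom colorings), concatenation of braids corresponds on the nose to tensor product of the associated complexes of bimodules over $A$. Thus $\soergel(\beta_1\beta_2) \cong \soergel(\beta_1)\otimes^{\mathbf{L}}_A \soergel(\beta_2)$ and $\soergel(\beta_2\beta_1)\cong \soergel(\beta_2)\otimes^{\mathbf{L}}_A \soergel(\beta_1)$, as complexes of $H^\prime$-equivariant bimodules (the Rickard complexes of Definition \ref{def:rickard} already consist of Soergel bimodules that are projective as one-sided modules by Proposition \ref{prop-grank}, so the derived and underived tensor products agree termwise up to homotopy).

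Next I would apply relative Hochschild homology termwise. Since the total complexes involved are finite in the topological direction, $\mHH^\dif_\bullet$ commutes with the formation of the total complex, so it suffices to produce, for each pair of terms, an isomorphism of $H^\prime$-modules $\mHH^\dif_\bullet(M\otimes^{\mathbf{L}}_A N)\cong \mHH^\dif_\bullet(N\otimes^{\mathbf{L}}_A M)$ natural enough to be compatible with the differentials $d_t$ and with the induced Cautis differential $d_C$. The first of these is precisely Proposition \ref{HHcyclprop}; naturality of the trace isomorphism in both bimodule arguments then assembles these termwise isomorphisms into an isomorphism of bicomplexes of $H^\prime$-modules between $\mHH^\dif_\bullet(\soergel(\beta_1\beta_2))$ and $\mHH^\dif_\bullet(\soergel(\beta_2\beta_1))$, intertwining $d_t$. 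For $d_C$ one uses that, under the identification $\mHH^\dif_\bullet(M)\cong \mH_\bullet(\K_n\otimes_{(A,A)}M)$, the Cautis differential is $\Id_{\K}\otimes d_M$-type and is manifestly compatible with the trace identification coming from cyclic permutation of tensor factors (this is the colored analogue of Corollary \ref{cor:dCdKcomHH}); hence the isomorphism also intertwines $d_C$, and therefore the total differential $d_T = d_t + d_C$.

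Finally, taking homology with respect to $d_T$ and applying the overall grading shift $q^{-\sum_i a_i}$ from Definition \ref{def-HHH-colored} (which depends only on the common color tuple and so is the same for $\beta_1\beta_2$ and $\beta_2\beta_1$) yields the desired isomorphism of bigraded $H^\prime$-modules $\mC^\prime(\beta_1\beta_2)\cong \mC^\prime(\beta_2\beta_1)$. I do not anticipate a genuine obstacle here; the only point requiring a little care is checking that the trace isomorphism of Proposition \ref{HHcyclprop} is sufficiently functorial in both variables to be applied termwise and to commute with the chain differentials — but this is exactly the content of the standard proof of the cyclic invariance of Hochschild homology with respect to derived tensor products, carried out $H^\prime$-equivariantly, so it is essentially the same argument already invoked for the uncolored Markov I move. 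Consequently the proposition follows verbatim from the uncolored reasoning, with $A_n$ replaced by $A_{\underline{k}}$ throughout.
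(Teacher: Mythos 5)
Your proposal is correct and follows exactly the route the paper takes: the paper's entire proof consists of invoking the trace-like property of relative Hochschild homology (Proposition \ref{HHcyclprop}), just as in the uncolored Markov I move of Section \ref{secmarkov}. Your write-up simply fills in the details (termwise application of the trace isomorphism, compatibility with $d_t$, $d_C$, and the overall shift) that the paper leaves implicit.
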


The proof of invariance under the second Markov move, uses a trick due to
Cautis \cite[proof of Proposition 5.1]{Cautisremarks} and is based on diagrammatic
manipulations. Let us rephrase the invariance result (Proposition~\ref{prop-Markov-II-for-HHH}) in the uncolored
case in a way that will suit the diagrammatic framework. It says that there are isomorphisms of bigraded $H^\prime$-modules:
  \begin{equation}\label{eqn:r1-1-col}
    \mC^{\prime}\left( \NB{\tikz[font=\tiny]{\begin{scope}
  \draw[>-] (-1, -1) -- (-1, -0.5)  node [pos =0, below] {$1$};
  \draw[<-] (-1,  1) -- (-1,  0.5);
  \draw (0, -0.5) .. controls +(0, 0.3) and +(0, -0.3) .. (-1, 0.5)  
  coordinate[pos=  0.5] (m);
  \fill[white] (m) circle (1mm);
  \draw (-1, -0.5) .. controls +(0, 0.3) and +(0, -0.3) .. (0, 0.5);
  \draw (0, 0.5) arc (180:0:0.5);
  \draw (0, -0.5) arc (-180:0:0.5);
  \draw[->-] (1, 0.5) --(1 , - 0.5);
\end{scope}}} \right)  \simeq
    \mC^{\prime}\left( \NB{\tikz[font=\tiny]{\begin{scope}
  \draw[>->] (-1, -1) -- (-1, 1)  node [pos =0, below] {$1$}
  coordinate[pos =0.5] (twist);
  \filldraw[draw= green!50!black, fill = white] (twist) circle
  (1mm) node[left, green!50!black] {$2$};
\end{scope}}} \right) \qquad
    \text{and} \qquad
    \mC^{\prime}\left( \NB{\tikz[font=\tiny]{\begin{scope}
  \draw[>-] (-1, -1) -- (-1, -0.5)  node [pos =0, below] {$1$};
  \draw[<-] (-1,  1) -- (-1,  0.5);
  \draw (-1, -0.5) .. controls +(0, 0.3) and +(0, -0.3) .. (0, 0.5)
  coordinate[pos=  0.5] (m);
  \fill[white] (m) circle (1mm);
  \draw (0, -0.5) .. controls +(0, 0.3) and +(0, -0.3) .. (-1, 0.5);
  \draw (0, 0.5) arc (180:0:0.5);
  \draw (0, -0.5) arc (-180:0:0.5);
  \draw[->-] (1, 0.5) --(1 , - 0.5);
\end{scope}}} \right)  \simeq
    \mC^{\prime}\left( \NB{\tikz[font=\tiny]{\begin{scope}
  \draw[>->] (-1, -1) -- (-1, 1)  node [pos =0, below] {$1$}
  coordinate[pos =0.5] (twist);
  \filldraw[draw= green!50!black, fill = white] (twist) circle
  (1mm) node[left, green!50!black] {$-2$};
\end{scope}}} \right).
  \end{equation}

\begin{cor}\label{cor:r1-a-col}
  For any positive integer $a$, one has:
  \[
    \mC^{\prime}\left( \NB{\tikz[font=\tiny]{\begin{scope}
  \draw[>-] (-1, -1) -- (-1, -0.5)  node [pos =0, below] {$a$};
  \draw[<-] (-1,  1) -- (-1,  0.5);
  \draw (0, -0.5) .. controls +(0, 0.3) and +(0, -0.3) .. (-1, 0.5)  
  coordinate[pos=  0.5] (m);
  \fill[white] (m) circle (1mm);
  \draw (-1, -0.5) .. controls +(0, 0.3) and +(0, -0.3) .. (0, 0.5);
  \draw (0, 0.5) arc (180:0:0.5);
  \draw (0, -0.5) arc (-180:0:0.5);
  \draw[->-] (1, 0.5) --(1 , - 0.5);
\end{scope}}} \right)  \simeq
q^{a-a^2}    \mC^{\prime}\left( \NB{\tikz[font=\tiny]{\begin{scope}
  \draw[>->] (-1, -1) -- (-1, 1)  node [pos =0, below] {$a$}
  coordinate[pos =0.5] (twist);
  \filldraw[draw= green!50!black, fill = white] (twist) circle
  (1mm) node[left, green!50!black] {$2a$};
\end{scope}}} \right) \qquad
    \text{and} \qquad
    \mC^{\prime}\left( \NB{\tikz[font=\tiny]{\begin{scope}
  \draw[>-] (-1, -1) -- (-1, -0.5)  node [pos =0, below] {$a$};
  \draw[<-] (-1,  1) -- (-1,  0.5);
  \draw (-1, -0.5) .. controls +(0, 0.3) and +(0, -0.3) .. (0, 0.5)
  coordinate[pos=  0.5] (m);
  \fill[white] (m) circle (1mm);
  \draw (0, -0.5) .. controls +(0, 0.3) and +(0, -0.3) .. (-1, 0.5);
  \draw (0, 0.5) arc (180:0:0.5);
  \draw (0, -0.5) arc (-180:0:0.5);
  \draw[->-] (1, 0.5) --(1 , - 0.5);
\end{scope}}} \right)  \simeq
q^{a^2-a}    \left( \NB{\tikz[font=\tiny]{\begin{scope}
  \draw[>->] (-1, -1) -- (-1, 1)  node [pos =0, below] {$a$}
  coordinate[pos =0.5] (twist);
  \filldraw[draw= green!50!black, fill = white] (twist) circle
  (1mm) node[left, green!50!black] {$-2a$};
\end{scope}}} \right).
    \]
\end{cor}

\begin{proof}
  Both cases are analogous, and we only prove the first one and argue by
  induction on $a$. Due to the blist hypothesis it is enough to
  prove
  \[
    \mC^{\prime}\left( \NB{\tikz[font=\tiny]{\begin{scope}
  \draw[>-] (-1.2, -1) .. controls +(0, 0.3) and +(0,0) .. (-1,
  -0.5) node [pos = 0, below] {$a$};
  \draw[>-] (-0.8, -1) .. controls +(0, 0.3) and +(0,0) .. (-1, -0.5) node [pos =0, below] {$1$};
  \draw[<-] (-1,  1) -- (-1,  0.5) node [pos =0, above] {$a+1$};
  \draw (0, -0.5) .. controls +(0, 0.3) and +(0, -0.3) .. (-1, 0.5)  
  coordinate[pos=  0.5] (m);
  \fill[white] (m) circle (1mm);
  \draw (-1, -0.5) .. controls +(0, 0.3) and +(0, -0.3) .. (0, 0.5);
  \draw (0, 0.5) arc (180:0:0.5);
  \draw (0, -0.5) arc (-180:0:0.5);
  \draw[->-] (1, 0.5) --(1 , - 0.5);
\end{scope}}} \right)
    \simeq
 q^{-a^2-a}   \mC^{\prime}\left( \NB{\tikz[font=\tiny]{\begin{scope}
  \draw[>-] (-1.2, -1) .. controls +(0, 0.3) and +(0,0) .. (-1,
  -0.5) node [pos = 0, below] {$a$};
  \draw[>-] (-0.8, -1) .. controls +(0, 0.3) and +(0,0) .. (-1, -0.5) node [pos =0, below] {$1$};
  \draw[->] (-1,  -0.5) -- (-1,  1) node [pos =1, above] {$a+1$}
  coordinate[pos=0.5] (twist);
  \filldraw[draw= green!50!black, fill = white] (twist) circle
  (1mm) node[left, green!50!black] {$2a+2$};
\end{scope}
}} \right).
  \]
  This is a straightforward computation:
  \begin{align*}
    \mC^{\prime}\left( \NB{\tikz[font=\tiny]{\begin{scope}
  \draw[>-] (-1.2, -1) .. controls +(0, 0.3) and +(0,0) .. (-1,
  -0.5) node [pos = 0, below] {$a$};
  \draw[>-] (-0.8, -1) .. controls +(0, 0.3) and +(0,0) .. (-1, -0.5) node [pos =0, below] {$1$};
  \draw[<-] (-1,  1) -- (-1,  0.5) node [pos =0, above] {$a+1$};
  \draw (0, -0.5) .. controls +(0, 0.3) and +(0, -0.3) .. (-1, 0.5)  
  coordinate[pos=  0.5] (m);
  \fill[white] (m) circle (1mm);
  \draw (-1, -0.5) .. controls +(0, 0.3) and +(0, -0.3) .. (0, 0.5);
  \draw (0, 0.5) arc (180:0:0.5);
  \draw (0, -0.5) arc (-180:0:0.5);
  \draw[->-] (1, 0.5) --(1 , - 0.5);
\end{scope}}} \right)
    &\simeq
      \mC^{\prime}\left( \NB{\tikz[font=\tiny]{\begin{scope}
  \draw[>-] (-1.2, -1) .. controls +(0, 0.3) and +(0,0) .. (-1.2,
  -0.5) node [pos = 0, below] {$a$};
  \draw[>-] (-0.8, -1) .. controls +(0, 0.3) and +(0,0) .. (-0.8,
  -0.5) node [pos =0, below] {$1$};  
  \draw[<-] (-1,  1) -- (-1,  0.8) node [pos =0, above] {$a+1$};
  \draw (-1.2, 0.5) .. controls +(0, 0.3) and +(0,0) .. (-1, 0.8);
    \draw (-0.8, 0.5) .. controls +(0, 0.3) and +(0,0) .. (-1, 0.8);
  \draw (-0.2, -0.5) .. controls +(0, 0.3) and +(0, -0.3) .. (-1.2,
  0.5) coordinate[pos=  0.5] (m1) coordinate[pos=  0.37] (m2);
  \draw ( 0.2, -0.5) .. controls +(0, 0.3) and +(0, -0.3) .. (-0.8, 0.5)
  coordinate[pos=  0.5] (m3) coordinate[pos=  0.63] (m4);
  \fill[white] (m1) circle (0.8mm);
  \fill[white] (m2) circle (0.8mm);
  \fill[white] (m3) circle (0.8mm);
  \fill[white] (m4) circle (0.8mm);
  \draw (-1.2, -0.5) .. controls +(0, 0.3) and +(0, -0.3) .. (-0.2, 0.5);
  \draw (-0.8, -0.5) .. controls +(0, 0.3) and +(0, -0.3) .. ( 0.2, 0.5);
  \draw (0.2, 0.5) arc (180:0:0.3);
  \draw (0.2, -0.5) arc (-180:0:0.3);
  \draw (-0.2, 0.5) arc (180:0:0.7);
  \draw (-0.2, -0.5) arc (-180:0:0.7);
  \draw[->-] (0.8, 0.5) --(0.8 , - 0.5);
  \draw[->-] (1.2, 0.5) --(1.2 , - 0.5);
\end{scope}}} \right)
    \simeq
      \mC^{\prime}\left( \NB{\tikz[font=\tiny]{\begin{scope}
  \draw[>-] (-1.2, -1) .. controls +(0, 0.3) and +(0,0) .. (-1.2,
  -0.5) node [pos = 0, below] {$a$};
  \draw[>-] (-0.8, -1) .. controls +(0, 0.3) and +(0,0) .. (-0.8,
  -0.5) node [pos =0, below] {$1$};  
  \draw[<-] (-1,  1) -- (-1,  0.8) node [pos =0, above] {$a+1$};
  \draw (-1.2, 0.5) .. controls +(0, 0.3) and +(0,0) .. (-1, 0.8);
  \draw (-0.8, 0.5) .. controls +(0, 0.3) and +(0,0) .. (-1, 0.8);
  \draw (-0.2, -0.5) .. controls +(0, 0.3) and +(0, -0.3) .. (-1.2,
  0.5) coordinate[pos=  0.5] (m1) coordinate[pos=  0.37] (m2);
  \draw ( -0.3, 0) .. controls +(0, 0.1) and +(0, -0.3) .. (-0.8, 0.5)
  coordinate[pos=  0.425] (m3) coordinate[pos=  0] (twist);
  \fill[white] (m1) circle (0.8mm);
  \fill[white] (m2) circle (0.8mm);
  \fill[white] (m3) circle (0.8mm);
  \draw (-1.2, -0.5) .. controls +(0, 0.3) and +(0, -0.3) .. (-0.2, 0.5);
  \draw (-0.8, -0.5) .. controls +(0, 0.3) and +(0, -0.1) .. ( -0.3, 0);
  \draw (-0.2, 0.5) arc (180:0:0.6);
  \draw (-0.2, -0.5) arc (-180:0:0.6);
  \draw[->-] (1, 0.5) --(1 , - 0.5);
    \filldraw[draw= green!50!black, fill = white] (twist) circle
  (1mm) node[right, green!50!black] {$2$};
\end{scope}}} \right)
    \simeq
      \mC^{\prime}\left( \NB{\tikz[font=\tiny]{\begin{scope}
  \draw[>-] (-1.2, -1) .. controls +(0, 0.3) and +(0,0) .. (-1.2,
  -0.5) node [pos = 0, below] {$a$};
  \draw[>-] (-0.8, -1) .. controls +(0, 0.3) and +(0,0) .. (-0.8,
  -0.5) node [pos =0, below] {$1$};  
  \draw[<-] (-1,  1) -- (-1,  0.8) node [pos =0, above] {$a+1$};
  \draw (-1.2, 0.5) .. controls +(0, 0.3) and +(0,0) .. (-1, 0.8);
  \draw (-0.8, 0.5) .. controls +(0, 0.3) and +(0,0) .. (-1, 0.8);
  \draw (-0.2, -0.5) .. controls +(0, 0.3) and +(0, -0.3) .. (-1.2,
  0.5) coordinate[pos=  0.5] (m1) coordinate[pos=  0.7] (m2);
    \fill[white] (m2) circle (0.8mm);
  \draw (-0.8, -0.5) .. controls +(0, 0.3) and +(0, -0.1) .. ( -1.3,
  0) coordinate[pos=  0.425] (m3);
  \fill[white] (m3) circle (0.8mm);
  \draw ( -1.3, 0) .. controls +(0, 0.1) and +(0, -0.3) .. (-0.8, 0.5)
   coordinate[pos=  0] (twist);
  \fill[white] (m1) circle (0.8mm);

  \draw (-1.2, -0.5) .. controls +(0, 0.3) and +(0, -0.3) .. (-0.2, 0.5);

  \draw (-0.2, 0.5) arc (180:0:0.6);
  \draw (-0.2, -0.5) arc (-180:0:0.6);
  \draw[->-] (1, 0.5) --(1 , - 0.5);
    \filldraw[draw= green!50!black, fill = white] (twist) circle
  (1mm) node[left, green!50!black] {$2$};
\end{scope}}} \right)
  \\
    &\simeq
      q^{a-a^2}\mC^{\prime}\left( \NB{\tikz[font=\tiny]{\begin{scope}
   \draw[>-] (-0.8, -1) .. controls +(0, 0.3) and +(0,-0.3) .. (-1.2,
  -0.25) node [pos =0, below] {$1$} coordinate[pos=0.5] (m1) coordinate[pos=1] (twist1);  
  \fill[white] (m1) circle (0.8mm);
  \draw[>-] (-1.2, -1) .. controls +(0, 0.3) and +(0,-0.3) .. (-0.8,
  -0.25) node [pos = 0, below] {$a$} coordinate[pos=1] (twist2);;
  \draw (-0.8, -.25) .. controls +(0, 0.3) and +(0,-0.3) .. (-1.2,
  0.5) coordinate[pos = 0.5] (m2);
  \fill[white] (m2) circle (0.8mm);
  \draw (-1.2, -.25) .. controls +(0, 0.3) and +(0,-0.3) .. (-0.8,
  0.5); 
  \draw[<-] (-1,  1) -- (-1,  0.8) node [pos =0, above] {$a+1$};
  \draw (-1.2, 0.5) .. controls +(0, 0.3) and +(0,0) .. (-1, 0.8);
  \draw (-0.8, 0.5) .. controls +(0, 0.3) and +(0,0) .. (-1, 0.8);
    \filldraw[draw= green!50!black, fill = white] (twist2) circle
  (1mm) node[right, green!50!black] {$2a$};
    \filldraw[draw= green!50!black, fill = white] (twist1) circle
  (1mm) node[left, green!50!black] {$2$};
\end{scope}
}} \right)
      \simeq
     q^{-a^2} \mC^{\prime}\left( \NB{\tikz[font=\tiny]{\begin{scope}
   \draw[>-] (-0.8, -1) .. controls +(0, 0.3) and +(0,-0.3) .. (-1.2,
  -0.25) node [pos =0, below] {$1$} coordinate[pos=0.5] (m1) coordinate[pos=1] (twist1);  
  \fill[white] (m1) circle (0.8mm);
  \draw[>-] (-1.2, -1) .. controls +(0, 0.3) and +(0,-0.3) .. (-0.8,
  -0.25) node [pos = 0, below] {$a$} coordinate[pos=1] (twist2);
  \draw (-0.8, -.25) .. controls +(0, 0.3) and +(0,-0.3) .. (-0.8,
  0.5) coordinate[pos = 0.5] (m2);
  \draw (-1.2, -.25) .. controls +(0, 0.3) and +(0,-0.3) .. (-1.2,
  0.5); 
  \draw[<-] (-1,  1) -- (-1,  0.8) node [pos =0, above] {$a+1$};
  \draw (-1.2, 0.5) .. controls +(0, 0.3) and +(0,0) .. (-1, 0.8);
  \draw (-0.8, 0.5) .. controls +(0, 0.3) and +(0,0) .. (-1, 0.8);
    \filldraw[draw= green!50!black, fill = white] (twist2) circle
  (1mm) node[right, green!50!black] {$2a+1$};
    \filldraw[draw= green!50!black, fill = white] (twist1) circle
  (1mm) node[left, green!50!black] {$a+2$};
\end{scope}
}} \right)
      \simeq
     q^{-a^2} \mC^{\prime}\left( \NB{\tikz[font=\tiny]{\begin{scope}
   \draw[>-] (-0.8, -1) .. controls +(0, 0.3) and +(0,-0.3) .. (-0.8,
  -0.25) node [pos =0, below] {$1$} coordinate[pos=0.5] (m1) coordinate[pos=1] (twist1);  
  \draw[>-] (-1.2, -1) .. controls +(0, 0.3) and +(0,-0.3) .. (-1.2,
  -0.25) node [pos = 0, below] {$a$} coordinate[pos=1] (twist2);
  \draw (-0.8, -.25) .. controls +(0, 0.3) and +(0,-0.3) .. (-1.2,
  0.5) coordinate[pos = 0.5] (m2);
  \fill[white] (m2) circle (0.8mm);
  \draw (-1.2, -.25) .. controls +(0, 0.3) and +(0,-0.3) .. (-0.8,
  0.5); 
  \draw[<-] (-1,  1) -- (-1,  0.8) node [pos =0, above] {$a+1$};
  \draw (-1.2, 0.5) .. controls +(0, 0.3) and +(0,0) .. (-1, 0.8);
  \draw (-0.8, 0.5) .. controls +(0, 0.3) and +(0,0) .. (-1, 0.8);
    \filldraw[draw= green!50!black, fill = white] (twist1)circle
  (1mm) node[right, green!50!black] {$a+2$};
    \filldraw[draw= green!50!black, fill = white] (twist2) circle
  (1mm) node[left, green!50!black] {$2a+1$};
\end{scope}
}} \right)
  \\ &
   \simeq
   q^{-a^2-a} \mC^{\prime}\left( \NB{\tikz[font=\tiny]{\begin{scope}
  \draw[>-] (-1.2, -1) .. controls +(0, 1.3) and +(0,0) .. (-1,
   0.5) node [pos = 0, below] {$a$} coordinate[pos =0.2] (twist1);
  \draw[>-] (-0.8, -1) .. controls +(0, 1.3) and +(0,0) .. (-1,  0.5)
  node [pos =0, below] {$1$} coordinate[pos =0.2] (twist2);
  \draw[->] (-1,  0.5) -- (-1,  1) node [pos =1, above] {$a+1$}
  coordinate[pos=0.5] (twist);
  \filldraw[draw= green!50!black, fill = white] (twist1) circle
  (1mm) node[left, green!50!black] {$-2(a+1)$};
    \filldraw[draw= green!50!black, fill = white] (twist2) circle
  (1mm) node[right, green!50!black] {$-2(a+1)$};
\end{scope}
}} \right)
    \simeq
  q^{-a^2-a}  \mC^{\prime}\left( \NB{\tikz[font=\tiny]{}} \right).
  \end{align*}
  Note that in the fifth and seventh isomorphisms above, we used Corollary \ref{cor:ft-ab}.
\end{proof}

As a direct consequence of Proposition~\ref{prop:markov1col} and
Corollary~\ref{cor:r1-a-col}, we obtain the following result.

\begin{thm}
  \label{thm:main-col}
  The bigraded $H^\prime$-module $\mC^{\prime}(\beta)$, the bigraded
  $H$-module $\mC^{\dif}(\beta)$ and the slash cohomology
$\mH^{\dif}_/(\beta)$ of $\beta$ depend only on the colored, oriented and framed
link represented by the closure of $\beta$.
\end{thm}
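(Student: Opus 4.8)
The plan is to reduce the statement to the combination of braid invariance and the two Markov moves, exactly as in the uncolored case, but now leveraging all the reduction machinery built in Sections~\ref{sec:topred}--\ref{sec:forktwists}. By the Alexander and Markov theorems for colored links (which, as noted in Section~\ref{sec:coloredmarkII}, follow formally from their classical counterparts), two colored braids have isotopic closures as colored, oriented, framed links if and only if they are related by a finite sequence of: (i) braid relations, (ii) conjugation (Markov~I), and (iii) stabilization/destabilization by a positive or negative kink (Markov~II), where throughout the framing is tracked by the blackboard framing convention. So it suffices to check that each of the three invariants $\mC^\prime(\beta)$, $\mC^\dif(\beta)$, and $\mH^\dif_/(\beta)$ is unchanged under each of these moves.

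First I would invoke Proposition~\ref{prop:braidrel-col} and its corollary, which already establish invariance of all three invariants under the braid relations; this is where the forkslide isomorphisms of Section~\ref{sec:pitchforks} and the blist hypothesis do the real work, bootstrapping from Theorem~\ref{thm-braid-invariant}. Next, Markov~I invariance is Proposition~\ref{prop:markov1col}, which is immediate from the trace-like property of relative Hochschild homology (Proposition~\ref{HHcyclprop}) together with the fact that the Cautis and topological differentials are induced functorially. For Markov~II, Corollary~\ref{cor:r1-a-col} provides the key statement: stabilizing an $a$-colored strand by a positive (resp.\ negative) kink changes $\mC^\prime$ only by an overall grading shift $q^{a-a^2}$ (resp.\ $q^{a^2-a}$), with no change to the underlying $H^\prime$-module. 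This is precisely the behavior required for a \emph{framed} invariant: a positive or negative kink alters the blackboard framing by $\pm 1$, and the grading shift $q^{\pm(a-a^2)}$ records exactly this framing change, consistently with the conventions fixed around \eqref{framing}. Since $\mC^\dif$ and $\mH^\dif_/$ are obtained from $\mC^\prime$ by the (functorial) Bockstein construction and slash cohomology, the isomorphisms of Corollary~\ref{cor:r1-a-col} and Propositions~\ref{prop:braidrel-col}, \ref{prop:markov1col} descend to them verbatim, keeping track of the same grading shifts.

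Assembling these: given two colored braids $\beta,\beta'$ whose closures represent the same colored oriented framed link, write a sequence of moves (i)--(iii) connecting them. Each braid relation and each Markov~I move leaves all three invariants unchanged on the nose. Each Markov~II move multiplies $\mC^\prime$ (and hence $\mC^\dif$, $\mH^\dif_/$) by a power of $q$ determined entirely by the color of the stabilized strand and the sign of the kink, which is in turn determined by the framing. Since the total framing of the resulting link is an invariant, the net accumulated power of $q$ depends only on the framed link and not on the chosen sequence; one verifies this by the standard argument that the writhe contributions of the kinks are additive and agree with $\sum_i \mathtt{f}_i$ weighted by the color-dependent factor $a_i - a_i^2$. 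Hence $\mC^\prime(\beta)$, $\mC^\dif(\beta)$, and $\mH^\dif_/(\beta)$ depend only on the colored, oriented, framed link represented by $\widehat\beta$, which is the assertion of the theorem.

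The main obstacle in this argument is not logical but bookkeeping: one must be careful that the grading shifts produced by Corollary~\ref{cor:r1-a-col} are stated with respect to the \emph{same} normalization (the $q^{-\sum a_i}$ prefactor in Definition~\ref{def-HHH-colored}) and that the color-dependence matches the framing convention fixed in \eqref{framing}; a sign or a factor-of-$a$ error here would be fatal. The genuinely hard mathematical content — that forkslides are isomorphisms in the relative homotopy category and that the blist hypothesis applies — has already been discharged in Sections~\ref{sec:pitchforks}--\ref{sec:forktwists}, so at this stage the proof is a short formal assembly.
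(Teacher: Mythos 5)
Your proposal follows the same route as the paper: braid invariance (Proposition~\ref{prop:braidrel-col} and its corollary), Markov~I (Proposition~\ref{prop:markov1col}), and the colored Markov~II computation (Corollary~\ref{cor:r1-a-col}); indeed the paper states the theorem as a direct consequence of the latter two. One genuine inaccuracy needs fixing. You claim that a positive (resp.\ negative) kink changes $\mC^\prime$ ``only by an overall grading shift $q^{a-a^2}$ \ldots{} with no change to the underlying $H^\prime$-module.'' That is not what Corollary~\ref{cor:r1-a-col} says: the diagrams on its right-hand sides carry green dots, i.e.\ the $H^\prime$-action is \emph{twisted} (compare the uncolored Proposition~\ref{prop-Markov-II-for-HHH}, whose conclusion is $\mC^\prime(\beta)^{\pm 2x_n}$, an overall twisting of the $\dif$-action). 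Since $\mC^\dif$ and $\mH^\dif_/$ are computed from the $\dif$-action, this twist is not cosmetic and must be tracked in your final bookkeeping step alongside the power of $q$. The repair is of the same nature as for the $q$-shift: the twist accumulated on each component is a fixed multiple (depending on the color) of the net signed count of kinks on that component, hence is determined by the framing of that component, so the total twist --- like the total $q$-shift --- depends only on the colored framed link. (This is precisely the dependence that Remark~\ref{rmk:unframed} cancels by hand to produce an unframed invariant.) With that amendment your assembly coincides with the paper's argument.
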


Both $\mC^{\prime}(\beta)$, and $\mC^{\dif}(\beta)$ are graded by
$q$ and $t$ degrees.  
By the graded Euler characteristic of
these objects, we mean:
\[
\chi_q( \mC^{\prime}(\beta) )= \sum_{i\in \Z} (-1)^i
\rkq (\mC^{\prime}(\beta)_i)\quad \text{and}\quad
\chi_q( \mC^{\dif}(\beta) )= \sum_{i\in \Z} (-1)^i
\mathrm{dim}_q (\mC^{\dif}(\beta)_i) ,
\]
where $\mC^{\prime}(\beta)_i$ (respectively $\mC^{\dif}(\beta)_i$) is the
$q$-graded homogeneous subspace of $\mC^{\prime}(\beta)$ (respectively
$\mC^{\dif}(\beta)$) of $t$-degree $i$.

\subsection{Relation to colored Jones polynomial}

These graded Euler characteristics are both equal to the colored Jones
polynomial of the underlying colored framed links. We give a few
details about the normalization of the colored Jones polynomial that we use.
The uncolored Jones polynomial is such that it is not sensitive to
framing and satisfies:
\[
  q^2 \Jones \left(\NB{\tikz[yscale=0.8, xscale = 0.6]{\begin{scope}[font=\tiny]
  \draw[->] (0.5, -0.5) ..controls +(0,0.3) and +(0,-0.3) .. (-0.5,
  0.5) node[pos=1, above] {$1$} coordinate[pos =0.2] (t1);
  \fill[white] (0,0) circle (2mm);
  \draw[->] (-0.5, -0.5) ..controls +(0,0.3) and +(0,-0.3) .. (0.5,
  0.5) node[pos=1, above] {$1$} coordinate[pos =0.2] (t2);
\end{scope}}}  \right)
-
  q^{-2}\Jones \left(\NB{\tikz[yscale=0.8, xscale =
    0.6]{\begin{scope}[font=\tiny]
  \draw[->] (-0.5, -0.5) ..controls +(0,0.3) and +(0,-0.3) .. (0.5,
  0.5) node[pos=1, above] {$1$} coordinate[pos =0.2] (t2);
  \fill[white] (0,0) circle (2mm);
  \draw[->] (0.5, -0.5) ..controls +(0,0.3) and +(0,-0.3) .. (-0.5,
  0.5) node[pos=1, above] {$1$} coordinate[pos =0.2] (t1);
\end{scope}}} \right) =  (q - q^{-1}) \Jones \left( \NB{\tikz[yscale=0.8, xscale = 0.6]{\begin{scope}[font=\tiny]
  \draw[->] (-0.5, -0.5) ..controls +(0,0.3) and +(0,-0.3) .. (-0.5,
  0.5) node[pos=1, above] {$1$} coordinate[pos =0.2] (t2);
  \draw[->] (0.5, -0.5) ..controls +(0,0.3) and +(0,-0.3) .. ( 0.5,
  0.5) node[pos=1, above] {$1$} coordinate[pos =0.2] (t1);
\end{scope}}} \right)
\]
It extends to diagrams of entangled MOY graphs and can be computed
using the following rules for crossings:
\begin{align}
  \Jones \left(\NB{\tikz[yscale=0.8, xscale = 0.6]{}} \right)
  &= q^{-2} \Jones \left(\NB{\tikz[yscale=0.4, xscale = 0.6, font
    =\tiny]{}} \right) - q^{-3} \Jones \left(\NB{\tikz[yscale=0.8,
    xscale = 0.6]{}} \right) \label{eq:crossing-plus} , \\
    \Jones \left(\NB{\tikz[yscale=0.8, xscale = 0.6]{}} \right)
  &= q^{2} \Jones  \left( \NB{\tikz[yscale=0.4, xscale = 0.6, font
    =\tiny]{}} \right) - q^{3} \Jones\left(\NB{\tikz[yscale=0.8,
    xscale = 0.6]{}} \right) ,  \label{eq:crossing-minus}
\end{align}
and the following identities (and their mirror images) on MOY graphs known as MOY calculus:
\begin{align}\label{eq:extrelcircle}
  \Jones\left(\vcenter{\hbox{\tikz[scale= 0.5, font = \tiny]{\draw[->]
  (0,0) arc(0:360:1cm) node[right] {{$\!a\!$}};}}}\right)= [a+1],
\end{align}
\begin{align} \label{eq:extrelass}
   \Jones\left(\stgamma\right) = \Jones\left(\stgammaprime\right),
 \end{align}
\begin{align} \label{eq:extrelbin1} 
\Jones\left(\digonab\right) = \arraycolsep=2.5pt
  \begin{bmatrix}
    a+b \\ a
  \end{bmatrix}% 
\Jones\left(\vertab \right),
\end{align}
\begin{align} \label{eq:extrelbin2}
\arraycolsep=2.5pt
\Jones\left(\baddigonab \right) = 
  \begin{bmatrix}
    a+b+1 \\ b
  \end{bmatrix}% 
\Jones\left(\verta\right) ,
\end{align}
\begin{align}
 \Jones\left(\squarea\right) = \Jones\left(\twoverta\right) +
  [a+3]\Jones\left(\doubleYa \right), \label{eq:extrelsquare1}
\end{align}
\begin{align}
  \Jones\left(\squarec\right)= \Jones\left(\squared
  \right)\label{eq:extrelsquare3} + [b-a] \Jones\left( \vertavertb\right).
\end{align}

Our version of the colored Jones polynomial is invariant under
forkslide moves. This characterizes completely the colored Jones polynomial $\Jones$ since one can blist
every component, use forkslide moves, rules for uncolored crossings and MOY
calculus to compute. Its behavior with respect to framing is given
by:
\[
    \Jones \left(\NB{\tikz[font=\tiny, scale = 0.5]{\begin{scope}
  \draw[>-] (-1, -1) -- (-1, -0.5)  node [pos =0, below] {$a$};
  \draw[<-] (-1,  1) -- (-1,  0.5);
  \draw (0, -0.5) .. controls +(0, 0.3) and +(0, -0.3) .. (-1, 0.5)  
  coordinate[pos=  0.5] (m);
  \fill[white] (m) circle (1mm);
  \draw (-1, -0.5) .. controls +(0, 0.3) and +(0, -0.3) .. (0, 0.5);
  \draw (0, 0.5) arc (180:0:0.5);
  \draw (0, -0.5) arc (-180:0:0.5);
  \draw[->-] (1, 0.5) --(1 , - 0.5);
\end{scope}}} \right) =
q^{a-a^2} \Jones \left(\NB{\tikz[font=\tiny, scale =0.5]{}} \right) \qquad
    \text{and} \qquad
    \Jones \left(\NB{\tikz[font=\tiny, scale =0.5]{\begin{scope}
  \draw[>-] (-1, -1) -- (-1, -0.5)  node [pos =0, below] {$a$};
  \draw[<-] (-1,  1) -- (-1,  0.5);
  \draw (-1, -0.5) .. controls +(0, 0.3) and +(0, -0.3) .. (0, 0.5)
  coordinate[pos=  0.5] (m);
  \fill[white] (m) circle (1mm);
  \draw (0, -0.5) .. controls +(0, 0.3) and +(0, -0.3) .. (-1, 0.5);
  \draw (0, 0.5) arc (180:0:0.5);
  \draw (0, -0.5) arc (-180:0:0.5);
  \draw[->-] (1, 0.5) --(1 , - 0.5);
\end{scope}}} \right)  =
q^{a^2-a}   \Jones  \left( \NB{\tikz[font=\tiny, scale =0.5]{}} \right).
\]

Recall from \eqref{eqn:cyclotomicEuler} that the cyclotomic Euler characteristic of a $p$-complex is identified with its image in the Grothendieck ring $\mathbb{O}_p=K_0(H\udmod)$.

\begin{prop}\label{prop:categorification}
  Let $\beta$ be a colored braid and denote by $\widehat{\beta}$ the
  framed link obtained by closing up $\beta$. Then
  \[
\chi_q( \mC^{\prime}(\beta) ) = \chi_q( \mC^{\dif}(\beta)) =
\Jones(\widehat{\beta}).
  \]
In particular the cyclotomic Euler characteristic $\chi_{\mathbb{O}}(\mH^\dif_/(\beta))$ is equal to
$\Jones(\widehat{\beta})$ specialized at a $2p$th root of unity.
\qedhere
\end{prop}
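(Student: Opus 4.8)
The plan is to reduce the statement to the two facts already established: braid invariance (Theorem~\ref{thm:main-col}) together with the first and second Markov moves (Proposition~\ref{prop:markov1col} and Corollary~\ref{cor:r1-a-col}), and the compatibility of the $\chi_q$ functor with the local relations defining $\Jones$. First I would note that $\chi_q(\mC^\dif(\beta))=\chi_q(\mC^\prime(\beta))$ is immediate from the definition $\mC^\dif(\beta)=\mC^\prime(\beta)^{\mathrm{free}}\otimes_\Z\F_p$: passing to the free part and then tensoring with $\F_p$ does not change the graded rank of each $t$-homogeneous piece, since $\rkq$ of a finitely generated $\Z$-module equals $\dim_q$ of its mod-$p$ reduction after killing torsion. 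So the content is the identity $\chi_q(\mC^\prime(\beta))=\Jones(\widehat\beta)$.

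For this, the approach is to show that $\beta\mapsto\chi_q(\mC^\prime(\beta))$, viewed as an invariant of colored braid closures, satisfies exactly the rules that characterize $\Jones$. Concretely: (i) by the converging spectral sequence from the proof of the corollary to Proposition~\ref{prop:braidrel-col}, $\chi_q(\mC^\prime(\beta))$ equals the graded Euler characteristic of the colored HOMFLYPT homology $\mHHH^\dif(\widehat\beta)$ with respect to the total differential, hence equals the $\chi_q$ of Rasmussen/RW's rational theory evaluated at the appropriate specialization — but more elementarily, one computes the Euler characteristic termwise through the Rickard complexes. Each positive crossing $T_{a,b}^+$ contributes a two-term complex whose Euler characteristic, after applying $\mHH_\bullet^\dif$ and collapsing $a=t^{-1}q^2$, is precisely the right-hand side of the skein rule \eqref{eq:crossing-plus} for colored strands; similarly for $T_{a,b}^-$ and \eqref{eq:crossing-minus}. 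Thus $\chi_q$ of a knotted MOY graph diagram satisfies the crossing resolution rules. (ii) For the MOY-graph relations \eqref{eq:extrelcircle}--\eqref{eq:extrelsquare3}, one uses that $\mHH_\bullet^\dif$ of a Soergel bimodule associated to a MOY graph, after imposing $d_C$ and collapsing the grading, recovers the graded dimension predicted by MOY calculus — this is exactly the content of the rational computations in \cite{RW} (e.g.\ Example~\ref{ex:1varoverZ} computes the circle case, giving $[a+1]$ after the appropriate shift), and the $H^\prime$-structure is irrelevant to $\chi_q$. Invariance under forkslide moves on the level of $\chi_q$ follows from Proposition~\ref{prop:allpitches}. (iii) Framing behavior: Corollary~\ref{cor:r1-a-col} gives $\mC^\prime$ of a curl equals $q^{\pm(a-a^2)}$ times $\mC^\prime$ of the straightened strand, matching the stated framing rule for $\Jones$. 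Since $\Jones$ is uniquely characterized by blisting every component, applying forkslide moves and the uncolored crossing rules, then MOY calculus, and $\chi_q(\mC^\prime(-))$ obeys all of these, the two agree.

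Finally, for the last sentence, I would invoke the definition of the cyclotomic Euler characteristic \eqref{eqn:cyclotomicEuler}: $\chi_\mathbb{O}(\mH^\dif_/(\beta))$ is by construction the image of the class $[\mC^\dif(\beta)]$ in $K_0(H\udmod)\cong\mathbb{O}_p=\Z[q,q^{-1}]/(1+q^2+\cdots+q^{2(p-1)})$, which is the reduction of $\chi_q(\mC^\dif(\beta))\in\Z[q,q^{-1}]$ modulo the $p$th cyclotomic-type relation. But reducing a Laurent polynomial modulo $1+q^2+\cdots+q^{2(p-1)}$ is precisely evaluation at a primitive $2p$th root of unity (up to the sign ambiguity absorbed into $\mathbb{O}_p$). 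Since $\chi_q(\mC^\dif(\beta))=\Jones(\widehat\beta)$ by the above, we conclude $\chi_\mathbb{O}(\mH^\dif_/(\beta))=\Jones(\widehat\beta)$ specialized at a $2p$th root of unity.

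The main obstacle I anticipate is step (ii): carefully matching the graded Euler characteristic of $\mHH_\bullet^\dif$ (with the Cautis differential imposed and the tri-grading collapsed via $a=t^{-1}q^2$) against each MOY-calculus identity, since one must track the grading shifts $\gamma(\Gamma)$ from Definition~\ref{defn:soergel} and the overall $q^{-\sum a_i}$ normalization in Definition~\ref{def-HHH-colored}, and confirm these conspire to reproduce exactly the quantum integers and binomials in \eqref{eq:extrelbin1}--\eqref{eq:extrelsquare3}. Most of this is already done rationally in \cite{RW}, so the work is bookkeeping rather than genuinely new mathematics; the $p$-DG layer contributes nothing to $\chi_q$ and only enters at the very last step through the passage to $\mathbb{O}_p$.
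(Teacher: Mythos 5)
Your proposal is correct but takes a genuinely longer route than the paper. You actually flag the short route yourself in step (i): once you observe that $\chi_q(\mC^\prime(\beta))$ equals the graded Euler characteristic of the triply-graded colored HOMFLYPT homology after the collapse $a=t^{-1}q^2$ (which, on the level of Euler characteristic, is the substitution $a=-q^2$), the paper simply invokes the cited fact that the triply-graded Euler characteristic is the colored HOMFLYPT polynomial and that HOMFLYPT specializes to colored Jones at $a=-q^2$, and stops there. Your alternative — verifying that $\chi_q(\mC^\prime)$ satisfies the crossing resolutions, MOY calculus relations \eqref{eq:extrelcircle}--\eqref{eq:extrelsquare3}, forkslide invariance, and framing rule, and then invoking the uniqueness characterization of $\Jones$ — is a valid but heavier argument, because it re-derives the HOMFLYPT-to-Jones specialization rather than citing it. One small inaccuracy worth fixing: you describe the colored Rickard complex $T^+_{a,b}$ as a two-term complex matching \eqref{eq:crossing-plus}; in fact it has $\min(a,b)+1$ terms, and \eqref{eq:crossing-plus}--\eqref{eq:crossing-minus} are stated only for label-$1$ strands, so you genuinely need to blist first and reduce to uncolored crossings via forkslides (as you do acknowledge in your closing paragraph of step (iii)). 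Both approaches treat the last sentence identically: $\chi_{\mathbb{O}}(\mH^\dif_/(\beta))$ is by definition the image of $[\mC^\dif(\beta)]$ in $K_0(H\udmod)\cong\mathbb{O}_p$, and reduction modulo $1+q^2+\cdots+q^{2(p-1)}$ is evaluation at a $2p$th root of unity.
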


\begin{proof}
  It suffices to show the statement for $\chi_q(\mC^\prime(\beta))$. Since the Euler characteristic does not change under taking homology, 
$\chi_q(\mC^\prime(\beta)) $ is equal to the Euler characteritic of $\mHH^\dif_\bullet (\soergel (\beta))$
where we identify $a=-q^2$ via the grading collapse \eqref{eqnatqdegreecollapse-r1}. Without the grading collapse, the triply graded homology has its Euler characteristic equal to the colored HOMFLYPT polynomial. Under setting $a=-q^2$, the HOMFLYPT polynomial specializes to the colored Jones polynomial $\Jones(\widehat{\beta})$.
\end{proof}

\begin{rem}[Unframed invariants] \label{rmk:unframed}
The framed colored Jones polynomial $\Jones$ can be renormalized to become an
invariant of unframed links. Consider a diagram for a link $L$ and denote by $X$ its
set of crossings. Each crossing $x \in X$ has a sign $s(x)$.  Suppose the crossing $x$ involves two distinct strands colored by $a$ and $b$.  Then define $\eta_x:=
s(x)\delta_{ab} (a^2-a)$ and $\eta_L:= \sum_{x \in X} \eta_x$. The
Laurent polynomial $J(L):= q^{\eta_L}\Jones(L)$ is an invariant of
oriented colored links.

Similarly, the homological invariants
$\mC^{\prime}$,  $\mC^{\dif}$ and $\mH^{\dif}_/$
can be renormalized to obtain invariants of oriented colored links.
First shift the $q$-degree by $\eta_L$. For each component $C$ of $L$ of color
$a$, add a twist (a green dot) of multiplicity $(-2a(n_+(C) -n_-(C))$,
where $n_+(C)$ (respectively $n_-(C)$) denotes the number of positive
(respectively negative) crossings of $C$ (forgetting about the other
components of $L$). 
\end{rem}

\section{Example: a Hopf link} \label{sec:hopflink}
In this section we compute the colored homology of a Hopf link where the components are colored by $1$ and $2$.

Throughout this section we will let $A=A_{(2,1)}$. % 
It should be possible to slightly generalize the calculation which follows to the case of the Hopf link where one components is colored $a$ and the other is colored $1$.  Ignoring the $H^\prime$-structure, this should be fairly straightforward.  With $H^\prime $, this case is technically more involved than the special case we present below.

Sto{\v{s}}i{\'c} \cite{StosicHH} calculated the Hochschild homology of a certain bimodule that will appear in our computations.
It would be interesting to compute the colored homologies of arbitrary torus links and knots building upon \cite{EH1, HM, Hog1, Mel1}.

\subsection{The complex}
To begin our computation, first note that the Hopf link is the closure
of a simple crossing concatenated with itself.  Thus we consider the
corresponding complex of Soergel bimodules $\rickardp{2}{1} \circ
\rickardp{1}{2}$ where we recall (up to some overall internal and cohomological shifts that we will add in later),
      \begin{equation}
        \begin{array}{crcl}
        \rickardp{1}{2} = &
       \soergel \left(    \NB{\tikz[font=\tiny, scale=0.6]{\begin{scope}
  \coordinate (bl) at (-0.5, -1);
  \coordinate (br) at ( 0.5, -1);
  \coordinate (bm) at (  0,-0.3);
  \coordinate (tl) at (-0.5,  1);
  \coordinate (tr) at ( 0.5,  1);
  \coordinate (tm) at (  0, 0.3);
  \draw[>-]  (bl) .. controls +( 0, 0.5) and +(0,0) .. (bm)
  node[below, pos = 0] {$2$};
  \draw[>-]  (br) .. controls +( 0, 0.5) and +(0,0) .. (bm)
  node[below, pos = 0] {$1$};
  \draw[<-]  (tl) .. controls +( 0, -0.5) and +(0,0) .. (tm)
  node[above, pos = 0] {$1$};
  \draw[<-]  (tr) .. controls +( 0, -0.5) and +(0,0) .. (tm)
  node[above, pos = 0] {$2$};
  \draw [->-] (bm) -- (tm) node[left, pos = 0.5] {$3$};
\end{scope}}} \right)  & \to & q^{-1} \soergel \left( \NB{\tikz[font=\tiny, scale=0.6]{\begin{scope}
  \coordinate (bl) at (-0.5, -1);
  \coordinate (br) at ( 0.5, -1);
  \coordinate (tl) at (-0.5,  1);
  \coordinate (tr) at ( 0.5,  1);
  \draw[>->] (bl) -- (tl) node[pos = 0, below] {$2$} node[pos = 1,
  above] {$1$} coordinate[pos = 0.4] (ml);
  \draw[>->] (br) -- (tr) node[pos = 0, below] {$1$} node[pos = 1, above] {$2$} coordinate[pos = 0.6] (mr);
  \draw[->-] (ml) -- (mr) node [pos= 0.5, above] {$1$};
\end{scope}}}  \right)
        \end{array}
      \end{equation}
      
            \begin{equation}
        \begin{array}{crcl}
        \rickardp{2}{1} = &
          \soergel \left(  \NB{\tikz[font=\tiny, scale=0.6]{\begin{scope}
  \coordinate (bl) at (-0.5, -1);
  \coordinate (br) at ( 0.5, -1);
  \coordinate (bm) at (  0,-0.3);
  \coordinate (tl) at (-0.5,  1);
  \coordinate (tr) at ( 0.5,  1);
  \coordinate (tm) at (  0, 0.3);
  \draw[>-]  (bl) .. controls +( 0, 0.5) and +(0,0) .. (bm)
  node[below, pos = 0] {$1$};
  \draw[>-]  (br) .. controls +( 0, 0.5) and +(0,0) .. (bm)
  node[below, pos = 0] {$2$};
  \draw[<-]  (tl) .. controls +( 0, -0.5) and +(0,0) .. (tm)
  node[above, pos = 0] {$2$};
  \draw[<-]  (tr) .. controls +( 0, -0.5) and +(0,0) .. (tm)
  node[above, pos = 0] {$1$};
  \draw [->-] (bm) -- (tm) node[left, pos = 0.5] {$3$};
\end{scope}}} \right) & \to & q^{-1} \soergel \left(  \NB{\tikz[font=\tiny, scale=0.6]{\begin{scope}
  \coordinate (bl) at (-0.5, -1);
  \coordinate (br) at ( 0.5, -1);
  \coordinate (tl) at (-0.5,  1);
  \coordinate (tr) at ( 0.5,  1);
  \draw[>->] (bl) -- (tl) node[pos = 0, below] {$1$} node[pos = 1,
  above] {$2$} coordinate[pos = 0.6] (ml);
  \draw[>->] (br) -- (tr) node[pos = 0, below] {$2$} node[pos = 1, above] {$1$} coordinate[pos = 0.4] (mr);
  \draw[->-] (mr) -- (ml) node [pos= 0.5, above] {$1$};
\end{scope}

}}  \right)
        \end{array} .
      \end{equation}
Then $\rickardp{2}{1} \circ \rickardp{1}{2}=$
\[
\begin{tikzpicture}[xscale = 4.6, yscale = 2.2, font = \small]
    \node (A) at (-1.25, 0) { \soergel $\left(   \NB{\tikz[font= \tiny, scale =0.6]{\begin{scope}
  \coordinate (o4) at (0,.6);
  \coordinate (o3) at (0,0.3);
  \coordinate (o2) at (0,-0.3);
  \coordinate (o1) at (0,-.6);
  \coordinate (bl) at (-.5,-1);
  \coordinate (br) at ( .5,-1);
  \coordinate (ul) at (-.5,1);
  \coordinate (ur) at (.5,1);
  \draw[->-] (o3) -- (o4) node [pos=.5, left] {$3$};
  \draw[->-] (o1) -- (o2) node [pos=0.5, left] {$3$};
  \draw[->-] (o2) .. controls +(0.3,0.3) and +(0.3, -0.3) .. (o3) node
 [pos= 0.5, right] {$2$};
   \draw[->-] (o2) .. controls +(-0.3,0.3) and +(-0.3, -0.3) .. (o3) node
 [pos= 0.5, left] {$1$};
 \draw[>-] (bl) .. controls +(0, 0.3) and +(-0,-0).. (o1) coordinate[pos=0.56] (c) node[pos=0, below] {$2$};
  \draw[>-] (br) .. controls +(0, 0.3) and +(0,-0).. (o1) node[pos=0, below] {$1$};
  \draw[<-] (ul) .. controls +(0, -0.3) and +(-0,-0).. (o4) coordinate[pos=0.56] (c) node[pos=0, above] {$2$};
  \draw[<-] (ur) .. controls +(0, -0.3) and +(0,-0).. (o4) node[pos=0, above] {$1$};
\end{scope}}} \right)$  };
       \node (C) at (1.35, 0) {$q^{-2}$ \soergel $ \left(\NB{\tikz[font= \tiny, scale =0.6]{\begin{scope}
  \coordinate (bl) at (-0.5, -1);
  \coordinate (br) at ( 0.5, -1);
  \coordinate (tl) at (-0.5,  1);
  \coordinate (tr) at ( 0.5,  1);
    \coordinate (ml) at (-0.5,  -.8);
        \coordinate (Ml) at (-0.5,  .8);
 \coordinate (mr) at (0.5,  -.6);
\coordinate (Mr) at (0.5,  .6);

   \draw[>->] (bl) -- (tl) node[pos = 0, below] {$2$} node[pos = 1,
  above] {$2$};

    \draw[>->] (br) -- (tr) node[pos = 0, below] {$1$} node[pos = 1, above] {$1$};
  
  \draw[->-] (ml) -- (mr) node [pos= 0.5, above] {$1$};
    \draw[->-] (Ml) -- (Mr) node [pos= 0.5, above] {$1$};

\end{scope}}} \right)$ };
    \node (m) at (0,0) {$q^{-1} \soergel  \left(\NB{\tikz[font= \tiny, scale =0.6]{\begin{scope}

    \coordinate (bl) at (-0.5, -1);
  \coordinate (br) at ( 0.5, -1);
  \coordinate (bm) at (  0,-0.6);
  \coordinate (tm) at (  0, -0.3);
  \coordinate (tl) at (-0.5,  0);
  \coordinate (tr) at ( 0.5,  0); 
    \coordinate (ttl) at (  -.5, 1);
        \coordinate (ttr) at (  .5, 1);

 \draw[->] (tl) -- (ttl) node[pos = 0, below] {$ $} node[pos = 1,
  above] {$2$} coordinate[pos = 0.6]  coordinate[pos = 0.6] (ml) ;
  
   \draw[->] (tr) -- (ttr) node[pos = 0, below] {$ $} node[pos = 1,
  above] {$1$} coordinate[pos = 0.6] coordinate[pos = 0.4] (mr);
  
   \draw[->] (mr) -- (ml) node[pos = 0, right] {$  $} node[pos = .5,
  above] {$ 1 $} coordinate[pos = 0.6];

  \draw[>-]  (bl) .. controls +( 0, 0.3) and +(0,0) .. (bm)
  node[below, pos = 0] {$2$};
  \draw[>-]  (br) .. controls +( 0, 0.3) and +(0,0) .. (bm)
  node[below, pos = 0] {$ 1$};
  \draw[]  (tl) .. controls +( 0, -0.3) and +(0,0) .. (tm)
  node[above, pos = 0] {$ $};
  \draw[]  (tr) .. controls +( 0, -0.3) and +(0,0) .. (tm)
  node[above, pos = 0] {$ $};
  \draw [->-] (bm) -- (tm) node[left, pos = 0.5] {$3$};
\end{scope}}} \right)  \bigoplus  q^{-1} \soergel \left( \NB{\tikz[font= \tiny, scale
      =0.6]{\begin{scope}

    \coordinate (bl) at (-0.5, 0);
  \coordinate (br) at ( 0.5, 0);
  \coordinate (bm) at (  0,.3);
  \coordinate (tl) at (-0.5,  1);
  \coordinate (tr) at ( 0.5,  1);
  \coordinate (tm) at (  0, 0.6);
  
      \coordinate (bbl) at (  -.5, -1);
        \coordinate (bbr) at (  .5, -1);

 \draw[] (bl) -- (bbl) node[pos = 0, left] {$  $} node[pos = 1,
  below] {$ 2$} coordinate[pos = .6]  coordinate[pos = 0.6] (ml) ;
  
   \draw[] (br) -- (bbr) node[pos = 0, below] {$ $} node[pos = 1,
  below] {$1$} coordinate[pos = 0.6] coordinate[pos = 0.4] (mr);
  
   \draw[<-] (mr) -- (ml) node[pos = 0, right] {$  $} node[pos = .5,
  above] {$ 1 $} coordinate[pos = 0.6];

  \draw[>-]  (bl) .. controls +( 0, 0.3) and +(0,0) .. (bm)
  node[below, pos = 0] {$ $};
  \draw[>-]  (br) .. controls +( 0, 0.3) and +(0,0) .. (bm)
  node[below, pos = 0] {$ $};
  \draw[<-]  (tl) .. controls +( 0, -0.3) and +(0,0) .. (tm)
  node[above, pos = 0] {$2$};
  \draw[<-]  (tr) .. controls +( 0, -0.3) and +(0,0) .. (tm)
  node[above, pos = 0] {$1$};
  \draw [->-] (bm) -- (tm) node[left, pos = 0.5] {$3$};
\end{scope}}} \right) $};
        \draw[-to] (A) -- (m) node[pos=0.5, above] {$\begin{pmatrix} \mapH \\-\mapH \end{pmatrix} $}; 
          \draw[-to] (m) -- (C) node[pos=0.5, above] {$\begin{pmatrix} \mapH & \mapH \end{pmatrix} $}; 
 \end{tikzpicture} .
\]
Ignoring the $H^\prime$-structure, there is a direct sum decomposition of the rightmost bimodule 
  \[
  \soergel \left( \NB{\tikz[font= \tiny, scale=0.6]{}} \right)
    \cong
  \soergel \left(  \NB{\tikz[font= \tiny, scale=0.6]{\begin{scope}
  \coordinate (bl) at (-0.5, -1);
  \coordinate (br) at ( 0.5, -1);
  \coordinate (bm) at (  0,-0.3);
  \coordinate (tl) at (-0.5,  1);
  \coordinate (tr) at ( 0.5,  1);
  \coordinate (tm) at (  0, 0.3);
  \draw[>-]  (bl) .. controls +( 0, 0.5) and +(0,0) .. (bm)
  node[below, pos = 0] {$2$};
  \draw[>-]  (br) .. controls +( 0, 0.5) and +(0,0) .. (bm)
  node[below, pos = 0] {$1$};
  \draw[<-]  (tl) .. controls +( 0, -0.5) and +(0,0) .. (tm)
  node[above, pos = 0] {$2$};
  \draw[<-]  (tr) .. controls +( 0, -0.5) and +(0,0) .. (tm)
  node[above, pos = 0] {$1$};
  \draw [->-] (bm) -- (tm) node[left, pos = 0.5] {$3$};
\end{scope}}} \right)
    \oplus 
q^{-2} \soergel \left(   \NB{\tikz[font= \tiny, scale=0.6]{\begin{scope}
  \coordinate (bl) at (-0.5, -1);
  \coordinate (br) at ( 0.5, -1);
  \coordinate (tl) at (-0.5,  1);
  \coordinate (tr) at ( 0.5,  1);
    \coordinate (ml) at (-0.5,  -.8);
        \coordinate (Ml) at (-0.5,  .8);
 \coordinate (mr) at (0.5,  -.6);
\coordinate (Mr) at (0.5,  .6);

   \draw[>->] (bl) -- (tl) node[pos = 0, below] {$2$} node[pos = 1,
  above] {$2$};

    \draw[>->] (br) -- (tr) node[pos = 0, below] {$1$} node[pos = 1, above] {$1$};

\end{scope}}} \right)
    .
  \]
While this is not an $H^\prime$-isomorphism, some of the maps needed for the isomorphism above do respect the $H^\prime$-structure.
In particular,
the map 
\[
\soergel \left( \NB{\tikz[font= \tiny, scale=0.6]{}} \right)
    \longrightarrow
\soergel \left( \NB{\tikz[font= \tiny, scale=0.6]{}} \right)
\]
where the generator gets sent to the generator is an $H^\prime$-module map.
Additionally, the unzip map $(\mapU \circ \mapH)$ 
\[
\soergel \left( \NB{\tikz[font= \tiny, scale=0.6]{}} \right)
    \longrightarrow
q^{-2} \soergel \left( \NB{\tikz[font= \tiny, scale=0.6]{\begin{scope}
  \coordinate (bl) at (-0.5, -1);
  \coordinate (br) at ( 0.5, -1);
  \coordinate (tl) at (-0.5,  1);
  \coordinate (tr) at ( 0.5,  1);
    \coordinate (ml) at (-0.5,  -.8);
        \coordinate (Ml) at (-0.5,  .8);
 \coordinate (mr) at (0.5,  -.6);
\coordinate (Mr) at (0.5,  .6);

   \draw[>->] (bl) -- (tl) node[pos = 0, below] {$2$} node[pos = 1,
  above] {$2$} coordinate[pos = 0.5] (ga) ;
   \filldraw[draw= green!50!black, fill = white] (ga) circle (1mm)
  node[left, green!50!black] {$1$};

    \draw[>->] (br) -- (tr) node[pos = 0, below] {$1$} node[pos = 1, above] {$1$};

\end{scope}}} \right)
\]
respects the $H^\prime$-structure.  Note that $\mapH$ acts on the portion of the diagram with external edges labeled by $1$ and $\mapU$ acts on the portion of the resulting diagram where the external edges are labeled by $2$.

Below we will repeatedly use Lemma \ref{lem-construction-of-triangle} to simplify the complex in the relative homotopy category. First,
we have a short exact sequence of complexes of $H^\prime$-equivariant bimodules
\begin{equation} \label{hopfses1}
\NB{\tikz[xscale = 3, yscale = 3]{
    \node (i1) at (-2.65, 1) {$0$};
    \node (i2) at (-2.65, 0) {$0$};
    \node (i3) at ( 2.5, 1) {$0$};
    \node (i4) at ( 2.5, 0) {$0$};
   \node (i5) at (-2.65, 2) {$0$};
        \node (i6) at (-1.75, 2) {$ q^{-2} \soergel \left( \NB{\tikz[font= \tiny,
  scale=0.6]{}} \right) $};
        \node (i7) at (0, 2) {$ q^{-2} \soergel \left( \NB{\tikz[font= \tiny,
  scale=0.6]{}} \right) \oplus q^{-2} \soergel \left( \NB{\tikz[font= \tiny,
  scale=0.6]{}} \right)  $};
    \node (i8) at (1.75, 2) {$q^{-2} \soergel \left( \NB{\tikz[font= \tiny,
  scale=0.6]{}} \right) $};
          \node (i9) at (2.5, 2) {$0$};
\node (A) at (-1.75,1) {$ \soergel \left( \NB{\tikz[font= \tiny,
  scale=0.6]{\begin{scope}
  \coordinate (o4) at (0,.6);
  \coordinate (o3) at (0,0.3);
  \coordinate (o2) at (0,-0.3);
  \coordinate (o1) at (0,-.6);
  \coordinate (bl) at (-.5,-1);
  \coordinate (br) at ( .5,-1);
  \coordinate (ul) at (-.5,1);
  \coordinate (ur) at (.5,1);
  \draw[->-] (o3) -- (o4) node [pos=.5, left] {$3$};
  \draw[->-] (o1) -- (o2) node [pos=0.5, left] {$3$};
  \draw[->-] (o2) .. controls +(0.3,0.3) and +(0.3, -0.3) .. (o3) node
 [pos= 0.5, right] {$2$};
   \draw[->-] (o2) .. controls +(-0.3,0.3) and +(-0.3, -0.3) .. (o3) node
 [pos= 0.5, left] {$1$};
 \draw[>-] (bl) .. controls +(0, 0.3) and +(-0,-0).. (o1) coordinate[pos=0.56] (c) node[pos=0, below] {$2$};
  \draw[>-] (br) .. controls +(0, 0.3) and +(0,-0).. (o1) node[pos=0, below] {$1$};
  \draw[<-] (ul) .. controls +(0, -0.3) and +(-0,-0).. (o4) coordinate[pos=0.56] (c) node[pos=0, above] {$2$};
  \draw[<-] (ur) .. controls +(0, -0.3) and +(0,-0).. (o4) node[pos=0, above] {$1$};
\end{scope}}} \right) $};
\node (B) at (-1.75,0) {$ \soergel \left( \NB{\tikz[font= \tiny, scale=0.6]{\begin{scope}
  \coordinate (bl) at (-0.5, -1);
  \coordinate (br) at ( 0.5, -1);
  \coordinate (bm) at (  0,-0.3);
  \coordinate (tl) at (-0.5,  1);
  \coordinate (tr) at ( 0.5,  1);
  \coordinate (tm) at (  0, 0.3);
  \draw[>-]  (bl) .. controls +( 0, 0.5) and +(0,0) .. (bm)
  node[below, pos = 0] {$2$};
  \draw[>-]  (br) .. controls +( 0, 0.5) and +(0,0) .. (bm)
  node[below, pos = 0] {$1$};
  \draw[<-]  (tl) .. controls +( 0, -0.5) and +(0,0) .. (tm)
  node[above, pos = 0] {$2$};
  \draw[<-]  (tr) .. controls +( 0, -0.5) and +(0,0) .. (tm)
  node[above, pos = 0] {$1$};
  \draw [->-] (bm) -- (tm) node[left, pos = 0.75] {$3$}    coordinate[pos = 0.25] (ga);
    \filldraw[draw= green!50!black, fill = white] (ga) circle (1mm)  node[right, green!50!black] {$1$};
\end{scope}}} \right)  \oplus  q^{2} \soergel \left( \NB{\tikz[font= \tiny,
  scale=0.6]{}} \right) $};
\draw[green!50!black, ->] ($(B)+(-0.1,-0.15)$) .. controls +(0.05,
-0.05) and +(-0.05, -0.05) .. ($(B)+(0.15,-0.15)$)
node[font =\tiny, below, pos= 0.5] {$-1$};
\node (C) at (0,1) {$ q^{-1}\soergel \left( \NB{\tikz[font= \tiny,
  scale=0.6]{}} \right) \oplus q^{-1} \soergel \left( \NB{\tikz[font= \tiny,
  scale=0.6]{}} \right)$};
\node (D) at (0,0) {$ \soergel \left( \NB{\tikz[font= \tiny,
  scale=0.6]{\begin{scope}
  \coordinate (bl) at (-0.5, -1);
  \coordinate (br) at ( 0.5, -1);
  \coordinate (bm) at (  0,-0.3);
  \coordinate (tl) at (-0.5,  1);
  \coordinate (tr) at ( 0.5,  1);
  \coordinate (tm) at (  0, 0.3);
  \draw[>-]  (bl) .. controls +( 0, 0.5) and +(0,0) .. (bm)
  node[below, pos = 0] {$2$};
  \draw[>-]  (br) .. controls +( 0, 0.5) and +(0,0) .. (bm)
  node[below, pos = 0] {$1$};
  \draw[<-]  (tl) .. controls +( 0, -0.5) and +(0,0) .. (tm)
  node[above, pos = 0] {$2$} coordinate[pos = 0.25] (ga) ;
    \filldraw[draw= green!50!black, fill = white] (ga) circle (1mm)
  node[left, green!50!black] {$1$};

  \draw[<-]  (tr) .. controls +( 0, -0.5) and +(0,0) .. (tm)
  node[above, pos = 0] {$1$};
  \draw [->-] (bm) -- (tm) node[left, pos = 0.5] {$3$};

\end{scope}}} \right) \oplus  \soergel \left( \NB{\tikz[font= \tiny,
  scale=0.6]{\begin{scope}
  \coordinate (bl) at (-0.5, -1);
  \coordinate (br) at ( 0.5, -1);
  \coordinate (bm) at (  0,-0.3);
  \coordinate (tl) at (-0.5,  1);
  \coordinate (tr) at ( 0.5,  1);
  \coordinate (tm) at (  0, 0.3);
  \draw[>-]  (bl) .. controls +( 0, 0.5) and +(0,0) .. (bm)
  node[below, pos = 0] {$2$} coordinate[pos = 0.25] (ga);
    \filldraw[draw= green!50!black, fill = white] (ga) circle (1mm)
  node[left, green!50!black] {$1$};
  \draw[>-]  (br) .. controls +( 0, 0.5) and +(0,0) .. (bm)
  node[below, pos = 0] {$1$};
  \draw[<-]  (tl) .. controls +( 0, -0.5) and +(0,0) .. (tm)
  node[above, pos = 0] {$2$}  ;

  \draw[<-]  (tr) .. controls +( 0, -0.5) and +(0,0) .. (tm)
  node[above, pos = 0] {$1$};
  \draw [->-] (bm) -- (tm) node[left, pos = 0.5] {$3$};

\end{scope}}} \right) $};
\node (E) at (1.75,1) {$q^{-2} \soergel \left( \NB{\tikz[font= \tiny,
  scale=0.6]{}} \right) $};
\node (F) at (1.75,0) {$ q^{-2} \soergel \left( \NB{\tikz[font= \tiny,
  scale=0.6]{}} \right) $};
\draw[->] (A) -- (B) node [left, pos= 0.5] {$C_3$};
\draw[->] (C) -- (D) node [left, pos= 0.5] {$C_1$};
\draw[->] (E) -- (F) node [right, pos= 0.5] {$\mapU \circ \mapH$};
\draw[->] (A) -- (C) node [above, pos = 0.5] {$\begin{pmatrix} \mapH \\ -\mapH \end{pmatrix}$};
\draw[->] (C) -- (E) node [above, pos = 0.5] {$\begin{pmatrix} \mapH & \mapH \end{pmatrix}$};
\draw[->] (B) -- (D) node [above, pos = 0.5] {$C_2$};
\draw[->] (D) -- (F) node [above, pos = 0.5] {$\begin{pmatrix} \mapH & \mapH \end{pmatrix}$};
\draw[->] (i1) -- (A);
\draw[->] (i2) -- (B);
\draw[->] (i1) -- (i2);
\draw[->] (E) -- (i3);
\draw[->] (F) -- (i4);
\draw[->] (i3) -- (i4);
\draw[->] (i5) -- (i6);
\draw[->] (i6) -- (i7) node [above, pos = .5] {$\begin{pmatrix} \Id \\ -\Id \end{pmatrix}$};
\draw[->] (i7) -- (i8) node [above, pos = .5] {$\begin{pmatrix} \Id & \Id \end{pmatrix}$};
\draw[->] (i8) -- (i9);
\draw[->] (i5) -- (i1);
\draw[->] (i6) -- (A) node [right, pos = .5] {$\mapB$};
\draw[->] (i7) -- (C) node [right, pos = 0.5] {$C_0$};
\draw[->] (i8) -- (E)node [right, pos = 0.5] {$ \mapH \circ \mapU \circ \mapA$};
\draw[->] (i9) -- (i3);

\node[draw, densely dotted ,fit=(i5) (i6) (i7) (i8) (i9) ,inner sep=1ex,rectangle] {};
\node[draw, densely dotted ,fit=(i1) (i3) (A) (C) (E) ,inner sep=1ex,rectangle] {};
\node[draw, densely dotted ,fit=(i2) (B) (D) (F) (i4) ,inner sep=1ex,rectangle] {};
}}
\end{equation}
where we define
\[C_0 = 
\begin{pmatrix}
\mapA \circ \mapB & 0 \\
0 & \mapA \circ \mapB
\end{pmatrix} , 
\quad \quad 
C_1 = 
\begin{pmatrix}
\mapU \circ \mapA & 0 \\
0 & \mapU \circ \mapA
\end{pmatrix} , \quad \quad
C_2=
\begin{pmatrix}
\Id & {\NB{\tikz[font= \tiny,
  scale=0.6]{\begin{scope}
  \coordinate (bl) at (-0.5, -1);
  \coordinate (br) at ( 0.5, -1);
  \coordinate (bm) at (  0,-0.3);
  \coordinate (tl) at (-0.5,  1);
  \coordinate (tr) at ( 0.5,  1);
  \coordinate (tm) at (  0, 0.3);
  \draw[>-]  (bl) .. controls +( 0, 0.5) and +(0,0) .. (bm)
  node[below, pos = 0] {$2$};
  \draw[>-]  (br) .. controls +( 0, 0.5) and +(0,0) .. (bm)
  node[below, pos = 0] {$1$};
  \draw[<-]  (tl) .. controls +( 0, -0.5) and +(0,0) .. (tm)
  node[above, pos = 0] {$2$};
  \draw[<-]  (tr) .. controls +( 0, -0.5) and +(0,0) .. (tm)
  node[above, pos = 0] {$1$} coordinate[pos = 0.25] (gb) ;
  \filldraw[draw= black!50!black, ] (gb) circle (1mm)
  node[left, black!50!black] {$e_1$};
  \draw[->-] (bm) -- (tm) node[left, pos = 0.5] {$3$};

\end{scope}}}} \\
-\Id & - {\NB{\tikz[font= \tiny,
  scale=0.6]{\begin{scope}
  \coordinate (bl) at (-0.5, -1);
  \coordinate (br) at ( 0.5, -1);
  \coordinate (bm) at (  0,-0.3);
  \coordinate (tl) at (-0.5,  1);
  \coordinate (tr) at ( 0.5,  1);
  \coordinate (tm) at (  0, 0.3);
  \draw[>-]  (bl) .. controls +( 0, 0.5) and +(0,0) .. (bm)
  node[below, pos = 0] {$2$};
  \draw[>-]  (br) .. controls +( 0, 0.5) and +(0,0) .. (bm)
  node[below, pos = 0] {$1$} coordinate[pos = 0.25] (gb) ;
  \filldraw[draw= black!50!black, ] (gb) circle (1mm)
  node[left, black!50!black] {$e_1$};
  \draw[<-]  (tl) .. controls +( 0, -0.5) and +(0,0) .. (tm)
  node[above, pos = 0] {$2$};

  \draw[<-]  (tr) .. controls +( 0, -0.5) and +(0,0) .. (tm)
  node[above, pos = 0] {$1$};
  \draw [->-] (bm) -- (tm) node[left, pos = 0.5] {$3$};

\end{scope}}}}
\end{pmatrix} ,
\]
and $C_3$ is defined by
\[
{\NB{\tikz[font= \tiny,
  scale=0.6]{\begin{scope}
  \coordinate (o4) at (0,.6);
  \coordinate (o3) at (0,0.3);
  \coordinate (o2) at (0,-0.3);
  \coordinate (o1) at (0,-.6);
  \coordinate (bl) at (-.5,-1);
  \coordinate (br) at ( .5,-1);
  \coordinate (ul) at (-.5,1);
  \coordinate (ur) at (.5,1);
  \draw[->-] (o3) -- (o4) node [pos=.5, left] {$3$};
  \draw[->-] (o1) -- (o2) node [pos=0.5, left] {$3$};
  \draw[->-] (o2) .. controls +(0.3,0.3) and +(0.3, -0.3) .. (o3) node
 [pos= 0.5, right] {$2$};
   \draw[->-] (o2) .. controls +(-0.3,0.3) and +(-0.3, -0.3) .. (o3) node
 [pos= 0.5, left] {$1$};
 \draw[>-] (bl) .. controls +(0, 0.3) and +(-0,-0).. (o1) coordinate[pos=0.56] (c) node[pos=0, below] {$2$};
  \draw[>-] (br) .. controls +(0, 0.3) and +(0,-0).. (o1) node[pos=0, below] {$1$};
  \draw[<-] (ul) .. controls +(0, -0.3) and +(-0,-0).. (o4) coordinate[pos=0.56] (c) node[pos=0, above] {$2$};
  \draw[<-] (ur) .. controls +(0, -0.3) and +(0,-0).. (o4) node[pos=0, above] {$1$};
\end{scope}}}}
 \mapsto
\begin{pmatrix}
  0 \\
  0
\end{pmatrix} ,
\quad \quad 
{\NB{\tikz[font= \tiny,
  scale=0.6]{\begin{scope}
  \coordinate (o4) at (0,.6);
  \coordinate (o3) at (0,0.3);
  \coordinate (o2) at (0,-0.3);
  \coordinate (o1) at (0,-.6);
  \coordinate (bl) at (-.5,-1);
  \coordinate (br) at ( .5,-1);
  \coordinate (ul) at (-.5,1);
  \coordinate (ur) at (.5,1);
  \draw[->-] (o3) -- (o4) node [pos=.5, left] {$3$};
  \draw[->-] (o1) -- (o2) node [pos=0.5, left] {$3$};
  \draw[->-] (o2) .. controls +(0.3,0.3) and +(0.3, -0.3) .. (o3) node
 [pos= 0.5, right] {$e_1$}  coordinate[pos= 0.5] (dot2);
 \draw[->-] (o2) .. controls +(-0.3,0.3) and +(-0.3, -0.3) .. (o3)
 coordinate[pos= 0.5] (dot);
 \draw[>-] (bl) .. controls +(0, 0.3) and +(-0,-0).. (o1) coordinate[pos=0.56] (c) node[pos=0, below] {$2$};
  \draw[>-] (br) .. controls +(0, 0.3) and +(0,-0).. (o1) node[pos=0, below] {$1$};
  \draw[<-] (ul) .. controls +(0, -0.3) and +(-0,-0).. (o4) coordinate[pos=0.56] (c) node[pos=0, above] {$2$};
  \draw[<-] (ur) .. controls +(0, -0.3) and +(0,-0).. (o4) node[pos=0, above] {$1$};
 \filldraw[draw= black!50!black, ] (dot2) circle (1mm)
  node[left, black!50!black] {$ $};
\end{scope}}}}
 \mapsto
\begin{pmatrix}
 {\NB{\tikz[font= \tiny,
  scale=0.6]{}}} \\
0
\end{pmatrix} ,
\quad \quad 
{\NB{\tikz[font= \tiny,
  scale=0.6]{\begin{scope}
  \coordinate (o4) at (0,.6);
  \coordinate (o3) at (0,0.3);
  \coordinate (o2) at (0,-0.3);
  \coordinate (o1) at (0,-.6);
  \coordinate (bl) at (-.5,-1);
  \coordinate (br) at ( .5,-1);
  \coordinate (ul) at (-.5,1);
  \coordinate (ur) at (.5,1);
  \draw[->-] (o3) -- (o4) node [pos=.5, left] {$3$};
  \draw[->-] (o1) -- (o2) node [pos=0.5, left] {$3$};
  \draw[->-] (o2) .. controls +(0.3,0.3) and +(0.3, -0.3) .. (o3) node
 [pos= 0.5, right] {$e_2$}  coordinate[pos= 0.5] (dot2);
 \draw[->-] (o2) .. controls +(-0.3,0.3) and +(-0.3, -0.3) .. (o3)
 coordinate[pos= 0.5] (dot);
 \draw[>-] (bl) .. controls +(0, 0.3) and +(-0,-0).. (o1) coordinate[pos=0.56] (c) node[pos=0, below] {$2$};
  \draw[>-] (br) .. controls +(0, 0.3) and +(0,-0).. (o1) node[pos=0, below] {$1$};
  \draw[<-] (ul) .. controls +(0, -0.3) and +(-0,-0).. (o4) coordinate[pos=0.56] (c) node[pos=0, above] {$2$};
  \draw[<-] (ur) .. controls +(0, -0.3) and +(0,-0).. (o4) node[pos=0, above] {$1$};
 \filldraw[draw= black!50!black, ] (dot2) circle (1mm)
  node[left, black!50!black] {$ $};
\end{scope}}}}
 \mapsto
\begin{pmatrix}
0 \\
 {\NB{\tikz[font= \tiny,
  scale=0.6]{}}}
\end{pmatrix} .
\]
The bimodules in the first two rows of \eqref{hopfses1} are equipped with the natural $H^\prime$-structure.
In the bottom row of \eqref{hopfses1}, the bimodules in the middle and right are
equipped with the $H^\prime$-structures as indicated by green dots in the diagrams.  
The $H^\prime$-structure on the bimodule in the leftmost position is a
bit more complicated. The green dots and the green arrow are meant to
encode that the $H^\prime$-action is twisted by the following matrix:
\[
\begin{pmatrix}
 {\NB{\tikz[font= \tiny,
  scale=0.6]{\begin{scope}
  \coordinate (bl) at (-0.5, -1);
  \coordinate (br) at ( 0.5, -1);
  \coordinate (bm) at (  0,-0.3);
  \coordinate (tl) at (-0.5,  1);
  \coordinate (tr) at ( 0.5,  1);
  \coordinate (tm) at (  0, 0.3);
    \coordinate (gm) at (  0, 0);
  \draw[>-]  (bl) .. controls +( 0, 0.5) and +(0,0) .. (bm);
  \draw[>-]  (br) .. controls +( 0, 0.5) and +(0,0) .. (bm)
  node[below, pos = 0] {$1$};
  \draw[<-]  (tl) .. controls +( 0, -0.5) and +(0,0) .. (tm)
  node[above, pos = 0] {$2$}  ;
  \draw[<-]  (tr) .. controls +( 0, -0.5) and +(0,0) .. (tm)
  node[above, pos = 0] {$1$};
  \draw [->-] (bm) -- (tm) node[left, pos = 0.5] {$3$} coordinate[pos = 0.25] (ga);
    \fill[black] (gm) circle (1mm)  node[right] {$e_1$};

\end{scope}}}} 
  & 0
\\
- {\NB{\tikz[font= \tiny,
  scale=0.6]{}}} & {\NB{\tikz[font= \tiny,
  scale=0.6]{}}} 
\end{pmatrix} .
\]
The top row of \eqref{hopfses1} is contractible.  Thus in the relative homotopy category, 
$\rickardp{2}{1} \circ \rickardp{1}{2}$ is isomorphic to the bottom row of \eqref{hopfses1} with the $H^\prime$-structure described above.

Next, we have a short exact sequence of complexes of $H^\prime$-equivariant bimodules
\begin{equation} \label{hopfses3}
\NB{\tikz[xscale = 3.2, yscale = 3]{
    \node (i1) at (-2.25, 1) {$0$};
    \node (i2) at (-2.25, 0) {$0$};
    \node (i3) at ( 2.6, 1) {$0$};
    \node (i4) at ( 2.6, 0) {$0$.};
    \node (i5) at (-2.25, 2) {$0$};
    \node (i9) at (2.6, 2) {$0$};
        \node (i6) at (-1.25, 2) {$ q^2 \soergel \left( \NB{\tikz[font= \tiny,
  scale=0.6]{\begin{scope}
  \coordinate (bl) at (-0.5, -1);
  \coordinate (br) at ( 0.5, -1);
  \coordinate (bm) at (  0,-0.3);
  \coordinate (tl) at (-0.5,  1);
  \coordinate (tr) at ( 0.5,  1);
  \coordinate (tm) at (  0, 0.3);
  \draw[>-]  (bl) .. controls +( 0, 0.5) and +(0,0) .. (bm)
  node[below, pos = 0] {$2$};
  \draw[>-]  (br) .. controls +( 0, 0.5) and +(0,0) .. (bm)
  node[below, pos = 0] {$1$};
  \draw[<-]  (tl) .. controls +( 0, -0.5) and +(0,0) .. (tm)
  node[above, pos = 0] {$2$} coordinate[pos = 0.25] (ga) ;
    \filldraw[draw= green!50!black, fill = white] (ga) circle (1mm)
  node[left, green!50!black] {$1$};

  \draw[<-]  (tr) .. controls +( 0, -0.5) and +(0,0) .. (tm)
  node[above, pos = 0] {$1$} coordinate[pos = 0.25] (gb) ;
    \filldraw[draw= green!50!black, fill = white] (gb) circle (1mm)
  node[left, green!50!black] {$2$};
  \draw [->-] (bm) -- (tm) node[left, pos = 0.5] {$3$};
 
\end{scope}}} \right)$ };
        \node (i7) at (.5, 2) {$ \soergel \left( \NB{\tikz[font= \tiny,
  scale=0.6]{}} \right) $};
    \node (i8) at (2, 2) {$q^{-2} \soergel \left( \NB{\tikz[font= \tiny,
  scale=0.6]{}} \right) $};
\node (A) at (-1.25,1) {$ \soergel \left( \NB{\tikz[font= \tiny, scale=0.6]{}} \right)  \oplus  q^{2} \soergel \left( \NB{\tikz[font= \tiny,
  scale=0.6]{}} \right) $ };
\node (B) at (-1.25,0) {$ \soergel \left( \NB{\tikz[font= \tiny,
  scale=0.6]{}} \right) $};
\node (D) at (.5,0) {$ \soergel \left( \NB{\tikz[font= \tiny,
  scale=0.6]{}} \right) $} ;
\node (C) at (.5,1) {$ \soergel \left( \NB{\tikz[font= \tiny,
  scale=0.6]{}} \right) \oplus \soergel \left( \NB{\tikz[font= \tiny,
  scale=0.6]{}} \right) $};
\draw[green!50!black, ->] ($(A)+(-0.1,-0.15)$) .. controls +(0.05,
-0.05) and +(-0.05, -0.05) .. ($(A)+(0.15,-0.15)$)
node[font =\tiny, below, pos= 0.5] {$-1$};
\node (E) at (2,1) {$q^{-2} \soergel \left( \NB{\tikz[font= \tiny,
  scale=0.6]{}} \right) $};
\node (F) at (2,0) {$0$};
\draw[->] (A) -- (B) node [left, pos= 0.5] {$C_5$};
\draw[->] (C) -- (D) node [left, pos= 0.5] {$\begin{pmatrix} \Id & 0 \end{pmatrix}$};
\draw[->] (E) -- (F) node [right, pos= 0.5] {$ $};
\draw[->] (A) -- (C) node [above, pos = 0.5] {$C_2$};
\draw[->] (C) -- (E) node [above, pos = 0.5] {$\begin{pmatrix} \mapH & \mapH \end{pmatrix}$};
\draw[->] (B) -- (D) node [above, pos = 0.5] {$ \Id $};
\draw[->] (D) -- (F) node [above, pos = 0.5] {$ $};
\draw[->] (i1) -- (A);
\draw[->] (i2) -- (B);
\draw[->] (i1) -- (i2);
\draw[->] (E) -- (i3);
\draw[->] (F) -- (i4);
\draw[->] (i3) -- (i4);
\draw[->] (i5) -- (i6);
\draw[->] (i6) -- (i7) coordinate[pos=.5] (iii);
\node[above] at (iii) {${\NB{\tikz[font= \tiny,
  scale=0.5]{}}}
  -
  {\NB{\tikz[font= \tiny,
  scale=0.5]{}}}$};
\draw[->] (i7) -- (i8) node [above, pos = .5] {$ \mapH $};
\draw[->] (i8) -- (i9) node [above, pos = .5] {$ $};
\draw[->] (i5) -- (i1);
\draw[->] (i6) -- (A) node [left, pos = .5] {$ C_4$};
\draw[->] (i7) -- (C) node [right, pos = 0.5] {$\begin{pmatrix} 0 \\ \Id  \end{pmatrix}$};
\draw[->] (i8) -- (E) node [right, pos = 0.5] {$\Id $};
\draw[->] (i9) -- (i3);
\node[draw, densely dotted ,fit=(i5) (i6) (i7) (i8) (i9) ,inner sep=1ex,rectangle] {};
\node[draw, densely dotted ,fit=(i1) (i3) (A) (C) (E) ,inner sep=1ex,rectangle] {};
\node[draw, densely dotted ,fit=(i2) (B) (D) (F) (i4) ,inner sep=1ex,rectangle] {};
}}
\end{equation}
The maps $C_4$ and $C_5$ are defined by:
\[
C_4=
\begin{pmatrix}
- {\NB{\tikz[font= \tiny,
  scale=0.6]{}}} \\
   {\NB{\tikz[font= \tiny,
  scale=0.6]{}}} \\
\end{pmatrix},
\quad \quad
C_5 =
\begin{pmatrix}
\Id & {\NB{\tikz[font= \tiny,
  scale=0.6]{}}}
\end{pmatrix} 
\]
Since the bottom row of \eqref{hopfses3} is contractible, we obtain an isomorphism in the relative homotopy category (incorporating back in overall shifts), 
\begin{equation} \label{hopfreduced}
 \NB{ \tikz[xscale = 3.5, yscale = 3]{
 \node (i5) at (-2.45, 2) {$\rickardp{2}{1} \circ \rickardp{1}{2} \cong$};
  \node (i6) at (-1.5, 2) {$  q^{-6}\left( q^2 \soergel \left( \NB{\tikz[font= \tiny,
  scale=0.6]{}} \right)\right.$};
     \node (i7) at (0, 2) {$ \soergel \left( \NB{\tikz[font= \tiny,
  scale=0.6]{}} \right) $};
    \node (i8) at (1, 2) {$ \left.q^{-2}\soergel\left(\NB{\tikz[font= \tiny,
  scale=0.6]{}} \right)\right)$}  ;
\draw[->] (i6) -- (i7) coordinate[pos= .5] (iii);
\node[above] at (iii) {${\NB{\tikz[font= \tiny,
  scale=0.5]{}}}
  -
  {\NB{\tikz[font= \tiny,
  scale=0.5]{}}}$};
\draw[->] (i7) -- (i8) node [above, pos = .5] {$ \mapH $};
    }}    
    \ ,
\end{equation}
where the leftmost term sits in cohomological degree $0$.

\subsection{Hochschild homology of the complex}

Now that we have simplified the complex associated to the braid used to calculate the homology of the Hopf link, we begin with the calculation of the Hochschild homologies of various Soergel bimodules appearing in the simplified complex.  

The Soergel bimodules appearing are all bimodules over
$A=\Z[e_1,e_2,x_3]$ where $e_1$ and $e_2$ are elementary symmetric functions in variables $x_1$ and $x_2$.
We also let $B$ denote $(A,A)$-bimodule associated to the MOY graph:
\[
\NB{\tikz[font= \tiny,
  scale=0.6]{}}
  .
\]
In order to calculate the Hochschild homologies of $A$ and $B$, we first recall the Koszul resolution of $A$ as an $(A,A)$-bimodule.  

\begin{equation} \label{koszul21a}
\bigotimes_{i=1}^2 
\left(
 \Z[e_i] \otimes \Z[e_i]  \xrightarrow{\, e_i\otimes 1-1\otimes e_i\, }% 
\Z^{}[e_i] \otimes \Z^{}[e_i] 
\right) \otimes
\left(
 \Z[x_3] \otimes \Z[x_3]  \xrightarrow{\,x_3\otimes 1-1\otimes x_3 \,}% 
\Z^{}[x_3] \otimes \Z^{}[x_3] 
\right)
\ .
\end{equation}
By Section \ref{sec:koszul-resolution-p}, this resolution could be made to be $H^\prime$-equivariant.  One may have to twist terms in higher Hochschild degrees, but the explicit form of the twists turn out to be irrelevant in this calculation so we ignore them below. 

Then to compute $\mHH_{\bullet}^{\dif}(B)$, we tensor \eqref{koszul21a} with $B$ and obtain
\begin{equation} \label{koszul21b}
\bigotimes_{i=1}^2 
\left(
 \Z[e_i] \otimes \Z[e_i]   \xrightarrow{\, e_i\otimes 1-1\otimes e_i \,}% 
\Z^{}[e_i] \otimes \Z^{}[e_i] 
\right) \otimes
\left(
 \Z[x_3] \otimes \Z[x_3]   \xrightarrow{\,x_3\otimes 1-1\otimes x_3 \,}% 
\Z^{}[x_3] \otimes \Z^{}[x_3] 
\right) \otimes_{(A,A)} B
\ .
\end{equation}
Using relations in $B$, this is isomorphic to 
\begin{equation} \label{koszul21c}
\bigotimes_{i=1}^2 
\left(
 \Z[e_i] \otimes \Z[e_i]   \xrightarrow{\quad 0 \quad}% 
\Z^{}[e_i] \otimes \Z^{}[e_i] 
\right) \otimes
\left(
 \Z[x_3] \otimes \Z[x_3]   \xrightarrow{\, x_3\otimes 1-1\otimes x_3 \,}% 
\Z^{}[x_3] \otimes \Z^{}[x_3] 
\right) \otimes_{(A,A)} B
\ .
\end{equation}
Using \cite[Theorem 2]{StosicHH}, this is isomorphic to 
\begin{equation}
\mHH_{\bullet}^{\dif}(B)
\cong \Lambda(de_1, de_2, f dx_3) \otimes A
\ ,
\end{equation}
where the $H^\prime$-structure in Hochschild degree zero is inherited from $A$, and
\begin{equation}
f=x_3^2-e_1 x_3 + e_2.    
\end{equation}
It is straightforward to calculate the Cautis differential on this homology
\begin{equation}
d_C(de_1)=e_1^2-2e_2,
\quad \quad
d_C(de_2)=e_1 e_2,
\quad \quad
d_C(dx_3)=x_3^2 .
\end{equation}
The homology of $\mHH^\dif_{\bullet}(B)$ with respect to $d_C$ is concentrated in Hochschild degree $0$ and we obtain
\begin{equation}
\mH(\mHH^\dif_\bullet(B),d_C) \cong
A / I , \quad  \quad \quad I=\langle e_1^2-2e_2, e_1 e_2, f x_3^2 \rangle .
\end{equation}
As a $\Z$-module, there is an isomorphism in Hochschild degree zero
\begin{equation}
\mH^{}(\mHH^\dif_\bullet (B),d_C)^{\mathrm{free}} \cong \Z \langle e_i x_3^j | 0 \leq i \leq 2, 0 \leq j \leq 3  \rangle .
\end{equation}

In order to compute the Hochschild homology of $A$, tensor the Koszul resolution of $A$ as a bimodule with $A$.  As a result of the tensoring, all the differentials in the Koszul resolution vanish and we obtain
\begin{equation} \label{HH21R}
\mHH_{\bullet}^{\dif}(A) \cong \Lambda(de_1, de_2, dx_3) \otimes A \ ,
\end{equation}
where the $H^\prime$-structure in Hochschild degree zero is inherited from $A$.
The homology of $\mHH^\dif_{\bullet}(A)$ with respect to $d_C$ is free
and concentrated in Hochschild degree $0$ and we obtain
\begin{equation}
\mH(\mHH_\bullet^\dif(A),d_C) \cong
A / I , \quad  \quad \quad I=\langle e_1^2-2e_2, e_1 e_2,  x_3^2 \rangle .
\end{equation}
As a $\Z$-module, there is an isomorphism in Hochschild degree zero
\begin{equation}
\mH(\mHH_\bullet^\dif(A),d_C)^{\mathrm{free}}= \mH(\mHH_\bullet^\dif(A),d_C) \cong \Z \langle e_i x_3^j | 0 \leq i \leq 2, 0 \leq j \leq 1  \rangle .
\end{equation}
Now that we have computed the homology of the various pieces of \eqref{hopfreduced}, we now consider the induced topological differentials in \eqref{hopfreduced} after taking Hochschild homology.  The leftmost differential vanishes.  
The rightmost differential becomes a projection map.  In particular, we obtain
\begin{equation}
  \ker \mH(\mHH^\dif(\mapH)) = \Z \langle x_3^2, e_1 x_3^2, e_2 x_3^2, x_3^3, e_1 x_3^3, e_2 x_3^3 \rangle \ .  \end{equation}

We actually need to compute $\mHT$ which is the homology with respect to the total differential $d_T$.  Here,
$d_T=d_C+\mHH^\dif(\mapH)$.
Since $\mHH^\dif(\mapH)$ is surjective here, the spectral sequence for the double complex collapses on the second page and we get that the homology with respect to the total differential $d_T=d_C+\mHH^\dif(\mapH)$, as a $\Z$-module is:
\[
\mHT \left( \mHH_\bullet^\dif(B) \rightarrow \mHH_\bullet^\dif(A) , d_T\right)
\cong
\Z \langle x_3^2, e_1 x_3^2, e_2 x_3^2, x_3^3, e_1 x_3^3, e_2 x_3^3 \rangle .
\]

Next we base change from $\Z$ to $\F_p$.
We also put in an overall shift of $q^{-6}$ coming from tensoring the Rickard complexes along with an overall shift of $q^{-3} $ coming from taking Hochschild homology. We then get that the homology of the Hopf link is isomorphic to
\begin{equation}
q^{-9} \left(q^2  \F_p \langle e_i x_3^j | 0 \leq i \leq 2, 0 \leq j \leq 3  \rangle
\oplus
t^{} \F_p \langle x_3^2, e_1 x_3^2, e_2 x_3^2, x_3^3, e_1 x_3^3, e_2 x_3^3 \rangle \right)
\end{equation}
where 
\begin{equation}
\partial(1)=e_1+2x_3 \ , \quad \quad \partial(x_3^2)= e_1 x_3^2 +2x_3^3 .
\end{equation}
In summary, the $p$-complexes are: % 
  \[
    \NB{\tikz[]{\begin{scope}[xscale = 2, yscale=1.5]
  \node (e0x0)    at (0,2) {$1$};
  \node (e0x1)    at (1,2) {$x_3$};
  \node (e0x2)    at (2,2) {$x_3^2$};
  \node (e0x3)    at (3,2) {$x_3^3$};
  \node (e1x0)    at (0,1) {$e_1$};
  \node (e1x1)    at (1,1) {$e_1x_3$};
  \node (e1x2)    at (2,1) {$e_1x_3^2$};
  \node (e1x3)    at (3,1) {$e_1x_3^3$};
  \node (e2x0)    at (0,0) {$e_2$};
  \node (e2x1)    at (1,0) {$e_2 x_3$};
  \node (e2x2)    at (2,0) {$e_2 x_3^2$};
  \node (e2x3)    at (3,0) {$e_2x_3^3$};
  \begin{scope}[font=\tiny]
    \draw[->] (e0x0) -- (e0x1) node [pos=0.5, above] {$2$};
    \draw[->] (e0x1) -- (e0x2) node [pos=0.5, above] {$3$};
    \draw[->] (e0x2) -- (e0x3) node [pos=0.5, above] {$4$};
    \draw[->] (e1x0) -- (e1x1) node [pos=0.5, above] {$2$};
    \draw[->] (e1x1) -- (e1x2) node [pos=0.5, above] {$3$};
    \draw[->] (e1x2) -- (e1x3) node [pos=0.3, above] {$4$}
    coordinate[pos=0.5](xing);
    \fill[white] (xing) circle (0.5mm);
    \draw[->] (e2x0) -- (e2x1) node [pos=0.5, below] {$2$};
    \draw[->] (e2x1) -- (e2x2) node [pos=0.5, below] {$3$};
    \draw[->] (e2x2) -- (e2x3) node [pos=0.5, below] {$4$};
    \draw[->] (e0x0) -- (e1x0) node [pos=0.5, left] {$1$};
    \draw[->] (e1x0) -- (e2x0) node [pos=0.5, left] {$2$};
    \draw[->] (e0x1) -- (e1x1) node [pos=0.5, left] {$1$};
    \draw[->] (e1x1) -- (e2x1) node [pos=0.5, left] {$2$};
    \draw[->] (e0x2) -- (e1x2) node [pos=0.5, right] {$1$};
    \draw[->] (e1x2) -- (e2x2) node [pos=0.5, right] {$2$};
    \draw[->] (e0x3) -- (e1x3) node [pos=0.5, right] {$6$};
    \draw[->] (e1x3) -- (e2x3) node [pos=0.5, right] {$12$};
    \draw[->] (e0x3) -- (e2x2) node [pos=0.7, right] {$-5$};    
\end{scope}
\end{scope}

}} \qquad \text{and} \qquad
    \NB{\tikz[]{\begin{scope}[xscale = 2, yscale=1.5]
  \node (e0x2)   at (0,2) {$x_3^2$};
  \node (e0x3)   at (1,2) {$x_3^3$};
  \node (e1x2) at (0,1) {$e_1x_3^2$};
  \node (e1x3) at (1,1) {$e_1x_3^3$};
  \node (e2x2) at (0,0) {$e_2x_3^2$};
  \node (e2x3) at (1,0) {$e_2x_3^3$};
  \begin{scope}[font=\tiny]
  \draw[->] (e0x2) -- (e0x3) node [pos=0.5, above] {$2$}; % 
  \draw[->] (e1x2) -- (e1x3) node [pos=0.3, above] {$2$}    coordinate[pos=0.5](xing);
    \fill[white] (xing) circle (0.5mm);
  \draw[->] (e2x2) -- (e2x3) node [pos=0.5, below] {$2$};
  \draw[->] (e0x2) -- (e1x2) node [pos=0.5, left] {$1$}; % 
  \draw[->] (e1x2) -- (e2x2) node [pos=0.5, left] {$2$};
  \draw[->] (e0x3) -- (e1x3) node [pos=0.5, right] {$4$};
  \draw[->] (e1x3) -- (e2x3) node [pos=0.5, right] {$8$};
  \draw[->] (e0x3) -- (e2x2) node [pos=0.7, right] {$-3$};
\end{scope}
\end{scope}

}} .
    \]
By carefully choosing bases for these vector spaces, one could compute that in the stable category, the homology of the Hopf link depends upon $p$ as follows.\\
If $p=2$:
\begin{align*}
\mH^{\dif}_/ &\cong  q^{-9} \left( q^2((q^4+q^6+q^8+q^{10})V_0 \oplus (1+q^2+q^4+q^6)V_1)
\oplus
t((q^4+q^6+q^8+q^{10})V_0 \oplus q^8 V_1) 
\right)  \\
&\cong q^{-9} \left( q^2(q^4+q^6+q^8+q^{10})V_0 
\oplus
t(q^4+q^6+q^8+q^{10})V_0  
\right)
,
\end{align*}
if $p=3$:
\begin{align*}
\mH^{\dif}_/ &\cong q^{-9} \left( q^2 (V_2 \oplus q^2 V_2 \oplus q^4 V_2 \oplus q^6 V_2) 
\oplus
t^{}(q^4 V_2 \oplus q^6 V_2) \right) \\ & \cong 0 ,
\end{align*}
if $p=5$:
\begin{align*}
\mH^{\dif}_/ &\cong q^{-9} \left(
q^2 (V_4 \oplus q^2 V_4 \oplus q^4 V_1)
\oplus
t^{}(q^4 V_3 \oplus q^6 V_1) \right) \\
&\cong q^{-9} \left(
q^2 ( q^4 V_1)
\oplus
t^{}(q^4 V_3 \oplus q^6 V_1) \right)
,
\end{align*}
if $p \neq 3, 5$:
\[\mH^{\dif}_/ \cong  q^{-9} \left( q^2 (V_5 \oplus q^2 V_3 \oplus q^4 V_1) \oplus
t^{}(q^4 V_3 \oplus q^6 V_1) \right).
\]

\addcontentsline{toc}{section}{References}

\bibliographystyle{alphaurl}
\bibliography{extracted}

\noindent Y.~Q.: { \sl \small Department of Mathematics, University of Virginia, Charlottesville, VA 22904, USA} \newline \noindent {\tt \small email: yq2dw@virginia.edu}

\vspace{0.1in}

\noindent L.-H.~R.:  {\sl \small RMATH, Maison du Nombre, 6 avenue de
la Fonte, L-4365 Esch-sur-Alzette, Luxembourg }
\newline \noindent {\tt \small email: louis-hadrien.robert@uni.lu  }

\vspace{0.1in}

\noindent J.~S.:  {\sl \small Department of Mathematics, CUNY Medgar Evers, Brooklyn, NY, 11225, USA}\newline \noindent {\tt \small email: jsussan@mec.cuny.edu \newline 
\sl \small Mathematics Program, The Graduate Center, CUNY, New York, NY, 10016, USA}\newline \noindent {\tt \small email: jsussan@gc.cuny.edu}

\vspace{0.1in}

\noindent E.~W.: { \sl \small Univ Paris Diderot, IMJ-PRG, UMR 7586 CNRS, F-75013, Paris, France} \newline \noindent {\tt \small email: wagner@imj-prg.fr}

\end{document}